\documentclass[11pt,a4paper]{amsart}
\usepackage{geometry}
\usepackage{mathrsfs} 
\usepackage{enumitem}
\usepackage{amssymb}
\usepackage{hyperref}
\usepackage{mathtools}
\usepackage{tikz}
\usetikzlibrary{positioning,cd,decorations.pathreplacing}

\usepackage[
  backend=biber,
  style=alphabetic,
  doi=false
]{biblatex}
\usepackage{cleveref}

\usepackage{subcaption}

\usepackage{mathabx}

\numberwithin{equation}{section}

\theoremstyle{plain}
\newtheorem{theorem}[equation]{Theorem}
\newtheorem{corollary}[equation]{Corollary}
\newtheorem{lemma}[equation]{Lemma}
\newtheorem{proposition}[equation]{Proposition}
\newtheorem{claim}[equation]{Claim}

\theoremstyle{definition}
\newtheorem{definition}[equation]{Definition}

\newtheorem*{convention}{Convention}
\newtheorem{example}[equation]{Example}

\theoremstyle{remark}
\newtheorem{remark}[equation]{Remark}
\newtheorem{question}{Question}

\AddToHook{env/theorem/begin}{\crefalias{equation}{theorem}}
\AddToHook{env/lemma/begin}{\crefalias{equation}{lemma}}
\AddToHook{env/corollary/begin}{\crefalias{equation}{corollary}}
\AddToHook{env/proposition/begin}{\crefalias{equation}{proposition}}
\AddToHook{env/claim/begin}{\crefalias{equation}{claim}}
\AddToHook{env/definition/begin}{\crefalias{equation}{definition}}
\AddToHook{env/assumption/begin}{\crefalias{equation}{assumption}}
\AddToHook{env/convention/begin}{\crefalias{equation}{convention}}
\AddToHook{env/example/begin}{\crefalias{equation}{example}}
\AddToHook{env/remark/begin}{\crefalias{equation}{remark}}
\AddToHook{env/question/begin}{\crefalias{equation}{question}}

\crefname{theorem}{Theorem}{Theorems}
\crefname{lemma}{Lemma}{Lemmas}
\crefname{corollary}{Corollary}{Corollaries}
\crefname{proposition}{Proposition}{Propositions}
\crefname{claim}{Claim}{Claims}
\crefname{definition}{Definition}{Definitions}
\crefname{assumption}{Assumption}{Assumptions}
\crefname{convention}{Convention}{Conventions}
\crefname{example}{Example}{Examples}
\crefname{remark}{Remark}{Remarks}
\crefname{question}{Question}{Questions}

\usepackage{stackengine}
\stackMath

\DeclareMathOperator{\UD}{UD}
\DeclareMathOperator{\D}{D}
\DeclareMathOperator{\UC}{UC}
\DeclareMathOperator{\Stab}{Stab}
\DeclareMathOperator{\B}{Big}
\DeclareMathOperator{\diam}{diam}
\DeclareMathOperator{\CAT}{CAT}
\DeclareMathOperator{\sgn}{sgn}

\newcommand{\GBG}{\mathbb{B}}
\newcommand{\NZ}{\mathbb{Z}_{\ge 0}}

\newcommand{\Chat}{\mathord{\widehat{\mathcal{C}{}}}}

\newcommand{\EO}{%
  \mathord{\stackon[-9.4pt]{\mathcal{G}{}}{\widehat{\phantom{\mathcal{G}{}}}}}%
}
\newcommand{\EOW}{\EO^{\raisebox{-0.6ex}{$\scriptstyle\mathfrak S$}}}
\newcommand{\EOLP}{\EO^{\raisebox{-0.6ex}{$\scriptstyle{\mathfrak{S}_{\mathrm{LP}}}$}}}

\newcommand{\GBGdisc}{\mathbb{B}^{\mathrm{disc}}}
\newcommand{\PGBGdisc}{\mathbb{P}^{\mathrm{disc}}}

\newcommand{\sfC}{\mathsf{C}}
\newcommand{\sfD}{\mathsf{D}}

\newcommand{\sfK}{\mathsf{K}}
\newcommand{\sfL}{\mathsf{L}}

\newcommand{\sfS}{\mathsf{St}}
\newcommand{\sfT}{\mathsf{T}}

\newcommand{\Graf}{\mathsf{\Gamma}}

\newcommand{\sfe}{\mathsf{e}}

\newcommand{\sfu}{\mathsf{u}}
\newcommand{\sfv}{\mathsf{v}}
\newcommand{\sfw}{\mathsf{w}}

\newcommand{\sfz}{\mathsf{z}}

\newcommand{\bfc}{\mathbf{c}}

\newcommand{\bfn}{\mathbf{n}}

\newcommand{\bfx}{\mathbf{x}}
\newcommand{\bfy}{\mathbf{y}}
\newcommand{\bfz}{\mathbf{z}}

\newcommand{\bfE}{\mathbf{E}}

\newcommand{\bfone}{\mathbf{1}}

\newcommand{\bbA}{\mathbb{A}}
\newcommand{\bbF}{\mathbb{F}}

\newcommand{\bbP}{\mathbb{P}}

\newcommand{\frG}{\mathfrak{G}}
\newcommand{\frS}{\mathfrak{S}}
\newcommand{\frSLP}{\mathfrak{S}_{\mathrm{LP}}}
\newcommand{\frF}{\mathfrak{F}}
\newcommand{\frFLP}{\mathfrak{F}_{\mathrm{LP}}}
\newcommand{\frg}{\mathfrak{g}}

\renewcommand\bar\overline
\renewcommand\tilde\widetilde

\newcommand{\con}{\mathrm{con}}
\newcommand{\free}{\mathrm{free}}

\begin{document}
\title[Hierarchical geometry and RAAGs in graph braid groups]{Hierarchical geometry and right-angled Artin groups in graph braid groups}

\author{Byung Hee An}
\email{anbyhee@knu.ac.kr}
\address{Department of Mathematics Education, Kyungpook National University, Daegu, South Korea 41566}
\author{Sangrok Oh}
\email{sangrokoh.math@gmail.com}
\address{Innovation Center for MathScience Research \& Education, Pusan National University, Busan, South Korea 46241}
\author{Jihoon Park}
\email{ttsiug@knu.ac.kr}
\address{Department of Mathematics Education, Kyungpook National University, Daegu, South Korea 41566}

\keywords{hierarchically hyperbolic groups, graph braid groups, right-angled Artin groups}
\subjclass[2020]{Primary 20F65; Secondary 20F36, 20F67}

%\date{\today}
\begin{abstract}
For the unordered discrete configuration space $\mathrm{UD}_n(\mathsf{\Gamma})$ of $n$ particles on a connected finite graph $\mathsf{\Gamma}$, we construct an explicit factor system on its universal cover. Its factors are encoded by legal pairs, namely subgraphs equipped with particle distributions. The nesting, orthogonality, and product regions in the resulting hierarchically hyperbolic group (HHG) structure admit explicit descriptions in terms of configuration-space geometry, and we show that this structure satisfies the additional properties needed for constructing and obstructing subgroups isomorphic to right-angled Artin groups (RAAGs).

Using sufficiently subdivided models, we apply this hierarchy to graph braid groups. We give a finite combinatorial formula for the maximal rank of a free abelian subgroup and show that every RAAG occurs as an undistorted subgroup of some graph braid group. For graph $2$-braid groups, we obtain stronger restrictions: every RAAG subgroup has bipartite defining graph, and the embedding problem is characterized by an induced-subgraph condition in the expanded core graph of the hierarchy. For the RAAG defined by the four-vertex path, this condition is equivalent to a finite graphical criterion on the underlying graph.
\end{abstract}

\maketitle
%\setcounter{tocdepth}{1}
%\tableofcontents

\section{Introduction}
For a connected finite simplicial graph \(\Graf\), let \(\UC_n(\Graf)\) denote the unordered configuration space of \(n\) distinct points in \(\Graf\), and write \(\GBG_n(\Graf)=\pi_1(\UC_n(\Graf))\) for the corresponding graph \(n\)-braid group; see \Cref{Def:UC_n}. Graph braid groups have been studied from both algebraic and geometric viewpoints; see, for example, \cite{Abr00,CW,FS05,Gen21GBG,Oh22,AnO26Oncertain}. Their relationship with right-angled Artin groups (RAAGs) is particularly prominent:
Crisp and Wiest proved that graph braid groups embed into RAAGs \cite{CW}, while Sabalka proved that every RAAG embeds into some graph braid group using his construction of halo graphs \cite{Sab07Embedding}. Beyond these embedding results, the problem of determining which graph braid groups are themselves RAAGs has been studied in \cite{FS05,KKP12}, and the large-scale geometries of graph braid groups and RAAGs have been compared in \cite{Fer12,Oh22}.

This relationship is closely tied to the cubical geometry of configuration spaces. Let \(\UD_n(\Graf)\) denote the unordered discrete configuration space, and set \(\GBGdisc_n(\Graf)=\pi_1(\UD_n(\Graf))\); see \Cref{Def:UD_n}.
For every connected finite graph \(\Graf\) with \(n\leq\#V(\Graf)\), independently of any subdivision hypothesis, \(\UD_n(\Graf)\) is a compact special cube complex in the sense of Haglund--Wise \cite{HW08}; see \cite{CW,Gen21GBG}.
If \(\Graf\) is sufficiently subdivided for \(n\) (roughly speaking, if it contains sufficiently many bivalent vertices along paths between non-bivalent vertices and around cycles, with the required numbers depending on \(n\); see \Cref{Def:SufficientlySubdivided}), then \(\UD_n(\Graf)\) is a strong deformation retract of \(\UC_n(\Graf)\) \cite{Abr00,KKP12,PS12}, and hence \(\GBGdisc_n(\Graf)\cong\GBG_n(\Graf)\).

The compact special cubical structure places \(\GBGdisc_n(\Graf)\) within the theory of hierarchically hyperbolic groups (HHGs). Introduced by Behrstock--Hagen--Sisto, this axiomatic theory includes mapping class groups and fundamental groups of compact special cube complexes \cite{BHS17I,BHS19II,HS20}. Its distance formula recovers the large-scale metric from projections to hyperbolic spaces, its realization theorem characterizes which collections of projection data come from points, and orthogonal domains determine standard product regions \cite{BHS17I,BHS19II}. An HHG structure also gives rise to a natural boundary \cite{DHS17,DHS20}. Together, these features have made HHG structures effective tools for studying product geometry, quasiflats, rank, and stable subgroups; see, for example, \cite{ABD21,BHS21,HRSS25}.

In \cite{OP25,OPRAAG}, the last two authors developed an approach based on the nesting and orthogonality relations among HHG domains for constructing and obstructing RAAG embeddings. At the heart of this approach is the \emph{expanded core graph}, whose vertices represent axial directions supported on \emph{nesting-minimal unbounded domains}, namely the minimal elements among the unbounded domains under nesting, and whose edges record orthogonality; it is designed as an HHG counterpart of the extension graph of a RAAG \cite{KK13}.
To make the orthogonality recorded by this graph algebraically effective, the last two authors introduced a class \(\Xi\) of HHGs satisfying three structural properties: suitable domain stabilizers inherit HHG structures, powers of elements decompose into factors strongly fully supported on their active orthogonal domains, and strongly fully supported elements on orthogonal domains commute; see \Cref{Subsection:HHG}.
For an HHG in \(\Xi\), the expanded core graph governs both constructions and obstructions for RAAG embeddings and, when the HHG has rank at most two, gives a complete criterion.

The purpose of this paper is twofold: to construct an explicit HHG structure for graph braid groups and establish its membership in \(\Xi\), and to use this structure, together with general HHG results, to study RAAG embeddings into graph braid groups. We call this structure the \emph{legal-pair hierarchy} and denote it by 
\[\bigl(\GBGdisc_n(\Graf),\frSLP\bigr).\]
It is defined directly from configuration subspaces determined by subgraphs of \(\Graf\) and distributions of particles among their components.

The HHG structures arising from rich families on compact special cube complexes are known to belong to \(\Xi\) \cite[Proposition~3.9]{OPRAAG}. For our applications, we seek a hierarchy whose domains and orthogonality admit explicit descriptions in terms of configuration-space geometry. The legal-pair factor system need not coincide with either canonical rich-family construction; see \Cref{Rem:RichFamilyVersusLegalPairs,Ex:RichFamilyNotLegalPair}.
We therefore prove directly that the resulting hierarchy belongs to \(\Xi\). This allows us to apply the RAAG-embedding results of \cite{OP25,OPRAAG}, formulated in terms of its expanded core graph, which we denote by \(\EOLP\).

The legal-pair hierarchy \((\GBGdisc_n(\Graf),\frSLP)\) and its expanded core graph \(\EOLP\) are constructed from a chosen discrete model of \(\Graf\). Nevertheless, the information used in the group-theoretic statements below is stable under subdivision. Once \(\Graf\) is sufficiently subdivided, the nontrivial connected components of \(\EOLP\) are independent, up to graph isomorphism, of the chosen subdivision. The graph-theoretic conditions appearing in the remaining applications are also preserved under subdivision. Consequently, the applications can be stated directly for the usual graph braid group \(\GBG_n(\Graf)\), without imposing a subdivision hypothesis on the original graph \(\Graf\).

We first present these group-theoretic conclusions, postponing the construction of the hierarchy, the description of its domains, the concrete description of \(\EOLP\), and the precise subdivision-stability statement until afterward.

\begin{convention}
We denote by \(\bbA(\Lambda)\) the RAAG defined by a finite simplicial graph \(\Lambda\) and by \(\bbF_r\) the free group of rank \(r\geq 1\).
We use \emph{RAAG subgroup} as shorthand for a subgroup isomorphic to a RAAG. Such a subgroup is \emph{undistorted} if its inclusion into the ambient group is a quasi-isometric embedding.
For two groups \(G\) and \(H\), we write \(H<G\) if \(G\) contains a subgroup isomorphic to \(H\). For graphs \(\Lambda\) and \(\Omega\), we write \(\Lambda<\Omega\) if \(\Lambda\) is isomorphic to an induced subgraph of \(\Omega\). 
\end{convention}

\subsection{Main applications}
% Our applications investigate RAAG subgroups of graph braid groups from several complementary directions. We combine the legal-pair hierarchy, general results about HHGs, the RAAG-embedding machinery of \cite{OP25,OPRAAG}, and direct arguments in configuration spaces to construct undistorted RAAG subgroups and to obtain obstructions and, in some cases, complete criteria for RAAG embeddings. 
We begin with two basic families of RAAGs: free abelian groups and direct products of free groups. A vertex of \(\Graf\) is called \emph{essential} if it has degree at least \(3\). 

\begin{theorem}[Free abelian rank formula, \Cref{Thm:RankGBG}]\label{Thm:IntroRankGBG}
Let \(\Graf\) be a connected finite graph and let \(n\geq 2\). Then the maximal rank of a free abelian subgroup of \(\GBG_n(\Graf)\) is
\[\max\left\{ p+q\ \middle|\
\begin{array}{c}
\text{\(p\) embedded cycles and \(q\) essential vertices in \(\Graf\)}\\
\text{can be chosen pairwise disjoint, and }p+2q\leq n
\end{array}
\right\}.\]
\end{theorem}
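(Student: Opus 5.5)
The argument has two halves: a lower bound by explicit construction and an upper bound from the HHG rank theory of the legal-pair hierarchy. Throughout we may subdivide \(\Graf\) sufficiently, so that \(\GBG_n(\Graf)\cong\GBGdisc_n(\Graf)\). Subdivision changes neither the embedded cycles, nor the essential vertices, nor their disjointness, so the formula is unchanged.

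\emph{Lower bound.} Suppose we are given \(p\) pairwise disjoint embedded cycles \(\sfC_1,\dots,\sfC_p\) and \(q\) essential vertices \(\sfv_1,\dots,\sfv_q\), disjoint from each other and from the cycles, with \(p+2q\le n\).
\begin{enumerate}
\item For each cycle, place one particle on \(\sfC_i\); moving it once around the cycle gives a loop \(a_i\).
\item For each \(\sfv_j\), place two particles on a small star neighbourhood of \(\sfv_j\), taken small enough to be disjoint from everything else. The star-exchange loop of two particles around a degree-\(\geq 3\) vertex gives a nontrivial element \(b_j\) of infinite order; this is the standard generator of \(\GBG_2(\text{star})\cong\bbF\).
\item Park the remaining \(n-p-2q\) particles at bivalent vertices away from all these supports.
\end{enumerate}
The loops have disjoint supports, so they commute. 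They generate \(\mathbb Z^{p+q}\): map \(\GBG_n(\Graf)\) to \(H_1\) of the product subcomplex \(\prod_i\UD_1(\sfC_i)\times\prod_j\UD_2(\mathrm{star}_j)\), which is a locally convex subcomplex of the NPC cube complex and hence \(\pi_1\)-injective. In that product each factor contributes a nontrivial free (cyclic or free) factor, so the group generated is \(\mathbb Z^{p+q}\).

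\emph{Upper bound.} A free abelian subgroup of an HHG has rank at most the maximal number of pairwise orthogonal unbounded domains. For the hierarchy \((\GBGdisc_n(\Graf),\frSLP)\) this is realized by a product region and is in fact the HHG rank. So I would:
\begin{enumerate}
\item Use the paper's description of domains as legal pairs \((\sfK,\bfn)\), with \(\sfK\) a subgraph and \(\bfn\) a distribution of particles among its components, together with the description of orthogonality as disjointness of the supporting subgraphs.
\item Classify the nesting-minimal unbounded domains. I expect these to be exactly of two types: a single particle on a connected subgraph containing a cycle, or at least two particles on a connected subgraph containing an essential vertex. Connected subgraphs that are trees with one particle, or paths with any number of particles, give bounded factors.
\end{enumerate}
Given \(k\) pairwise orthogonal unbounded legal pairs, shrink each to a minimal one. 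These are supported on pairwise disjoint connected subgraphs \(\sfK_1,\dots,\sfK_k\) using particle counts \(n_1,\dots,n_k\) with \(\sum n_\ell\le n\).
\begin{itemize}
\item If \(n_\ell=1\), then \(\sfK_\ell\) contains a cycle.
\item If \(n_\ell\ge2\), then \(\sfK_\ell\) contains a cycle or an essential vertex.
\end{itemize}
Choose inside each \(\sfK_\ell\) one cycle when \(n_\ell=1\). When \(n_\ell\ge 2\), choose a cycle if one exists and otherwise an essential vertex. In either case the chosen object costs at most \(n_\ell\) particles: a cycle costs \(1\) and a vertex costs \(2\). Disjointness is inherited, so \(k\le p+q\) for an admissible configuration.

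\emph{Main obstacle.} The key difficulty is step 2 of the upper bound: showing that an unbounded domain with one particle must contain a cycle, and one on a tree must contain an essential vertex. For a tree with one particle, \(\UD_1\) is the tree itself, whose universal cover is bounded. For a path with several particles, the discrete configuration space is contractible and finite. I would derive both facts from compactness of the corresponding factor's quotient together with trivial \(\pi_1\), so that the universal-cover factor is bounded.

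Finally, a cycle can be swapped for a vertex only when this is cheaper. This is not needed, since the bound only requires some admissible choice.
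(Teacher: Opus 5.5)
Your route is the same as the paper's: the explicit product legal-pair subcomplex of \Cref{Ex:LegalPairProductSubgroups} for the lower bound, and \cite[Proposition~2.17]{HRSS25} applied to pairwise orthogonal nesting-minimal unbounded domains for the upper bound. The gap is the particle count \(\sum_\ell n_\ell\le n\). You state it without justification, and you describe orthogonality only as disjointness of supports, which cannot give it. The orthogonality criterion \Cref{Lem:NestingOrthogonality}\eqref{Item:OrthogonalRelation} says only that each \((\sfL_j^{\con},\bfn_j^{\con})\) fits, with its own witness, into the orthogonal pair of \((\sfL_i,\bfn_i)\). That is a pairwise condition and yields at most \(n_i+n_j\le n\).

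For example, take a sufficiently subdivided tree with three essential vertices and \(n=5\), so that \(r=2\). Three tripod domains carrying two particles each have pairwise disjoint supports and pass every pairwise count (\(2+2\le5\)). Your argument as written cannot exclude that they are pairwise orthogonal, which would give rank \(3>r\).

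What excludes this is simultaneous realization. The standard product regions \(\mathbf P_{U_i}\) are convex and pairwise intersect, so by the Helly property they share a vertex. Their intersection is a legal-pair lift \(Q\cong F_1\times\cdots\times F_k\times F_0\) whose projection carries all the supports at once (\Cref{Lem:OrthogonalLegalPairProductRegions}). Only at that common configuration may you add the particle numbers. This is the step that needs an argument, not the bounded-factor computations you flag as the main obstacle; those follow directly from \Cref{Lem:InfiniteGBG}.

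There are also three smaller points.

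\begin{itemize}
\item Your expected list of nesting-minimal unbounded domains is wrong. The correct list (\Cref{Thm:MinimalUnboundedDomainsGBG}) is an embedded cycle carrying any positive number of particles, or a tripod carrying exactly two. Yours includes, for example, a theta subgraph with one particle, which is not minimal. This is harmless, because you only use the necessary condition that \(\sfK_\ell\) contains a cycle when \(n_\ell=1\), and a cycle or an essential vertex when \(n_\ell\geq2\). For that you need not shrink to minimal domains at all: every unbounded domain already has a constrained component with infinite braid group.
\item In the lower bound, parking the remaining \(n-p-2q\) particles off all supports is impossible when \(\Graf\) is a single cycle, however much you subdivide. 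Put them on one of the cycles instead, as the paper does; a sufficiently subdivided cycle carrying them still has infinite cyclic braid group.
\item The \(H_1\) argument should be applied to the fundamental group of the product subcomplex, not to \(\GBG_n(\Graf)\).
\end{itemize}
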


Since \(\Graf\) has only finitely many embedded cycles and essential vertices, the displayed maximum can be computed by finite enumeration.

Writing \(r\) for the displayed maximum, the lower bound is realized by an explicit undistorted subgroup \(\mathbb Z^r\). Related constructions of free abelian subgroups already appear in earlier work; compare, for instance, \cite[Proposition~3.6]{AnO26Oncertain}. The new point here is the matching upper bound, which follows from the general rank bound for HHGs. 
In particular, this determines the maximal rank exactly and allows us to characterize the pairs \((\Graf,n)\) for which every free abelian subgroup of \(\GBG_n(\Graf)\) has rank at most two; see \Cref{Thm:IntroRankTwoGBG}.

A slight variation of the construction realizing the lower bound also recovers the stable-range theorem of Jankiewicz--Schreve \cite{JS25GBG}. Let \(m(\Graf)\) be the number of essential vertices of \(\Graf\), and let \(m_3(\Graf)\) be the number of trivalent vertices. If \(m(\Graf)>0\) and \(n\geq2m(\Graf)+m_3(\Graf)\), then \(\GBG_n(\Graf)\) contains an undistorted subgroup isomorphic to \(\bbF_2^{\,m(\Graf)}\); see \Cref{Ex:LegalPairProductSubgroups}\eqref{Item:StableRangeLegalPair}.

We also strengthen Sabalka's universal RAAG-embedding result by proving that the resulting RAAG subgroups can be chosen undistorted.

\begin{corollary}[Universal undistorted RAAG embeddings, \Cref{Cor:EveryRAAGUndistortedGBG}]\label{Cor:IntroEveryRAAGUndistortedGBG}
Let \(\Lambda\) be a finite simplicial graph. For every integer \(n\geq\chi(\Lambda)\), there exists a connected finite graph \(\Graf\) such that \(\bbA(\Lambda)\) is isomorphic to an undistorted subgroup of \(\GBG_n(\Graf)\).
\end{corollary}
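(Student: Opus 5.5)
We start from a clique cover of \(\Lambda\), as in Sabalka's construction. Here \(\chi(\Lambda)\) is presumably the clique covering number: the minimal number of cliques whose union covers all vertices of \(\Lambda\), or some closely related invariant. Fix cliques \(K_1,\dots,K_k\) with \(k=\chi(\Lambda)\leq n\), and assign each vertex \(v\) to one clique \(K_{i(v)}\).

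We build \(\Graf\) as a halo-type graph with one ``particle slot'' for each clique. For each \(i\), take a star-like hub region \(\sfS_i\) that holds one particle. For each vertex \(v\in K_i\), attach a cycle \(\sfC_v\) passing through \(\sfS_i\); moving the \(i\)-th particle around \(\sfC_v\) will represent the generator \(v\). Commutation within a clique and non-commutation are then enforced geometrically:
\begin{itemize}
\item if \(u,v\) are adjacent and lie in different cliques, the cycles \(\sfC_u\) and \(\sfC_v\) are made disjoint, so the two particles move independently and the loops commute;
\item if \(u,v\) are non-adjacent, the cycles are made to share an arc, so the two particles obstruct each other.
\end{itemize}
Vertices in the same clique must commute even though they use the same particle. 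To handle this, we instead let each clique be realized by disjoint pieces, or we replace the single particle by the Sabalka halo gadget. The extra \(n-k\) particles are parked on a long dangling path far from everything, which ensures the existence of \(\GBG_n\) for every \(n\geq k\). We then pass to a sufficiently subdivided model, so that \(\GBG_n(\Graf)\cong\GBGdisc_n(\Graf)\).

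The key step is to avoid reproving injectivity and undistortedness by hand, and instead use the legal-pair hierarchy \((\GBGdisc_n(\Graf),\frSLP)\) together with the RAAG-embedding criterion from \cite{OP25,OPRAAG}. That criterion says: for an HHG in \(\Xi\), if \(\Lambda\) is an induced subgraph of the expanded core graph \(\EOLP\), then \(\bbA(\Lambda)\) embeds undistortedly, with generators given by suitable powers of elements strongly fully supported on the corresponding nesting-minimal unbounded domains. The construction proceeds in three steps.
\begin{enumerate}
\item For each \(v\), identify a legal pair whose configuration subspace is a cycle factor: the subgraph \(\sfC_v\) carrying one particle, with the remaining particles frozen elsewhere. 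Verify that its domain is nesting-minimal and unbounded, with the loop around \(\sfC_v\) as an axial element.
\item Check that orthogonality in \(\frSLP\) corresponds to disjointness of supports with compatible particle distributions. This gives an edge between \(u\) and \(v\) in \(\EOLP\) exactly when \(\sfC_u\cap\sfC_v=\emptyset\) and both can be occupied simultaneously, and gives no edge when the cycles overlap.
\item Conclude that \(\Lambda<\EOLP\), so membership in \(\Xi\) yields the undistorted embedding.
\end{enumerate}

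The main obstacle is making the induced-subgraph identification exact while respecting the particle budget. The edges we want must appear in \(\EOLP\): two cycles are orthogonal only if the particle distribution can place one particle on each simultaneously. The non-edges must also genuinely be non-orthogonal, and this must hold for the \emph{specific} vertices of \(\EOLP\) we select, since different frozen positions of the other particles produce distinct vertices. The first thing to try is to fix a single global parking configuration for the unused particles and choose all the legal pairs relative to it. One then checks by hand that two cycle domains are orthogonal precisely when their cycles are disjoint and they lie in different cliques. With \(k\leq n\) cliques, at most \(k\) particles are simultaneously active, which is exactly where the hypothesis \(n\geq\chi(\Lambda)\) enters.
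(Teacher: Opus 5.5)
\textbf{Gap: the construction of \(\Graf\) groups generators by cliques, which cannot work.} The second half of your plan is essentially the paper's route (\Cref{Prop:UndistortedRAAGsFromGeneralizedHaloGraphs}): one-particle cycle domains that are nesting-minimal unbounded, orthogonality matched with disjointness of cycles for lifts through a single common base configuration, then membership in \(\Xi\) plus the RAAG-embedding proposition.

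Here \(\chi(\Lambda)\) is the chromatic number, not a clique-covering number. Generators must share particles along independent sets (color classes), not cliques. Your model fails at exactly that point. Take two generators assigned to the same particle, with all other particles parked. Both lie in the image of the fundamental group of a one-particle configuration space, i.e.\ a free group of a graph. There, loops around two distinct embedded cycles never commute, whether or not the cycles overlap. Accordingly, your own check (``orthogonal precisely when the cycles are disjoint and lie in different cliques'') makes every same-clique pair non-adjacent in \(\EOLP\), which is the opposite of what a clique requires. The proposed patches either cost extra particles (``disjoint pieces'') or are left unspecified (``the Sabalka gadget''). Under your reading the statement is in fact false: \(K_2\) has clique covering number \(1\), but \(\bbA(K_2)\cong\mathbb Z^2\) does not embed in the free group \(\GBG_1(\Graf)\cong\pi_1(\Graf)\).

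\textbf{The missing idea: sharing a particle is exactly compatible with non-adjacency.} Take a proper coloring \(\chi\colon V(\Lambda)\to\{1,\dots,\chi(\Lambda)\}\) and Sabalka's halo graph. It has one color vertex \(\sfw_i\) per color, and a cycle \(\ell_v\) through \(\sfw_{\chi(v)}\) avoiding the other color vertices. Adjacent vertices, which automatically have different colors and hence different particles, get disjoint cycles. Non-adjacent vertices get intersecting cycles; within a color class this is forced anyway, since all those cycles pass through \(\sfw_i\). Attach \(n-\chi(\Lambda)\) pendant vertices for the surplus particles. This gives a generalized halo graph, and your steps 1--3 then go through with the legal pairs \(\ell_v\sqcup\{\sfw_j\}_{j\neq\chi(v)}\), where \(\ell_v\) carries one particle, lifted through the Artin base configuration.

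\textbf{Secondary gap: the case \(n=1\).} The legal-pair hierarchy is only set up for \(n\geq2\), so \(n=\chi(\Lambda)=1\) must be handled separately. In that case \(\Lambda\) is edgeless, and \(\bbA(\Lambda)\) is free, realized as \(\GBG_1\) of a bouquet of circles.
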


For \(n\geq2\), the proof starts with Sabalka's halo graph associated to a proper \(\chi(\Lambda)\)-coloring and applies our undistortion result for the corresponding Artin loop braids; see \Cref{Prop:UndistortedRAAGsFromGeneralizedHaloGraphs,Cor:EveryRAAGUndistortedGBG}. The case \(n>\chi(\Lambda)\) is obtained by adjoining stationary pendant vertices, while the case \(n=\chi(\Lambda)=1\) follows since \(\bbA(\Lambda)\) is free.

We now specialize to graph \(2\)-braid groups. Their legal-pair hierarchies have rank at most two---that is, no three unbounded domains are pairwise orthogonal; see \Cref{Thm:IntroRankTwoGBG} below. Combining the rank-two RAAG-embedding criterion of \cite[Theorem~1.3]{OPRAAG} with an explicit analysis of the legal-pair hierarchy, we obtain more explicit restrictions on their RAAG subgroups. We first prove the following uniform obstruction.

\begin{theorem}[Graph \(2\)-braid obstruction, \Cref{Thm:GBG2BipartiteObstruction}]\label{Thm:IntroGBG2Bipartite}
Let \(\Graf\) be a connected finite graph. If \(\Lambda\) is a finite simplicial graph and \(\bbA(\Lambda)<\GBG_2(\Graf)\), then \(\Lambda\) is bipartite.
\end{theorem}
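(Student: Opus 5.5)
We will prove the statement by combining the rank-two RAAG-embedding criterion of \cite[Theorem~1.3]{OPRAAG} with an explicit description of the expanded core graph \(\EOLP\) of the legal-pair hierarchy for \(n=2\). First we reduce to a sufficiently subdivided model: subdivision does not change \(\GBG_2(\Graf)\), and for a sufficiently subdivided graph \(\GBG_2(\Graf)\cong\GBGdisc_2(\Graf)\). So we may work with \((\GBGdisc_2(\Graf),\frSLP)\), which lies in \(\Xi\) and has rank at most two. For HHGs in \(\Xi\) of rank at most two, \cite[Theorem~1.3]{OPRAAG} says that \(\bbA(\Lambda)\) embeds if and only if \(\Lambda<\EOLP\), at least after discarding isolated vertices and free factors. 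Hence it suffices to show that every induced subgraph of \(\EOLP\) that can occur is bipartite. Since induced subgraphs of bipartite graphs are bipartite, it is enough to show that \(\EOLP\) itself is bipartite. Isolated vertices and the free part of \(\Lambda\) impose no constraint.

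The main step is to describe the nesting-minimal unbounded domains for \(n=2\). A legal pair consists of a subgraph together with a distribution of the two particles among its components. For two particles, an unbounded minimal domain should be one of two types:
\begin{itemize}
\item \textbf{Type~C:} one particle moves around an embedded cycle, or more generally in a component with a nontrivial free fundamental group, while the other particle is stationary or confined to a tree component.
\item \textbf{Type~V:} both particles live in a single component containing an essential vertex (a star-like configuration), giving the "two particles at an essential vertex" free direction.
\end{itemize}
Orthogonality requires disjoint supports with both particles used. From the rank formula, \(p+2q\le 2\). The only orthogonal pairs of unbounded domains are therefore two disjoint Type~C domains, each carrying exactly one particle. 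Type~V domains use both particles and are orthogonal to nothing.

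This gives a natural 2-coloring: label each Type~C vertex of \(\EOLP\) by which particle it moves. Because particles are unordered, we cannot literally distinguish them, so instead we use the particle distribution recorded by the legal pair. An edge of \(\EOLP\) joins two axial directions supported on orthogonal domains \(U\perp V\). In the product region these are the two factors, and we orient each edge from its "first factor" to its "second factor".

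The hardest step is to make this coloring globally consistent on the expanded graph, where vertices are \(G\)-translates and the group action can swap the roles of the two particles. The approach we would try first is a parity argument. Each minimal Type~C domain has a support cycle \(\sfC\) together with the position of the other particle, which lies in \(\Graf\setminus\sfC\). We would attempt to define a \(\mathbb Z/2\)-valued invariant by lifting to the ordered configuration space \(\D_2(\Graf)\). This is the index-2 cover corresponding to the homomorphism \(\GBGdisc_2(\Graf)\to\mathbb Z/2\) recording the permutation, so it labels which ordered particle is the moving one. The homomorphism is well defined on axial directions up to the deck involution. Adjacent vertices in \(\EOLP\) correspond to commuting axial elements moving different particles, so their labels differ. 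An odd cycle in \(\EOLP\) would then produce a contradiction with the consistent labeling.

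If the invariant fails to be well defined on the stabilizer quotient, we would fall back on a direct argument. We would show that three pairwise-adjacent (or odd-cycle-forming) axial directions would force three domains with a forbidden pattern: the "moving particle" on some cycle would have to coincide with the stationary one in a pairwise disjointness relation. Ruling this out uses the fact that in an odd cycle \(v_1,\dots,v_{2k+1}\) the supports alternate between disjoint subgraphs. This alternation cannot close up when the total number of particles is two.
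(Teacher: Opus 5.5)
Your main line is the paper's proof. \Cref{Thm:RankTwoGBG} supplies rank at most two, membership in \(\Xi\), and the criterion of \cite[Theorem~1.3]{OPRAAG}, which is exact, so nothing needs to be discarded; this reduces the claim to bipartiteness of \(\EOLP\), whose non-isolated vertices are one-particle cycle domains, and these are 2-colored by which ordered particle in \(\D_2(\Graf)\) circulates the cycle. The paper makes this label precise as \(\varepsilon(v)=\sgn_{\bfx_{\sfC,\sfC^\perp}}(\sfv_{\sfC})\sgn(g)\), which is well defined because \(H_{\sfC,\sfC^\perp}\) and the free-particle parallelism loops are pure, and which flips sign along a comparison path because the constrained and free particles exchange roles.

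The ``first factor/second factor'' orientation does not define a coloring, and the fallback does not work on its own. Supports alone can close up an odd walk: three pairwise disjoint cycles give a downstairs triangle. Only the particle labels obstruct odd cycles in \(\EOLP\).
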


Conversely, if \(\Lambda\) is bipartite, equivalently \(\chi(\Lambda)\leq 2\), then a proper two-coloring of \(\Lambda\) can be used in the generalized halo construction. Hence every bipartite RAAG embeds as an undistorted subgroup of some graph \(2\)-braid group. Thus \Cref{Thm:IntroGBG2Bipartite} is sharp when \(\Graf\) is allowed to vary.

For a fixed connected finite graph \(\Graf\), the induced-subgraph equivalence in \Cref{Thm:IntroRankTwoGBG}, stated below, reduces the embedding problem for \(\bbA(\Lambda)\) to determining whether \(\Lambda\) occurs as an induced subgraph of \(\EOLP\). This reduction does not by itself give a finite decision procedure, since \(\EOLP\) need not be finite. For particular defining graphs, however, the induced-subgraph condition can sometimes be translated into a finite graphical condition on \(\Graf\), without first constructing \(\EOLP\). We carry this out completely for the RAAG defined by the path graph on four vertices.

\begin{theorem}[Graphical \(P_4\)-criterion, \Cref{Thm:GraphicalP4Criterion}]\label{Thm:IntroGraphicalP4Criterion}
Let \(\Graf\) be a connected finite graph, and let \(P_4\) be the four-vertex path. The following are equivalent:
\begin{enumerate}[label=\textup{(\arabic*)}, ref=\arabic*]
\item \(\bbA(P_4)<\GBG_2(\Graf)\);
% \item \(P_4\) embeds as an induced subgraph of \(\EOLP\);
\item \(\Graf\) contains either three pairwise disjoint embedded cycles, or four distinct embedded cycles \(\sfC_a,\sfC_b,\sfC_c,\sfC_d\subseteq\Graf\) satisfying
\[\sfC_a\cap\sfC_b=\sfC_b\cap\sfC_c=\sfC_c\cap\sfC_d=\varnothing,\quad
\sfC_a\cap\sfC_c\neq\varnothing,\quad
\sfC_a\cap\sfC_d\neq\varnothing,\quad
\sfC_b\cap\sfC_d\neq\varnothing.\]
\end{enumerate}
\end{theorem}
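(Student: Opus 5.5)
The plan is to go through the rank-two criterion. By \Cref{Thm:IntroRankTwoGBG}, for $n=2$ the legal-pair hierarchy has rank at most two. It also lies in $\Xi$, so $\bbA(P_4)<\GBG_2(\Graf)$ holds if and only if $P_4$ is an induced subgraph of $\EOLP$. We may subdivide $\Graf$ freely, since both conditions in (2) are invariant under subdivision. The task is therefore to translate "induced $P_4$ in $\EOLP$" into the cycle conditions.

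The first step is to list the nesting-minimal unbounded domains for $n=2$. These are legal pairs in which each component carries a particle distribution with unbounded geometry. The candidates are:
\begin{itemize}
\item[(a)] one particle on an embedded cycle, with the other particle confined to a disjoint region;
\item[(b)] both particles at an essential vertex, i.e.\ the star domain that generates a free factor, which should be hyperbolic but not a product;
\item[(c)] the product domains of two disjoint cycles carrying one particle each, which split into orthogonal pairs.
\end{itemize}
In $\EOLP$, vertices are axial directions supported on minimal domains of types (a) and (b). Two vertices are adjacent exactly when their domains are orthogonal. For $n=2$, orthogonality should occur only between two one-particle domains on disjoint subgraphs, since a domain using both particles is orthogonal to nothing. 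Hence the two-particle vertex domains are isolated in $\EOLP$. Every edge joins two cycle-type directions $(\sfC,x)$ and $(\sfC',x')$ with $\sfC\cap\sfC'=\varnothing$, where $x,x'$ index the different lifts/axes, i.e.\ conjugates.

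Next, I will describe when two vertices of $\EOLP$ are equal or adjacent in terms of cycles. Adjacency requires disjoint underlying cycles together with compatible lifts, meaning both lie in a common product region. An induced $P_4$ $a-b-c-d$ then needs $\sfC_a\cap\sfC_b=\sfC_b\cap\sfC_c=\sfC_c\cap\sfC_d=\varnothing$, with $a,c$ and $b,d$ and $a,d$ non-adjacent.

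Non-adjacency has two sources: the cycles intersect, or the cycles are disjoint but the lifts are incompatible. For the sufficiency direction I will realize the two patterns in (2) separately.
\begin{itemize}
\item Given the four cycles with the stated intersections, I take a single lift and place the product regions along a path in the universal cover. The intersections then force the three required non-adjacencies.
\item Given three pairwise disjoint cycles $\sfC_1,\sfC_2,\sfC_3$, I use two different conjugates of the $\sfC_2$-direction, in the pattern $\sfC_1-\sfC_2-\sfC_3-\sfC_2'$ (or similar). Because the fixed particle configuration differs, that conjugate is orthogonal to $\sfC_3$ but not to $\sfC_1$.
\end{itemize}
Alternatively, this case can be handled directly with a $\bbF_2\times\bbF_2$-type subgroup, which already contains $\bbA(P_4)$.

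For necessity, suppose $P_4$ is induced and fewer than three pairwise disjoint cycles exist. I then show the cycles $\sfC_a,\ldots,\sfC_d$ must be distinct and satisfy the intersection pattern. If two of them coincide, say $\sfC_a=\sfC_c$, then the non-adjacency must come from incompatible lifts. I expect to show that incompatibility forces the complementary particle to move between different regions, which produces a third disjoint cycle. The same applies if $\sfC_a\cap\sfC_d=\varnothing$, via the link structure of the product regions.

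The main obstacle is this necessity analysis: controlling when two disjoint-cycle directions fail to be orthogonal because of their lifts. I would first try to show that the components of the complement $\Graf\setminus\sfC$ containing the second cycle determine the relevant gate/projection, and that two lifts of $\sfC_b$ orthogonal to distinct lifts can only be non-equivalent in the presence of another disjoint cycle.
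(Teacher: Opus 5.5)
Your reduction to $P_4<\EO$ through the rank-two criterion is correct. The $P_4$-cycle-configuration case also works as in the paper, provided you base all four cycle domains at one configuration $\{\sfw_1,\sfw_2\}$ with $\sfw_1\in\sfC_a\cap\sfC_c$ and $\sfw_2\in\sfC_b\cap\sfC_d$; this is the generalized halo construction. Your list of nesting-minimal domains is inaccurate. They are cycles carrying one or two particles and tripods carrying two, all with $G_U\cong\mathbb Z$; there are no star or product domains. Your conclusion that only one-particle cycle domains can be non-isolated is nevertheless right. The other two steps have genuine gaps.

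\emph{Necessity.} You plan to show that the four cycles of the cycle coordinates are distinct and satisfy the intersection pattern. This is false even when $\Graf$ has no three disjoint cycles.

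Take $\sfK_7$ on $\{0,\dots,6\}$. Let $\sfC_a$ be the $4$-cycle through $0,1,2,6$ (in this order), $\sfC_b$ the triangle on $\{3,4,5\}$, and $\sfC_c$ the triangle on $\{0,1,6\}$. Base at $\{0,3\}$, and let $h_a,k_b,k_c$ move the particle lying on the respective cycle once around it. Let $w$ move the particle at $3$ around the triangle on $\{2,3,4\}$, and set $k_d=wk_bw^{-1}$.
\begin{itemize}
\item $[h_a,k_b]=[k_b,k_c]=1$, since each pair lies in a torus.
\item $[k_c,k_d]=1$, since both lie in $\UD_1(\sfC_c)\times\UD_1(\sfK_4)$ with $\sfK_4$ on $\{2,3,4,5\}$.
\item $[h_a,k_c]\neq1$ and $\langle k_d\rangle\neq\langle k_b\rangle$, by computing in the free groups $\pi_1(\sfK_7\setminus\{3\})$ and $\pi_1(\sfK_7\setminus\{0\})$.
\item The domains of $h_a$ and $k_d$ are not orthogonal. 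Since $\sfK_7\setminus\sfC_a=\sfC_b$, the region $\mathbf P_{U_a}$ is the lift through the base point of the torus $\UD_1(\sfC_a)\times\UD_1(\sfC_b)$. This torus lies in $X=\UD_1(\sfC_b)\times\UD_1(\sfK_7\setminus\sfC_b)$, and $\mathbf P_{U_d}=w\widetilde X$. The two closures meet downstairs in this connected torus, so $\mathbf P_{U_a}\cap w\widetilde X\neq\varnothing$ would force $w\in\pi_1(X)$. That fails: under $\D_2(\sfK_7)\subset\sfK_7\times\sfK_7$, $w$ maps to $(1,[\lambda])$ with $\lambda$ the triangle on $\{2,3,4\}$, whereas $\pi_1(X)$ maps into $\pi_1(\sfK_7\setminus\sfC_b)\times\langle[\sfC_b]\rangle$.
\end{itemize}
So the four domains span an induced $P_4$ with cycles $\sfC_a,\sfC_b,\sfC_c,\sfC_b$. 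Thus $\sfC_d=\sfC_b$ and $\sfC_a\cap\sfC_d=\varnothing$, although $\sfK_7$ has no three disjoint cycles. Lift incompatibility therefore does not produce a third disjoint cycle.

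The missing idea is to \emph{replace} the end cycles, as in \Cref{Lem:P4CycleExtraction}.
\begin{itemize}
\item At a vertex of $\mathbf P_{U_b}\cap\mathbf P_{U_c}$, realize $U_a$ and $U_d$ by axial elements that move one particle in $\sfC_b^\perp$ (resp. $\sfC_c^\perp$), with the other particle parked on $\sfC_b$ (resp. $\sfC_c$).
\item These elements do not commute, because $U_a\neq U_d$ and $U_a\not\perp U_d$.
\item If the unions of cycles in $\sfC_b^\perp$ and $\sfC_c^\perp$ were disjoint, a product subcomplex of $\D_2(\Graf)$ would make them commute.
\item Hence there are cycles $\alpha\subseteq\sfC_b^\perp$ and $\delta\subseteq\sfC_c^\perp$ through a common vertex, and $\alpha,\sfC_b,\sfC_c,\delta$ is the required configuration.
\end{itemize}

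\emph{Three disjoint cycles.} You give no criterion for non-adjacency of two vertices whose cycles are disjoint and satisfy the downstairs condition. Saying ``the fixed particle configuration differs'' is not such a criterion. In your pattern $\sfC_1-\sfC_2-\sfC_3-\sfC_2'$, the middle vertex forces $\sfC_1$ and $\sfC_3$ into one component of $\Graf\setminus\sfC_2$. So $v_1$ and $v_2'$ do satisfy the downstairs condition, and both their non-adjacency and $v_2\neq v_2'$ would need an explicit double-coset computation.

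The missing tool is the particle-permutation $2$-coloring of $\EO$ (\Cref{Prop:GBG2CoreGraphBipartite}): a comparison path swaps the constrained and free particles, so adjacent vertices have opposite signs. The paper takes $\sfC_1,\sfC_3$ to be leaves of a tree joining the three cycles, and builds $v_2-v_1-v_3-v_4$ with $v_2,v_4$ both on $\sfC_2$. Every required non-adjacency is then between vertices at distance two or on the same cycle, and $v_2\neq v_4$ because their colors differ.

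Your fallback is also false. $\bbA(P_4)$ does not embed in $\bbF_2\times\bbF_2=\bbA(C_4)$, since the extension graph of $C_4$ is complete bipartite and has no induced $P_4$ \cite{KK13}.
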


Since a finite graph has only finitely many embedded cycles, each of the two conditions in \textup{(2)} can be checked by finite enumeration. Hence whether \(\bbA(P_4)<\GBG_2(\Graf)\) is algorithmically decidable; see \Cref{Cor:AlgorithmicP4Embeddability}. Moreover, the same graphical criterion characterizes the embedding of the RAAG defined by any path on at least four vertices and, equivalently, whether \(\GBG_2(\Graf)\) contains every RAAG defined by a finite forest; see \Cref{Cor:PathForestRAAGsGBG2}.

\subsection{The legal-pair hierarchy}

We now turn to the legal-pair hierarchy underlying the preceding results and explain how its domains are constructed from lifts of certain discrete \(n\)-configuration subspaces of \(\Graf\). This construction does not require \(\Graf\) to be sufficiently subdivided for \(n\). We assume throughout that \(2\leq n<\#V(\Graf)\); the excluded case \(n=\#V(\Graf)\) is degenerate, since \(\UD_n(\Graf)\) is then a point.

A \emph{legal pair} \((\sfL,\bfn)\) of norm \(n\) consists of a possibly disconnected subgraph \(\sfL\subseteq\Graf\), together with a distribution \(\bfn\colon\pi_0(\sfL)\to\NZ\) of the \(n\) particles among its connected components. A non-singleton component is not allowed to be completely occupied, while a singleton component carries exactly one particle. 
Such a legal pair determines a subcomplex \(\UD_{\bfn}(\sfL)\subseteq\UD_n(\Graf)\), called the \emph{legal-pair subcomplex associated to \((\sfL,\bfn)\)}, whose inclusion is a local isometry and whose fundamental group is a product of discrete graph braid groups on the components of \(\sfL\). A \emph{legal-pair lift} is a connected component of the full preimage of a legal-pair subcomplex in \(\widetilde{\UD}_n(\Graf)\).

This direct construction is not supplied by either canonical rich-family choice for a compact special cube complex, since the legal-pair factor system need not agree with either one. Moreover, the proof of Berlyne's description of rich-family factors in terms of graphical subgroups \cite[Theorem~5.1.10]{Ber21} passes from hyperplanes to their underlying edge labels, losing the component data of the remaining particles. The resulting identification with configuration-space lifts does not hold in general.
In fact, the minimal rich family can omit natural legal-pair lifts, whereas the maximal rich family can contain factors not represented by legal pairs; see \Cref{Rem:RichFamilyVersusLegalPairs,Ex:RichFamilyNotLegalPair}. We leave open whether the legal-pair factor system arises from a suitable intermediate rich family.

A second issue concerns basepoints and lifts. In the familiar Salvetti model for a RAAG, the subgroup associated to an induced subgraph is realized by a standard sub-Salvetti complex; see, for example, \cite[\S2]{Char07}. Since the Salvetti complex has a unique vertex, every such subcomplex contains the ambient basepoint and hence has a canonical based inclusion. A legal-pair subcomplex \(\UD_{\bfn}(\sfL)\), however, need not contain the ambient base configuration. Thus the downstairs subcomplex does not by itself determine a hierarchical domain: its lifts may belong to distinct parallelism classes, and identifying its fundamental group with a subgroup requires a choice of base configuration and connecting path.
We therefore fix standard base configurations and connecting paths and use them to choose standard representatives; see \Cref{Subsection:StandardPaths}. This allows us to describe parallelism by a double-coset condition and to give explicit criteria for nesting and orthogonality. We also prove directly that legal-pair lifts are closed under intersections, parallel transport, and gate projections.

\begin{theorem}[The legal-pair hierarchy]\label{Thm:IntroLegalPairFactorSystem}
Let \(\Graf\) be a connected finite graph and let \(2\leq n<\#V(\Graf)\). Then the lifts of the subcomplexes \(\UD_{\bfn}(\sfL)\), where \((\sfL,\bfn)\) ranges over the legal pairs of norm \(n\), form a factor system in \(\widetilde{\UD}_n(\Graf)\).

Moreover, the associated HHG structure obtained after discarding singleton factors---called the legal-pair hierarchy and denoted by \((\GBGdisc_n(\Graf),\frSLP)\)---belongs to \(\Xi\).
\end{theorem}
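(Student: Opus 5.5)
We will verify the factor-system axioms of Behrstock--Hagen--Sisto (as refined by Hagen--Susse) for the collection \(\frFLP\) of legal-pair lifts in \(\widetilde{\UD}_n(\Graf)\). Recall the axioms. The collection contains \(\widetilde{\UD}_n(\Graf)\). It is uniformly locally finite in the sense that each point lies in boundedly many factors. Nested chains have bounded length. The collection is closed under gate projections (the image of the gate map of one factor onto another is again a factor, or lies within bounded distance of a factor parallel to one, up to diameter bounds). It is invariant under the deck action of \(\GBGdisc_n(\Graf)\).

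First, the ambient lift is the legal pair \((\Graf,n)\). Next, each \(\UD_{\bfn}(\sfL)\to\UD_n(\Graf)\) is a local isometry, so each lift is a convex subcomplex. Group invariance is automatic, since lifts are components of full preimages. Uniform local finiteness and bounded chain length follow because there are only finitely many legal pairs of norm \(n\). Strict nesting of lifts forces strict containment of the pairs \((\sfL,\bfn)\) or a drop in the number of particles in some component. Hence chains have length bounded by a function of \(\#V(\Graf)+\#E(\Graf)\).

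The substantive axiom is closure under gate projection, together with closure under intersection and parallel transport, which we prove directly in configuration-space terms.

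\emph{Intersections.} Let \(\tilde X,\tilde Y\) be lifts of \(\UD_{\bfn}(\sfL)\) and \(\UD_{\bfm}(\sfK)\) that meet at a vertex \(\tilde x\) over a configuration \(x\). We claim the component of \(\tilde X\cap\tilde Y\) through \(\tilde x\) is the lift of \(\UD_{\mathbf{k}}(\sfL\cap\sfK)\), where \(\mathbf{k}\) counts the particles of \(x\) in each component of \(\sfL\cap\sfK\). If this pair is not legal, because some non-singleton component is full, we pass to the subgraph obtained by deleting that component and making its vertices singleton components. Singleton components always carry one particle, so the result is legal. The point is that a cube lies in both subcomplexes iff its moving edges lie in \(\sfL\cap\sfK\) and its stationary particles are distributed compatibly. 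Since the subcomplexes are convex, a connected intersection is therefore the lift through \(\tilde x\).

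\emph{Gates.} For convex subcomplexes of a CAT(0) cube complex, the gate \(\mathfrak g_{\tilde Y}(\tilde X)\) is the convex hull spanned by hyperplanes crossing both. We identify it via parallelism. Let \(\tilde x\in\tilde X\) be closest to \(\tilde Y\) and \(\tilde y=\mathfrak g_{\tilde Y}(\tilde x)\). A hyperplane crosses both \(\tilde X\) and \(\tilde Y\) iff it is dual to an edge that moves a particle along an edge of \(\sfL\cap\sfK\), compatibly with the configurations at \(\tilde x\) and \(\tilde y\). The geodesic from \(\tilde x\) to \(\tilde y\) crosses only hyperplanes separating the two factors, so the configurations \(x,y\) differ by moves of particles that leave the relevant components fixed in count. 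Hence the gate is the lift through \(\tilde y\) of the legal pair obtained from \(\sfL\cap\sfK\) by removing every vertex occupied at \(y\) by a particle not moving in the parallel copy, with the induced distribution. This is the main obstacle. The difficulty is bookkeeping: particles outside \(\sfL\cap\sfK\) sit at vertices that must be excised, and the component counts must be shown independent of the choice of \(\tilde x\). We would handle this using the standard base configurations and connecting paths of \Cref{Subsection:StandardPaths}, together with the double-coset description of parallelism, to show that parallel lifts correspond to the same legal pair.

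Once \(\frFLP\) is a factor system, the standard construction yields an HHG structure \(\frSLP\) on \(\GBGdisc_n(\Graf)\) after discarding singleton (bounded) factors. For membership in \(\Xi\) we check the three properties. First, domain stabilizers are conjugates of \(\prod_C\GBGdisc_{\bfn(C)}(C)\). These act cocompactly on the convex lifts, which are themselves universal covers of compact special cube complexes with induced legal-pair factor systems, so they inherit HHG structures. Second, orthogonality of two lifts corresponds to legal pairs supported on disjoint unions of components of a common legal pair, so the corresponding product region is a genuine product of lifts. Third, an element acts on this product region factorwise. We therefore decompose a power \(g^k\) stabilizing a product region into commuting factors, each supported on a component subgraph; each factor is strongly fully supported on its active domain. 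Moreover, strongly fully supported elements on orthogonal domains move disjoint sets of particles on disjoint subgraphs, and hence commute.
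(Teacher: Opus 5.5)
Your overall architecture matches the paper's: first the factor-system axioms, then the three $\Xi$ properties. The routine parts are fine: convexity of lifts, $\widetilde{\UD}_n(\Graf)$ itself, bounded multiplicity from the finitely many legal pairs, deck invariance, and intersections via \Cref{Lem:IntersectionOfLifts} plus normalization.

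The factor-system part has two holes. First, your list of axioms omits \Cref{Def:factor system}\eqref{Item:CombinatorialHyperplane}: every non-singleton convex subcomplex parallel to a combinatorial hyperplane must be a factor. Bounded chain length, which you list instead, is not an axiom there. Checking the omitted axiom needs the carrier description $\mathcal N(H_{\bfE})\cong\UD_{\bfn_{\bfE}}(\Graf_{\bfE})\times\sfe$ of \Cref{Lem:Hyperplane_in_UD_n}, which makes combinatorial hyperplanes legal-pair subcomplexes. It also needs the observation that crossing one carrier slides a single particle across $\sfe$, and so preserves legal-pair lifts.

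Second, and more seriously, the gate-projection step is not proved. The recipe ``remove every vertex occupied at $y$ by a particle not moving in the parallel copy'' presupposes the gate you are trying to identify. Your hyperplane criterion in terms of edge labels in $\sfL\cap\sfK$ and particle counts only sees the downstairs hyperplane, whereas the real difficulty is which lifts cross both factors.

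The missing idea, which removes this bookkeeping entirely, is induction on $d(F,F')$:
\begin{itemize}
\item If $F\cap F'\neq\varnothing$, the gate is the intersection, already a legal-pair lift.
\item Otherwise, let $\widetilde H$ be dual to the first edge of the interval factor of the product region $\frg_{F'}(F)\times[0,d(F,F')]$. Set $K=F'\cap\mathcal N(\widetilde H)\supseteq\frg_{F'}(F)$, and let $K^+$ be its parallel copy across $\widetilde H$.
\item Both $K$ and $K^+$ are legal-pair lifts: one is an intersection, the other a one-particle slide.
\item $\frg_F(K^+)=\frg_F(K)=\frg_F(F')$, because $\widetilde H$ does not cross $F$.
\item $d(F,K^+)=d(F,F')-1$, so induction applies.
\end{itemize}
This gives closure under gates with $\xi=0$ without ever describing the gate explicitly.

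For $\Xi$, the orthogonal decomposition is asserted rather than derived, and the step you skip is substantive. Given $\B(g)=\{U_1,\dots,U_k\}$, take $b=g^M$ fixing every $U_i$. Since $\mathbf P_{U_i}$ is the lift of the closure subcomplex and depends only on $U_i$, $b$ preserves each $\mathbf P_{U_i}$. The Helly property then makes $Q=\bigcap_i\mathbf P_{U_i}$ a nonempty legal-pair lift with $Q\cong F_1\times\cdots\times F_k\times F_0$ and $\Stab_G(Q)\cong G_{U_1}\times\cdots\times G_{U_k}\times G_0$.

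Writing $b=h_1\cdots h_kh_0$, you must eliminate the residual factor $h_0$, which your sketch never mentions. Every $V\in\B(h_0)$ is orthogonal to all $U_i$, since its factors are parallel into an $F_0$-fiber. The $h_i$ act trivially on $\mathcal C V$ by \cite[Lemma~3.3]{OPRAAG}, so $V\in\B(b)$, a contradiction. Hence $h_0$ is elliptic, and it is trivial because $G$ is torsion-free. Likewise, $U_i\in\B(h_i)$ is not automatic: it holds because the $h_j$ with $j\neq i$ act trivially on $\mathcal C U_i$, so $h_i$ acts on $\mathcal C U_i$ as $b$ does.

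For the commutative property, ``moving disjoint particles on disjoint subgraphs'' does not by itself make two elements of $\GBGdisc_n(\Graf)$ commute, since the relative position of the lifts (the double-coset data) matters. The correct argument uses $G_U=\Stab_G(F_U)$ and $G_V=\Stab_G(F_V)$ for factors chosen as distinct coordinates of a common product region $Q$.

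Finally, the $\mathbf F_U$ stabilizers property requires $G_U=\Stab_G(F)$ to act cofinitely on the restricted index set $\frS_U$. It is not enough that $F$ carry some HHG structure of its own.
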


See \Cref{Prop:LCsubcomplexes_of_UD_n,Thm:GBGHHG,Thm:GBGLegalPairXi}.
Using the standard paths, the domains are represented by cosets of discrete graph braid subgroups. Parallelism of legal-pair lifts, and nesting and orthogonality of the corresponding domains, are described in \Cref{lemma:parallelism,Lem:NestingOrthogonality}.
Of particular importance are the nesting-minimal unbounded domains. Their classification is the main input for the rank formula and, in the two-particle case, for the more refined RAAG-embedding obstructions.

\begin{theorem}[Nesting-minimal unbounded domains]\label{Thm:IntroMinimalDomainsGBG}
For a connected finite graph \(\Graf\) and \(2\leq n<\#V(\Graf)\), let \((\GBGdisc_n(\Graf),\frSLP)\) be the HHG structure given in \Cref{Thm:IntroLegalPairFactorSystem}.
A domain is nesting-minimal unbounded if and only if it admits a representative legal-pair lift whose underlying legal pair has exactly
one non-singleton component and that component is either
\begin{enumerate}
\item an embedded cycle carrying a positive number of particles; or
\item a tripod, that is, a subgraph isomorphic to \(\sfK_{1,3}\), carrying two particles.
\end{enumerate}
Moreover, the hyperbolic space associated to every nesting-minimal unbounded domain is a quasi-line.
\end{theorem}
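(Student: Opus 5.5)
The plan is to work entirely with the factor-system description from \Cref{Thm:IntroLegalPairFactorSystem}. A domain corresponds to a parallelism class of legal-pair lifts. Nesting is inclusion of lifts up to parallelism, and the hyperbolic space attached to a domain $U$ is the factored contact graph of the lift $\tilde F_U$. A domain is unbounded precisely when this factored space is infinite diameter, which happens when $\tilde F_U$ is not coarsely covered by proper subfactors and products of such.

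For a lift of $\UD_{\bfn}(\sfL)$ with $\sfL=\sfL_1\sqcup\dots\sqcup\sfL_k$, the lift is the product $\prod_i \widetilde{\UD}_{n_i}(\sfL_i)$. The lifts of the legal pairs obtained by keeping one component and freezing the others are proper subfactors orthogonal to one another. Hence, if two components carry unbounded braid groups, the domain is not nesting-minimal: each factor gives a smaller unbounded domain. If exactly one component $\sfL_1$ is non-singleton with $\GBGdisc_{n_1}(\sfL_1)$ infinite, we reduce to analysing connected legal pairs with one non-singleton component. The first step is therefore this product reduction.

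Next we classify connected $\sfL$ carrying $m\geq1$ particles with infinite $\GBGdisc_m(\sfL)$ (the finite case gives bounded domains).
\begin{enumerate}
\item If $\sfL$ is a tree with no essential vertex, it is a path and the group is trivial.
\item If $\sfL$ contains an embedded cycle $\sfC$ and is not itself a cycle, then the legal pair $(\sfC\sqcup\text{frozen vertices}, \ldots)$ with at least one particle on $\sfC$ gives a proper sub-lift with infinite cyclic fundamental group. It is unbounded, so $\sfL$ is not minimal. The one caveat is arranging that the remaining particles can be parked at vertices off $\sfC$; this uses $m<\#V(\sfL)$ and the legality conditions.
\item If $\sfL$ is a tree with an essential vertex and $m\geq2$, it contains a tripod carrying two particles, with the other particles parked as singletons. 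The two-particle tripod complex $\UD_2(\sfK_{1,3})$ is a hexagon-type circle with $\pi_1\cong\mathbb Z$, so this is again a smaller unbounded domain.
\item Trees with $m=1$ give trivial groups.
\end{enumerate}
So every nesting-minimal unbounded domain is represented as in (1) or (2) of the statement.

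Conversely, a cycle with $m$ particles, or a tripod with two particles, has $\UD$ homotopy equivalent to a circle, so its lift is a quasi-line. Its proper legal-pair sub-lifts come from proper subgraphs of a cycle or tripod, which are forests with too few essential vertices, so they are contractible and hence bounded. The factored contact graph is thus quasi-isometric to the lift itself, namely a quasi-line. This proves unboundedness, minimality and the final quasi-line claim at once.

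The main obstacle is the dictionary between ``unbounded domain'' and ``infinite braid group of the non-singleton component''. The product reduction must also respect parallelism, since sub-lifts must be chosen inside the given lift, using the standard paths and \Cref{Lem:NestingOrthogonality}. I would handle this by showing that for a legal pair whose non-singleton component has finite (hence trivial) $\GBGdisc$, the lift is a bounded cube complex. For an infinite component, I would exhibit the explicit cycle or tripod sub-lift through a vertex of the given lift, then verify via the double-coset parallelism criterion of \Cref{lemma:parallelism} that it is a genuine proper nesting.
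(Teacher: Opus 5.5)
Your proposal is correct in outline but takes a different route from the paper. You prove the converse first, by direct contact-graph geometry. For a cycle- or tripod-type lift \(F\), every proper legal-pair sub-lift of \(F\) lies over the configuration space of a linear forest, so it is a compact contractible copy of uniformly bounded diameter. This must include the combinatorial hyperplanes of \(F\), so that the contact graph itself is quasi-isometric to \(F\). Hence \(\Chat F\) is quasi-isometric to the quasi-line \(F\), which gives unboundedness, minimality and the quasi-line assertion at once; the forward direction then reuses these explicit sub-lifts.

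The paper argues more abstractly. It uses the \(\mathbf F_U\) stabilizers property (\Cref{Prop:LegalPairFUStabilizers}) and the axial--elliptic dichotomy (\Cref{Prop:AxialEllipticDichotomy}) to show that any legal-pair lift whose stabilizer has an infinite-order element contains an unbounded nested domain, and runs its replacement argument on that basis. In the reverse direction it obtains an unbounded domain nested into \(U\) in the same way, then uses the nesting criterion of \Cref{Lem:NestingOrthogonality} to show it equals \(U\). The quasi-line claim comes only afterwards (\Cref{Cor:CoreEqualsExpandedCore}), from the distance formula. Your route is more elementary and avoids the \(\Xi\)-machinery; the paper's avoids computing factored contact graphs and reuses structure it needs anyway.

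One step must be rerouted. Your product reduction relies on the ``dictionary'' that an infinite braid group gives an unbounded domain, and that is false in general. For \(\sfC_1\sqcup\sfC_2\) with one particle on each cycle, the lift is a product of two lines; its contact graph is a join and hence bounded, although the stabilizer is \(\mathbb Z^2\). You also give no argument that a single connected component with infinite braid group yields an unbounded domain.

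What you actually need is only an unbounded domain \emph{nested} into \(U\), and your own sub-lifts supply it:
\begin{enumerate}
\item If \(U=[F]\) is minimal unbounded, then \(F\) is unbounded, so some constrained component \(\sfL_i\) has infinite braid group.
\item Take the cycle or tripod sub-lift \(F'\subseteq F\) with \emph{all} other particles frozen as singletons. This also disposes of extra non-singleton components with finite groups, a case your case split omits.
\item By your converse, \([F']\) is unbounded, so minimality forces \([F']=U\).
\end{enumerate}
This makes both the product reduction and the double-coset properness check unnecessary.

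For parking particles off the tripod you need \(\#V(\sfL_i)\geq m+2\), which follows from \Cref{Lem:InfiniteGBG}. A tree with an essential vertex and \(m\geq2\) is not enough: \(\sfK_{1,3}\) with three particles has trivial braid group.
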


This combines \Cref{Thm:MinimalUnboundedDomainsGBG,Cor:CoreEqualsExpandedCore}; the former gives the equivalent coset formulation in terms of standard representatives (\Cref{Def:StandardRepresentativeLegalPair}) and constrained parts (\Cref{Def:ConstrainedandFree}).

Since the hyperbolic space associated to every nesting-minimal unbounded domain is a quasi-line, the expansion used in the general definition of the expanded core graph adds no additional vertices in this case; see \Cref{Def:ExpandedCoreGraph}. Thus the vertices of \(\EOLP\) are precisely the nesting-minimal unbounded domains. Together with the explicit orthogonality criterion for legal-pair domains, the preceding classification therefore describes \(\EOLP\) in terms of cycle and tripod supports, particle distributions, and particle motions.
To relate this description to the usual graph braid group, we choose a model of \(\Graf\) that is sufficiently subdivided for \(n\). The following proposition compares the non-isolated parts of the expanded core graphs under further subdivision.

\begin{proposition}[Subdivision stability, \Cref{Prop:SubdivisionStabilityExpandedCore}]
Fix \(n\geq2\), and let \(\Graf\) be a connected finite graph that is sufficiently subdivided for \(n\), and let \(\Graf'\) be a further subdivision of \(\Graf\). Let \(\EO\) and \(\EO'\) denote the expanded core graphs of the legal-pair hierarchies on \(\GBGdisc_n(\Graf)\) and \(\GBGdisc_n(\Graf')\), respectively.

The full subgraphs of \(\EO\) and \(\EO'\) spanned by their non-isolated vertices are isomorphic. Consequently, the nontrivial connected components are independent, up to graph isomorphism, of the chosen sufficiently subdivided model.
\end{proposition}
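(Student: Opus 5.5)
Since the further subdivision \(\Graf'\) is obtained by finitely many single-edge subdivisions, it suffices to treat the case where \(\Graf'\) arises from \(\Graf\) by subdividing one edge \(\sfe\), and then induct. Both \(\Graf\) and \(\Graf'\) are sufficiently subdivided for \(n\). Hence the inclusion-induced maps \(\UD_n(\Graf)\simeq\UC_n(\Graf)=\UC_n(\Graf')\simeq\UD_n(\Graf')\) give an isomorphism \(\phi\colon\GBGdisc_n(\Graf)\to\GBGdisc_n(\Graf')\). We fix standard base configurations and standard paths compatibly, so that \(\phi\) sends standard representatives to standard representatives. By \Cref{Thm:IntroMinimalDomainsGBG}, the vertices of \(\EO\) are cosets of the discrete braid subgroups attached to legal pairs with exactly one non-singleton component, namely a cycle with \(k\geq1\) particles or a tripod with two particles. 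The plan is to define a map on these vertices, using the coset descriptions of parallelism (\Cref{lemma:parallelism}) and orthogonality (\Cref{Lem:NestingOrthogonality}).

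Cycle and tripod supports correspond under subdivision. An embedded cycle of \(\Graf\) subdivides to a unique embedded cycle of \(\Graf'\). A tripod in \(\Graf\), or in \(\Graf'\), is determined up to parallelism by its essential centre and three half-edges. The singleton components, the stationary particles, can be slid along the subdivided edge without changing the parallelism class. This is because sufficient subdivision leaves enough room on both sides, so that legal pairs differing only by the position of stationary particles within one component of the complement give parallel lifts. Using this, we define \(\Psi\) on vertices by sending \(g\,\GBGdisc_{\bfn}(\sfL)\) to \(\phi(g)\,\GBGdisc_{\bfn'}(\sfL')\), where \((\sfL',\bfn')\) is the subdivided legal pair with the stationary particles redistributed. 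We then check that the image subgroups agree: \(\phi(\GBGdisc_{\bfn}(\sfL))\) is conjugate, by an explicit standard-path element, to \(\GBGdisc_{\bfn'}(\sfL')\). Finally, by the double-coset parallelism criterion, \(\Psi\) is well defined and injective.

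The main obstacle is that \(\Psi\) need not be surjective. In \(\Graf'\) there may be minimal domains whose stationary-particle configuration has no analogue in \(\Graf\), for instance a new bivalent vertex occupied in a way that changes the component distribution of the complement. This is exactly why the statement restricts to non-isolated vertices. I would show that a vertex of \(\EO'\) with some orthogonal neighbour forces the distribution of the remaining particles to be realizable, up to parallelism, in \(\Graf\). The reason is that an orthogonal minimal domain needs a disjoint cycle or tripod carrying particles, and the orthogonality criterion identifies the pair with a product legal pair. Sufficient subdivision of \(\Graf\) lets one push particles off \(\sfe\) into the complement without changing the particle counts on components. So non-isolated vertices of \(\EO'\) lie in the image of \(\Psi\).

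Next, \(\Psi\) preserves and reflects orthogonality. By \Cref{Lem:NestingOrthogonality}, orthogonality of two legal-pair domains is the existence of representatives whose non-singleton supports are disjoint and which embed in a common legal pair, with a coset compatibility. Disjointness of cycles and tripods is unchanged by subdivision, and the coset condition is transported by \(\phi\). Hence \(\Psi\) restricts to an isomorphism between the full subgraphs spanned by the non-isolated vertices. Connected components with at least one edge consist of non-isolated vertices, so the final claim follows.
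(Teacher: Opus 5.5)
Your outline agrees with the paper's in its first half. You build an isomorphism \(\phi\) through the common space \(\UC_n\), and you transfer the cycle or tripod support together with the particle counts on complementary components. You restrict to non-isolated vertices because subdivision can create new isolated domains (\Cref{Ex:SubdivisionCreatesIsolatedDomain}), and you use a room-counting argument to realize non-isolated domains of \(\Graf'\) in \(\Graf\).

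The gap is in the well-definedness and injectivity of \(\Psi\) and, above all, in the \emph{reflection} of orthogonality. You propose to transport the double-coset and comparison-path conditions of \Cref{lemma:parallelism,Lem:NestingOrthogonality} through \(\phi\). But \(\phi\) is an abstract isomorphism obtained from homotopy equivalences with \(\UC_n\), while the closure subcomplexes in those conditions change under subdivision. Complementary components acquire the new subdivision vertices. Constrained singletons can become free: a singleton filling a one-vertex complementary component becomes free once an edge joining it to the support is subdivided. (Only free singletons can be slid, so your ``sliding'' step already depends on this structure.) Even granting that comparison paths in \(\UD_n(\Graf)\) can be pushed forward, nothing in your sketch pulls a comparison path between the transferred pairs in \(\UD_n(\Graf')\) back to one in \(\UD_n(\Graf)\). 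Such a path may occupy new vertices and run through enlarged closures. Without this you cannot conclude \(U=V\) from \(\Psi(U)=\Psi(V)\), nor \(U\bot V\) from \(\Psi(U)\bot\Psi(V)\). Also, the downstairs orthogonality condition is not disjointness of supports. It is the pair of \(\preceq\)-relations in \Cref{Lem:NestingOrthogonality}\eqref{Item:OrthogonalRelation}, which depend on exactly this complementary structure.

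The missing idea is \Cref{Cor:MinimalDomainStabilizers}. For a nesting-minimal unbounded domain \(U\), one has \(G_U\cong\mathbb Z\), and \(U\) is determined by \(G_U\). For distinct such domains, \(U\bot V\) if and only if \([G_U,G_V]=1\); the nontrivial direction comes from membership in \(\Xi\) and \Cref{Prop:RAAGEmbeddingTheorem}. These characterizations are purely group-theoretic, so any isomorphism transports them. The paper defines \(\Psi(U)\) as the unique domain with \(G_{\Psi(U)}=\phi(G_U)\). Injectivity, and both preservation and reflection of orthogonality, are then immediate. What remains is your room-counting for existence and surjectivity, plus one geometric input: writing \(H=\GBGdisc_{\bfn}(\sfL)\) and \(H'=\GBGdisc_{\bfn'}(\sfL')\), one has \(\phi(H)=aH'a^{-1}\) for some \(a\), because both come from the same cycle motion or tripod exchange in \(\UC_n\).

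This also corrects your formula for \(\Psi\): the image coset is \(\phi(g)aH'\), not \(\phi(g)H'\). Your claim that standard paths can be chosen compatibly so that \(a\) disappears is unsupported. The standard paths for \(\Graf'\) come from a different maximal tree and labeling, and you yourself later assert only conjugacy. Finally, independence of the model needs the fact that any two sufficiently subdivided models have a common further subdivision, which your last sentence omits.
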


The proof identifies non-isolated domains by matching their cyclic subgroups under the isomorphism induced by subdivision. This correspondence need not extend to a bijection of the full vertex sets satisfying the same condition on cyclic subgroups; see \Cref{Ex:SubdivisionCreatesIsolatedDomain}.

The free abelian rank formula in \Cref{Thm:IntroRankGBG} now follows from the classification above. Pairwise orthogonal nesting-minimal unbounded domains have pairwise disjoint cycle and tripod supports, with particle costs \(1\) and \(2\), respectively. Conversely, any such disjoint collection whose total particle cost is at most \(n\) determines a pairwise orthogonal collection of nesting-minimal unbounded domains. The general HHG rank bound gives the upper bound, while the corresponding legal-pair subcomplex gives the matching undistorted free abelian subgroup.

The description of \(\EOLP\) also explains the generalized halo construction in \Cref{Prop:UndistortedRAAGsFromGeneralizedHaloGraphs}. Each supporting cycle \(\ell_v\) determines a nesting-minimal unbounded domain \(U_v\), and the corresponding Artin loop braid is strongly fully supported on \(U_v\). The defining intersection conditions give \(U_v\bot U_w\) if and only if \(\{v,w\}\in E(\Lambda)\).
Thus the domains \(U_v\) realize \(\Lambda\) as an induced subgraph of \(\EOLP\). Since the legal-pair hierarchy belongs to \(\Xi\) by \Cref{Thm:IntroLegalPairFactorSystem}, the general RAAG construction then shows that sufficiently large common powers of the Artin loop braids generate an undistorted subgroup isomorphic to \(\bbA(\Lambda)\). In particular, applying this argument to Sabalka's halo graphs gives the universal undistorted realization stated in \Cref{Cor:IntroEveryRAAGUndistortedGBG}.

We now turn to the obstruction side. We determine explicitly, in terms of \(\Graf\) and \(n\), when the legal-pair hierarchy has rank at most two, and hence when the rank-two RAAG-embedding criterion of \cite{OPRAAG} applies.

\begin{theorem}[Rank-two classification for graph braid groups, \Cref{Thm:RankTwoGBG}]\label{Thm:IntroRankTwoGBG}
Let \(\Graf\) be a connected finite graph and let \(n\geq 2\). The legal-pair hierarchy associated to any sufficiently subdivided model of \(\Graf\) has rank at most \(2\) if and only if one of the following holds:
\begin{enumerate}
\item \(n=2\);
\item \(n=3\), and \(\Graf\) contains no three pairwise disjoint embedded cycles;
\item \(n=4\), and whenever \(\ell_1,\ell_2\subseteq\Graf\) are disjoint embedded cycles, the union \(\ell_1\cup\ell_2\) contains every essential vertex of \(\Graf\); 
\item \(n=5\), and every embedded cycle in \(\Graf\), if one exists, contains all but at most one essential vertex of \(\Graf\);
\item \(n\geq6\), and \(\Graf\) contains at most two essential vertices.
\end{enumerate}
In each of these cases, for every finite simplicial graph \(\Lambda\),
\[\bbA(\Lambda)<\GBG_n(\Graf)\qquad\Longleftrightarrow\qquad \Lambda<\EOLP,\] 
where \(\EOLP\) is the expanded core graph of the legal-pair hierarchy on any chosen sufficiently subdivided model of \(\Graf\).
\end{theorem}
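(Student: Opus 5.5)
The rank of the legal-pair hierarchy is the maximal number of pairwise orthogonal unbounded domains. Every unbounded domain nests a nesting-minimal unbounded one, and orthogonality passes to nested domains. So the rank equals the maximal size of a pairwise orthogonal family of nesting-minimal unbounded domains. By \Cref{Thm:IntroMinimalDomainsGBG} these are represented by legal pairs with a single non-singleton component, which is either a cycle with $k\ge1$ particles or a tripod with two particles.

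The first step is to establish the combinatorial description already sketched for \Cref{Thm:IntroRankGBG}. I would use the orthogonality criterion \Cref{Lem:NestingOrthogonality} to show two such domains are orthogonal if and only if, after choosing suitable representatives, their supports are disjoint and can be realized simultaneously in one legal pair. Cycles then cost at least one particle and tripods two. Consequently the rank is
\[
\max\{p+q : p \text{ disjoint cycles and } q \text{ essential vertices, pairwise disjoint},\ p+2q\le n\}.
\]
The sufficient-subdivision hypothesis is used here: a tripod centered at an essential vertex can be shrunk to avoid any cycle or tripod not containing that vertex. Hence "disjoint tripods" reduces to "distinct essential vertices disjoint from the chosen cycles".

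The second step is the case analysis, which asks when no choice with $p+q=3$ and $p+2q\le n$ exists. The admissible $(p,q)$ with $p+q=3$ are $(3,0),(2,1),(1,2),(0,3)$, with costs $3,4,5,6$ respectively.
\begin{enumerate}
\item For $n=2$ none is affordable.
\item For $n=3$ only three disjoint cycles are affordable.
\item For $n=4$ one adds two disjoint cycles together with an essential vertex off them. Excluding this is exactly condition (3), and condition (3) also rules out three disjoint cycles: given three, take two of them, and the third contains an essential vertex (or, in a subdivided model, one can argue the third cycle meets an essential vertex outside the union, since the graph is connected and not a single cycle).
\item For $n=5$ one excludes one cycle with two essential vertices off it. This is condition (4), and condition (4) implies the cheaper exclusions by a similar argument.
\item For $n\ge6$ one excludes three essential vertices, and having at most two essential vertices already forbids the cycle configurations, because each cycle needs an essential vertex outside the others.
\end{enumerate}
Monotonicity in $n$ explains the nested form of the conditions. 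The degenerate cases, where $\Gamma$ is a path or a cycle and there are no essential vertices, must be checked separately.

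The final assertion follows by invoking \cite[Theorem~1.3]{OPRAAG}. This applies because the hierarchy lies in $\Xi$ by \Cref{Thm:IntroLegalPairFactorSystem} and has rank at most two. It identifies $\mathbb{A}(\Lambda)<\mathbb{B}^{\mathrm{disc}}_n$ with $\Lambda<\widehat{\mathcal{G}}$, and $\mathbb{B}^{\mathrm{disc}}_n\cong\mathbb{B}_n$ for a sufficiently subdivided model. Independence of the model uses \Cref{Prop:SubdivisionStabilityExpandedCore}. Isolated vertices need care, since their number may change under subdivision. I would handle them by noting that $\Lambda$ with isolated vertices embeds whenever $\Lambda$ minus its isolated vertices embeds and $\widehat{\mathcal{G}}$ has enough vertices, which holds because $\widehat{\mathcal{G}}$ is infinite once some unbounded domain exists (free products with $\mathbb{Z}$ embed).

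The main obstacle is the first step. It requires verifying that disjointness of supports really yields orthogonality for some representatives, given the particle-distribution and basepoint subtleties. It also requires showing that the cases (2)–(4) conditions correctly absorb the lower-cost configurations using connectivity and essential vertices.
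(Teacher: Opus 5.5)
Your proposal is correct and is essentially the paper's argument. You reduce to the rank formula and note that rank at most two means no admissible collection with $p+q=3$ exists. You absorb the cheaper configurations into conditions (3)--(5), using that each cycle in such a collection is a proper subgraph of the connected graph $\Graf$ and so contains an essential vertex. You then finish with the rank-two criterion \Cref{Thm:RankTwoCriterionHHG}.

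Your first step, which you flag as the main obstacle, is exactly \Cref{Thm:RankGBG}, and it is already proved along the route you sketch:
\begin{itemize}
\item disjoint supports via \Cref{Lem:NestingOrthogonality}\eqref{Item:OrthogonalRelation};
\item the particle count via the common product region of \Cref{Lem:OrthogonalLegalPairProductRegions};
\item the lower bound via \Cref{Ex:LegalPairProductSubgroups}\eqref{Item:FreeAbelianLegalPair}.
\end{itemize}
So you can simply cite it.

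The only thing to change is your final paragraph. Model independence needs neither \Cref{Prop:SubdivisionStabilityExpandedCore} nor any special treatment of isolated vertices. The rank formula depends only on the cycles and essential vertices of $\Graf$, so \emph{every} sufficiently subdivided model has rank at most two. Applying \Cref{Thm:RankTwoCriterionHHG} to each model separately gives $\bbA(\Lambda)<\GBG_n(\Graf)\Leftrightarrow\Lambda<\EOLP$ for that model. Since the left side does not depend on the model, neither does the right side, isolated vertices included; this is how the paper concludes.

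Your proposed workaround would in fact be wrong if you relied on it, for two reasons:
\begin{itemize}
\item An infinite graph need not contain, alongside a given finite induced subgraph, a vertex adjacent to none of it: $K_{\infty,\infty}$ contains $K_2$ but not $K_2\sqcup K_1$.
\item $\bbA(\Lambda')*\mathbb Z$ need not embed wherever $\bbA(\Lambda')$ does: $\mathbb Z^2<\bbF_2\times\bbF_2$, but $\mathbb Z^2*\mathbb Z$ does not embed there.
\end{itemize}
Moreover, \Cref{Prop:SubdivisionStabilityExpandedCore} only matches the non-isolated parts of the expanded core graphs, so by itself it could not give full model independence. Dropping this detour leaves a complete proof.
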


In the rank-two cases above, the theorem gives a stronger form of model independence: whether a given finite graph \(\Lambda\) occurs as an induced subgraph of \(\EOLP\) is independent of the chosen sufficiently subdivided model.

When \(n=2\), the particle-permutation homomorphism gives a bipartite coloring of \(\EOLP\); see \Cref{Prop:GBG2CoreGraphBipartite}. Together with the induced-subgraph equivalence above, this yields the graph \(2\)-braid obstruction in \Cref{Thm:IntroGBG2Bipartite}.

To obtain the more precise \(P_4\)-criterion, we describe every non-isolated vertex of \(\EOLP\) by a cycle coordinate \((g,\sfC,\sfC^\perp)\), where \(g\in\GBGdisc_2(\Graf)\), \(\sfC\subseteq\Graf\) is an embedded cycle, and \(\sfC^\perp\) is a component of \(\Graf\setminus\sfC\) containing an embedded cycle; see \Cref{Lem:NonisolatedCycleDomains,Def:CycleCoordinates}. The adjacency criterion for these coordinates translates the condition \(P_4<\EOLP\) into the finite graphical condition on \(\Graf\) in \Cref{Thm:IntroGraphicalP4Criterion}; see \Cref{Lem:CycleCoordinateOrthogonality,Prop:P4SourcesGBG2,Lem:P4CycleExtraction}.

\Cref{Fig:IntroGammaToCore} illustrates the case of three pairwise disjoint cycles in \Cref{Thm:IntroGraphicalP4Criterion}.

% \Cref{Fig:IntroGammaToCore} illustrates the case of three pairwise disjoint cycles in \Cref{Thm:IntroGraphicalP4Criterion}. The element \(\sigma\) is supplied by the last assertion of \Cref{Lem:CycleCoordinateOrthogonality}; its nontrivial particle permutation ensures that the two endpoints of the displayed \(P_4\) are distinct.

\begin{figure}[ht]
\centering
\subcaptionbox{The graph \(\Graf\) and its three pairwise disjoint cycles%
  \label{Fig:IntroGamma}}[0.42\textwidth]{%
\begin{tikzpicture}[baseline=-.5ex]
\draw[fill, thick] (0,0) circle (2pt);
\foreach \i in {0,120,240} {
\begin{scope}[rotate=\i]
\draw[lightgray, thick] (0,0) ++(90:1) ++(60:1) -- ++(-1,0);
\draw[fill, thick] (0,0) -- (90:1) circle (2pt) -- ++(60:1) circle (2pt) ++(-1,0) circle (2pt) -- ++(-60:1);
\end{scope}
}
\draw (210:1.5) node {\(\sfC_1\)} (-30:1.5) node {\(\sfC_2\)} (90:1.5) node {\(\sfC_3\)};
\end{tikzpicture}
}
\hfill
\subcaptionbox{An induced \(P_4\) in \(\EOLP\)%
  \label{Fig:IntroCoreGraph}}[0.54\textwidth]{%
\begin{tikzpicture}[scale=1.0,
    wht/.style={circle,draw,thick,fill=white,inner sep=2pt},
    blk/.style={circle,draw,thick,fill=black,inner sep=2pt}]
  \node[wht] (a) at (0,0)    {};
  \node[blk] (b) at (2,0) {};
  \node[wht] (c) at (4,0)  {};
  \node[blk] (d) at (6,0) {};
  \draw[thick] (a) -- (b) -- (c) -- (d);
  \node[below=7pt] at (a) {\((g,\sfC_2,\sfC_2^\perp)\)};
  \node[above=7pt] at (b) {\((g,\sfC_1,\sfC_1^\perp)\)};
  \node[below=7pt] at (c) {\((g,\sfC_3,\sfC_3^\perp)\)};
  \node[above=7pt] at (d) {\((g\sigma,\sfC_2,\sfC_2^\perp)\)};
\end{tikzpicture}}
\caption{From \(\Graf\) to \(\EOLP\), for \(n=2\), where \(\sfC_i^{\perp}=\Graf\setminus\sfC_i\).
Edges record orthogonality, and the two vertex classes come from the particle-permutation homomorphism. The four vertices span an induced \(P_4\); here \(\sigma\) is a braid with nontrivial particle permutation, so it changes the class and the two ends are distinct vertices with the same cycle datum.}
\label{Fig:IntroGammaToCore}
\end{figure}

\subsection{Further questions}\label{Subsection:FurtherQuestions}

The legal-pair hierarchy and our RAAG-embedding results suggest questions at two levels. We first consider the hierarchy itself and further applications of HHG theory, and then turn to questions about the subgroup structure of graph braid groups.

The explicit description of the legal-pair hierarchy suggests that other objects associated to an HHG structure may admit direct combinatorial descriptions.

\begin{question}[The HHS boundary]
Can the HHS boundary of the legal-pair hierarchy be described directly in terms of subgraphs of \(\Graf\), particle distributions, and particle motions? How do the resulting boundaries compare under subdivision? Can such a description be used to characterize Morse elements of graph braid groups?
\end{question}

Our application of HHG theory to generalized halo graphs raises a separate quantitative issue. Applying the general RAAG-embedding theorem for HHGs gives undistortion after passing to sufficiently large powers of the associated Artin loop braids. Sabalka's original halo construction, however, proves injectivity for the subgroup generated by their squares.

\begin{question}[Squares in halo embeddings]
Let \(\Lambda\) be a finite simplicial graph equipped with a proper \(n\)-coloring for \(n\ge 2\), and let \(\Graf\) be a \(\Lambda\)-halo graph with respect to this coloring in the sense of Sabalka.
For each \(v\in V(\Lambda)\), let \(\gamma_v\) be its Artin loop braid. Is the subgroup \[\langle\gamma_v^2\mid v\in V(\Lambda)\rangle<\GBG_n(\Graf)\] 
undistorted? More generally, can one effectively determine an exponent \(d\) for which \(\langle\gamma_v^d\mid v\in V(\Lambda)\rangle<\GBG_n(\Graf)\) is undistorted?
\end{question}

We now turn to questions about RAAG subgroups of graph braid groups that can be formulated independently of the chosen HHG structure. The preceding corollary shows that, for every finite simplicial graph \(\Lambda\), the least particle number needed to realize \(\bbA(\Lambda)\) is at most \(\chi(\Lambda)\). This upper bound is sharp when \(\chi(\Lambda)\leq2\): graph \(1\)-braid groups are free, while \Cref{Thm:IntroGBG2Bipartite}, together with the generalized halo construction, characterizes the RAAGs that embed in graph \(2\)-braid groups when the ambient graph is allowed to vary.

\begin{question}[Particle number and chromatic number]
For a finite simplicial graph \(\Lambda\), is the least integer
\(n\) for which \(\bbA(\Lambda)<\GBG_n(\Graf)\) for some connected finite graph \(\Graf\) equal to \(\chi(\Lambda)\)?
More modestly, is there a function \(f\colon\mathbb N\to\mathbb N\) such that
\[\bbA(\Lambda)<\GBG_n(\Graf)\quad\Longrightarrow\quad\chi(\Lambda)\leq f(n)\]
for all \(n\geq2\), all finite simplicial graphs \(\Lambda\), and all connected finite graphs \(\Graf\)?
\end{question}

The RAAG-embedding problem is also natural algorithmically for an arbitrary number of particles. The rank-two RAAG-embedding criterion makes the two-particle case the first one in which it reduces to a concrete induced-subgraph problem.

\begin{question}[Algorithmic RAAG embeddability]
Given a finite simplicial graph \(\Lambda\) and a connected finite graph \(\Graf\), is there an algorithm that determines whether \(\bbA(\Lambda)<\GBG_2(\Graf)\)? Equivalently, can one decide whether \(\Lambda\) occurs as an induced subgraph of the expanded core graph of a sufficiently subdivided model of \(\Graf\)? More generally, does such an algorithm exist for each fixed \(n\geq2\)?
\end{question}

For \(r\geq3\), let \(C_r\) denote the cycle graph on \(r\) vertices. The graphical criterion in \Cref{Thm:IntroGraphicalP4Criterion} answers the algorithmic question when \(\Lambda=P_4\), while \Cref{Thm:IntroGBG2Bipartite} rules out every odd cycle graph. The first remaining case is \(C_4\), for which \(\bbA(C_4)\cong\bbF_2\times\bbF_2\).

\begin{question}[Even-cycle criteria]
For each \(m\geq2\), is there a finite graphical criterion on
\(\Graf\) characterizing when
\(\bbA(C_{2m})<\GBG_2(\Graf)\)?
\end{question}

\subsection{Outline of the paper}

In \Cref{Section:Preliminaries}, we recall the necessary background on factor systems, hierarchical hyperbolicity, RAAG embeddings, and graph braid groups. In \Cref{Section:LegalPairFactorSystem}, we construct the legal-pair factor system. In \Cref{Section:LegalPairHierarchy}, we describe the resulting hierarchy and prove that it belongs to \(\Xi\). In \Cref{Section:ApplicationtoGBG}, we classify the nesting-minimal unbounded domains, establish subdivision stability, compute the rank of the legal-pair hierarchy, and construct undistorted RAAG subgroups using generalized halo graphs.
In \Cref{Section:Graph2BraidGroups}, we specialize to graph \(2\)-braid groups and analyze their RAAG subgroups using cycle coordinates for the expanded core graph.

\subsection*{Acknowledgements}
The first and third authors were supported by Samsung Science and Technology Foundation under Project Number SSTF-BA2022-03.

\section{Preliminaries}\label{Section:Preliminaries}

\begin{convention}
\begin{enumerate}
\item
Unless explicitly stated otherwise, all graphs are assumed to be simplicial, and ambient graphs for graph braid groups are assumed to have at least one edge. For topological graph braid groups, the simpliciality assumption entails no loss of generality, since every finite graph admits a simplicial subdivision with the same underlying topological space. For discrete configuration spaces, however, the chosen simplicial subdivision is part of the cubical model. We adopt the simplicial convention throughout for uniformity with the defining graphs of RAAGs considered in this paper.

\item
Ambient graphs for graph braid groups and their subgraphs are typeset in sans serif, as in \(\Graf\) and \(\sfL\); labeled vertices and edges are similarly
written as \(\sfv\) and \(\sfe\). In the cycle-coordinate and \(P_4\)-configuration arguments, \(\sfC\) and \(\sfC^\perp\) denote distinguished cycles and selected complementary components, while other cycles and tripods are denoted by Greek letters such as \(\ell\) and \(\tau\). Defining graphs of RAAGs are written in ordinary italic type, as in \(\Lambda\).

Configuration spaces are typeset in upright Roman letters, as in \(\UD_n(\Graf)\) and \(\UC_n(\Graf)\). Graph braid groups, RAAGs, and free groups are typeset in blackboard bold, as in \(\GBG_n(\Graf)\), \(\bbA(\Lambda)\), and \(\bbF_r\), while generic groups and subgroups are denoted by italic letters such as \(G\) and \(H\).

\item
For \(r,s\geq1\), the standard symbols \(K_r\) and \(K_{r,s}\) denote the complete graph on \(r\) vertices and the complete bipartite graph with parts of sizes \(r\) and \(s\), respectively. We write \(\overline K_r\) for the discrete graph on \(r\) vertices. When one of these graphs occurs as an ambient or supporting graph for a graph configuration space, we use the sans-serif versions \(\sfK_r\) and \(\sfK_{r,s}\). 
\item For cubical maps between nonpositively curved cube complexes, we use the standard full-link criterion: a cubical map is a local isometry if it is locally injective and the induced map on each vertex link is injective with image a full subcomplex.
\end{enumerate}
\end{convention}

\subsection{\texorpdfstring{\(\CAT(0)\)}{CAT(0)} cube complexes and factor systems}\label{Subsection:FactorSystems}

We follow the standard cubical conventions of \cite[\S2]{BHS17I} and the references therein. In particular, we use the notions of hyperplanes, carriers, combinatorial hyperplanes, convex subcomplexes, gates, and parallelism. We recall only the definitions and properties needed for the factor-system construction below.

We will repeatedly use the following standard consequence of covering-space theory and \cite[Proposition~II.4.14]{BH}.

\begin{lemma}[Lifts of locally convex subcomplexes]\label{Lem:IntersectionOfLifts}
Let \(X\) be a compact nonpositively curved cube complex, and let \(p\colon\widetilde X\rightarrow X\) be its universal covering map. If \(A\subseteq X\) is a connected locally convex subcomplex, i.e., \(A\) is connected and the embedding \(A\hookrightarrow X\) is a local isometry, then every lift of \(A\) is convex in \(\widetilde X\).

Moreover, if \(A,B\subseteq X\) are connected locally convex subcomplexes and lifts \(\widetilde A,\widetilde B\subseteq \widetilde X\) have nonempty intersection, then \(\widetilde A\cap\widetilde B\) is a lift of a connected component of \(A\cap B\).
\end{lemma}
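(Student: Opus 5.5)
The first assertion is the standard fact that a local isometry into a nonpositively curved cube complex lifts to a convex embedding of universal covers. We take the inclusion \(\iota\colon A\hookrightarrow X\), which is a local isometry by hypothesis. With the piecewise Euclidean metric, \(A\) is a complete geodesic locally \(\CAT(0)\) space, since it is a finite-dimensional cube complex and locally convex in \(X\). By \cite[Proposition~II.4.14]{BH}, \(\iota\) is \(\pi_1\)-injective and lifts to an isometric embedding \(\widetilde\iota\colon\widetilde A\to\widetilde X\) with convex image. Now let \(\widetilde A_0\) be a lift of \(A\), meaning a connected component of \(p^{-1}(A)\). The restriction \(p|_{\widetilde A_0}\colon\widetilde A_0\to A\) is a covering map whose characteristic subgroup is the image of \(\pi_1(\widetilde X)=1\). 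It is therefore the universal cover of \(A\), so \(\widetilde A_0\) is a translate of \(\widetilde\iota(\widetilde A)\) under a deck transformation. Deck transformations are isometries, so \(\widetilde A_0\) is convex. Since it is a subcomplex and is convex in the \(\CAT(0)\) metric, it is also combinatorially convex.

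For the intersection statement, take lifts \(\widetilde A,\widetilde B\) with \(\widetilde A\cap\widetilde B\neq\varnothing\). Both are convex, so \(\widetilde A\cap\widetilde B\) is convex, hence connected, and it is a subcomplex. Its image lies in \(A\cap B\) and is connected, so it lies in a single component \(C\) of \(A\cap B\). Let \(\widetilde C\) be the component of \(p^{-1}(C)\) containing \(\widetilde A\cap\widetilde B\). The inclusion \(\widetilde A\cap\widetilde B\subseteq\widetilde C\) is then immediate.

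For the reverse inclusion, \(\widetilde C\) is connected and lies in \(p^{-1}(A)\), so it lies in a single component of \(p^{-1}(A)\). That component meets \(\widetilde A\cap\widetilde B\), so it is \(\widetilde A\). By the same argument \(\widetilde C\subseteq\widetilde B\). Hence \(\widetilde A\cap\widetilde B=\widetilde C\), which is a lift of the component \(C\), as claimed.

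The only point needing care is the first step. We must check that a connected locally convex subcomplex of a compact nonpositively curved cube complex satisfies the hypotheses of \cite[Proposition~II.4.14]{BH}, namely completeness, the geodesic property, and local \(\CAT(0)\)-ness, and that combinatorial local convexity given by the full-link criterion matches metric local convexity. Both follow from Gromov's link condition and finite dimensionality. Everything else is elementary covering-space theory.
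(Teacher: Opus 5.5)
Your argument is correct and matches the paper, which gives no separate proof but treats the lemma as a standard consequence of \cite[Proposition~II.4.14]{BH} and covering-space theory. That is exactly what you carry out, including the connectedness argument identifying \(\widetilde A\cap\widetilde B\) with a component of \(p^{-1}(C)\). One wording fix: the characteristic subgroup of \(p|_{\widetilde A_0}\colon\widetilde A_0\to A\) is \(\ker\bigl(\pi_1(A)\to\pi_1(X)\bigr)\), not ``the image of \(\pi_1(\widetilde X)\)'', and it is trivial precisely because of the \(\pi_1\)-injectivity you quoted one sentence earlier.
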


Let \(X\) be a \(\CAT(0)\) cube complex. For a nonempty convex subcomplex \(K\subseteq X\) and a vertex \(x\in X^{(0)}\), there is a unique closest vertex \(\frg_K(x)\in K\), called the \emph{gate of \(x\) in \(K\)}.
For a subcomplex \(Y\subseteq X\), the \emph{gate projection} of \(Y\) to \(K\), denoted by \(\frg_K(Y)\), is the subcomplex spanned by the vertices \(\frg_K(y)\), where \(y\in Y^{(0)}\); if \(Y\) is convex, then \(\frg_K(Y)\) is also convex. 
Two convex subcomplexes \(Y_1,Y_2\subseteq X\) are called \emph{parallel}, written \(Y_1\parallel Y_2\), if they are crossed by the same set of hyperplanes. By \cite[Lemma~2.4]{BHS17I}, parallel convex subcomplexes occur as the boundary fibers of a cubical product region.

\begin{definition}[Factor system]\label{Def:factor system}
Let \(X\) be a \(\CAT(0)\) cube complex. A collection \(\frF\) of subcomplexes of \(X\) is called a \emph{factor system} if it satisfies the following conditions:
\begin{enumerate}
\item\label{Item:Elements_of_FS}
Every element of \(\frF\) is a nonempty convex subcomplex of \(X\), and \(X\in\frF\).
\item\label{Item:CombinatorialHyperplane}
Every nontrivial (that is, non-singleton) convex subcomplex parallel to a combinatorial hyperplane of \(X\) belongs to \(\frF\).
\item\label{Item:Delta}
There exists \(\Delta\geq 1\) such that every vertex of \(X\) belongs to at most \(\Delta\) elements of \(\frF\).
\item\label{Item:Projections}
There exists \(\xi\geq 0\) such that, for every \(F,F'\in\frF\), either \(\frg_F(F')\in\frF\), or \(\diam\bigl(\frg_F(F')\bigr)<\xi\).
\end{enumerate}
\end{definition}

For any cube complex \(Y\), its \emph{crossing graph} \(\mathcal C_0(Y)\) and \emph{contact graph} \(\mathcal C(Y)\) both have the hyperplanes of \(Y\) as their vertices. Two hyperplanes are adjacent in \(\mathcal C_0(Y)\) if they cross, and they are adjacent in \(\mathcal C(Y)\) if their carriers intersect. Thus \(\mathcal C_0(Y)\) is a spanning subgraph of \(\mathcal C(Y)\).

\begin{definition}[Factored contact graph]
Let \(X\) be a \(\CAT(0)\) cube complex with a factor system \(\frF\), and let \(F\in\frF\). The \emph{factored contact graph} \(\Chat F\) is obtained from the contact graph of \(F\) as follows. For each parallelism class of \(F'\in\frF\) which is parallel to a proper subcomplex of \(F\) and which is either parallel to a combinatorial hyperplane or has diameter at least \(\xi\), one adds a cone vertex joined to every vertex corresponding to a hyperplane crossing \(F'\).
\end{definition}

\begin{remark}
If \Cref{Def:factor system}\eqref{Item:Projections} holds for some \(\xi_0\geq0\), then it also holds for every \(\xi\geq\xi_0\). Hence, when passing to the associated HHG structure, we enlarge \(\xi\), if necessary, so that \(\xi\geq1\), and, when \(X\) is not a point, discard singleton factors; compare \cite[Remark~8.15]{BHS17I} and \cite[Definition~10.10]{DHS17}. The resulting collection is again a factor system: the first three axioms are preserved, and a gate projection removed in this process is a singleton and hence has diameter \(0<\xi\).

We always use this reduced factor system in the associated HHS construction. This removal ensures that singleton factors do not determine hierarchical domains or contribute cone vertices to factored contact graphs.
\end{remark}

We will use both the hyperbolicity of \(\Chat F\) and the fact that every factor coned off in its construction has bounded image. A factor system on a \(\CAT(0)\) cube complex determines an HHS structure; if a group \(G\) acts properly and cocompactly while preserving the factor system, this gives an HHG structure on \(G\) \cite{BHS17I,HS20}.
In particular, the universal cover of a compact special cube complex admits a deck-invariant factor system \cite[Proposition~B and Remark~13.2]{BHS17I}. Hence, if \(X\) is a compact special cube complex, then \(\pi_1 X\) admits an HHG structure. We stress that the legal-pair factor system constructed below is a specific factor system adapted to graph braid groups; its factor-system axioms and the additional structural properties needed below will be verified directly.

\subsection{Hierarchical terminology and the class \texorpdfstring{\(\Xi\)}{Xi}}\label{Subsection:HHG}

All hierarchical structures used in this paper arise from the factor-system construction recalled above. We therefore do not review the full HHS axioms, and record only the terminology and structural results needed below. We refer to \cite{BHS19II,DHS17} for hierarchically hyperbolic spaces and groups.

Let \((G,\frS)\) be a hierarchically hyperbolic group (HHG). A domain \(U\in\frS\) is called \emph{unbounded} if \(\diam(\mathcal C U)=\infty\).
For \(g\in G\), its \emph{bigset} is
\[\B(g)=\left\{ U\in\frS \mid \diam_{\mathcal C U}\bigl(\pi_U(\langle g\rangle x)\bigr)=\infty \right\},\]
where \(x\in G\) is any basepoint. It follows immediately from the definition that 
\[\B(g^k)=\B(g)\qquad\text{for every nonzero integer }k.\]
The elements of \(\B(g)\) are called the \emph{active domains} of \(g\). We say that \(g\) is \emph{elliptic} if its orbits in \(G\) are bounded, and \emph{axial} if the orbit map \(n\mapsto g^n x\) is a quasi-isometric embedding for some \(x\in G\).

The results of \cite[Proposition~6.4 and Lemma~6.7]{DHS17}, \cite[Theorem~3.1]{DHS20}, and \cite[Lemma~1.21]{AB23} give the axial--elliptic dichotomy and the corresponding behavior on active domains. The precise uniform formulation below was recorded in \cite[Proposition~2.19]{OP25} and recalled in \cite[Proposition~2.7]{OPRAAG}.

\begin{proposition}[Axial--elliptic dichotomy]\label{Prop:AxialEllipticDichotomy}
An element \(g\in G\) is elliptic if and only if \(\B(g)=\varnothing\), if and only if \(g\) has finite order.

If \(g\) has infinite order, then \(g\) is axial. Moreover,
\begin{itemize}
\item
\(g\) permutes \(\B(g)\), which is a nonempty finite pairwise orthogonal subset of \(\frS\);
\item
there exists \(M=M(\frS)>0\) such that, for every \(U\in\B(g)\), the element \(g^M\) fixes \(U\) and acts loxodromically on \(\mathcal C U\).
\end{itemize}
\end{proposition}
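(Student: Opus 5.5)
The plan is to assemble the statement from the cited structural results, using the distance formula together with finiteness of the \(G\)-action on \(\frS\). First I would prove the equivalence "elliptic \(\iff\) finite order". Since \(G\) acts on itself properly and cocompactly, a bounded orbit \(\langle g\rangle x\) meets only finitely many elements of a ball. Hence \(g^a x=g^b x\) for some \(a\neq b\), so \(g\) has finite order; the converse is trivial. For "elliptic \(\iff\B(g)=\varnothing\)", one direction is immediate: \(\pi_U\) is uniformly coarsely Lipschitz, so bounded orbits have bounded projections.

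The reverse direction is the substantive point. I would invoke the classification of \cite[Proposition~6.4]{DHS17}. Because \(\frS\) has finitely many \(G\)-orbits and bounded complexity, some power \(g^{M}\), with \(M\) depending only on \(\frS\), fixes every domain of \(\B(g)\); this uniform exponent is the content of \cite[Theorem~3.1]{DHS20} and \cite[Lemma~1.21]{AB23}. Each fixed domain \(U\) carries an action of \(g^M\) on the hyperbolic space \(\mathcal C U\). Acylindricity-type control, specifically the absence of parabolic isometries for HHG actions, shows that this action is either elliptic or loxodromic. If \(\B(g)=\varnothing\), all projections of \(\langle g\rangle x\) are bounded, and the uniformity just described makes the bound uniform. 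The distance formula with a large threshold then shows that the orbit is bounded. If \(\B(g)\neq\varnothing\), \(g^M\) is loxodromic on some \(\mathcal C U\), so \(n\mapsto g^{n}x\) is a quasi-isometric embedding. This follows because \(\pi_U\) is coarsely Lipschitz and \(d_{\mathcal C U}(\pi_U(x),\pi_U(g^{Mn}x))\) grows linearly. This proves that \(g\) is axial.

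For the bulleted properties, \(g\) permutes \(\B(g)\) because \(\pi_{gU}(gy)=g\pi_U(y)\) and \(\langle g\rangle\) preserves its own orbit. Pairwise orthogonality is \cite[Lemma~6.7]{DHS17}. The idea is that two active domains that are transverse or nested would both have unbounded projections of a single quasi-geodesic orbit, and this contradicts the consistency and bounded geodesic image axioms. Finiteness then follows because pairwise orthogonal families have size bounded by the complexity.

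I expect the main obstacle to be the uniformity of \(M\), i.e.\ that the exponent depends only on \(\frS\) and not on \(g\). I would handle it by quoting \cite[Proposition~2.19]{OP25}, which records exactly this uniform formulation. If a direct argument were needed, I would bound the size of \(\B(g)\) by the complexity, so that \(g^{k!}\) fixes every active domain for \(k\) at most the complexity. I would then apply the uniform loxodromic-power statement of \cite[Theorem~3.1]{DHS20} to the finitely many orbit types of domains.
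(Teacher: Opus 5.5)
Your proposal follows the paper's route: the paper gives no independent argument and simply assembles the statement from \cite[Proposition~6.4 and Lemma~6.7]{DHS17}, \cite[Theorem~3.1]{DHS20} and \cite[Lemma~1.21]{AB23}, quoting the uniform version from \cite[Proposition~2.19]{OP25}. One caution: your gloss on \(\B(g)=\varnothing\Rightarrow g\) elliptic is not a valid argument on its own, because the uniformity of \(M\) is vacuous when \(\B(g)=\varnothing\), and boundedness of each \(\pi_U(\langle g\rangle x)\) separately gives no bound uniform over the infinitely many \(U\in\frS\), which is what the distance formula would need; this direction, together with ruling out parabolic actions on \(\mathcal C U\), is the real content you must import from the cited results, whereas the uniformity of \(M\) is the easy part, essentially by your own \(k!\) argument.
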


We next recall the product-region terminology used in the definition of the class \(\Xi\). Let \((X,\frS)\) be an HHS. For each domain \(U\in\frS\), there is a coarsely defined product map
\[\phi_U\colon\mathbf F_U\times\mathbf E_U\longrightarrow X\]
whose image is the standard product region \(\mathbf P_U\), where \((\mathbf F_U,\frS_U)\) and \((\mathbf E_U,\frS_U^\perp)\) are the standard nesting and orthogonality factors associated to \(U\), respectively; see \cite[\S5]{BHS19II}.

When the HHS structure arises from a factor system on a \(\CAT(0)\) cube complex \(X\), we use the cubical model of \cite[Remark~13.5]{BHS17I}. Namely, after choosing a factor \(F\in U\), we identify the standard nesting factor \(\mathbf F_U\) with \(F\). The standard orthogonality factor \(\mathbf E_U\) is modeled by the convex cube complex parametrizing the parallel copies of \(F\), and the product map is a cubical isometric embedding with convex image. In this model, nesting can be witnessed by parallel representatives \(F_U\in U\) and \(F_V\in V\) with \(F_U\subseteq F_V\), while orthogonality can be witnessed by representatives occurring as distinct factors of a common cubical product region; see \cite[\S8.3]{BHS17I}. Thus \(\mathbf P_U\) is realized as a convex cubical product subcomplex of \(X\).

We use the metric orthogonal stabilizers introduced in \cite[Definition~3.1]{AB23} and the strengthened \(\mathbf F_U\) stabilizers property introduced in \cite{OPRAAG}.

\begin{definition}[Metric orthogonal stabilizers and strong support]\label{Def:OurGUandFU}
For \(U\in\frS\), let \(G_U\leq\Stab_G(U)\) be the subgroup consisting of the elements which stabilize every \(\mathbf F_U\)-fiber \(\phi_U(\mathbf F_U\times\{e\})\) for \(e\in\mathbf E_U\).
Equivalently, \(G_U\) is the kernel of the induced action of \(\Stab_G(U)\) on \(\mathbf E_U\), viewed as a metric space.
We call \(G_U\) the \emph{metric orthogonal stabilizer} of \(U\). An axial element \(g\in G\) is said to be \emph{strongly fully supported on} an unbounded domain \(U\in\frS\) if \(g\in G_U\) and \(U\in\B(g)\).
\end{definition}

By \cite[Lemma~3.3]{OPRAAG}, if \(g\in G_U\) and \(V\bot U\) is unbounded, then \(g\) fixes \(V\) and acts trivially on \(\mathcal C V\). In particular, if \(g\) is strongly fully supported on \(U\), then \(\B(g)=\{U\}\).

The class needed for the obstruction results is the following. Notice that clean containers are not part of its definition; clean containers define the subclass \(\Xi_{\mathrm{cc}}\) in \cite{OPRAAG}.

\begin{definition}[The class \(\Xi\)]\label{Def:Xiclass}
We say that an HHG \((G,\frS)\) satisfies the \emph{\(\mathbf F_U\) stabilizers property} if, for every \(U\in\frS\), the natural restricted action \(G_U\curvearrowright(\mathbf F_U,\frS_U)\) makes \(G_U\) an HHG with underlying HHS \((\mathbf F_U,\frS_U)\).

The class \(\Xi\) of HHGs introduced in \cite[Definition~3.7]{OPRAAG} is the collection of HHGs \((G,\frS)\) satisfying the \(\mathbf F_U\) stabilizers property and the following two additional properties:
\begin{enumerate}
\item \emph{Orthogonal decomposition property.} For every infinite-order element \(g\in G\), writing \(\B(g)=\{U_1,\dots,U_k\}\), there exist \(N>0\) and axial elements \(h_1,\dots,h_k\in G\) such that
\[g^N=h_1\cdots h_k,\]
where \(h_i\) is strongly fully supported on \(U_i\) for every \(i\).

\item \emph{Commutative property.} Any two axial elements strongly fully supported on orthogonal unbounded domains commute.
\end{enumerate}
\end{definition}

We refer to \cite[\S3]{OPRAAG} for the development of these properties. In particular, the one-domain HHG structures on hyperbolic groups and the rich-family HHG structures on compact special groups belong to \(\Xi\); see \cite[Proposition~3.9]{OPRAAG}. The legal-pair hierarchy constructed below is not assumed to be one of these rich-family structures, so its membership in \(\Xi\) will be verified separately in \Cref{Thm:GBGLegalPairXi}.

\subsection{RAAG embeddings and the expanded core graph}
\label{Subsection:RAAGEmbeddings}

A \emph{right-angled Artin group} is the group associated to a finite graph \(\Lambda\), defined by
\[\bbA(\Lambda)=\langle v\in V(\Lambda)\mid [v,w]=1\text{ whenever }\{v,w\}\in E(\Lambda)\rangle.\]
The graph \(\Lambda\) is called the \emph{defining graph} of \(\bbA(\Lambda)\). 

We first recall geometric irredundancy and the construction theorem for HHGs in the class \(\Xi\) used below.

Let \((G,\frS)\) be an HHG. A finite collection of axial elements \(g_1,\dots,g_m\in G\) is \emph{geometrically irredundant} if, for any distinct \(g_i,g_j\), either \(\B(g_i)\neq\B(g_j)\), or there exists \(U\in\B(g_i)\cap\B(g_j)\) such that the limit sets of \(g_i\) and \(g_j\) in \(\partial\mathcal C U\) are disjoint.

\begin{proposition}[Undistorted RAAGs in \(\Xi\), {\cite[Proposition~3.13]{OPRAAG}}]\label{Prop:RAAGEmbeddingTheorem}
Let \((G,\frS)\in\Xi\), and let \(g_1,\dots,g_m\in G\) be a geometrically irredundant collection of axial elements. Suppose that each \(g_i\) is strongly fully supported on an unbounded domain \(U_i\in\frS\). Then there exists \(D>0\) such that, for every integer \(d\geq D\), the subgroup \(\langle g_1^d,\dots,g_m^d\rangle<G\) is an undistorted subgroup isomorphic to \(\bbA(\Lambda)\), where \(\Lambda\) has vertex set \(\{v_1,\dots,v_m\}\), with \(v_i\) corresponding to \(g_i\), and its edges are determined by
\[\{v_i,v_j\}\in E(\Lambda)\quad\Longleftrightarrow\quad U_i\bot U_j \qquad\text{for all distinct \(i,j\).}\]
\end{proposition}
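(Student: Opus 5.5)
The plan is to first upgrade the generators so that the commutation relations hold, and then obtain injectivity together with undistortion from a single lower bound on distances in $G$, proved by a hierarchical ping-pong argument.

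\textbf{Normalization and the homomorphism.} Replace every $g_i$ by a fixed power $g_i^{M}$, with $M=M(\frS)$ from \Cref{Prop:AxialEllipticDichotomy}. After this, each $g_i$ fixes $U_i$ and acts loxodromically on $\mathcal C U_i$. This replacement preserves three things:
\begin{itemize}
\item membership in $G_{U_i}$;
\item geometric irredundancy, since limit sets of powers are unchanged;
\item the equality $\B(g_i)=\{U_i\}$, by \cite[Lemma~3.3]{OPRAAG}.
\end{itemize}
By the commutative property in \Cref{Def:Xiclass}, $g_i$ and $g_j$ commute whenever $U_i\bot U_j$. Hence, for every $d$, the assignment $v_i\mapsto g_i^d$ extends to a homomorphism $\Phi_d\colon\bbA(\Lambda)\to G$. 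The remaining task is to show that $\Phi_d$ is a quasi-isometric embedding for all large $d$; this gives injectivity as well as undistortion.

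\textbf{Pairwise ping-pong.} Next I record how two non-orthogonal generators interact. For each non-orthogonal pair $(i,j)$ there are three cases.
\begin{enumerate}
\item If $U_i=U_j$, then $\B(g_i)=\B(g_j)$, so geometric irredundancy says the two limit sets in $\partial\mathcal C U_i$ are disjoint. Classical ping-pong in the hyperbolic space $\mathcal C U_i$ then gives neighbourhoods $N_i^{\pm}$ of the limit points such that large powers of $g_i$ move everything outside $N_i^{\mp}$ into $N_i^{\pm}$.
\item If $U_i\pitchfork U_j$, I use the Behrstock inequality. A large power of $g_i$ moves $\rho^{U_j}_{U_i}$ far in $\mathcal C U_i$. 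Hence points that have made progress in $\mathcal C U_i$ project close to $\rho^{U_i}_{U_j}$ in $\mathcal C U_j$.
\item If $U_i\sqsubsetneq U_j$, then $g_i\in G_{U_i}$ moves the whole standard product region of $U_i$ only within a bounded neighbourhood of $\rho^{U_i}_{U_j}$ in $\mathcal C U_j$. Meanwhile $g_j^d$ translates far along $\mathcal C U_j$. The bounded geodesic image theorem then plays the role of ping-pong.
\end{enumerate}
In every case the constants depend only on $\frS$ and the finitely many $g_i$. So a single threshold $D$ can be chosen that beats all of them.

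\textbf{Lower bound along normal forms.} Take $w\in\bbA(\Lambda)$ and write it in a reduced syllable normal form
\[w=v_{i_1}^{k_1}\cdots v_{i_r}^{k_r},\]
for instance the Cartier--Foata normal form, grouping commuting syllables. Set $h_s=g_{i_1}^{dk_1}\cdots g_{i_{s-1}}^{dk_{s-1}}$ and consider the translated domains $h_s U_{i_s}$. By induction on $r$, using the three cases above, I will show that each syllable contributes projection distance at least $c\,d\,|k_s|$ in $\mathcal C(h_sU_{i_s})$ between the basepoint $x$ and $\Phi_d(w)x$. This is the HHG analogue of the usual argument for RAAGs generated by pseudo-Anosovs. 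The distance formula then gives
\[d_G\bigl(x,\Phi_d(w)x\bigr)\;\succeq\;\sum_s d|k_s|\;\asymp\;d\cdot|w|.\]
The reverse inequality is trivial, so $\Phi_d$ is a quasi-isometric embedding.

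\textbf{Main obstacle.} The hard step is this induction. I need to show that progress made in earlier translated domains is not undone by later syllables. The difficulty is the case where two translates $h_sU_{i_s}$ and $h_tU_{i_t}$ with $s<t$ coincide or are nested, but the intervening syllables do not all commute with them.

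I would handle this with a "no backtracking" lemma. The claim is that, for a reduced normal form, consecutive relevant domains are either transverse or properly nested in an ordered way. This uses reducedness: any two syllables with the same or nested domain are separated by a syllable whose domain is non-orthogonal to them. That separating syllable drives the projection near a $\rho$-point or into the opposite ping-pong neighbourhood. One can then apply the Behrstock inequality and bounded geodesic image repeatedly, as in the construction of hierarchy paths.
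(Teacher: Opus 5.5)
The paper gives no proof of this proposition; it is quoted from \cite[Proposition~3.13]{OPRAAG}, so your outline has to stand on its own. The first half is sound. The commutative property makes \(v_i\mapsto g_i^d\) a homomorphism \(\Phi_d\), and a lower bound \(d_G(x,\Phi_d(w)x)\succeq|w|\) would give injectivity and undistortion at once. Your normalization, however, is unnecessary and costs you something as written. After replacing \(g_i\) by \(g_i^M\), you only obtain the conclusion for exponents divisible by \(M\), not for every \(d\geq D\). Since \(g_i\in G_{U_i}\leq\Stab_G(U_i)\) already fixes \(U_i\), and \(g_i^M\) is loxodromic on \(\mathcal C U_i\), so is \(g_i\) itself.

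The genuine gap is the lower bound, which is the entire content of the proposition. You reduce it to a no-backtracking lemma that is neither proved nor correctly formulated.

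First, the translated domains \(h_sU_{i_s}\) need not be distinct, so you cannot obtain \(\sum_s d|k_s|\) by adding one distance-formula term per syllable. For example, let \(U_j\) be the maximal domain (so \(G_{U_j}=G\)) and \(U_i\sqsubsetneq U_j\). Then in \(v_j^{a}v_i^{b}v_j^{c}\cdots\) every \(v_j\)-syllable contributes to the \emph{same} space \(\mathcal C U_j\). You must show these contributions accumulate there. That requires a bounded-geodesic-image argument showing that each intervening \(g_i^{db}\) acts as a ``turn'' at the corresponding translate of \(\rho^{U_i}_{U_j}\). Likewise, equal domains require a quasi-geodesic argument in one hyperbolic space along the whole word, not just pairwise ping-pong. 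Conversely, where translates ought to be distinct (e.g.\ if \(U_j\in G\cdot U_i\)), that distinctness must itself be proved.

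Second, the lemma you propose is false as stated. Reducedness only guarantees that two occurrences of the \emph{same} generator are separated by a non-commuting syllable. Distinct generators with equal or nested domains may be adjacent in a reduced word, e.g.\ \(v_iv_j\) with \(U_i=U_j\) or \(U_i\sqsubsetneq U_j\). So the claimed separating syllable need not exist, and in the equal case consecutive domains are neither transverse nor properly nested. Nested domains also carry no Behrstock time-order, so the hierarchy-path ordering you appeal to is unavailable. This is why Clay--Leininger--Mangahas assume supports are pairwise disjoint or overlapping.

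What is missing is a single inductive invariant: a ping-pong region for \(\Phi_d(w)x\) in the relevant translate of the last syllable's domain, preserved under multiplication by any syllable not commuting with it. It must treat the equal, transverse and nested cases together and handle commuting blocks. Your outline stops exactly where this difficulty begins.
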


The expanded core graph packages the domains used in this construction.

\begin{definition}[Expanded core graph]\label{Def:ExpandedCoreGraph}
Let \(\frS^\infty=
\{U\in\frS\mid \diam(\mathcal C U)=\infty\}\). An element \(U\in\frS^\infty\) is called \emph{nesting-minimal unbounded} if it is minimal in the poset \((\frS^\infty,\sqsubseteq)\); equivalently, whenever \(U'\sqsubseteq U\) and \(\mathcal C U'\) is unbounded, one has \(U'=U\).

Let \(\frG\subseteq\frS\) be the set of nesting-minimal unbounded domains. The \emph{core graph} \(\mathcal G^{\frS}\) has vertex set \(\frG\), with two vertices adjacent precisely when the corresponding domains are orthogonal.

For each \(U\in\frG\), let \(V_U\) consist of one vertex if \(\mathcal C U\) is a quasi-line, and countably many vertices otherwise. The \emph{expanded core graph} \(\EOW\), or simply \(\EO\) when \(\frS\) is clear, is defined by
\[V(\EOW)=\bigsqcup_{U\in\frG}V_U,\]
where every vertex of \(V_U\) is adjacent to every vertex of \(V_{U'}\) if and only if \(U\bot U'\).
\end{definition}

By \cite[Lemma~4.4]{OPRAAG} and \Cref{Prop:RAAGEmbeddingTheorem}, every finite induced subgraph \(\Lambda<\EO\) determines an undistorted subgroup of \(G\) isomorphic to \(\bbA(\Lambda)\).

Recall that the \emph{rank} of an HHS is the maximal cardinality of a collection of pairwise orthogonal unbounded domains. The obstruction theory of \cite{OPRAAG} gives a complete RAAG-embedding criterion when the rank is at most two.

\begin{theorem}[Rank-two RAAG-embedding criterion, {\cite[Theorem~1.3]{OPRAAG}}]\label{Thm:RankTwoCriterionHHG}
Let \((G,\frS)\in\Xi\) have rank at most \(2\). For every finite graph \(\Lambda\), \(\bbA(\Lambda)<G\) if and only if \(\Lambda<\EO\).
\end{theorem}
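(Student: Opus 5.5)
This theorem is quoted from \cite[Theorem~1.3]{OPRAAG}, so the plan is to recall how its proof goes rather than to derive anything from the legal-pair structure.

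\emph{The implication \(\Lambda<\EO\Rightarrow\bbA(\Lambda)<G\).} This direction holds in every rank and was already noted after \Cref{Def:ExpandedCoreGraph}. Take a finite induced subgraph \(\Lambda\) with vertex set \(\{v_1,\dots,v_m\}\), where \(v_i\in V_{U_i}\). For each \(U_i\), I would use the \(\mathbf F_U\) stabilizers property: \(G_{U_i}\) is an HHG acting on \(\mathbf F_{U_i}\), and \(U_i\) is nesting-minimal unbounded there. Hence \(G_{U_i}\) contains axial elements acting loxodromically on \(\mathcal C U_i\), and these are strongly fully supported on \(U_i\). If several vertices lie in the same \(V_U\) (when \(\mathcal C U\) is not a quasi-line), I would choose loxodromics with pairwise disjoint limit sets in \(\partial\mathcal C U\), as \cite[Lemma~4.4]{OPRAAG} does. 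The resulting collection is geometrically irredundant, and \Cref{Prop:RAAGEmbeddingTheorem} then gives \(\bbA(\Lambda)\).

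\emph{The implication \(\bbA(\Lambda)<G\Rightarrow\Lambda<\EO\).} This is where rank at most two is used. Fix an embedding with vertex generators \(a_v\).
\begin{enumerate}
\item Apply the orthogonal decomposition property to each \(a_v\), after raising it to a power; this does not change the isomorphism type of the subgroup generated. This writes each generator as a product of strongly fully supported pieces.
\item For commuting generators \(a_v,a_w\), use the commutative property together with the rank bound to show that their bigsets are orthogonal or share domains in a controlled way.
\item Replace each \(a_v\) by a single strongly fully supported element on a nesting-minimal unbounded domain lying below a domain of \(\B(a_v)\). Then adjust the choices so that orthogonality of the chosen domains matches adjacency in \(\Lambda\), and so that non-adjacent vertices receive distinct vertices of \(\EO\).
\end{enumerate}

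\emph{Main obstacle.} The hard step is the last one: ensuring that non-adjacent vertices of \(\Lambda\) are not forced onto orthogonal domains. Rank at most two means each \(\B(a_v)\) has at most two domains. A vertex of \(\EO\) has an orthogonal complement of rank at most one, and this complement is what constrains the choices. I expect the actual argument in \cite{OPRAAG} handles this by a careful case analysis on whether \(\B(a_v)\) has one or two elements. I would follow that analysis rather than reprove it.
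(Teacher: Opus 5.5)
This matches the paper, which gives no argument of its own and imports the result from \cite[Theorem~1.3]{OPRAAG}: your proof of \(\Lambda<\EO\Rightarrow\bbA(\Lambda)<G\) via the \(\mathbf F_U\) stabilizers property, \cite[Lemma~4.4]{OPRAAG} and \Cref{Prop:RAAGEmbeddingTheorem} is exactly the remark following \Cref{Def:ExpandedCoreGraph}, and deferring the converse to \cite{OPRAAG} is what the paper does. Do not mistake your three-step outline of the converse for an argument, though: the commutative property only gives ``orthogonal \(\Rightarrow\) commuting'', so it says nothing about the bigsets of commuting generators (for that one uses that commuting elements permute each other's bigsets); and choosing one nesting-minimal domain below \(\B(a_v)\) independently for each \(v\) can send non-adjacent generators to orthogonal domains (e.g.\ two non-commuting generators with the same two-element bigset), which is precisely the coordination problem the cited proof has to solve.
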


\subsection{Graph braid groups}\label{Sec:graphbraidgroup}
We recall configuration spaces of graphs, their discrete counterparts, and the cubical structures and hyperplanes of the latter that will be used in \Cref{Section:LegalPairFactorSystem}.

\begin{definition}[Configuration spaces and graph braid groups]\label{Def:UC_n}
Let \(\Graf\) be a graph and let \(n\geq0\). The \emph{ordered} and \emph{unordered \(n\)-configuration spaces} of \(\Graf\) are, respectively, 
\[\mathrm C_n(\Graf)=\{(x_1,\dots,x_n)\in\Graf^n\mid x_i\neq x_j\text{ for }i\neq j\}\qquad\text{and}\qquad\UC_n(\Graf)=\mathrm C_n(\Graf)/\mathfrak S_n.\]

Choose an ordered configuration \(\bar{\mathbf x}\in\mathrm C_n(\Graf)\), and let \(\mathbf x\in\UC_n(\Graf)\) be its image. The corresponding
\emph{pure graph \(n\)-braid group} and \emph{graph \(n\)-braid group} are, respectively,
\[\bbP_n(\Graf,\bar{\mathbf x})=\pi_1(\mathrm C_n(\Graf),\bar{\mathbf x})
\qquad\text{and}\qquad \GBG_n(\Graf,\mathbf x)=\pi_1(\UC_n(\Graf),\mathbf x).\]
\end{definition}

Note that the previous two groups only depend on the topological type of the ambient graph. The following two groups do depend on the subdivision of the graph.

\begin{definition}[Discrete configuration spaces and discrete graph braid groups]\label{Def:UD_n}
Let \(\Graf\) be a graph and let \(n\geq0\). The \emph{ordered} and \emph{unordered discrete \(n\)-configuration spaces} of \(\Graf\) are
\begin{align*}
\D_n(\Graf)&=
\{(\sigma_1,\dots,\sigma_n)\in\Graf^n\mid \sigma_i\cap\sigma_j=\varnothing \text{ for } i\neq j\}\quad\text{and}\\
\UD_n(\Graf)&=
\{\{\sigma_1,\dots,\sigma_n\}\subset\Graf\mid \sigma_i\cap\sigma_j=\varnothing \text{ for } i\neq j\}
\cong \D_n(\Graf)/\mathfrak S_n,
\end{align*}
where each \(\sigma_i\) is either a vertex or an edge of \(\Graf\), regarded as a closed cell. When basepoint components are understood, their universal covers are denoted by \(\widetilde{\D}_n(\Graf)\) and \(\widetilde{\UD}_n(\Graf)\), respectively.

Choose a collection \(\mathbf x\) of \(n\) distinct vertices of \(\Graf\), and let \(\bar{\mathbf x}\) be an ordering of \(\mathbf x\). The corresponding discrete graph braid groups are
\[\PGBGdisc_n(\Graf,\bar{\mathbf x})=\pi_1(\D_n(\Graf),\bar{\mathbf x})\qquad\text{and}\qquad\GBGdisc_n(\Graf,\mathbf x)=\pi_1(\UD_n(\Graf),\mathbf x).\]
\end{definition}

The free action of \(\mathfrak S_n\) on \(\D_n(\Graf)\) by coordinate permutations makes the quotient map \[q\colon\D_n(\Graf)\longrightarrow\UD_n(\Graf)\] 
a covering map. With the basepoints chosen above, \(q_*\) identifies \(\PGBGdisc_n(\Graf,\bar{\mathbf x})\) with the kernel of the particle-permutation homomorphism \(\GBGdisc_n(\Graf,\mathbf x)\rightarrow\mathfrak S_n\). Likewise, the quotient map \(\mathrm C_n(\Graf)\rightarrow\UC_n(\Graf)\) is a covering map, and the corresponding statement holds for \(\bbP_n(\Graf,\bar{\mathbf x})<\GBG_n(\Graf,\mathbf x)\).

When the basepoints are understood, we suppress them from all four group notations above.

\begin{remark}\label{Rem:GBGDegenerate}
When \(n=0\), all four configuration spaces above consist of the empty configuration and are therefore one-point spaces. When \(n=1\), all four are naturally isomorphic to \(\Graf\). Consequently, all four braid groups are trivial for \(n=0\) and are isomorphic to \(\pi_1(\Graf)\) for \(n=1\).
Accordingly, unless otherwise stated, we assume that \(n\geq2\) for ambient configuration spaces and graph braid groups.
\end{remark}

Both \(\D_n(\Graf)\) and \(\UD_n(\Graf)\) have natural cube-complex structures. A \(k\)-cube of \(\UD_n(\Graf)\) is represented by a collection \(\{\sfe_1,\dots,\sfe_k,\sfv_1,\dots,\sfv_{n-k}\}\), where the \(\sfe_i\) are edges of \(\Graf\), the \(\sfv_j\) are vertices of \(\Graf\), and all these closed cells are pairwise disjoint. Its faces are obtained by replacing some of the \(\sfe_i\) by their endpoints. The cube structure on \(\D_n(\Graf)\) is defined analogously, with the ordering of the cells retained.
In particular, a vertex of \(\UD_n(\Graf)\), which we also call a \emph{vertex configuration}, is an unordered collection of \(n\) distinct vertices of \(\Graf\).

\begin{remark}\label{Rem:ConfigurationSpaceDegenerate}
Since every cube of \(\UD_n(\Graf)\) has a vertex face, \(\UD_n(\Graf)\) is nonempty if and only if it contains a vertex configuration, equivalently, if and only if \(n\leq\#V(\Graf)\). For \(n\geq1\), it is a point if and only if
\(n=\#V(\Graf)\).

If \(\Graf\) is connected and \(1\leq n<\#V(\Graf)\), then \(\UD_n(\Graf)\) is connected. Indeed, using at least one unoccupied vertex, one can move particles one at a time along a spanning tree of \(\Graf\) to pass between any
two vertex configurations. 
Together with
\Cref{Rem:GBGDegenerate}, this shows that the nonempty, non-point case relevant below is precisely \(2\leq n<\#V(\Graf)\).
\end{remark}

Write \(\pi_0(\Graf)\) for the set of connected components of \(\Graf\), and let \(\bfn:\pi_0(\Graf)\to\NZ\) be a function. We define
\begin{equation}\label{Eq:ProductofGBG}
\UD_{\bfn}(\Graf)=\prod_{\Graf_i\in \pi_0(\Graf)}\UD_{\bfn(\Graf_i)}(\Graf_i),    
\end{equation}
whenever each factor on the right-hand side is nonempty, and write \(\|\bfn\|=\sum_{\Graf_i\in \pi_0(\Graf)}\bfn(\Graf_i)\). 
Equivalently, \(\UD_{\bfn}(\Graf)\) is the subcomplex of \(\UD_{\|\bfn\|}(\Graf)\) consisting of configurations containing exactly \(\bfn(\Graf_i)\) particles in \(\Graf_i\) for every \(\Graf_i\in \pi_0(\Graf)\).
If a vertex configuration \(\mathbf x\) contains exactly \(\bfn(\Graf_i)\) vertices in \(\Graf_i\), we say that \(\mathbf x\) \emph{realizes} \(\bfn\). 
After choosing such a basepoint, we write 
\[\GBGdisc_{\bfn}(\Graf)=\pi_1(\UD_{\bfn}(\Graf),\mathbf x).\]
By the product decomposition in \eqref{Eq:ProductofGBG}, one has 
\[\GBGdisc_{\bfn}(\Graf)\cong\prod_{\Graf_i\in \pi_0(\Graf)}\GBGdisc_{\bfn(\Graf_i)}(\Graf_i).\]

\begin{definition}\label{Def:SufficientlySubdivided}
Let \(\Graf\) be a finite connected graph, and let \(n\geq2\).
We say that \(\Graf\) is \emph{sufficiently subdivided for \(n\)} if the following conditions hold:
\begin{enumerate}
\item if \(\Graf\) is a path, then it has at least \(n+1\) vertices;
\item otherwise,
\begin{enumerate}
\item every path between two distinct non-bivalent vertices contains at least \(n-1\) edges;
\item every cycle contains at least \(n+1\) edges.
\end{enumerate}
\end{enumerate}
\end{definition}

In the path case, this convention is slightly stronger than the usual sufficient-subdivision condition: we require at least \(n+1\) vertices, rather than \(n\), thereby excluding the case in which \(\UD_n(\Graf)\) is a point. For a non-path graph, the remaining conditions already imply \(n<\#V(\Graf)\). Thus every graph sufficiently subdivided for \(n\) satisfies \(n<\#V(\Graf)\), and consequently \(\UD_n(\Graf)\) is nonempty and not a point; see \Cref{Rem:ConfigurationSpaceDegenerate}.

The following result allows us to work with discrete configuration spaces and then pass to the usual graph braid groups using sufficiently subdivided models.

\begin{theorem}[Discrete configuration-space models, \cite{Abr00,KKP12,PS12}]\label{Thm:DiscreteConfigurationRetraction}
Let \(\Graf\) be a connected finite graph that is sufficiently subdivided for \(n\geq2\). Then the inclusions
\[\D_n(\Graf)\hookrightarrow\mathrm C_n(\Graf)\qquad\text{and}\qquad\UD_n(\Graf)\hookrightarrow\UC_n(\Graf)\]
exhibit the discrete configuration spaces as strong deformation retracts of the corresponding continuous configuration spaces. 
Consequently,
\[\PGBGdisc_n(\Graf)\cong\bbP_n(\Graf)\qquad\text{and}\qquad\GBGdisc_n(\Graf)\cong\GBG_n(\Graf).\]
\end{theorem}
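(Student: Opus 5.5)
This result is quoted from the literature, so the plan is to reduce it to the published retraction theorems and check that the hypotheses and conclusions line up with the conventions above. Everything else is formal.

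\textbf{Step 1: the ordered case.} I would first treat the ordered spaces. Prue--Scrimshaw \cite{PS12}, sharpening Abrams \cite{Abr00}, prove that \(\D_n(\Graf)\hookrightarrow\mathrm C_n(\Graf)\) is a strong deformation retract under the following hypotheses:
\begin{itemize}
\item each path between distinct essential (non-bivalent) vertices has at least \(n-1\) edges;
\item each cycle has at least \(n+1\) edges;
\item in the path case, \(\Graf\) has at least \(n\) vertices.
\end{itemize}
Conditions (2)(a),(b) of \Cref{Def:SufficientlySubdivided} are exactly the first two hypotheses. Condition (1) is stronger than the third. Hence our hypothesis implies theirs, and the ordered statement follows.

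\textbf{Step 2: passing to the unordered case.} The key point is that the retraction can be chosen \(\mathfrak S_n\)-equivariant. The Abrams and Prue--Scrimshaw deformation is built from the positions of the particles relative to the cell structure, for example by pushing particles apart along edges and toward vertices according to their relative order. It never uses the labels of the particles, so it commutes with coordinate permutations. I would verify this equivariance explicitly, or cite \cite{KKP12}, where the unordered statement is recorded directly.

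With equivariance in hand, the homotopy
\[H\colon\mathrm C_n(\Graf)\times[0,1]\to\mathrm C_n(\Graf)\]
descends through the free quotient \(\mathrm C_n(\Graf)\to\UC_n(\Graf)\). The quotient homotopy is again a strong deformation retraction onto \(\D_n(\Graf)/\mathfrak S_n=\UD_n(\Graf)\). Here one uses that the cube structure on \(\UD_n(\Graf)\) is exactly the quotient cube structure described above.

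\textbf{Step 3: the group isomorphisms.} A strong deformation retraction is a homotopy equivalence fixing the chosen vertex basepoint. Therefore the inclusions induce isomorphisms on \(\pi_1\), giving \(\PGBGdisc_n(\Graf)\cong\bbP_n(\Graf)\) and \(\GBGdisc_n(\Graf)\cong\GBG_n(\Graf)\). These isomorphisms are compatible with the particle-permutation homomorphisms to \(\mathfrak S_n\).

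\textbf{Main obstacle.} The only nontrivial point is the equivariance in Step 2, that is, checking that the published retraction is label-independent. If a particular published construction were not visibly equivariant, I would fall back on averaging. Alternatively, I would observe that any \(\mathfrak S_n\)-equivariant homotopy equivalence between free \(\mathfrak S_n\)-complexes descends to the quotients, and that \cite{KKP12} states the unordered version directly.
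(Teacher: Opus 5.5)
Your proposal is correct and matches the paper, which gives no argument of its own and simply quotes the result from \cite{Abr00,KKP12,PS12}. Your check that the paper's sufficient-subdivision convention implies the cited hypotheses is the right one, with one small correction: the ``at least \(n\) vertices'' hypothesis in the cited theorem applies to every graph, not only to paths. For non-path graphs it holds automatically, as the paper remarks after \Cref{Def:SufficientlySubdivided}. Note also that the ``averaging'' fallback does not make sense for homotopies in a non-convex space, but nothing depends on it, since the cited sources already supply the unordered statement.
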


We use these isomorphisms together with changes of basepoint, without fixing particular deformation retractions.

Independently of the subdivision condition, discrete configuration spaces of finite graphs are compact special cube complexes.

\begin{theorem}
Suppose that \(\Graf\) is a finite graph and that \(\bfn\colon\pi_0(\Graf)\rightarrow\NZ\) is a particle distribution for which \(\UD_{\bfn}(\Graf)\) is nonempty. Then \(\UD_{\bfn}(\Graf)\) is a compact special cube complex.
Consequently, \(\GBGdisc_{\bfn}(\Graf)=\pi_1(\UD_{\bfn}(\Graf))\) admits an HHG structure arising from a factor system on its universal cover.
\end{theorem}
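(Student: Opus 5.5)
Since \(\UD_{\bfn}(\Graf)\) is by definition the product \(\prod_i\UD_{\bfn(\Graf_i)}(\Graf_i)\), the plan is to establish each property factor by factor and then pass to the product. Compactness is immediate, because \(\Graf\) is finite and each factor has finitely many cubes. For specialness we use the Crisp--Wiest/Genevois argument \cite{CW,Gen21GBG}. First we check that \(\UD_m(\Graf_i)\) is nonpositively curved by verifying Gromov's link condition. A vertex configuration \(\mathbf x\) has link whose vertices are the edges \(\sfe\) with one endpoint in \(\mathbf x\) and the other endpoint unoccupied. A set of such edges spans a simplex if and only if the corresponding closed cells, together with the remaining vertices of \(\mathbf x\), are pairwise disjoint. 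This is a pairwise condition, so the link is a flag simplicial complex.

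Next we rule out the four hyperplane pathologies of Haglund--Wise \cite{HW08}. The labelling of each edge of \(\UD_m\) by the edge \(\sfe\) of \(\Graf\) along which a particle moves is constant along hyperplanes, since opposite edges of a square move the same edge of \(\Graf\). We then check the following in terms of these labels.

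\begin{enumerate}
\item \emph{Two-sidedness.} We orient each edge of \(\Graf\) and orient the hyperplane accordingly.
\item \emph{No self-intersection.} Two distinct edges of a cube carry disjoint labels, so they lie in distinct hyperplanes.
\item \emph{No direct self-osculation.} Two edges at a common vertex with the same label \(\sfe\) and consistent orientation would both have to move the unique particle on the tail of \(\sfe\), so they coincide.
\item \emph{No inter-osculation.} For hyperplanes labelled \(\sfe,\sfe'\), if \(\sfe\cap\sfe'=\varnothing\) then crossing is possible and osculation must be excluded, which needs care.
\end{enumerate}

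The simplest route for the last item is to replace the hyperplane-label argument by the standard local isometry \(\UD_m(\Graf_i)\to\UD_m(\Graf_i)^{\mathrm{Salv}}\) into a Salvetti complex, or more concretely the Crisp--Wiest map of \(\D_m\) into the Salvetti complex of the RAAG whose generators are the edges of \(\Graf\) and which commute when disjoint. Specialness is equivalent to admitting a local isometry to a Salvetti complex, so it suffices to check that this map is a cubical local isometry via the full-link criterion in our Convention. Local injectivity holds because distinct edges at a vertex have distinct labels. Fullness holds because if the labels of several link vertices pairwise commute, meaning the edges are pairwise disjoint, then the corresponding moves at \(\mathbf x\) span a cube. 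Here one uses that the endpoints being moved into are unoccupied and distinct, which follows from disjointness of the edges. Since \(\UD_m\) is the quotient of \(\D_m\) by a free action, we either run the same argument directly on \(\UD_m\), which works because labels are unordered, or note that a finite cover is special and invoke Haglund--Wise virtual specialness. The direct route is the one we would choose, since the label map is already defined on \(\UD_m\).

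For the product, the link of \(\UD_{\bfn}\) is the join of the factor links, and a product of local isometries into Salvetti complexes is a local isometry into the Salvetti complex of the direct product RAAG, which is again a Salvetti complex. The main obstacle is verifying fullness of the link image at configurations where two candidate edges share an unoccupied endpoint. Those edges are not disjoint, so their labels do not commute, and fullness is preserved, but this is exactly where the argument must be stated carefully.

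The final consequence then follows from \cite[Proposition~B and Remark~13.2]{BHS17I} as recalled above.
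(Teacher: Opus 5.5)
Your proposal is correct and follows essentially the same route as the paper: work factor by factor, get nonpositive curvature from flag links as in Abrams, use the Crisp--Wiest edge-label map to a Salvetti complex (you verify the local-isometry conditions explicitly, where the paper cites \cite[Theorem~2]{CW}), conclude specialness via Haglund--Wise, and finish with \cite[Proposition~B and Remark~13.2]{BHS17I}. Drop the parenthetical alternative of passing to a finite cover: it would only give virtual specialness, not specialness of \(\UD_{\bfn}(\Graf)\) itself, so the direct label map on \(\UD_m\), which is the route you chose, is the one the statement needs.
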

\begin{proof}
By \eqref{Eq:ProductofGBG} and the closure of compact special cube complexes under finite products, it suffices to consider the case in which \(\Graf\) is connected. Set \(n=\|\bfn\|\). The cases \(n=0\), \(n=1\) and \(n=\#V(\Graf)\) are immediate from \Cref{Rem:GBGDegenerate,Rem:ConfigurationSpaceDegenerate}. We may therefore assume that \(2\leq n<\#V(\Graf)\).
Then \(\UD_{\bfn}(\Graf)=\UD_n(\Graf)\) is connected by \Cref{Rem:ConfigurationSpaceDegenerate}. Since \(\Graf\) is finite, \(\UD_n(\Graf)\) has finitely many cubes and is therefore compact.

Abrams proved that \(\UD_n(\Graf)\) is nonpositively curved \cite{Abr00}, and Crisp--Wiest constructed a cubical local isometry from \(\UD_n(\Graf)\) to a Salvetti complex \cite[Theorem~2]{CW}. Thus \(\UD_n(\Graf)\) is special in the sense of Haglund--Wise \cite{HW08}. Genevois later gave a direct verification of specialness using a special coloring; see \cite[Propositions~3.4 and~3.7 and Remark~3.8]{Gen21GBG}.
Finally, by \cite[Proposition~B and Remark~13.2]{BHS17I}, the universal cover of a compact special cube complex admits an invariant factor system, and the associated hierarchical structure makes its fundamental group an HHG.
\end{proof}

Using the cubical terminology recalled in \Cref{Subsection:FactorSystems}, we next describe the hyperplanes of \(\UD_n(\Graf)\) directly in terms of discrete configurations.
For subgraphs \(\Graf''\subseteq\Graf'\subseteq\Graf\), we write \(\Graf'\setminus\Graf''\) for the subgraph of \(\Graf'\) consisting of all closed cells disjoint from \(\Graf''\). In particular, if \(\Graf'=\Graf\) and \(\Graf''\) is an edge \(\sfe\), then the endpoints of \(\sfe\), as well as every edge incident to either endpoint, are omitted from \(\Graf\setminus\sfe\).

Let \(\bfE=\{\sfe,\sfv_1,\dots,\sfv_{n-1}\}\) be an edge of \(\UD_n(\Graf)\). For each \(\sfL_i\in\pi_0(\Graf\setminus\sfe)\), define
\(\bfn_{\bfE}(\sfL_i)=\#(\bfE\cap \sfL_i)\). Let
\[\Graf_{\bfE}=\bigsqcup_{\substack{\sfL_i\in\pi_0(\Graf\setminus\sfe)\\ \bfn_{\bfE}(\sfL_i)>0}} \sfL_i,\]
and regard \(\bfn_{\bfE}\) as a positive particle distribution on the components of \(\Graf_{\bfE}\).

We write \(H_{\bfE}\) for the hyperplane of \(\UD_n(\Graf)\) dual to \(\bfE\), and \(\mathcal N(H_{\bfE})\) for its carrier, that is, the union of the cubes of \(\UD_n(\Graf)\) crossed by \(H_{\bfE}\).

\begin{lemma}[Hyperplanes in discrete configuration spaces]\label{Lem:Hyperplane_in_UD_n}
Let \(\bfE=\{\sfe,\sfv_1,\dots,\sfv_{n-1}\}\) be an edge of \(\UD_n(\Graf)\).
Then the following are equivalent:
\begin{itemize}
\item an edge \(\bfE'=\{\sfe',\sfv_1',\dots,\sfv_{n-1}'\}\) of \(\UD_n(\Graf)\) is dual to \(H_{\bfE}\);
\item \(\sfe=\sfe'\) and \(\bfn_{\bfE}\equiv\bfn_{\bfE'}\).
\end{itemize}

Consequently, the carrier of \(H_{\bfE}\) has a cubical product decomposition \[\mathcal N(H_{\bfE})\cong \UD_{\bfn_{\bfE}}(\Graf_{\bfE})\times[0,1],\] where the interval factor records the particle moving along \(\sfe\).
Under this decomposition, \(H_{\bfE}\cong \UD_{\bfn_{\bfE}}(\Graf_{\bfE})\times\left\{1/2\right\}\).

Moreover, every hyperplane \(\widetilde H\) of \(\widetilde{\UD}_n(\Graf)\) is a lift of a hyperplane of this form. If \(\widetilde H\) lies over \(H_{\bfE}\), then \(\mathcal N(\widetilde H)\) is a connected component of the inverse image of \(\mathcal N(H_{\bfE})\), and \(\mathcal N(\widetilde H)\cong\widetilde H\times[0,1]\).
\end{lemma}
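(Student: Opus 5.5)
The plan has three parts: build the combinatorics of dual edges from elementary squares, read the carrier off that description, and then pass to the universal cover by covering-space theory.

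\emph{Step 1: elementary parallelism.} I will describe when two edges of \(\UD_n(\Graf)\) are opposite sides of a square. A square is a collection \(\{\sfe,\sfe'',\sfv_1,\dots,\sfv_{n-2}\}\) of pairwise disjoint closed cells. Its two edges parallel to the \(\sfe\)-direction are
\[\{\sfe,\sfu,\sfv_1,\dots,\sfv_{n-2}\}\qquad\text{and}\qquad\{\sfe,\sfu',\sfv_1,\dots,\sfv_{n-2}\},\]
where \(\sfu,\sfu'\) are the endpoints of \(\sfe''\). Since \(\sfe''\) is disjoint from \(\sfe\), it lies in \(\Graf\setminus\sfe\). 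Hence \(\sfu\) and \(\sfu'\) lie in the same component of \(\Graf\setminus\sfe\), so opposite edges carry the same edge \(\sfe\) and have \(\bfn_{\bfE}\equiv\bfn_{\bfE'}\). Dual edges form the equivalence class generated by opposite sides of squares, so this gives the forward implication.

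\emph{Step 2: the converse.} Suppose \(\bfE\) and \(\bfE'\) share \(\sfe\) and have equal distributions. The \(n-1\) remaining particles of each lie in \(\Graf_{\bfE}\) with the same count in every component. So both configurations are vertices of \(\UD_{\bfn_{\bfE}}(\Graf_{\bfE})\), which is a product of the spaces \(\UD_{k}(\sfL_i)\) over connected \(\sfL_i\). I need each factor to be connected, and this is the main obstacle. If \(k<\#V(\sfL_i)\), the factor is connected by \Cref{Rem:ConfigurationSpaceDegenerate}. If \(k=\#V(\sfL_i)\), it is a single point, so it is trivially connected. Therefore there is an edge path in \(\UD_{\bfn_{\bfE}}(\Graf_{\bfE})\) joining the two configurations. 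Each edge of this path moves one particle along some \(\sfe''\subseteq\Graf\setminus\sfe\), and together with \(\sfe\) it spans a square. Chaining these squares shows \(\bfE\) and \(\bfE'\) are dual to the same hyperplane.

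\emph{Step 3: the carrier.} By Steps 1 and 2, the cubes crossed by \(H_{\bfE}\) are exactly the collections \(\{\sfe\}\cup c\), where \(c\) ranges over the cubes of \(\UD_{\bfn_{\bfE}}(\Graf_{\bfE})\). The map \(c\times[0,1]\mapsto\{\sfe\}\cup c\), with the interval parametrizing the particle on \(\sfe\), is then a bijection on cubes that respects faces. This gives \(\mathcal N(H_{\bfE})\cong\UD_{\bfn_{\bfE}}(\Graf_{\bfE})\times[0,1]\), and the hyperplane is the midcube slice \(\UD_{\bfn_{\bfE}}(\Graf_{\bfE})\times\{1/2\}\). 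In particular, \(H_{\bfE}\) is embedded and two-sided, which is consistent with specialness.

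\emph{Step 4: the universal cover.} The covering map sends hyperplanes to hyperplanes, so every hyperplane \(\widetilde H\) of \(\widetilde{\UD}_n(\Graf)\) maps onto some \(H_{\bfE}\). Its carrier is the union of the lifted cubes crossed by \(\widetilde H\). This union is connected, and it is a union of components of the preimage of \(\mathcal N(H_{\bfE})\); since it is connected, it is exactly one such component. The product structure \(H_{\bfE}\times[0,1]\) lifts, because the preimage of a product with a contractible interval factor is the product of the preimage of \(H_{\bfE}\) with \([0,1]\). The corresponding component is \(\widetilde H\times[0,1]\), since hyperplanes of CAT(0) cube complexes have carriers isomorphic to the hyperplane times an interval.
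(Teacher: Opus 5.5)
Your proposal is correct and follows essentially the same route as the paper. Both arguments use elementary parallelism across squares for the forward direction, the connectivity of each factor \(\UD_{\bfn_{\bfE}(\sfL_i)}(\sfL_i)\) to chain squares for the converse, the identification of the carrier with \(\UD_{\bfn_{\bfE}}(\Graf_{\bfE})\times\sfe\), and the simply connected interval factor to lift the product structure to the universal cover. Your separate treatment of a fully occupied component, where the factor is a point, only makes explicit a case the paper covers in a single line; it does not change the method.
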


\begin{proof}
Two edges of a cube complex are dual to the same hyperplane if and only if they are related by a sequence of elementary parallelisms, where two edges are elementarily parallel when they are opposite sides of a square.

Consider a square of \(\UD_n(\Graf)\) having \(\bfE\) as one of its sides.
It is represented by a configuration containing the edge \(\sfe\), another edge \(\sfe'\) disjoint from \(\sfe\), and \(n-2\) fixed vertices. The two opposite sides parallel to \(\bfE\) are obtained by retaining the \(\sfe\)-coordinate and replacing the particle at one endpoint of \(\sfe'\) by the other endpoint. Thus, under an elementary parallelism, the moving edge \(\sfe\) remains unchanged, and each of the other particles remains in the same connected component of \(\Graf\setminus\sfe\).
It follows that if \(\bfE'=\{\sfe',\sfv_1',\dots,\sfv_{n-1}'\}\) is dual to \(H_{\bfE}\), then \(\sfe=\sfe'\), and the numbers of stationary particles in the components of \(\Graf\setminus\sfe\) agree with those of \(\bfE\).

Conversely, suppose that \(\sfe=\sfe'\) and that these particle counts agree. For each component \(\sfL_i\) of \(\Graf\setminus\sfe\), the two collections \(\bfE\cap\sfL_i\) and \(\bfE'\cap\sfL_i\) are vertices of \(\UD_{\bfn_{\bfE}(\sfL_i)}(\sfL_i)\). 
Since \(\sfL_i\) is connected, so is the configuration space \(\UD_{\bfn_{\bfE}(\sfL_i)}(\sfL_i)\). Hence the first configuration can be transformed into the second by a sequence of single-particle moves inside \(\sfL_i\).
Performing these moves successively over all components of \(\Graf\setminus\sfe\), while keeping the \(\sfe\)-coordinate fixed, gives a sequence of squares in \(\UD_n(\Graf)\) whose consecutive \(\sfe\)-edges are opposite sides. Therefore \(\bfE'\) is dual to \(H_{\bfE}\).

The preceding characterization shows that the edges dual to \(H_{\bfE}\) are precisely those obtained by allowing the stationary particles to vary in the components of \(\Graf_{\bfE}\) according to the distribution \(\bfn_{\bfE}\), while retaining the particle moving along \(\sfe\). 
The cubes crossed by \(H_{\bfE}\) are obtained by additionally allowing disjoint stationary particles to move simultaneously. Their union is therefore exactly
\[\UD_{\bfn_{\bfE}}(\Graf_{\bfE})\times\sfe\cong\UD_{\bfn_{\bfE}}(\Graf_{\bfE})\times[0,1].\]
The hyperplane itself is the midcube \(\UD_{\bfn_{\bfE}}(\Graf_{\bfE})\times\left\{1/2\right\}\).

Finally, a covering map of cube complexes preserves cubes, parallelism of edges, and hyperplanes. Hence every hyperplane of \(\widetilde{\UD}_n(\Graf)\) lies over a hyperplane \(H_{\bfE}\) of \(\UD_n(\Graf)\), and its carrier is a connected component of the inverse image of \(\mathcal N(H_{\bfE})\). 
Since \([0,1]\) is simply connected, each connected component of the inverse image of \(\UD_{\bfn_{\bfE}}(\Graf_{\bfE})\times[0,1]\) is a product of a connected component of the inverse image of \(\UD_{\bfn_{\bfE}}(\Graf_{\bfE})\) with \([0,1]\).
Thus \(\mathcal N(\widetilde H)\cong\widetilde H\times[0,1]\).
\end{proof}

\begin{remark}\label{Rem:CombinatorialHyperplanesUD}
In the product decomposition \(\mathcal N(H_{\bfE})\cong\UD_{\bfn_{\bfE}}(\Graf_{\bfE})\times[0,1]\), the two combinatorial hyperplanes associated to \(H_{\bfE}\) are the boundary components \(\UD_{\bfn_{\bfE}}(\Graf_{\bfE})\times\{0\}\) and \(\UD_{\bfn_{\bfE}}(\Graf_{\bfE})\times\{1\}\).
They are obtained by placing the moving particle at one or the other endpoint of \(\sfe\), while retaining the same distribution of the other particles.
\end{remark}

\begin{example}\label{Ex:HyperplaneData}
\Cref{Fig:HyperplaneCarrierDecomposition} illustrates these notions for \(n=4\).
In \Cref{Fig:HyperplaneData} the four particles of \(\bfE\) are red: one of them moves along
\(\sfe\), while the other three are distributed among the three components of
\(\Graf\setminus\sfe\), each of which is labeled by the number of particles it carries.
Let \(\sfL_1\) be the triangle and let \(\sfL_2\) be the \(4\)-cycle, so that
\(\bfn_{\bfE}(\sfL_1)=2\) and \(\bfn_{\bfE}(\sfL_2)=1\); the third component, drawn in
gray, carries no particle and is therefore discarded. Thus
\(\Graf_{\bfE}=\sfL_1\sqcup\sfL_2\) is the black part of the picture, and \(\UD_{\bfn_{\bfE}}(\Graf_{\bfE})\cong\UD_2(\sfL_1)\times\UD_1(\sfL_2)\). \Cref{Fig:HyperplaneCarrier} shows the corresponding product decomposition of the carrier
\(\mathcal N(H_{\bfE})\), given by \Cref{Lem:Hyperplane_in_UD_n}.
\end{example}

\begin{figure}[ht]
\subcaptionbox{The data \((\Graf_{\bfE},\bfn_{\bfE})\)\label{Fig:HyperplaneData}}[0.4\textwidth]{
\(
\begin{tikzpicture}[baseline=-.5ex]
\draw[thick,lightgray, fill] (0,0) -- (-135:1) circle (2pt) -- node[midway,below] {\(0\)} ++(-1,0) circle (2pt) (0,0) -- (135:1) (1,0) -- (2,0);
\draw[thick,fill] (135:1) circle (2pt) -- ++(150:1) circle (2pt) -- ++(-90:1) circle (2pt) -- (135:1);
\draw (135:1) ++(-0.57,0) node {\(2\)} +(-0.75,0) node {\(\sfL_1\)};
\draw[thick,fill] (2,0) circle (2pt) -- ++(45:1) circle (2pt) -- ++(-45:1) circle (2pt) -- ++(-135:1) circle (2pt) -- (2,0);
\draw (2.71,0) node {\(1\)} +(0.75,0.75) node {\(\sfL_2\)};
\draw[thick, fill, red] (0,0) circle (2pt) -- node[midway, below] {\(\sfe\)} (1,0) circle (2pt);
\draw[red,fill=red] (135:1) +(150:1) circle (3pt) +(-150:1) circle (3pt);
\draw[red,fill=red] (2,0) ++(45:1) ++(-45:1) circle (3pt);
\end{tikzpicture}
\)
}
\subcaptionbox{The carrier \(\mathcal N(H_{\bfE})\)\label{Fig:HyperplaneCarrier}}[0.4\textwidth]{
\(
\begin{tikzpicture}[baseline=-.5ex]
\draw[thick,fill=blue!5] (0,0) rectangle (2.6,1.6);
\draw[thick,dashed] (0,0.8) -- (2.6,0.8);
\draw (2.6,1.6) node[right=0.6ex] {\(\times\{1\}\)};
\draw (2.6,0) node[right=0.6ex] {\(\times\{0\}\)};
\draw (2.6,0.8) node[right=0.6ex] {\(H_{\bfE}\)};
\draw[<->] (-0.3,0) -- (-0.3,1.6);
\draw (-0.3,0.8) node[left=0.2ex] {\(\sfe\)};
\draw[<->] (0,-0.35) -- (2.6,-0.35);
\draw (1.3,-0.4) node[below] {\(\UD_{\bfn_{\bfE}}(\Graf_{\bfE})\)};
\end{tikzpicture}
\)
}
\caption{The hyperplane \(H_{\bfE}\) dual to an edge
\(\bfE=\{\sfe,\sfv_1,\dots,\sfv_{n-1}\}\) of \(\UD_n(\Graf)\), for \(n=4\).}
\label{Fig:HyperplaneCarrierDecomposition}
\end{figure}

We conclude this subsection with an infinitude criterion for discrete graph braid groups.
A vertex of \(\Graf\) is called \emph{essential} if it has degree at least three. We write \(V_{\mathrm{ess}}(\Graf)\) for the set of essential vertices.

\begin{lemma}[Infinite discrete graph braid groups]\label{Lem:InfiniteGBG}
Let \(\Graf\) be a connected finite graph, and suppose that \(1\leq n\leq m=\#V(\Graf)\). Then \(\GBGdisc_n(\Graf)\) is infinite if and only if one of the following holds:
\begin{enumerate}
\item \(\Graf\) contains an essential vertex and \(\#V(\Graf)\geq n+2\geq4\);
\item \(\Graf\) contains a cycle and \(\#V(\Graf)\geq n+1\geq2\).
\end{enumerate}
\end{lemma}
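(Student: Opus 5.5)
Since \(\UD_n(\Graf)\) is a compact nonpositively curved cube complex, \(\GBGdisc_n(\Graf)\) is torsion-free. Hence it is infinite if and only if it is nontrivial, and it is nontrivial if and only if \(\UD_n(\Graf)\) contains a non-contractible loop. The plan is to prove the two implications separately: for sufficiency we exhibit explicit nontrivial loops, and for necessity we contract \(\UD_n(\Graf)\) when neither condition holds. Throughout we use that \(\UD_n(\Graf)\) is connected for \(n<\#V(\Graf)\) by \Cref{Rem:ConfigurationSpaceDegenerate}, and that it is a point when \(n=\#V(\Graf)\).

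\emph{Sufficiency.} Suppose first that \(\Graf\) contains an embedded cycle \(\ell\) and \(\#V(\Graf)\geq n+1\).
\begin{itemize}
\item If \(\ell\) has at least \(n+1\) vertices, place all \(n\) particles on \(\ell\) and rotate them once around it. This loop lies in the locally convex subcomplex \(\UD_n(\ell)\), whose fundamental group is \(\mathbb Z\). The loop is nontrivial there, for instance because its winding number is nonzero.
\item If \(\ell\) has at most \(n\) vertices, place one particle on \(\ell\) and park the remaining \(n-1\) particles off \(\ell\). Because \(\#V(\Graf)\geq n+1\), this can be arranged so that one vertex of \(\ell\) is left empty. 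Then move particles cyclically around \(\ell\) using that hole.
\end{itemize}
In either case the loop is detected by the homomorphism to \(H_1(\Graf)\), or to \(\mathbb Z\) via a cocycle dual to \(\ell\), obtained by summing the oriented edges traversed by all particles. The net traversal equals \(\ell\) up to a nonzero multiple, so the loop is nontrivial.

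Next suppose \(\Graf\) has an essential vertex \(\sfv\) and \(\#V(\Graf)\geq n+2\geq4\). Choose a tripod \(\tau\subseteq\Graf\) centered at \(\sfv\), and place two particles near \(\tau\) and the remaining \(n-2\) particles elsewhere. We need to leave enough free vertices so that the two particles can perform the standard star-exchange loop in \(\UD_2(\tau')\), where \(\tau'\) is a subdivided star with enough room. The count \(\#V(\Graf)\geq n+2\) is exactly what leaves two empty vertices besides the two moving particles; these can be shifted toward \(\sfv\) by moving parked particles along a spanning tree.

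The resulting loop is nontrivial because it maps to a nontrivial element under a suitable retraction. Concretely, one can use the Crisp--Wiest local isometry to a Salvetti complex, or a local-isometry argument via \Cref{Lem:IntersectionOfLifts}. The obstacle here is making the local moves honest when the tripod legs are short, since they may have length one. To handle this, I would realize the loop in \(\UD_2\) of a small star-like subgraph containing \(\sfv\) together with some of its leg-continuations, which has \(\ge 4\) vertices by the count. I would then verify nontriviality by showing the abelianized image in \(H_1(\UD_n)\) is nonzero; the first homology of \(\UD_n\) has known star classes from Ko--Park.

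\emph{Necessity.} If neither condition holds, there are two cases.
\begin{itemize}
\item If \(\Graf\) is a tree with essential vertices and \(\#V(\Graf)\leq n+1\), there is at most one free vertex. Every particle move slides into that single hole, so \(\UD_n(\Graf)\) is 1-dimensional: no two disjoint edges can both be moving, as that needs two holes. It is therefore a graph whose vertices correspond to choices of hole, i.e.\ to vertices of \(\Graf\), with edges corresponding to edges of \(\Graf\). This graph is isomorphic to \(\Graf\), a tree, so the group is trivial.
\item If \(\Graf\) is a cycle or contains a cycle with \(\#V(\Graf)=n\), the configuration space is a point.
\item If \(\Graf\) is a path, \(\UD_n\) is contractible; this is a standard observation, for instance since it is a CAT(0) cube complex which is a subcomplex of a cube.
\end{itemize}
The main obstacle is the nontriviality of the tripod loop in tight graphs, which I would handle via an explicit homomorphism to \(\mathbb Z/2\) or \(\mathbb Z\) built from the Świątkowski or Farley--Sabalka discrete Morse description.
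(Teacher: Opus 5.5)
There is a genuine gap in the essential-vertex half of sufficiency: you never show that the star-exchange loop is nontrivial. You list several tools (Crisp--Wiest, a local-isometry argument, Ko--Park star classes, discrete Morse theory) and end by calling this ``the main obstacle,'' but you carry none of them out. The detector you do use in the cycle case cannot help here. The exchange loop in \(\UD_2(\tau)\) traverses each edge of \(\tau\) once in each direction, so its total displacement in \(H_1(\Graf)\) is zero.

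You also misdiagnose the difficulty. No subdivided star, no extra room, and no shifting of holes along a spanning tree is needed. The tripod \(\tau\cong\sfK_{1,3}\) itself, with legs of length one, already works: \(\UD_2(\tau)\) has six vertices (the \(2\)-subsets of \(V(\tau)\)) and six edges, and every vertex has degree two, so it is a hexagon. The missing step is short:
\begin{itemize}
\item Use \(\#V(\Graf)-4\geq n-2\) to choose a set \(\bfy\) of \(n-2\) vertices outside \(\tau\).
\item The map \(\bfx\mapsto\bfx\sqcup\bfy\) embeds \(\UD_2(\tau)\) in \(\UD_n(\Graf)\) as a local isometry by the full-link criterion. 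Moves along edges of \(\tau\) that span a simplex in the target link use pairwise disjoint edges avoiding all stationary particles, so they already span a simplex in the source. This is \Cref{Lem:LocalisometrybetweenLegalPairs}.
\item A local isometry is \(\pi_1\)-injective, so the hexagon gives \(\mathbb Z<\GBGdisc_n(\Graf)\).
\end{itemize}
This is the paper's argument. The paper first reduces to trees, since a cycle together with \(\#V(\Graf)\geq n+1\) is already covered, but the local-isometry argument does not need that reduction. The Crisp--Wiest route would also work, but only after checking that the hexagon reads a nontrivial reduced word in three pairwise non-commuting edge generators, which you do not do.

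The rest is essentially sound, with two corrections.

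First, in the cycle case with \(q=\#V(\ell)\leq n\), putting one particle on \(\ell\) and \(n-1\) off \(\ell\) can be impossible. For example, a \(4\)-cycle with one pendant vertex and \(n=4\) has only one vertex off \(\ell\). Instead place \(r=\max\{1,n-(m-q)\}\) particles on \(\ell\); then \(1\leq r\leq q-1\) because \(m\geq n+1\). With that choice, your total-displacement homomorphism \(\pi_1(\UD_n(\Graf))\to H_1(\Graf)\) is a valid and somewhat more elementary alternative to the paper's argument:
\begin{itemize}
\item it is well defined, because the two boundary paths of a square traverse the same two edges of \(\Graf\);
\item the rotation loop maps to a positive multiple of \([\ell]\neq0\);
\item it avoids deriving \(\pi_1(\UD_r(\ell))\cong\mathbb Z\) from the retraction theorem.
\end{itemize}

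Second, in the necessity direction you omit the case \(n=1\) for a tree with an essential vertex, which is exactly where \(n+2\geq4\) fails. It is immediate, since \(\UD_1(\Graf)=\Graf\) is a tree and hence contractible.
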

\begin{proof}
Suppose first that \(\Graf\) contains an embedded cycle \(\ell\) and that \(m\geq n+1\). Write \(q=\#V(\ell)\), and choose \(r=\max\{1,n-(m-q)\}\). 
Then \(1\leq r\leq q-1\), and there are at least \(n-r\) vertices outside \(\ell\). By the full-link criterion, fixing particles at \(n-r\) such vertices gives a locally convex subcomplex \(\UD_r(\ell)\hookrightarrow\UD_n(\Graf)\). 
Since \(q\geq r+1\), we have \(\pi_1(\UD_r(\ell))\cong\mathbb Z\): this is immediate when \(r=1\), and when \(r\geq2\) it follows from \Cref{Thm:DiscreteConfigurationRetraction}.
The inclusion is a local isometry and therefore injective on fundamental groups. Thus \(\GBGdisc_n(\Graf)\) is infinite.

Suppose next that \(\Graf\) contains an essential vertex and that \(m\geq n+2\geq4\). If \(\Graf\) contains an embedded cycle, then the preceding argument applies, since \(m\geq n+1\). Hence we may assume that \(\Graf\) contains no cycle. It is therefore a tree, and we may choose a tripod \(\tau\cong\sfK_{1,3}\) centered at the given essential vertex. Since \(m-4\geq n-2\), we may fix \(n-2\) particles outside \(\tau\).
This gives a locally convex subcomplex \(\UD_2(\tau)\hookrightarrow\UD_n(\Graf)\). The complex \(\UD_2(\tau)\) is a circle, so it again determines an infinite cyclic subgroup of \(\GBGdisc_n(\Graf)\).

Conversely, suppose that neither condition holds. If \(\Graf\) contains a cycle, then the failure of the second condition and the inequality \(n\leq m\) imply that \(n=m\). Hence \(\UD_n(\Graf)\) consists of a single point.

We may therefore assume that \(\Graf\) is a tree. If \(\Graf\) contains an essential vertex, then the failure of the first condition implies that either \(n=1\) or \(m\leq n+1\). If \(n=1\), then \(\UD_1(\Graf)=\Graf\), which is contractible. If \(m=n\), then \(\UD_n(\Graf)\) is a point. If \(m=n+1\), the map which assigns to a configuration its unique unoccupied vertex gives a cubical isomorphism \(\UD_n(\Graf)\cong\Graf\), so the configuration space is again contractible.

Finally, if \(\Graf\) contains neither a cycle nor an essential vertex, then \(\Graf\) is a path, and \(\UD_n(\Graf)\) is contractible. Thus \(\GBGdisc_n(\Graf)\) is trivial in all remaining cases.
\end{proof}

\section{The legal-pair factor system}\label{Section:LegalPairFactorSystem}

Throughout this and the next section, fix a connected finite graph \(\Graf\) and an integer \(n\) satisfying \(2\leq n<\#V(\Graf)\). No subdivision hypothesis is imposed.

In this section, we introduce legal pairs and prove that their lifts form a factor system in \(\widetilde{\UD}_n(\Graf)\). We retain singleton factors in order to obtain exact closure under gate projections; they will be discarded in \Cref{Section:LegalPairHierarchy} before defining the legal-pair hierarchy.

\subsection{Legal pairs}
As in the product decomposition in \eqref{Eq:ProductofGBG}, a configuration space on a disconnected graph depends not only on the graph but also on the distribution of the particles among its connected components. We encode these data by legal pairs.

\begin{definition}[Legal pair]\label{Def:LegalPair}
Let \(\sfL\subseteq\Graf\) be a possibly empty subgraph, not necessarily connected, and let \(\bfn\colon\pi_0(\sfL)\rightarrow\NZ\) be a function
satisfying \[\|\bfn\|=\sum_{\sfL_i\in\pi_0(\sfL)}\bfn(\sfL_i)\leq n.\]
We call \(\|\bfn\|\) the \emph{norm} of the pair \((\sfL,\bfn)\).

The pair \((\sfL,\bfn)\) is called \emph{legal} if, for every \(\sfL_i\in\pi_0(\sfL)\), \(0<\bfn(\sfL_i)\leq \#V(\sfL_i)\), and equality holds if and only if \(\sfL_i\) consists of a single vertex. 
In particular, \((\varnothing,\varnothing)\) is a legal pair. See \Cref{figure:legal_pairs}.
\end{definition}

\begin{figure}[ht]
\subcaptionbox{A legal pair}[0.45\textwidth]{
\(
(\sfL,\bfn)=\begin{tikzpicture}[baseline=-.5ex]
\draw[thick,fill] (0,0) circle (2pt) node[above] {\(1\)};
\draw[thick,fill] (1,0) circle (2pt) -- node[midway,above] {\(1\)} (2,0) circle (2pt);
\begin{scope}[xshift=4cm]
\draw[thick,fill] (0,0) node {\(2\)} (36:1) circle (2pt) -- (108:1) circle (2pt) -- (180:1) circle (2pt) -- (252:1) circle (2pt) -- (-36:1) circle (2pt) -- (36:1);
\end{scope}
\end{tikzpicture}
\)
}
\subcaptionbox{A non-legal pair}[0.45\textwidth]{
\(
(\sfL,\bfn)=\begin{tikzpicture}[baseline=-.5ex]
\draw[thick,fill] (0,0) circle (2pt) node[above] {\(1\)};
\draw[thick,fill] (1,0) circle (2pt) -- node[midway,above] {\(2\)} (2,0) circle (2pt);
\begin{scope}[xshift=4cm]
\draw[thick,fill] (0,0) node {\(1\)} (36:1) circle (2pt) -- (108:1) circle (2pt) -- (180:1) circle (2pt) -- (252:1) circle (2pt) -- (-36:1) circle (2pt) -- (36:1);
\end{scope}
\end{tikzpicture}
\)
}
\caption{An example and a non-example of a legal pair. The value of $\bfn(\sfL_i)$ is indicated by the number on each component.}
\label{figure:legal_pairs}
\end{figure}

A legal pair \((\sfL,\bfn)\) determines the connected cube complex \(\UD_{\bfn}(\sfL)\) defined in \eqref{Eq:ProductofGBG}. For the empty legal pair, we regard \(\UD_0(\varnothing)=\{\varnothing\}\) as a point.

\begin{remark}[Normalization of particle distributions]\label{Rem:LegalPairNormalization}
The conditions in \Cref{Def:LegalPair} are a normalization convention for configuration spaces on subgraphs. A component carrying no particles may be deleted without changing the corresponding configuration space. If a non-singleton component \(\sfL_i\) carries \(\#V(\sfL_i)\) particles, then \(\UD_{\#V(\sfL_i)}(\sfL_i)=\{V(\sfL_i)\}\).
We therefore replace \(\sfL_i\) by the discrete subgraph \(V(\sfL_i)\), regarded as a union of singleton components, and assign one particle to each of them.

Thus any subgraph equipped with a particle distribution for which the corresponding configuration space is nonempty can be replaced, without changing the corresponding configuration space, by a legal pair. We refer to this replacement as \emph{normalization}.
\end{remark}

\begin{definition}[The relation \(\preceq\)]\label{Def:LegalPairRelation}
Let \((\sfL,\bfn)\) and \((\sfL',\bfn')\) be legal pairs. Suppose that \(\sfL\subseteq \sfL'\), and write \(\sfL^{\mathrm{ext}}=\sfL'\setminus\sfL\), with connected components \(\sfL^{\mathrm{ext}}_k\in\pi_0(\sfL^{\mathrm{ext}})\). 
We write \((\sfL,\bfn)\preceq(\sfL',\bfn')\) if there exists a function \(\bfn^{\mathrm{ext}}:\pi_0(\sfL^{\mathrm{ext}})\to\NZ\) such that
\begin{enumerate}
\item \(\bfn^{\mathrm{ext}}(\sfL^{\mathrm{ext}}_k)\leq\#V(\sfL^{\mathrm{ext}}_k)\) for every \(\sfL^{\mathrm{ext}}_k\in\pi_0(\sfL^{\mathrm{ext}})\), and
\item for every component \(\sfL'_i\in\pi_0(\sfL')\),
\[\bfn'(\sfL'_i)=\sum_{\substack{\sfL_j\in\pi_0(\sfL)\\ \sfL_j\subseteq \sfL'_i}}\bfn(\sfL_j) +\sum_{\substack{\sfL^{\mathrm{ext}}_k\in\pi_0(\sfL^{\mathrm{ext}})\\ \sfL^{\mathrm{ext}}_k\subseteq \sfL'_i}}\bfn^{\mathrm{ext}}(\sfL^{\mathrm{ext}}_k).\]
\end{enumerate}
In this case, we say that \(\bfn^{\mathrm{ext}}\) \emph{witnesses} the relation \((\sfL,\bfn)\preceq(\sfL',\bfn')\). See \Cref{figure:comparable_legal_pairs}.
\end{definition}

By definition, if \((\sfL,\bfn)\preceq(\sfL',\bfn')\) and \(\sfL=\sfL'\), then two functions \(\bfn\) and \(\bfn'\) must be identical.

\begin{figure}[ht]
\subcaptionbox{Comparable legal pairs}{
\(
(\sfL,\bfn)=\left(\begin{tikzpicture}[baseline=-.5ex]
\draw[thick,fill] (0,0) circle (2pt) node[above] {\(1\)};
\draw[thick,fill] (1,0) circle (2pt) -- node[midway,above] {\(1\)} (2,0) circle (2pt);
\begin{scope}[xshift=4cm]
\draw[thick,fill,lightgray] (252:1) -- (-36:1) circle (2pt) -- (36:1) circle (2pt) -- (108:1);
\draw[thick,fill] (0,0) node {\(2\)} (108:1) circle (2pt) -- (180:1) circle (2pt) -- (252:1) circle (2pt);
\end{scope}
\end{tikzpicture}\right)
\preceq
\left(\begin{tikzpicture}[baseline=-.5ex]
\draw[thick,fill] (0,0) circle (2pt) node[above] {\(1\)};
\draw[thick,fill] (1,0) circle (2pt) -- node[midway,above] {\(1\)} (2,0) circle (2pt);
\begin{scope}[xshift=4cm]
\draw[thick,fill] (0,0) node {\(3\)} (36:1) circle (2pt) -- (108:1) circle (2pt) -- (180:1) circle (2pt) -- (252:1) circle (2pt) -- (-36:1) circle (2pt) -- (36:1);
\end{scope}
\end{tikzpicture}\right)=(\sfL',\bfn')
\)
}

\subcaptionbox{Incomparable legal pairs}{
\(
(\sfL,\bfn)=\left(\begin{tikzpicture}[baseline=-.5ex]
\draw[thick,fill] (0,0) circle (2pt) node[above] {\(1\)};
\draw[thick,fill] (1,0) circle (2pt) -- node[midway,above] {\(1\)} (2,0) circle (2pt);
\begin{scope}[xshift=4cm]
\draw[thick,fill] (0,0) node {\(2\)} (36:1) circle (2pt) -- (108:1) circle (2pt) -- (180:1) circle (2pt) -- (252:1) circle (2pt) -- (-36:1) circle (2pt) -- (36:1);
\end{scope}
\end{tikzpicture}\right)
\not\preceq
\left(\begin{tikzpicture}[baseline=-.5ex]
\draw[thick,fill] (0,0) circle (2pt) node[above] {\(1\)};
\draw[thick,fill] (1,0) circle (2pt) -- node[midway,above] {\(1\)} (2,0) circle (2pt);
\begin{scope}[xshift=4cm]
\draw[thick,fill] (0,0) node {\(3\)} (36:1) circle (2pt) -- (108:1) circle (2pt) -- (180:1) circle (2pt) -- (252:1) circle (2pt) -- (-36:1) circle (2pt) -- (36:1);
\end{scope}
\end{tikzpicture}\right)=(\sfL,\bfn')
\)
}
\caption{An illustration of the relation \(\preceq\) and a pair of
legal pairs which are not \(\preceq\)-comparable.}
% \caption{Comparable and incomparable legal pairs.}
\label{figure:comparable_legal_pairs}
\end{figure}

\begin{example}[Nonuniqueness of witnesses]\label{Ex:NonuniquenessOfWitnesses}
When the norms of two \(\preceq\)-comparable legal pairs are different, the witness of the relation need not be unique. Consider the following strict comparison:
\[
(\sfL,\bfn)=
\begin{tikzpicture}[baseline=-.5ex]
\draw[thick,fill]
(0,0) circle (2pt) node[above right] {\(2\)}
-- (1,0) circle (2pt)
(0,0) -- (120:1) circle (2pt)
(0,0) -- (240:1) circle (2pt);
\end{tikzpicture}
\qquad\preceq\qquad
\begin{tikzpicture}[baseline=-.5ex]
\draw[thick,fill]
(0,0) circle (2pt) node[above right] {\(3\)}
-- (1,0) circle (2pt)
(0,0) -- (120:1) circle (2pt)
-- (120:2) circle (2pt) node[right] {\(\sfv_1\)}
(0,0) -- (240:1) circle (2pt)
-- (240:2) circle (2pt) node[right] {\(\sfv_2\)};
\end{tikzpicture}
=(\sfL',\bfn').
\]
Here \(\sfL^{\mathrm{ext}}=\{\sfv_1,\sfv_2\}\), and the relation is witnessed by precisely two functions defined as follows:
\[\bfn^{\mathrm{ext}}(\sfv_1)=1,\quad\bfn^{\mathrm{ext}}(\sfv_2)=0, \qquad\text{or}\qquad \bfn^{\mathrm{ext}}(\sfv_1)=0,\quad \bfn^{\mathrm{ext}}(\sfv_2)=1.\]
Each of these witnesses is realized by a unique vertex configuration, namely \(\mathbf y_1=\{\sfv_1\}\) and \(\mathbf y_2=\{\sfv_2\}\), respectively. 
For each \(i\in\{1,2\}\), the configuration \(\mathbf y_i\) determines a cubical embedding \(\iota_{\mathbf y_i}\colon\UD_{\bfn}(\sfL)\hookrightarrow\UD_{\bfn'}(\sfL')\) which is a local isometry. Since \(\mathbf y_1\neq\mathbf y_2\), the two embeddings are distinct.
Thus, when the norms differ, the relation \((\sfL,\bfn)\preceq(\sfL',\bfn')\) need not determine a unique downstairs embedding; the witness and a realizing vertex configuration must be retained when this comparison is lifted.
\end{example}

\begin{lemma}\label{Lem:LocalisometrybetweenLegalPairs}
Let \((\sfL,\bfn)\preceq(\sfL',\bfn')\) be legal pairs, and let \(\bfn^{\mathrm{ext}}\) witness this relation. If \(\bfy\) is a vertex configuration in \(\sfL^{\mathrm{ext}}\) realizing \(\bfn^{\mathrm{ext}}\), then \(\iota_{\mathbf y}\colon \UD_{\bfn}(\sfL)\rightarrow\UD_{\bfn'}(\sfL')\), defined by \(\mathbf x\longmapsto\mathbf x\sqcup\mathbf y\), is a cubical embedding which is a local isometry.

If \(\|\bfn\|=\|\bfn'\|\), then every witness is identically zero and is realized uniquely by the empty configuration. In particular, the relation determines a unique embedding of the above form.
\end{lemma}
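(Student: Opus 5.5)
We first check that \(\iota_{\mathbf y}\) is well defined and cubical. A cube of \(\UD_{\bfn}(\sfL)\) is a collection of pairwise disjoint closed cells of \(\sfL\) with exactly \(\bfn(\sfL_j)\) cells in each component \(\sfL_j\). The vertices of \(\mathbf y\) lie in \(\sfL^{\mathrm{ext}}=\sfL'\setminus\sfL\), which by our convention consists of closed cells disjoint from \(\sfL\). Hence the cells of \(\mathbf x\) and of \(\mathbf y\) are pairwise disjoint, and \(\mathbf x\sqcup\mathbf y\) is a cube of \(\UD_{\|\bfn'\|}(\sfL')\). Its cells inside a component \(\sfL'_i\) are those of \(\mathbf x\) in components \(\sfL_j\subseteq\sfL'_i\) together with those of \(\mathbf y\) in components \(\sfL^{\mathrm{ext}}_k\subseteq\sfL'_i\). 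By the witness identity in \Cref{Def:LegalPairRelation}, this count is \(\bfn'(\sfL'_i)\), so the cube lies in \(\UD_{\bfn'}(\sfL')\). Faces are taken only in the \(\mathbf x\)-part, so \(\iota_{\mathbf y}\) commutes with face maps. Since \(\mathbf x\) can be recovered as the set of cells of \(\mathbf x\sqcup\mathbf y\) meeting \(\sfL\), the map \(\iota_{\mathbf y}\) is injective. It is therefore a cubical embedding.

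For the local isometry we use the full-link criterion from our conventions. Fix a vertex \(\mathbf x\). Link vertices at \(\mathbf x\sqcup\mathbf y\) in \(\UD_{\bfn'}(\sfL')\) correspond to edges \(\sfe\) of \(\sfL'\) with one endpoint an occupied vertex \(\sfv\) and the other endpoint unoccupied. Two such link vertices span an edge of the link exactly when the two edges of \(\sfL'\) are disjoint. The same description holds in \(\UD_{\bfn}(\sfL)\), and the induced link map is injective. For fullness, take link vertices \(\sfe_1,\dots,\sfe_k\) coming from \(\UD_{\bfn}(\sfL)\), that is, edges of \(\sfL\) moving particles of \(\mathbf x\), which are pairwise adjacent in the ambient link. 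Then they are pairwise disjoint, so they span a cube in the ambient link. Since each \(\sfe_i\) lies in \(\sfL\), the particle stays in the same component \(\sfL_j\), so this cube already lies in \(\UD_{\bfn}(\sfL)\).

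The step needing care, and the main obstacle, is that the image link is full as a subcomplex, so that the image is locally convex. The risk is that an ambient edge moving a particle of \(\mathbf x\) might be mistaken for an image edge when it is not. By definition, however, the image link consists precisely of the edges of \(\sfL\) that move particles of \(\mathbf x\) while \(\mathbf y\) stays fixed. Flagness of both links (Abrams) then reduces fullness to the pairwise statement already checked, so there is no ambiguity once the image link is identified this way.

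Finally, suppose \(\|\bfn\|=\|\bfn'\|\). Summing the witness identity over \(\pi_0(\sfL')\) uses each component of \(\sfL\) and of \(\sfL^{\mathrm{ext}}\) exactly once, because \(\sfL\subseteq\sfL'\) and components of subgraphs lie in a unique component of \(\sfL'\). This gives \(\|\bfn'\|=\|\bfn\|+\sum_k\bfn^{\mathrm{ext}}(\sfL^{\mathrm{ext}}_k)\). Hence \(\bfn^{\mathrm{ext}}\equiv0\), its only realizing configuration is \(\mathbf y=\varnothing\), and \(\iota_\varnothing\) is the unique embedding of this form.
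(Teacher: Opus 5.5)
Your proof is correct and follows essentially the same route as the paper. The paper first identifies \(\UD_{\bfn}(\sfL)\) with \(\UD_{\bfn\sqcup\bfone_{\bfy}}(\sfL\sqcup\bfy)\), adjoining the vertices of \(\bfy\) as singleton components, and then applies the full-link criterion to the natural inclusion into \(\UD_{\bfn'}(\sfL')\), as you do directly, with the same summation argument for the equal-norm case; your appeal to flagness is harmless but unnecessary, since your pairwise-disjointness argument already shows that every ambient link simplex with vertices in the image comes from a cube of \(\UD_{\bfn}(\sfL)\).
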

\begin{proof}
Let \((\sfL^\bfy,\bfn^\bfy)\coloneqq(\sfL\sqcup\bfy,\ \bfn\sqcup\bfone_{\bfy})\) be the legal pair obtained from
\((\sfL,\bfn)\) by adjoining every vertex of \(\bfy\) as a singleton component, where \(\bfone_{\bfy}\) assigns one particle to each of them.
Then \(\UD_{\bfn}(\sfL)\rightarrow\UD_{\bfn^\bfy}(\sfL^\bfy)\), defined by \(\bfx\mapsto\bfx\sqcup\bfy\),
is a cubical isomorphism.

The natural cubical embedding \(\UD_{\bfn^\bfy}(\sfL^\bfy)\hookrightarrow \UD_{\bfn'}(\sfL')\) is a local isometry. Indeed, a vertex of the link of \(\mathbf x\sqcup\mathbf y\) in \(\UD_{\bfn^\bfy}(\sfL^\bfy)\) corresponds to moving one particle of \(\mathbf x\) along an edge of \(\sfL\). The same move defines a vertex of its link in \(\UD_{\bfn'}(\sfL')\), so the induced map on links is injective.
If a collection of vertices in the image of this link map spans a simplex in the target link, then the corresponding closed edges of \(\sfL\) are pairwise disjoint and are disjoint from all stationary particles of \(\mathbf x\sqcup\mathbf y\). The same moves can be performed simultaneously in \(\UD_{\bfn^\bfy}(\sfL^\bfy)\) and thus the collection already spans a simplex in the source link. The full-link criterion now shows that this embedding is a local isometry. Hence so is \(\iota_{\bfy}\), being the composition of this embedding with a cubical isomorphism.

Finally, suppose that \(\|\bfn\|=\|\bfn'\|\). Summing the equalities in the definition of \(\preceq\) over all components of \(\sfL'\) gives
\[0=\|\bfn'\|-\|\bfn\|=\sum_{\sfL^{\mathrm{ext}}_k\in\pi_0(\sfL^{\mathrm{ext}})}\bfn^{\mathrm{ext}}(\sfL^{\mathrm{ext}}_k).\]
Since \(\bfn^{\mathrm{ext}}\) is nonnegative, it is identically zero. Its unique realizing configuration is therefore \(\mathbf y=\varnothing\). Hence the relation determines a unique embedding of the stated form.
\end{proof}

\begin{convention}
By the final assertion of \Cref{Lem:LocalisometrybetweenLegalPairs}, every legal pair \((\sfL,\bfn)\) with \(\|\bfn\|=n\) uniquely determines a cubical embedding \(\UD_{\bfn}(\sfL)\hookrightarrow\UD_n(\Graf)\) which is a local isometry. We identify \(\UD_{\bfn}(\sfL)\) with its image, and call this image the \emph{legal-pair subcomplex} associated to \((\sfL,\bfn)\).
\end{convention}

\begin{lemma}\label{Lem:Intersection_of_Legal_pairs}
Let \((\sfL,\bfn)\) and \((\sfL',\bfn')\) be legal pairs of norm \(n\). Then
the intersection \[\UD_{\bfn}(\sfL)\cap\UD_{\bfn'}(\sfL')\subseteq\UD_n(\Graf)\]
is a disjoint union of legal-pair subcomplexes.

More precisely, every connected component of the intersection is represented by the normalization of a pair \((\sfL'',\bfn'')\) with \(\|\bfn''\|=n\) satisfying the following conditions:
\begin{enumerate}[label=\textup{(\arabic*)}, ref=\arabic*]
\item
\(\sfL''\) is a union of connected components of \(\sfL\cap\sfL'\);
\item
\(\bfn''\colon\pi_0(\sfL'')\rightarrow\mathbb Z_{\geq1}\) satisfies
\(\bfn''(\sfL''_k)\leq\#V(\sfL''_k)\) for every \(\sfL''_k\in\pi_0(\sfL'')\), together with the following equalities:
\[\bfn(\sfL_i)=\sum_{\substack{\sfL''_k\in\pi_0(\sfL'')\\ \sfL''_k\subseteq\sfL_i}}\bfn''(\sfL''_k),\quad\text{\(\forall \sfL_i\in\pi_0(\sfL)\),}\qquad\text{and}\qquad
\bfn'(\sfL'_j)=\sum_{\substack{\sfL''_k\in\pi_0(\sfL'')\\ \sfL''_k\subseteq\sfL'_j}}\bfn''(\sfL''_k),\quad\text{\(\forall\sfL'_j\in\pi_0(\sfL')\).}\]
\end{enumerate}
\end{lemma}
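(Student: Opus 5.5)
The plan is to describe the intersection cube by cube, then group its cubes according to the particle distribution they induce on the components of \(\sfL\cap\sfL'\). A cube of \(\UD_n(\Graf)\) is a collection \(\sigma=\{\sfe_1,\dots,\sfe_k,\sfv_1,\dots,\sfv_{n-k}\}\) of pairwise disjoint closed cells. It lies in \(\UD_{\bfn}(\sfL)\) exactly when every cell lies in \(\sfL\) and each component \(\sfL_i\) contains exactly \(\bfn(\sfL_i)\) cells; the analogous statement holds for \((\sfL',\bfn')\). Hence \(\sigma\) lies in both subcomplexes precisely when all its cells lie in \(\sfL\cap\sfL'\) and both counting conditions hold.

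Every cell lies in a unique component of \(\sfL\cap\sfL'\), so the cube \(\sigma\) determines a function \(\bfn_\sigma\) on the set of components of \(\sfL\cap\sfL'\) that contain at least one cell. Define \(\sfL''_\sigma\) to be the union of those components, and set \(\bfn''=\bfn_\sigma\). These data satisfy the constraints in the statement. Each value is positive and at most \(\#V(\sfL''_k)\), because each cell is disjoint from the others and contains a vertex. Each component of \(\sfL\cap\sfL'\) lies in a unique \(\sfL_i\) and a unique \(\sfL'_j\), so the two summation identities are the counting conditions rewritten.

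The next step is to check that \(\bfn_\sigma\) is locally constant on the intersection. Passing from a cube to one of its faces replaces an edge \(\sfe\) by one of its endpoints, and that endpoint lies in the same component of \(\sfL\cap\sfL'\) as \(\sfe\). So a cube and its faces have the same distribution, and since the intersection is a union of cubes, the distribution is constant on each connected component.

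Conversely, fix a pair \((\sfL'',\bfn'')\) satisfying the stated conditions. I claim that \(\UD_{\bfn''}(\sfL'')\), taken as the subcomplex of \(\UD_n(\Graf)\) of configurations with exactly \(\bfn''(\sfL''_k)\) cells in each \(\sfL''_k\), equals the union of the cubes of the intersection with distribution \(\bfn''\). Cube by cube this is immediate from the previous paragraph. The subcomplex is connected because it is a product of the connected spaces \(\UD_{\bfn''(\sfL''_k)}(\sfL''_k)\), by \Cref{Rem:ConfigurationSpaceDegenerate} together with the point cases. Therefore the connected components of the intersection correspond bijectively to the realizable distributions \(\bfn''\), and distinct distributions give disjoint subcomplexes.

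Finally, apply the normalization of \Cref{Rem:LegalPairNormalization}. It replaces each fully occupied non-singleton component by singletons, and it does not change the configuration space. This makes each connected component a legal-pair subcomplex of norm \(n\), identified with its image under the unique embedding from \Cref{Lem:LocalisometrybetweenLegalPairs}.

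I expect the main obstacle to be bookkeeping rather than a conceptual step. One has to check that restricting to components of \(\sfL\cap\sfL'\) loses nothing. The point is that a configuration lies in the subcomplex determined by \(\sfL''\) exactly when its cells avoid the other components, and this is ensured because zero-count components were omitted from \(\sfL''\). One also has to check that normalization is compatible with this identification inside \(\UD_n(\Graf)\).
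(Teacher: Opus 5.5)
Your proposal is correct and follows essentially the same route as the paper. Both arguments classify each cube of the intersection by the particle distribution it induces on the occupied components of \(\sfL\cap\sfL'\), identify each class with \(\UD_{\bfn''}(\sfL'')\), use connectedness of these subcomplexes to recognize them as the connected components, and finish by normalization. Your explicit face-compatibility check (passing to a face preserves the distribution, so distinct classes are disjoint closed subcomplexes) spells out a step the paper leaves implicit.
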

\begin{proof}
A cell of \(\UD_n(\Graf)\), represented by an unordered collection \(\{\sigma_1,\dots,\sigma_n\}\), lies in \(\UD_{\bfn}(\sfL)\cap\UD_{\bfn'}(\sfL')\) if and only if every \(\sigma_j\) is contained in \(\sfL\cap\sfL'\) and both sets of particle-counting conditions determined by \(\bfn\) and \(\bfn'\) are satisfied.
Such a cell determines a union \(\sfL''\) of connected components of \(\sfL\cap\sfL'\), namely those containing at least one \(\sigma_j\), together with a positive particle distribution 
\[\bfn''\colon\pi_0(\sfL'')\rightarrow\mathbb Z_{\geq1}\quad\text{defined by}\quad\bfn''(\sfL''_k)=\#\{j\mid \sigma_j\subseteq\sfL''_k\}.\]
The cells determining the same pair \((\sfL'',\bfn'')\) form precisely the subcomplex \(\UD_{\bfn''}(\sfL'')\).

Compatibility with the two original particle distributions is equivalent to the equalities in \textup{(2)}. Conversely, every pair \((\sfL'',\bfn'')\) satisfying these conditions determines a subcomplex
\[\UD_{\bfn''}(\sfL'')\subseteq \UD_{\bfn}(\sfL)\cap\UD_{\bfn'}(\sfL'),\]
and every cell in the intersection belongs to exactly one such subcomplex.

The pair \((\sfL'',\bfn'')\) can fail to be legal only when some non-singleton component \(\sfL''_k\in\pi_0(\sfL'')\) satisfies \(\bfn''(\sfL''_k)=\#V(\sfL''_k)\). By \Cref{Rem:LegalPairNormalization}, normalization preserves both the norm and the associated subcomplex.
Thus the resulting pairs are legal pairs of norm \(n\), and the intersection is a disjoint union of legal-pair subcomplexes. Since each of these subcomplexes is connected, they are precisely the connected components of the intersection.
\end{proof}

\subsubsection{Undistorted product subgroups}

The following is an easy consequence of \Cref{Lem:LocalisometrybetweenLegalPairs}.

\begin{corollary}[Undistorted legal-pair subgroups]\label{Cor:UndistortedLegalPairSubgroups}
For every legal pair \((\sfL,\bfn)\) of norm \(n\), the inclusion \(\UD_{\bfn}(\sfL)\hookrightarrow\UD_n(\Graf)\), after choices of basepoints, induces an injective homomorphism with undistorted image. Moreover,
\[\pi_1\bigl(\UD_{\bfn}(\sfL)\bigr)\cong\prod_{\sfL_i\in\pi_0(\sfL)}\GBGdisc_{\bfn(\sfL_i)}(\sfL_i).\]
\end{corollary}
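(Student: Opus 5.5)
The plan is to combine \Cref{Lem:LocalisometrybetweenLegalPairs} with standard facts about local isometries of compact nonpositively curved cube complexes, and then read off the product structure from \eqref{Eq:ProductofGBG}.

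\emph{Injectivity and undistortion.} By the convention following \Cref{Lem:LocalisometrybetweenLegalPairs}, the legal pair \((\sfL,\bfn)\) of norm \(n\) determines a cubical embedding \(\UD_{\bfn}(\sfL)\hookrightarrow\UD_n(\Graf)\) that is a local isometry. The source is connected, being a finite product of connected configuration spaces: each factor \(\UD_{\bfn(\sfL_i)}(\sfL_i)\) is either a point (singleton component) or satisfies \(1\le\bfn(\sfL_i)<\#V(\sfL_i)\), so \Cref{Rem:ConfigurationSpaceDegenerate} applies. Both complexes are compact and nonpositively curved.

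Choose a vertex configuration \(\mathbf x\in\UD_{\bfn}(\sfL)\) as a basepoint, and use it also in \(\UD_n(\Graf)\); a change of basepoint in the ambient complex only conjugates the image. By \Cref{Lem:IntersectionOfLifts}, a lift \(\widetilde{\UD}_{\bfn}(\sfL)\subseteq\widetilde{\UD}_n(\Graf)\) through a lift of \(\mathbf x\) is a convex subcomplex, and it is simply connected since it is \(\CAT(0)\). Hence the restricted covering map is the universal cover of \(\UD_{\bfn}(\sfL)\), which gives \(\pi_1\)-injectivity by covering theory. This is the standard \cite[Proposition~II.4.14]{BH} argument.

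Undistortion then follows by a Švarc--Milnor argument. The image subgroup \(H\) acts properly and cocompactly on the convex lift, which is isometrically embedded in \(\widetilde{\UD}_n(\Graf)\). Since \(\GBGdisc_n(\Graf)\) acts properly and cocompactly on \(\widetilde{\UD}_n(\Graf)\), the orbit maps are quasi-isometries, so the inclusion of \(H\) is a quasi-isometric embedding. One can use either the combinatorial or the \(\CAT(0)\) metric, since convex subcomplexes are isometrically embedded for both.

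\emph{Product decomposition.} By \eqref{Eq:ProductofGBG}, \(\UD_{\bfn}(\sfL)\) is the product of the factors \(\UD_{\bfn(\sfL_i)}(\sfL_i)\) over \(\sfL_i\in\pi_0(\sfL)\). The fundamental group of a finite product of connected spaces is the product of the fundamental groups, which gives the displayed isomorphism with \(\prod_i\GBGdisc_{\bfn(\sfL_i)}(\sfL_i)\). Singleton factors contribute trivial groups and may be retained or dropped.

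The only point needing care is the basepoint bookkeeping: the subcomplex need not contain the ambient base configuration, so the embedding is defined only up to conjugacy. Undistortion is conjugation-invariant, so this does not affect the conclusion.
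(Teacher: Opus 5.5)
Your proposal is correct and follows essentially the same route as the paper. The local isometry from \Cref{Lem:LocalisometrybetweenLegalPairs} gives $\pi_1$-injectivity and convex lifts via \Cref{Lem:IntersectionOfLifts}, the Milnor--{\v S}varc lemma gives undistortion, and the product formula is read off from \eqref{Eq:ProductofGBG}; you simply spell out the covering-space, connectedness, and basepoint details that the paper leaves implicit.
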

\begin{proof}
The product decomposition follows from \eqref{Eq:ProductofGBG}. By \Cref{Lem:LocalisometrybetweenLegalPairs}, the inclusion \(\UD_{\bfn}(\sfL)\hookrightarrow\UD_n(\Graf)\) is a local isometry, so it is injective on fundamental groups and every lift is convex by \Cref{Lem:IntersectionOfLifts}. The Milnor--{\v S}varc lemma then implies that its image is undistorted.
\end{proof}

\begin{example}[Free abelian and stable-range product subgroups]
\label{Ex:LegalPairProductSubgroups}
Assume that \(\Graf\) is sufficiently subdivided for \(n\); by \Cref{Thm:DiscreteConfigurationRetraction}, \(\GBGdisc_n(\Graf)\cong\GBG_n(\Graf)\).

\begin{enumerate}[label=\textup{(\arabic*)}, ref=\arabic*]
\item\label{Item:FreeAbelianLegalPair}
Let \(\ell_1,\dots,\ell_p\) and \(\sfv_1,\dots,\sfv_q\) be \(p\) embedded cycles and \(q\) essential vertices that are pairwise disjoint, where \(p+q>0\) and \(p+2q\leq n\).
Since \(\Graf\) is sufficiently subdivided for \(n\), we may choose pairwise disjoint tripods \(\tau_1,\dots,\tau_q\), centered respectively at \(\sfv_1,\dots,\sfv_q\), which are also disjoint from the cycles.

Assign one particle to each cycle and two particles to each tripod. If \(p>0\), assign the remaining \(n-p-2q\) particles to \(\ell_1\). This is possible because \(\ell_1\) has at least \(n+1\) vertices. If \(p=0\), fix the remaining \(n-2q\) particles at distinct vertices outside the tripods. The sufficient-subdivision conditions provide enough such vertices, as may be seen by following a bivalent path from one of the tripod centers to the next non-bivalent vertex, or back to the same center around a cycle.

Let \((\sfL_{\mathrm{ab}},\bfn_{\mathrm{ab}})\) be the resulting legal pair, and set \(a_i=\bfn_{\mathrm{ab}}(\ell_i)\).
Omitting the one-point factors, its legal-pair subcomplex decomposes as
\[\UD_{\bfn_{\mathrm{ab}}}(\sfL_{\mathrm{ab}})\cong\prod_{i=1}^{p}\UD_{a_i}(\ell_i)\times\prod_{j=1}^{q}\UD_2(\tau_j).\]
Each space \(\UD_{a_i}(\ell_i)\) has infinite cyclic fundamental group, as does each \(\UD_2(\tau_j)\). Hence
\[\pi_1\bigl(\UD_{\bfn_{\mathrm{ab}}}(\sfL_{\mathrm{ab}})\bigr)\cong\mathbb Z^{p+q}.\]
By \Cref{Cor:UndistortedLegalPairSubgroups}, this gives an undistorted subgroup \(\mathbb Z^{p+q}<\GBGdisc_n(\Graf)\cong\GBG_n(\Graf)\).

\item\label{Item:StableRangeLegalPair}
Let \(m(\Graf)\) be the number of essential vertices of \(\Graf\), and let \(m_3(\Graf)\) be the number of trivalent vertices. Suppose that \(m(\Graf)>0\) and \(n\geq2m(\Graf)+m_3(\Graf)\).
After passing to a further subdivision, if necessary, and continuing to denote it by \(\Graf\), choose pairwise disjoint subgraphs \(\sfS_{\sfv}\), indexed by the essential vertices \(\sfv\) of \(\Graf\), as follows:
\begin{itemize}
\item if \(\deg(\sfv)\geq4\), let \(\sfS_{\sfv}\) be the star consisting of \(\sfv\) and the first edge in every incident direction, and set \(r_{\sfv}=2\);
\item if \(\deg(\sfv)=3\), let \(\sfS_{\sfv}\) be a once-subdivided tripod centered at \(\sfv\), and set \(r_{\sfv}=3\).
\end{itemize}
We may also arrange that the remaining
\[n-\sum_{\sfv\text{ essential}}r_{\sfv}=n-\bigl(2m(\Graf)+m_3(\Graf)\bigr)\]
particles can be fixed at distinct vertices outside these subgraphs.

This determines a legal pair \((\sfL_{\mathrm{prod}},\bfn_{\mathrm{prod}})\) satisfying
\[\UD_{\bfn_{\mathrm{prod}}}(\sfL_{\mathrm{prod}})\cong \prod_{\sfv\in V_{\mathrm{ess}}(\Graf)}\UD_{r_{\sfv}}(\sfS_{\sfv}).\]
For each essential vertex \(\sfv\), set \(H_{\sfv}=\GBGdisc_{r_{\sfv}}(\sfS_{\sfv})\). By \cite[Examples~4 and~5]{JS25GBG}, the group \(H_{\sfv}\) is a finitely generated nonabelian free group in either case. Therefore \Cref{Cor:UndistortedLegalPairSubgroups} gives an undistorted subgroup  \(\prod_{\sfv\in V_{\mathrm{ess}}(\Graf)}H_{\sfv}<\GBGdisc_n(\Graf)\).
For each \(\sfv\), choose a rank-two free factor \(K_{\sfv}<H_{\sfv}\). Then \(\prod_{\sfv}K_{\sfv}\) is undistorted in \(\prod_{\sfv}H_{\sfv}\). Hence, we obtain an undistorted subgroup
\[\prod_{\sfv\in V_{\mathrm{ess}}(\Graf)}K_{\sfv}\cong\bbF_2^{\,m(\Graf)}<\GBG_n(\Graf).\]
\end{enumerate}

The second construction recovers
\cite[Theorem~2]{JS25GBG}.
\end{example}

\subsection{Construction of the factor system}
Let \(p\colon\widetilde{\UD}_n(\Graf)\rightarrow\UD_n(\Graf)\) be the universal covering map. If \(A\subseteq\UD_n(\Graf)\) is a legal-pair subcomplex, i.e., the subcomplex associated to a legal pair with norm equal to \(n\), then we call each connected component of \(p^{-1}(A)\) a \emph{legal-pair lift}; see \Cref{Fig:LegalPairTerminology}.
In this subsection, we prove that the collection of all legal-pair lifts forms a factor system in \(\widetilde{\UD}_n(\Graf)\).

\begin{figure}[ht]
\centering
\resizebox{\linewidth}{!}{%
\begin{tikzpicture}[
box/.style={draw,
    rounded corners=3pt,
    align=center,
    inner sep=7pt},
  every node/.style={font=\normalsize}
]
\node[box] (pair) at (0,0)
{a legal pair \((\sfL,\bfn)\) with an embedding \(\UD_{\bfn}(\sfL)\hookrightarrow\UD_n(\Graf)\)};
\node[box,text width=3cm] (lower) at (-4.5,-2.2){an embedding requires a witness};
\node[box,text width=4.1cm] (equal) at (0,-2.2)
  {unique embedding;\\  \emph{legal-pair subcomplex}};
\node[box,text width=4.9cm] (lifts) at (5.5,-2.2){components of \(p^{-1}\!\left(\UD_{\bfn}(\sfL)\right)\);\\ \emph{legal-pair lifts}};

\draw[->,thick]([xshift=6mm]pair.south west) -- node[pos=.55,right] {\(\|\bfn\|<n\)} (lower.north);
\draw[->,thick] (pair) --  node[pos=.58,right] {\(\|\bfn\|=n\)} (equal);
\draw[->,thick] (equal) -- (lifts);
\end{tikzpicture}%
}
\caption{Terminology for legal pairs relative to the fixed ambient particle number \(n\). The terms \emph{legal-pair subcomplex} and \emph{legal-pair lift} are reserved for legal pairs of norm \(n\).}
\label{Fig:LegalPairTerminology}
\end{figure}

The description of hyperplane carriers in \Cref{Lem:Hyperplane_in_UD_n} immediately gives the following.

\begin{corollary}[Hyperplane carriers and combinatorial hyperplanes as legal-pair subcomplexes]\label{Cor:CombinatorialHyperplanesLegal}
Every hyperplane carrier in \(\UD_n(\Graf)\), as well as each of its
two combinatorial hyperplanes, is a legal-pair subcomplex.
\end{corollary}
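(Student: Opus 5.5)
The plan is to read off the legal pairs directly from the product decomposition in \Cref{Lem:Hyperplane_in_UD_n}. Fix an edge \(\bfE=\{\sfe,\sfv_1,\dots,\sfv_{n-1}\}\) of \(\UD_n(\Graf)\), and let \(\sfe\) have endpoints \(\sfu,\sfw\). By \Cref{Lem:Hyperplane_in_UD_n}, the carrier is
\[\mathcal N(H_{\bfE})=\UD_{\bfn_{\bfE}}(\Graf_{\bfE})\times\sfe,\]
regarded as a subcomplex of \(\UD_n(\Graf)\). By \Cref{Rem:CombinatorialHyperplanesUD}, the two combinatorial hyperplanes are \(\UD_{\bfn_{\bfE}}(\Graf_{\bfE})\times\{\sfu\}\) and \(\UD_{\bfn_{\bfE}}(\Graf_{\bfE})\times\{\sfw\}\).

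For the combinatorial hyperplanes, form the pair \((\Graf_{\bfE}\sqcup\{\sfu\},\ \bfn_{\bfE}\sqcup\bfone_{\{\sfu\}})\). Here \(\sfu\) is a singleton component, and it is disjoint from \(\Graf_{\bfE}\subseteq\Graf\setminus\sfe\), since \(\Graf\setminus\sfe\) omits the endpoints of \(\sfe\) and every edge incident to them. Its norm is \(\|\bfn_{\bfE}\|+1=(n-1)+1=n\). Its subcomplex in \(\UD_n(\Graf)\) consists exactly of the configurations with one particle at \(\sfu\) and the others distributed in \(\Graf_{\bfE}\) according to \(\bfn_{\bfE}\), which is the combinatorial hyperplane. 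The same construction with \(\sfw\) in place of \(\sfu\) gives the other combinatorial hyperplane.

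For the carrier, form \((\Graf_{\bfE}\sqcup\sfe,\ \bfn_{\bfE}\sqcup\bfone_{\sfe})\), where the closed edge \(\sfe\) is a component with two vertices carrying one particle. The cells of its subcomplex are the collections consisting of one cell of \(\sfe\) (either endpoint or the edge itself) together with a cell of \(\UD_{\bfn_{\bfE}}(\Graf_{\bfE})\). This is exactly \(\UD_{\bfn_{\bfE}}(\Graf_{\bfE})\times\sfe\), which equals \(\mathcal N(H_{\bfE})\).

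It remains to check legality, which is the only point needing care. Each component \(\sfL_i\) of \(\Graf_{\bfE}\) satisfies \(0<\bfn_{\bfE}(\sfL_i)\leq\#V(\sfL_i)\), because its particles occupy distinct vertices of \(\sfL_i\). Equality may occur for a non-singleton component, for instance a fully occupied component of \(\Graf\setminus\sfe\). In that case, replace the pair by its normalization in the sense of \Cref{Rem:LegalPairNormalization}. This preserves both the norm and the associated subcomplex, so the carrier and combinatorial hyperplanes are still legal-pair subcomplexes. The added components cause no problem: a singleton \(\{\sfu\}\) with one particle is legal, and \(\sfe\) has two vertices and one particle, so \(1<2\) is the required strict inequality.

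Finally, since the norm equals \(n\), the identification of each of these pairs with a subcomplex of \(\UD_n(\Graf)\) is the unique embedding given by \Cref{Lem:LocalisometrybetweenLegalPairs}.
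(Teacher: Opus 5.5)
Your proposal is correct and follows the paper's proof. Both arguments read the carrier off the decomposition \(\mathcal N(H_{\bfE})\cong\UD_{\bfn_{\bfE}}(\Graf_{\bfE})\times\UD_1(\sfe)\) as the subcomplex of the norm-\(n\) distribution on \(\Graf_{\bfE}\sqcup\sfe\), normalize it, and replace \(\sfe\) by one of its endpoints to obtain the two combinatorial hyperplanes. Your checks of the disjointness of \(\sfe\) from \(\Graf_{\bfE}\), of the norm count, and of the legality of the added components spell out details that the paper leaves implicit.
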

\begin{proof}
Let \(H_{\bfE}\) be the hyperplane dual to an edge \(\bfE=\{\sfe,\sfv_1,\dots,\sfv_{n-1}\}\), and retain the notation \((\Graf_{\bfE},\bfn_{\bfE})\) introduced immediately before \Cref{Lem:Hyperplane_in_UD_n}. Then, by \Cref{Lem:Hyperplane_in_UD_n}, 
\[\mathcal N(H_{\bfE})\cong \UD_{\bfn_{\bfE}}(\Graf_{\bfE})\times\UD_1(\sfe).\]
Thus the carrier is the configuration-space subcomplex associated to the particle distribution on \(\Graf_{\bfE}\sqcup\sfe\) which restricts to \(\bfn_{\bfE}\) on \(\Graf_{\bfE}\) and assigns one particle to \(\sfe\). This distribution has norm \(n\), so after normalization the carrier is a legal-pair subcomplex. 

Replacing \(\sfe\) by either of its endpoints gives the two combinatorial hyperplanes. The same argument shows that they are legal-pair subcomplexes.
\end{proof}

We next record that legal-pair lifts are preserved under parallel transport across hyperplane carriers.

\begin{lemma}[Parallel transport across a hyperplane]\label{Lem:ParallelTransportLegalPair}
Let \(F\) be a legal-pair lift in \(\widetilde{\UD}_n(\Graf)\), and let \(\widetilde H\) be a hyperplane such that \(F\cap\mathcal N(\widetilde H)\neq\varnothing\). Then \(F\cap\mathcal N(\widetilde H)\) is also a legal-pair lift. 

Moreover, if \(F\cap\mathcal N(\widetilde H)\) is contained in one of the two combinatorial hyperplanes of \(\mathcal N(\widetilde H)\), then its parallel copy in the other combinatorial hyperplane is also a legal-pair lift.
\end{lemma}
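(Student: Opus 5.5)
The plan is to reduce both assertions to statements downstairs in \(\UD_n(\Graf)\), using \Cref{Lem:IntersectionOfLifts}, \Cref{Lem:Intersection_of_Legal_pairs} and \Cref{Cor:CombinatorialHyperplanesLegal}.

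Write \(F\) as a component of \(p^{-1}(\UD_{\bfn}(\sfL))\). By \Cref{Lem:Hyperplane_in_UD_n}, \(\widetilde H\) lies over some \(H_{\bfE}\), and \(\mathcal N(\widetilde H)\) is a component of \(p^{-1}(\mathcal N(H_{\bfE}))\). By \Cref{Cor:CombinatorialHyperplanesLegal}, \(\mathcal N(H_{\bfE})\) is the legal-pair subcomplex of the normalization of \((\Graf_{\bfE}\sqcup\sfe,\ \bfn_{\bfE}\sqcup 1)\).

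\emph{First assertion.} Both \(F\) and \(\mathcal N(\widetilde H)\) are lifts of connected locally convex subcomplexes (\Cref{Lem:LocalisometrybetweenLegalPairs}). By the second part of \Cref{Lem:IntersectionOfLifts}, their nonempty intersection is a lift of a connected component of \(\UD_{\bfn}(\sfL)\cap\mathcal N(H_{\bfE})\). By \Cref{Lem:Intersection_of_Legal_pairs}, that component is itself a legal-pair subcomplex of norm \(n\). Hence \(F\cap\mathcal N(\widetilde H)\) is a component of the preimage of a legal-pair subcomplex, i.e.\ a legal-pair lift. The only point to check is that "a lift of a component" in \Cref{Lem:IntersectionOfLifts} means a full connected component of the preimage. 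I would argue this by convexity: a connected subset of the preimage containing the intersection lies in both \(F\) and \(\mathcal N(\widetilde H)\), since each of these is a component of its own preimage.

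\emph{Second assertion.} Let \(A\) be the legal-pair subcomplex of the pair \((\sfL'',\bfn'')\) found above, with \(A\subseteq\mathcal N(H_{\bfE})\). Suppose \(F\cap\mathcal N(\widetilde H)\) lies in one combinatorial hyperplane. Then \(A\) lies in one combinatorial hyperplane downstairs, say the one with the moving particle at endpoint \(\sfu\) of \(\sfe\); here I use that \(p\) maps the two combinatorial hyperplanes of \(\mathcal N(\widetilde H)\) to those of \(\mathcal N(H_{\bfE})\), which follows from the product structure \(\mathcal N(\widetilde H)\cong\widetilde H\times[0,1]\). So in every configuration of \(A\), one particle sits at \(\sfu\) and the other particles lie in \(\Graf_{\bfE}\). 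Consequently \(\{\sfu\}\) is a singleton component of \(\sfL''\) carrying one particle, and the rest of \((\sfL'',\bfn'')\) is a legal pair \((\sfL_0,\bfn_0)\) inside \(\Graf_{\bfE}\). Replacing \(\sfu\) by the other endpoint \(\sfw\) gives the pair \((\sfL_0\sqcup\{\sfw\},\bfn_0\sqcup 1)\), which is still legal because \(\sfw\notin\Graf_{\bfE}\).

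Call its subcomplex \(A'\). Under the product decomposition \(\mathcal N(H_{\bfE})\cong\UD_{\bfn_{\bfE}}(\Graf_{\bfE})\times[0,1]\), we have \(A=\UD_{\bfn_0}(\sfL_0)\times\{0\}\) and \(A'=\UD_{\bfn_0}(\sfL_0)\times\{1\}\). Using \(\mathcal N(\widetilde H)\cong\widetilde H\times[0,1]\), the parallel copy of \(F\cap\mathcal N(\widetilde H)=\widetilde A\times\{0\}\) is \(\widetilde A\times\{1\}\). This is a connected subset of \(p^{-1}(A')\), and it is a full component of \(p^{-1}(A')\cap\mathcal N(\widetilde H)\).

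The main obstacle is showing that \(\widetilde A\times\{1\}\) is a full component of \(p^{-1}(A')\) in all of \(\widetilde{\UD}_n(\Graf)\), not merely within the carrier. I would first try the following: a component \(C\) of \(p^{-1}(A')\) meeting \(\widetilde A\times\{1\}\) is connected and lies over \(A'\subseteq\mathcal N(H_{\bfE})\). Hence \(C\) lies in a single component of \(p^{-1}(\mathcal N(H_{\bfE}))\), namely \(\mathcal N(\widetilde H)\), and within it in the slice over \(\{1\}\). Therefore \(C=\widetilde A\times\{1\}\).
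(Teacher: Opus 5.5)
Your proposal is correct and follows the same route as the paper. The first assertion comes from the carrier being a legal-pair subcomplex (\Cref{Cor:CombinatorialHyperplanesLegal}) together with \Cref{Lem:Intersection_of_Legal_pairs,Lem:IntersectionOfLifts}. The second comes from swapping the occupied endpoint of \(\sfe\) in the product decomposition of the carrier. Your extra checks fill in details the paper leaves implicit: that the relevant intersections and parallel copies are full components of preimages, and that the swapped pair \((\sfL_0\sqcup\{\sfw\},\bfn_0\sqcup 1)\) is already legal, so no normalization is needed.
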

\begin{proof}
Let \(H_{\bfE}\) be the hyperplane of \(\UD_n(\Graf)\) covered by \(\widetilde H\). By \Cref{Lem:Hyperplane_in_UD_n}, \(\mathcal N(\widetilde H)\) is a connected component of \(p^{-1}(\mathcal N(H_{\bfE}))\). Since \(\mathcal N(H_{\bfE})\) is a legal-pair subcomplex by \Cref{Cor:CombinatorialHyperplanesLegal}, \(\mathcal N(\widetilde H)\) is a legal-pair lift. Hence \Cref{Lem:Intersection_of_Legal_pairs,Lem:IntersectionOfLifts} shows that \(F\cap\mathcal N(\widetilde H)\) is a legal-pair lift.

Under the carrier product decomposition, the boundary swap replaces the particle at one endpoint of the edge of \(\Graf\) dual to \(\widetilde H\) by a particle at the other endpoint, while leaving all remaining cells unchanged. It therefore sends \(F\cap\mathcal N(\widetilde H)\) to a connected component of the full preimage of a configuration-space subcomplex determined by a particle distribution of norm \(n\) on a subgraph of \(\Graf\). After normalization, this parallel copy is a legal-pair lift.
\end{proof}

Consequently, by iterating the lemma along the interval factor, legal-pair lifts are preserved under parallel transport between the boundary fibers of a cubical product region.

\begin{proposition}[The full legal-pair factor system]\label{Prop:LCsubcomplexes_of_UD_n}
Let \(\frFLP^0\) be the collection of all legal-pair lifts in \(\widetilde{\UD}_n(\Graf)\). Then \(\frFLP^0\) is a \(\GBGdisc_n(\Graf)\)-invariant factor system.

Moreover, every gate projection between two factors is again a factor, so the constant in \Cref{Def:factor system}\eqref{Item:Projections} may be taken to be \(0\).
\end{proposition}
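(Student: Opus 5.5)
We verify the four axioms of \Cref{Def:factor system} in order, together with deck-invariance. Invariance is immediate: the set of legal-pair subcomplexes downstairs is fixed, and deck transformations permute the components of each full preimage \(p^{-1}(\UD_{\bfn}(\sfL))\). For axiom \eqref{Item:Elements_of_FS}, each legal-pair subcomplex is connected and locally convex by \Cref{Lem:LocalisometrybetweenLegalPairs}, so every lift is convex by \Cref{Lem:IntersectionOfLifts}. The whole space \(\widetilde{\UD}_n(\Graf)\) is the lift of \(\UD_{\bfn}(\Graf)\) with \(\bfn\) assigning \(n\) to \(\Graf\). This pair is legal because \(n<\#V(\Graf)\).

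For axiom \eqref{Item:Delta}, downstairs there are only finitely many legal pairs of norm \(n\), say \(N\) of them. A vertex \(\tilde x\) lies in at most one component of each \(p^{-1}(A)\), so \(\Delta=N\) suffices.

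For axiom \eqref{Item:CombinatorialHyperplane}, \Cref{Cor:CombinatorialHyperplanesLegal} together with \Cref{Lem:Hyperplane_in_UD_n} shows that combinatorial hyperplanes of \(\widetilde{\UD}_n(\Graf)\) are legal-pair lifts. Now let \(Y\) be a convex subcomplex parallel to a combinatorial hyperplane \(\widetilde H^\pm\). By \cite[Lemma~2.4]{BHS17I}, \(Y\) and \(\widetilde H^\pm\) are the two boundary fibers of a product region \(\widetilde H^\pm\times I\). Here \(I\) is a convex subcomplex, hence the geodesic interval of hyperplanes separating them. We move from \(\widetilde H^\pm\) to \(Y\) one hyperplane at a time. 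At each step the current fiber \(F\) lies in one combinatorial hyperplane of the carrier of the next crossing hyperplane \(\widetilde K\), since \(\widetilde K\) does not cross \(F\). The second part of \Cref{Lem:ParallelTransportLegalPair} then shows that the parallel copy of \(F\) across \(\widetilde K\) is again a legal-pair lift. By induction, \(Y\) is a legal-pair lift.

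The main work is axiom \eqref{Item:Projections} with \(\xi=0\), namely that \(\frg_F(F')\in\frFLP^0\) for all factors \(F,F'\). We use the standard description of gate projections between convex subcomplexes. If \(\frg_{F}(F')\) and \(\frg_{F'}(F)\) are the two projections, then they are parallel, and the hyperplanes crossing them are exactly the hyperplanes crossing both \(F\) and \(F'\). Moreover, these two projections are joined by a product region whose interval factor is spanned by the hyperplanes separating \(F\) from \(F'\).

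\emph{Case 1: \(F\cap F'\neq\varnothing\).} Then \(\frg_F(F')=F\cap F'\). By \Cref{Lem:Intersection_of_Legal_pairs,Lem:IntersectionOfLifts}, this is a lift of a component of the downstairs intersection. It is therefore a legal-pair lift, a singleton being allowed after normalization.

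\emph{Case 2: \(F\cap F'=\varnothing\).} Choose a vertex \(x\in \frg_F(F')\), and let \(\widetilde K_1,\dots,\widetilde K_m\) be the hyperplanes separating \(F\) from \(F'\), ordered along a geodesic from \(x\) to \(\frg_{F'}(x)\). We proceed by induction on \(m\). Each \(\widetilde K_1\) is adjacent to \(\frg_F(F')\) in the sense that its carrier meets \(F\). By the first part of \Cref{Lem:ParallelTransportLegalPair}, \(F_1=F\cap\mathcal N(\widetilde K_1)\) is a legal-pair lift containing \(\frg_F(F')\). Transporting \(F_1\) across \(\widetilde K_1\) gives a legal-pair lift \(F_1'\), and this lift is one step closer to \(F'\). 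We then replace \(F\) by \(F_1'\) and repeat. After \(m\) steps we reach a factor \(F_m'\) that meets \(F'\). Case 1 then makes \(F_m'\cap F'\) a factor, and transporting it back across the \(\widetilde K_i\) stays inside \(\frFLP^0\) by \Cref{Lem:ParallelTransportLegalPair}. The result is parallel to \(\frg_F(F')\) and contained in \(F\).

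The main obstacle is verifying that this transported factor equals \(\frg_F(F')\) exactly, rather than merely being parallel to a subcomplex of it. We will check this by tracking the set of hyperplanes. Intersecting with carriers preserves every hyperplane that crosses both \(F\) and \(F'\), because each such hyperplane crosses every \(\widetilde K_i\) and so crosses \(\mathcal N(\widetilde K_i)\cap F\). On the other side, the final intersection with \(F'\) removes exactly the hyperplanes that do not cross \(F'\). Since a convex subcomplex of \(F\) containing \(x\) is determined by its hyperplanes, this identification gives equality.
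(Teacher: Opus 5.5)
Your proof is correct. For deck invariance and axioms \eqref{Item:Elements_of_FS}--\eqref{Item:Delta} it is essentially the paper's argument. For the projection axiom you take a genuinely different route.

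\emph{The paper's route.} It moves \(F'\) toward \(F\) rather than \(F\) toward \(F'\). With \(A=\frg_{F'}(F)\), it takes the hyperplane \(\widetilde H\) adjacent to \(A\) in the product region between \(A\) and \(\frg_F(F')\). It then replaces \(F'\) by the transported copy \(K^+\) of \(K=F'\cap\mathcal N(\widetilde H)\). The key observation is \(\frg_F(K^+)=\frg_F(K)=\frg_F(F')\): paired vertices of \(K\) and \(K^+\) are separated only by \(\widetilde H\), which does not cross \(F\), so they have the same gate. Induction on \(d(F,F')\) then ends in the intersection case. The gate projection never changes along the way, so no final identification is needed.

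\emph{Your route.} You push \(F\) toward \(F'\) along a geodesic \(x=x_0,\dots,x_m=\frg_{F'}(x)\), intersect with \(F'\), transport back to a factor \(Z_0\subseteq F\), and then identify \(Z_0\) with \(\frg_F(F')\). That identification does hold, but it relies on three facts you use without stating them.
\begin{itemize}
\item The hyperplanes crossing \(\mathcal N(\widetilde K_i)\) are \(\widetilde K_i\) together with the hyperplanes crossing \(\widetilde K_i\).
\item For intersecting convex subcomplexes, a hyperplane crossing both crosses their intersection. This is Helly applied to a halfspace and the two subcomplexes, and it is what actually justifies ``crosses \(\widetilde K_i\) and so crosses \(\mathcal N(\widetilde K_i)\cap F\)''.
\item The transported copy of \(x_{i-1}\) across \(\widetilde K_i\) is \(x_i\). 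This keeps every carrier intersection nonempty, guarantees \(F_m'\cap F'\ni x_m\), and puts \(x\) in \(Z_0\).
\end{itemize}
With these, the hyperplane set of \(Z_0\) is exactly the set of hyperplanes crossing both \(F\) and \(F'\). This uses both inclusions: yours, and the monotonicity of hyperplane sets under intersection and transport. Since \(Z_0\) and \(\frg_F(F')\) are convex subcomplexes through \(x\) with the same hyperplanes, they coincide.

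\emph{Comparison.} The paper's argument avoids this hyperplane bookkeeping entirely. Yours has the advantage of exhibiting \(\frg_F(F')\) explicitly as an iterated sequence of carrier intersections and parallel transports. That description matches the configuration-space picture of fixing particles and sliding them across edges of \(\Graf\).
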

\begin{proof}
The deck invariance of \(\frFLP^0\) is immediate from its definition: a deck transformation sends legal-pair lifts to legal-pair lifts. We now verify the four axioms in \Cref{Def:factor system}.

Each element of \(\frFLP^0\) is a nonempty convex subcomplex of \(\widetilde{\UD}_n(\Graf)\). 
Indeed, by \Cref{Lem:LocalisometrybetweenLegalPairs}, the inclusion \(\UD_{\bfn}(\sfL)\hookrightarrow\UD_n(\Graf)\) is a local isometry, and hence its image is locally convex. Therefore every lift is convex by \Cref{Lem:IntersectionOfLifts}.
Moreover, \(\widetilde{\UD}_n(\Graf)\in\frFLP^0\), since \((\Graf,\bfn_{\Graf})\) is a legal pair, where \(\bfn_{\Graf}(\Graf)=n\). Thus \Cref{Def:factor system}\eqref{Item:Elements_of_FS} holds.

Every combinatorial hyperplane of \(\widetilde{\UD}_n(\Graf)\) is a connected component of the full preimage of a combinatorial hyperplane downstairs. Hence it is a legal-pair lift by \Cref{Cor:CombinatorialHyperplanesLegal}.
Let \(K\) be a combinatorial hyperplane, and let \(F\) be a non-singleton convex subcomplex with \(F\parallel K\). By \cite[Lemma~2.4]{BHS17I}, \(K\) and \(F\) occur as the boundary fibers of a cubical product region \(K\times I\). Parallel transport along \(I\), together with \Cref{Lem:ParallelTransportLegalPair}, shows that \(F\) is a legal-pair lift, proving \Cref{Def:factor system}\eqref{Item:CombinatorialHyperplane}.

Since \(\Graf\) is finite, there are only finitely many legal pairs of norm \(n\). For each legal-pair subcomplex downstairs, a vertex of \(\widetilde{\UD}_n(\Graf)\) belongs to at most one of its lifts. Consequently, there is a uniform bound on the number of elements of \(\frFLP^0\) containing any given vertex. This proves \Cref{Def:factor system}\eqref{Item:Delta}.

It remains to verify \Cref{Def:factor system}\eqref{Item:Projections}. Let \(F,F'\in\frFLP^0\). If \(F\cap F'\neq\varnothing\), then the gate projection of either factor to the other is their intersection: \(\frg_F(F')=F\cap F'\).
By \Cref{Lem:Intersection_of_Legal_pairs,Lem:IntersectionOfLifts}, this intersection is a legal-pair lift, and hence belongs to \(\frFLP^0\).

We argue by induction on the combinatorial distance \(d(F,F')\). The case \(d(F,F')=0\) is the intersection case established above. Suppose that \(d(F,F')>0\), and put \(A=\frg_{F'}(F)\) and \(B=\frg_F(F')\). By \cite[Lemmas~2.4 and~2.6]{BHS17I}, \(A\) and \(B\) are parallel and occur as the boundary fibers of a cubical product region \(A\times[0,d(F,F')]\). Let \(\widetilde H\) be the hyperplane dual to the first edge of the interval factor, starting from the \(A\)-side. The product structure gives
\[A\subseteq K:=F'\cap\mathcal N(\widetilde H).\]
Since \(\widetilde H\) separates \(F'\) from \(F\), the subcomplex \(K\) lies in the combinatorial boundary of the carrier on the \(F'\)-side. Let \(K^+\) be its parallel copy in the opposite combinatorial boundary. By \Cref{Lem:ParallelTransportLegalPair}, both \(K\) and \(K^+\) are legal-pair lifts.

Since \(A\subseteq K\subseteq F'\) and \(\frg_F(A)=B\), we have \(\frg_F(K)=B\). Moreover, paired vertices of \(K\) and \(K^+\) are separated only by \(\widetilde H\), which does not cross \(F\), and therefore have the same gate in \(F\). Hence
\[\frg_F(K^+)=\frg_F(K)=\frg_F(F').\]
Finally, \(d(F,K^+)=d(F,F')-1\). The induction hypothesis applied to \(F\) and \(K^+\) shows that \(\frg_F(K^+)\) is a legal-pair lift, and thus \(\frg_F(F')\in\frFLP^0\).
Therefore every gate projection between factors belongs to \(\frFLP^0\), so \Cref{Def:factor system}\eqref{Item:Projections} holds with \(\xi=0\).
\end{proof}

\subsection{Comparison with rich-family factor systems}
\label{Subsection:Comparison}

We briefly recall the rich-family construction. 
Let \(X\) be a compact special cube complex, and let \(\mathcal C_{0}X\) be its crossing graph. A collection \(\mathcal R\) of subgraphs of \(\mathcal C_{0}X\) is a \emph{rich family} if it contains \(\mathcal C_{0}X\) and every vertex link and is closed under intersections; the \emph{minimal} rich family is the family generated by these requirements, whereas the \emph{maximal} rich family consists of all subgraphs of \(\mathcal C_{0}X\). 
For \(\Omega\in\mathcal R\), two \(1\)-cubes \(e,e'\) of \(X\) are called \(\Omega\)-equivalent if there is an edge path \(e=e_0,\dots,e_k=e'\) such that every \(e_i\) is dual to a hyperplane represented by a vertex of \(\Omega\). We call a full subcomplex whose \(1\)-skeleton is an \(\Omega\)-equivalence class an \emph{\(\Omega\)-equivalence subcomplex}. The lifts of all such subcomplexes, as \(\Omega\) ranges over \(\mathcal R\), form a factor system on \(\widetilde X\); see \cite[Definitions~8.2 and~8.7 and Corollary~8.9]{BHS17I}.

\begin{remark}[Legal-pair and rich-family factor systems]\label{Rem:RichFamilyVersusLegalPairs}
The legal-pair factor system in \Cref{Prop:LCsubcomplexes_of_UD_n} need not coincide with the factor systems arising from either of the two canonical rich-family choices. In the rich-family construction for the compact special cube complex \(\UD_n(\Graf)\), a factor is a lift of an \(\Omega\)-equivalence subcomplex for some subgraph \(\Omega\) of the crossing graph \(\mathcal C_{0}(\UD_n(\Graf))\). A vertex of \(\mathcal C_{0}(\UD_n(\Graf))\) is a hyperplane of \(\UD_n(\Graf)\), and such a hyperplane is determined not only by an edge \(\sfe\) of \(\Graf\) but also by the distribution of the remaining particles among the components of \(\Graf\setminus\sfe\).

Berlyne used this rich-family construction and proposed a description of its factors in terms of graphical subgroups \cite[Theorem~5.1.10]{Ber21}. In the proof, a subgraph \(\Graf_\Omega\subset\Graf\) is associated to \(\Omega\) by taking the union of the underlying edge labels occurring among the hyperplanes represented in \(\Omega\). This passage retains the labels \(\sfe\) but forgets the component data of the remaining particles. Consequently, distinct subgraphs of \(\mathcal C_{0}(\UD_n(\Graf))\) may determine the same subgraph \(\Graf_\Omega\), while their equivalence subcomplexes can be different. In particular, the identification in that proof of an \(\Omega\)-equivalence subcomplex with a configuration-space component on \(\Graf_\Omega\), with all particles outside \(\Graf_\Omega\) fixed, does not hold in general; see \Cref{Ex:RichFamilyNotLegalPair}.

The rich-family construction itself still provides a factor system; the issue concerns only its proposed description by graphical subgroups. We therefore construct the legal-pair factor system directly from the subcomplexes \(\UD_{\bfn}(\sfL)\). Its factor-system axioms, including closure under gate projections, are verified in \Cref{Prop:LCsubcomplexes_of_UD_n}.
\end{remark}

The following example shows, within a single graph, how the legal-pair factor system differs from both canonical rich-family constructions.

\begin{example}[The two canonical rich-family constructions]\label{Ex:RichFamilyNotLegalPair}
Let \(\Graf=\sfK_{1,4}\), with center \(\sfv\), leaves \(\sfu_1,\dots,\sfu_4\), and edges \(\sfe_i=[\sfv,\sfu_i]\), and consider \(\UD_2(\Graf)\).
For a fixed edge \(\sfe_i\), the graph \(\Graf\setminus\sfe_i\) consists of the three isolated vertices \(\sfu_j\), for \(j\neq i\). Hence \(\UD_2(\Graf)\) has three distinct hyperplanes with underlying edge label \(\sfe_i\), represented by the edges \(\{\sfe_i,\sfu_j\}\), where \(j\neq i\).
Since no two edges of \(\Graf\) are disjoint, \(\UD_2(\Graf)\) is one-dimensional; in fact, it is the subdivision of the complete graph on the four vertices \(\{\sfv,\sfu_1\},\dots,\{\sfv,\sfu_4\}\). Thus each of its edges determines a distinct hyperplane. See \Cref{Fig:RichFamilyK14}.

Since \(\UD_2(\Graf)\) is one-dimensional, \(\mathcal C_{0}(\UD_2(\Graf))\) is discrete. Hence its canonical minimal rich family contains no proper nonempty subgraph of \(\mathcal C_{0}(\UD_2(\Graf))\). Let \(\tau\subset\Graf\) be the tripod obtained by deleting one of the four arms of \(\Graf\). Then \(\UD_2(\tau)\) is a proper legal-pair subcomplex of \(\UD_2(\Graf)\), and its lifts belong to the legal-pair factor system. They do not occur in the factor system associated to the canonical minimal rich family.

Now let \(\Omega\) be obtained from \(\mathcal C_{0}(\UD_2(\Graf))\) by deleting the single vertex corresponding to the hyperplane represented by \(\{\sfe_1,\sfu_2\}\) and retaining every other vertex. Since deleting this edge from \(\UD_2(\Graf)\) leaves a connected graph, all the remaining \(1\)-cubes form a single \(\Omega\)-equivalence class. Let \(Y_\Omega\) be the corresponding full subcomplex. Thus \(Y_\Omega\) is obtained from \(\UD_2(\Graf)\) by omitting the single edge \(\{\sfe_1,\sfu_2\}\). In particular, \(Y_\Omega\) is connected and proper, and its lifts occur in the factor system associated to the maximal rich family.

Every edge \(\sfe_i\) of \(\Graf\) still occurs as the underlying label of a hyperplane represented in \(\Omega\); in particular, \(\Graf_\Omega=\Graf\). Nevertheless, \(Y_\Omega\) is not a connected component of \(\UD_2(\Graf_\Omega)=\UD_2(\Graf)\), since the latter is connected and \(Y_\Omega\) is proper. Thus it is not the configuration-space component predicted by the graphical description in \cite[Theorem~5.1.10]{Ber21}.

The subcomplex \(Y_\Omega\) is also not a legal-pair subcomplex. Indeed, a legal-pair subcomplex whose edge labels contain all four edges of \(\Graf\) must have underlying subgraph \(\Graf\), and hence is the entire complex \(\UD_2(\Graf)\), rather than the proper subcomplex \(Y_\Omega\).

Thus the factor system arising from the canonical minimal rich family can omit legal-pair lifts, whereas the factor system arising from the maximal rich family can contain factors which are not legal-pair lifts.
\end{example}

\begin{figure}[ht]
\centering
\subcaptionbox{\(\Graf=\sfK_{1,4}\), with the tripod \(\tau\) in black%
  \label{Fig:K14Ambient}}[0.45\textwidth]{%
\begin{tikzpicture}[baseline=-.5ex]
  \coordinate (v)  at (0,0);
  \coordinate (u1) at (90:1.5);
  \coordinate (u2) at (180:1.5);
  \coordinate (u3) at (270:1.5);
  \coordinate (u4) at (0:1.5);
  \draw[lightgray, thick] (v) -- (u1);
  \draw[line width=1.2pt] (v) -- (u2);
  \draw[line width=1.2pt] (v) -- (u3);
  \draw[line width=1.2pt] (v) -- (u4);
  \foreach \i in {v,u2,u3,u4} {\draw[fill, thick] (\i) circle (2pt);}
  \draw[fill, thick, lightgray] (u1) circle (2pt);
  \draw (v)  node[above right=0.8ex] {\(\sfv\)};
  \draw (u1) node[above=0.6ex] {\(\sfu_1\)};
  \draw (u2) node[left=0.6ex]  {\(\sfu_2\)};
  \draw (u3) node[below=0.6ex] {\(\sfu_3\)};
  \draw (u4) node[right=0.6ex] {\(\sfu_4\)};
\end{tikzpicture}}
\hfill
\subcaptionbox{\(\UD_2(\Graf)\), with \(\UD_2(\tau)\) in black%
  \label{Fig:UD2K14}}[0.5\textwidth]{%
\begin{tikzpicture}[baseline=-.5ex, every node/.style={font=\scriptsize}]
  \coordinate (B1) at (0,0);
  \coordinate (B2) at (0,2);
  \coordinate (B3) at (210:2);
  \coordinate (B4) at (-30:2);
  \coordinate (M12) at (90:1);
  \coordinate (M13) at (210:1);
  \coordinate (M14) at (-30:1);
  \coordinate (M23) at (150:1);
  \coordinate (M24) at (30:1);
  \coordinate (M34) at (-90:1);
  \draw[line width=1.2pt] (B2) -- (M23) -- (B3) -- (M34) -- (B4) -- (M24) -- (B2);
  \draw[lightgray, thick] (B1) -- (M13) -- (B3);
  \draw[lightgray, thick] (B1) -- (M14) -- (B4);
  \draw[lightgray, thick] (B1) -- (M12);
  \draw[dashed, thick] (M12) -- (B2);
  \foreach \i in {B2,B3,B4,M23,M24,M34}
    {\draw[fill, thick] (\i) circle (2pt);}
  \foreach \i in {B1,M12,M13,M14}
    {\draw[fill, thick, lightgray] (\i) circle (2pt);}
  \draw (B2)  node[above=0.7ex]      {\(\{\sfv,\sfu_2\}\)};
  \draw (B3)  node[below left=0.2ex] {\(\{\sfv,\sfu_3\}\)};
  \draw (B4)  node[below right=0.2ex]{\(\{\sfv,\sfu_4\}\)};
  \draw (B1)  node[below=0.5ex]      {\(\{\sfv,\sfu_1\}\)};
  \draw (M12) node[below=0ex]      {\(\{\sfu_1,\sfu_2\}\)};
  \draw (M13) node[below=-0.2ex] {\(\{\sfu_1,\sfu_3\}\)};
  \draw (M14) node[below=-0.2ex]{\(\{\sfu_1,\sfu_4\}\)};
  \draw (M23) node[left=0.7ex]       {\(\{\sfu_2,\sfu_3\}\)};
  \draw (M24) node[right=0.7ex]      {\(\{\sfu_2,\sfu_4\}\)};
  \draw (M34) node[below=0.7ex]      {\(\{\sfu_3,\sfu_4\}\)};
\end{tikzpicture}}
\caption{The complex \(\UD_2(\sfK_{1,4})\) of \Cref{Ex:RichFamilyNotLegalPair}; the vertex subdividing the edge from \(\{\sfv,\sfu_i\}\) to \(\{\sfv,\sfu_j\}\) is \(\{\sfu_i,\sfu_j\}\). The gray and dashed cells belong to \(\UD_2(\Graf)\) but not to \(\UD_2(\tau)\), and the dashed edge is \(\{\sfe_1,\sfu_2\}\).}
\label{Fig:RichFamilyK14}
\end{figure}

\section{The legal-pair hierarchy}\label{Section:LegalPairHierarchy}

We retain the assumptions on \(\Graf\) and \(n\) from \Cref{Section:LegalPairFactorSystem}. We now discard singleton factors, describe the resulting hierarchy and its domain relations, and prove that it belongs to \(\Xi\).

Set \(G=\GBGdisc_n(\Graf)\), and let \(\frFLP^0\) be the full legal-pair factor system from \Cref{Prop:LCsubcomplexes_of_UD_n}.

\begin{definition}[The legal-pair hierarchy]\label{Def:LegalPairHierarchy}
Set \(\frFLP=\{F\in\frFLP^0\mid F\text{ is not a singleton}\}\) and call \(\frFLP\) the \emph{reduced legal-pair factor system}. Since \(\frFLP^0\) is \(G\)-invariant and the \(G\)-action preserves singleton subcomplexes, \(\frFLP\) is \(G\)-invariant. By the reduction described in \Cref{Subsection:FactorSystems}, \(\frFLP\) is a factor system, and the constant in \Cref{Def:factor system}\eqref{Item:Projections} may be taken to be \(1\). 
The factor-system construction therefore gives an HHG structure on \(G\), whose domain set is
\[\frSLP=\frFLP/{\parallel}.\]
We call the HHG structure \((G,\frSLP)\) the \emph{legal-pair hierarchy}.
\end{definition}

\subsection{Standard paths and distinguished lifts}\label{Subsection:StandardPaths}

Choose a maximal tree \(\sfT\subseteq\Graf\), a leaf \(*\in\sfT\), and a planar embedding
\(\sfT\hookrightarrow\mathbb R^2\).
Following the standard discrete Morse-theoretic convention for graph braid groups, label the vertices of \(\Graf\) by positive integers by traversing counterclockwise along the boundary of a ribbon neighborhood of \(\sfT\), starting at \(*\). We write \(\sfv_m\) for the vertex labeled \(m\).

\begin{figure}[ht]
\[
\sfT=
\begin{tikzpicture}[baseline=-.5ex]
\foreach \i in {0,120,240} {
\begin{scope}[rotate=\i]
\draw[line width=8] (0,0) -- (90:1);
\draw[line width=8, line cap=round] (90:1) -- ++(60:1) ++(-1,0) -- ++(-60:1);
\end{scope}
}
\foreach \i in {0,120,240} {
\begin{scope}[rotate=\i]
\draw[line width=7, line cap=round, white] (0,0) -- (90:1) -- ++(60:1) ++(-1,0) -- ++(-60:1);
\end{scope}
}
\draw[fill, thick] (0,0) circle (2pt);
\foreach \i in {0,120,240} {
\begin{scope}[rotate=\i]
\draw[lightgray, thick] (0,0) ++(90:1) ++(60:1) -- ++(-1,0);
\draw[fill, thick] (0,0) -- (90:1) circle (2pt) -- ++(60:1) circle (2pt) ++(-1,0) circle (2pt) -- ++(-60:1);
\end{scope}
}
\draw[->] (0,-2) ++(120:1.1) arc (120:60:1.1);
\draw (0,0) node[below=1ex] {\(\sfv_3\)} ++ (-150:1) node[below right] {\(\sfv_2\)} + (-120:1) node[below=1ex] {\(*=\sfv_1\)} +(-1,0) node[left=1ex] {\(\sfv_{10}\)};
\draw (0,0) ++ (-30:1) node[below left] {\(\sfv_4\)} + (-60:1) node[below=1ex] {\(\sfv_5\)} +(1,0) node[right=1ex] {\(\sfv_6\)};
\draw (0,0) ++ (0,1) node[right=1ex] {\(\sfv_7\)} +(60:1) node[above right] {\(\sfv_8\)} +(120:1) node[above left] {\(\sfv_9\)};
\end{tikzpicture}
\]
\caption{A maximal tree \(\sfT\) and labels}\label{Fig:MaximalTreeAndLabels}
\end{figure}

This labeling has the following property: every vertex \(\sfv\neq *\) of \(\Graf\) admits a unique neighbor \(\sfv'\) in \(\sfT\) whose label is smaller than that of \(\sfv\), namely the neighbor of \(\sfv\) lying on the geodesic in \(\sfT\) from \(\sfv\) to \(*\). The assignment \(\sfv\mapsto\sfv'\) may therefore be regarded as a \emph{(discrete) gradient vector field} on \(\Graf\).

This gradient vector field is used to reduce configurations, as follows. Let \(\mathbf x\) be a vertex of \(\UD_n(\Graf)\). Following the terminology of \cite{FS05}, a vertex \(\sfv\in\mathbf x\) is \emph{unblocked} if \(\sfv\neq *\) and \(\sfv'\notin\mathbf x\), and it is \emph{minimal unblocked} if, in addition, its label is the smallest among the labels of the unblocked vertices of \(\mathbf x\). If \(\sfv\) is unblocked, then the closed edge of \(\Graf\) joining \(\sfv\) to \(\sfv'\) meets \(\mathbf x\) only in \(\sfv\), so that
\[\mathbf x\longmapsto\left(\mathbf x\setminus\{\sfv\}\right)\cup\{\sfv'\}\]
is again a vertex of \(\UD_n(\Graf)\), and the two configurations span an edge of \(\UD_n(\Graf)\). We call this move, performed at the minimal unblocked vertex of \(\mathbf x\), the \emph{vertex-reduction} of \(\mathbf x\).

Each vertex-reduction strictly decreases the sum of the labels, so the procedure terminates at a configuration with no unblocked vertex, which we call a \emph{critical \(0\)-cell}.
Every critical \(0\)-cell contains \(*\), since otherwise its smallest-labeled vertex would be unblocked.
% Each vertex-reduction strictly decreases the sum of the labels of the vertices of a configuration, so iterating the procedure terminates after finitely many steps. The resulting configuration has no unblocked vertex; that is, no particle can be moved along the gradient vector field. We call such a configuration a \emph{critical \(0\)-cell}. Every critical \(0\)-cell contains \(*\). Indeed, if \(*\notin\mathbf x\), then the vertex \(\sfv\in\mathbf x\) with the smallest label satisfies \(\sfv\neq *\), and \(\sfv'\) has a strictly smaller label than \(\sfv\) and hence does not lie in \(\mathbf x\); so \(\sfv\) is unblocked.

Although the reduction procedure is deterministic for each initial configuration, its terminal critical \(0\)-cell may depend on that configuration.
For instance, for the tree \(\sfT\) of \Cref{Fig:MaximalTreeAndLabels} with \(n=3\), both \(\{\sfv_1,\sfv_2,\sfv_3\}\) and \(\{\sfv_1,\sfv_2,\sfv_{10}\}\) are critical \(0\)-cells.

\begin{remark}\label{Rem:UniqueCritical0Cell}
The configuration \(\{\sfv_1,\dots,\sfv_n\}\) is always a critical \(0\)-cell, since \(*=\sfv_1\) and, for every \(2\leq m\leq n\), the predecessor \(\sfv_m'\) belongs to \(\{\sfv_1,\dots,\sfv_{m-1}\}\).
If \(\Graf\) is sufficiently subdivided for \(n\) in the sense of \Cref{Def:SufficientlySubdivided}, then the maximal tree \(\sfT\) and the leaf \(*\) can be chosen so that \(\{\sfv_1,\dots,\sfv_n\}\) is the \emph{unique} critical \(0\)-cell.
For \(n=2\), the critical \(0\)-cell is unique for every choice of \(\sfT\) and \(*\): a critical configuration must contain \(*=\sfv_1\) and its unique neighbor \(\sfv_2\) in \(\sfT\).

% For \(n=2\), no such choice is needed: since \(*=\sfv_1\) is a leaf of \(\sfT\), its unique neighbor in \(\sfT\) is \(\sfv_2\), and a critical \(0\)-cell \(\{\sfv_1,\sfv\}\) must satisfy \(\sfv'=\sfv_1\). Hence \(\{\sfv_1,\sfv_2\}\) is the unique critical \(0\)-cell for every choice of \(\sfT\) and \(*\).

Sufficient subdivision is not necessary for the existence of such a choice. For example, let \(\Graf=\sfK_{2,3}\) and let \(\sfT\) be the maximal tree which is a path of length \(4\), with \(*\) one of its two endpoints, as in \Cref{Fig:K23SpanningPath}. 
Since the labels increase along \(\sfT\), a critical \(0\)-cell must consist of the \(n\) vertices with smallest labels. Hence \(\{\sfv_1,\dots,\sfv_n\}\) is the unique critical \(0\)-cell for every \(2\leq n\leq4\). On the other hand, \(\sfK_{2,3}\) is not sufficiently subdivided for \(n=4\).

\begin{figure}[ht]
\[
\begin{tikzpicture}[baseline=-.5ex, scale=1.15]
\coordinate (u) at (0,0);
\coordinate (v) at (3,0);
\coordinate (w1) at (1.5,1);
\coordinate (w2) at (1.5,0);
\coordinate (w3) at (1.5,-1);
\draw[gray, thick] (u) -- (w3);
\draw[gray, thick] (v) -- (w1);
\draw[line width=1.2pt] (w1) -- (u) -- (w2) -- (v) -- (w3);
\foreach \i in {u,v,w1,w2,w3} {\draw[fill, thick] (\i) circle (2pt);}
\draw (w1) node[above=1ex] {\(*=\sfv_1\)};
\draw (u) node[left=1ex] {\(\sfv_2\)};
\draw (w2) node[above=1ex] {\(\sfv_3\)};
\draw (v) node[right=1ex] {\(\sfv_4\)};
\draw (w3) node[below=1ex] {\(\sfv_5\)};
\end{tikzpicture}
\]
\caption{A maximal tree \(\sfT\) of \(\sfK_{2,3}\) which is a path of length \(4\) (black), together with the resulting labels. The gray edges belong to \(\sfK_{2,3}\) but not to \(\sfT\).}
\label{Fig:K23SpanningPath}
\end{figure}

We refer to Farley--Sabalka \cite{FS05} for a detailed treatment of the discrete Morse theory underlying this discussion.
\end{remark}

Set \(\mathbf x^0=\{\sfv_1,\dots,\sfv_n\}\in \UD_n(\Graf)\), which is a critical \(0\)-cell by \Cref{Rem:UniqueCritical0Cell}.
For each critical \(0\)-cell \(\mathbf c\) of \(\UD_n(\Graf)\), fix once and for all an edge path
\(\rho_{\mathbf c}\) in \(\UD_n(\Graf)\) from \(\mathbf x^0\) to \(\mathbf c\); we take \(\rho_{\mathbf x^0}\) to be the constant path.

Iterated vertex-reduction for each vertex configuration \(\bfx\in\UD_n(\Graf)\) determines an edge path \(\mu_{\mathbf x}\) from \(\mathbf x\) to a critical \(0\)-cell \(\mathbf c(\mathbf x)\), each edge of which is a single vertex-reduction. 

\begin{definition}[Basepoints and standard paths]\label{Def:BasePointsStandardPaths}
We take \(\bfx^0\) as the global basepoint of \(\UD_n(\Graf)\), and fix a lift \(\widetilde{\bfx}^0\) of \(\bfx^0\) in \(\widetilde{\UD}_n(\Graf)\).

For each vertex configuration \(\bfx\in \UD_n(\Graf)\), we call
\[\gamma_{\bfx}=\rho_{\bfc(\bfx)}\cdot\mu_{\bfx}^{-1}\]
the \emph{standard path} to \(\bfx\); it is an edge path in \(\UD_n(\Graf)\) from \(\bfx^0\) to \(\bfx\). In particular, \(\gamma_{\bfx^0}\) is the constant path.

We denote by \(\widetilde\gamma_{\bfx}\) the lift of \(\gamma_{\bfx}\) starting at \(\widetilde{\bfx}^0\).
\end{definition}

\begin{remark}
The paths \(\rho_\bfc\) fixed above are chosen arbitrarily, but they too can be defined explicitly in terms of the labels induced by \(\sfT\).
\end{remark}

\begin{convention}
Let \((\sfL,\bfn)\) be a legal pair.
\begin{enumerate}[wide]
\item
For each component \(\sfL_i\in\pi_0(\sfL)\), set \(n_i=\bfn(\sfL_i)\).
Let \(\mathbf x^0_{\sfL_i,n_i}\in \UD_{n_i}(\sfL_i)\) be the set of the \(n_i\) vertices of \(\sfL_i\) with smallest labels and let
\[\mathbf x^0_{\sfL,\bfn}=\bigsqcup_{\sfL_i\in \pi_0(\sfL)}\mathbf x^0_{\sfL_i,n_i}\in \UD_{\bfn}(\sfL).\]
We use this configuration as the basepoint when writing
\[\GBGdisc_{n_i}(\sfL_i)=\pi_1\bigl(\UD_{n_i}(\sfL_i),\mathbf x^0_{\sfL_i,n_i}\bigr)\quad\text{and}\quad\GBGdisc_{\bfn}(\sfL)=\pi_1\bigl(\UD_{\bfn}(\sfL),\mathbf x^0_{\sfL,\bfn}\bigr) \cong \prod_{\sfL_i\in \pi_0(\sfL)}\GBGdisc_{n_i}(\sfL_i).\]

\item 
Suppose that \(\|\bfn\|=n\). We write
\[\gamma_{\sfL,\bfn}=\gamma_{\mathbf x^0_{\sfL,\bfn}}.\]
Then the inclusion \(\UD_{\bfn}(\sfL)\hookrightarrow \UD_n(\Graf)\), together with the standard path \(\gamma_{\sfL,\bfn}\), induces an injective homomorphism 
\[\Phi_{\sfL,\bfn}\colon \GBGdisc_{\bfn}(\sfL)\rightarrow\GBGdisc_n(\Graf),\quad\text{defined by}\quad[\alpha]\mapsto [\gamma_{\sfL,\bfn}\cdot\alpha\cdot\gamma_{\sfL,\bfn}^{-1}].\]
We identify \(\GBGdisc_{\bfn}(\sfL)\) with the image of \(\Phi_{\sfL,\bfn}\).

Let \(p\colon\widetilde{\UD}_n(\Graf)\rightarrow \UD_n(\Graf)\) be the universal covering map. Let \(\widetilde\gamma_{\sfL,\bfn}\) be the lift of \(\gamma_{\sfL,\bfn}\) starting at \(\widetilde{\mathbf x}^0\), and denote its endpoint by \(\widetilde{\mathbf x}^0_{\sfL,\bfn}\).
We define \(\langle\sfL,\bfn\rangle\) to be the connected component of \(p^{-1}(\UD_{\bfn}(\sfL))\) containing \(\widetilde{\mathbf x}^0_{\sfL,\bfn}\). Then every lift of \(\UD_{\bfn}(\sfL)\) is of the form \(g\cdot\langle\sfL,\bfn\rangle\) for some \(g\in\GBGdisc_n(\Graf)\).
\end{enumerate}
\end{convention}

\begin{remark}\label{Rem:AuxiliaryChoicesLegalPairs}
The global base configuration, standard paths, distinguished lifts, and subgroup representatives fixed above are auxiliary. After identifying fundamental groups by a change-of-basepoint isomorphism, changing these choices only changes distinguished lifts by deck translations and subgroup representatives by conjugation. The factor system and its parallelism classes are independent of these choices.
\end{remark}

\begin{example}[Legal pairs and their base configurations]\label{Ex:LegalPairBasePoints}
Let \(\Graf\) be the graph of \Cref{Fig:MaximalTreeAndLabels}, with the labels fixed there, and let \(n=2\).
The graph \(\Graf\) has exactly two embedded cycles which do not contain \(\sfv_1\), namely the triangles \(\ell_1\) on \(\sfv_4,\sfv_5,\sfv_6\) and \(\ell_2\) on \(\sfv_7,\sfv_8,\sfv_9\). For such a cycle \(\ell\), the complement \(\Graf\setminus\ell\) is connected, and we may take the subgraph \(\ell\sqcup(\Graf\setminus\ell)\) with one particle on each of its two components. Let \((\sfL,\bfn)\) and \((\sfL',\bfn')\) be the pairs obtained in this way from \(\ell_1\) and \(\ell_2\), respectively. Neither component is a single vertex and each carries fewer particles than it has vertices, so both pairs are legal of norm \(2\).

Following the convention above, the base configuration of each pair consists of the vertex of smallest label in each component. Hence
\[\bfx^0_{\sfL,\bfn}=\{\sfv_1,\sfv_4\}\qquad\text{and}\qquad \bfx^0_{\sfL',\bfn'}=\{\sfv_1,\sfv_7\},\]
as shown in \Cref{Fig:LegalPairBasePoints}.
\end{example}

\begin{figure}[ht]
\begin{align*}
(\sfL,\bfn)&=
\begin{tikzpicture}[baseline=-.5ex]
\begin{scope}[rotate=-120]
\draw[lightgray] (0,0) -- (0,1);
\draw[fill, thick] (0,0) circle (2pt);
\draw[thick] (0,0) ++(90:1) ++(60:1) -- ++(-1,0) node[midway,above left=-1ex] {\(\ell_1\)};
\draw[fill, thick] (90:1) circle (2pt) -- ++(60:1) circle (2pt) ++(-1,0) circle (2pt) -- ++(-60:1);
\foreach \i in {120,240} {
\begin{scope}[rotate=\i]
\draw[thick] (0,0) ++(90:1) ++(60:1) -- ++(-1,0);
\draw[fill, thick] (0,0) -- (90:1) circle (2pt) -- ++(60:1) circle (2pt) ++(-1,0) circle (2pt) -- ++(-60:1);
\end{scope}
}
\end{scope}
\draw[thick,fill, blue] (0,0) ++ (-150:1) ++(-120:1) circle (3pt);
\draw[thick,fill, blue] (0,0) ++ (-30:1) circle (3pt);
\draw (0,0) node[below=1ex] {\(\sfv_3\)} ++ (-150:1) node[below right] {\(\sfv_2\)} + (-120:1) node[below=1ex] {\(\sfv_1\)} +(-1,0) node[left=1ex] {\(\sfv_{10}\)};
\draw (0,0) ++ (-30:1) node[below left] {\(\sfv_4\)} + (-60:1) node[below=1ex] {\(\sfv_5\)} +(1,0) node[right=1ex] {\(\sfv_6\)};
\draw (0,0) ++ (0,1) node[right=1ex] {\(\sfv_7\)} +(60:1) node[above right] {\(\sfv_8\)} +(120:1) node[above left] {\(\sfv_9\)};
\end{tikzpicture}&
(\sfL',\bfn')&=
\begin{tikzpicture}[baseline=-.5ex]
\draw[lightgray] (0,0) -- (0,1);
\draw[fill, thick] (0,0) circle (2pt);
\draw[thick] (0,0) ++(90:1) ++(60:1) -- ++(-1,0) node[midway,below] {\(\ell_2\)};
\draw[fill, thick] (90:1) circle (2pt) -- ++(60:1) circle (2pt) ++(-1,0) circle (2pt) -- ++(-60:1);
\foreach \i in {120,240} {
\begin{scope}[rotate=\i]
\draw[thick] (0,0) ++(90:1) ++(60:1) -- ++(-1,0);
\draw[fill, thick] (0,0) -- (90:1) circle (2pt) -- ++(60:1) circle (2pt) ++(-1,0) circle (2pt) -- ++(-60:1);
\end{scope}
}
\draw[thick,fill, blue] (0,0) ++ (-150:1) ++(-120:1) circle (3pt);
\draw[thick,fill, blue] (0,0) ++ (90:1) circle (3pt);
\draw (0,0) node[below=1ex] {\(\sfv_3\)} ++ (-150:1) node[below right] {\(\sfv_2\)} + (-120:1) node[below=1ex] {\(\sfv_1\)} +(-1,0) node[left=1ex] {\(\sfv_{10}\)};
\draw (0,0) ++ (-30:1) node[below left] {\(\sfv_4\)} + (-60:1) node[below=1ex] {\(\sfv_5\)} +(1,0) node[right=1ex] {\(\sfv_6\)};
\draw (0,0) ++ (0,1) node[right=1ex] {\(\sfv_7\)} +(60:1) node[above right] {\(\sfv_8\)} +(120:1) node[above left] {\(\sfv_9\)};
\end{tikzpicture}
\end{align*}
\caption{The legal pairs and chosen points \(\bfx^0_{\sfL,\bfn}\) and \(\bfx^0_{\sfL',\bfn'}\)}
\label{Fig:LegalPairBasePoints}
\end{figure}

\subsection{Domains and their relations}

We now describe the domains of the legal-pair hierarchy, together with their nesting and orthogonality relations. The domains will be represented by left cosets of discrete graph braid subgroups embedded using the standard paths fixed in \Cref{Subsection:StandardPaths}.

\begin{corollary}\label{Cor:FactorSystemElements}
The full legal-pair factor system \(\frFLP^0\) consists precisely of the legal-pair lifts, i.e., the subcomplexes \(g\cdot\langle\sfL,\bfn\rangle\), where \((\sfL,\bfn)\) is a legal pair with \(\|\bfn\|=n\) and \(g\in\GBGdisc_n(\Graf)\). 
The reduced legal-pair factor system \(\frFLP\) defining the legal-pair hierarchy consists precisely of those subcomplexes \(g\cdot\langle\sfL,\bfn\rangle\) for which \(\sfL\) has at least one non-singleton component.
\end{corollary}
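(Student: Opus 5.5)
The first assertion is essentially a repackaging of definitions, and we verify it in two directions. By definition, \(\frFLP^0\) is the set of connected components of \(p^{-1}(\UD_{\bfn}(\sfL))\), where \((\sfL,\bfn)\) ranges over legal pairs of norm \(n\).

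For one inclusion, fix such a pair. Since \(\UD_{\bfn}(\sfL)\) is connected and the deck group \(\GBGdisc_n(\Graf)\) acts transitively on the fibre of \(p\) over \(\mathbf x^0_{\sfL,\bfn}\), it also acts transitively on the components of \(p^{-1}(\UD_{\bfn}(\sfL))\). Indeed, every component meets that fibre, and any two fibre points differ by a deck transformation, which permutes the components. Hence every lift has the form \(g\cdot\langle\sfL,\bfn\rangle\), as already noted in the convention preceding the statement.

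For the other inclusion, each \(g\cdot\langle\sfL,\bfn\rangle\) is a component of \(p^{-1}(\UD_{\bfn}(\sfL))\), because deck transformations preserve the full preimage. So it lies in \(\frFLP^0\).

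For the second assertion, recall that \(\frFLP\) is obtained from \(\frFLP^0\) by discarding singletons. It therefore suffices to show that a lift \(g\cdot\langle\sfL,\bfn\rangle\) is a singleton if and only if every component of \(\sfL\) is a singleton.

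Since \(p\) restricts to a covering map from this lift onto \(\UD_{\bfn}(\sfL)\), the lift is a single point exactly when \(\UD_{\bfn}(\sfL)\) is a single point. If every component is a single vertex, then \(\UD_{\bfn}(\sfL)=\prod\UD_1(\{\sfv\})\) is a point. Conversely, suppose some component \(\sfL_i\) is non-singleton. Legality gives \(0<\bfn(\sfL_i)<\#V(\sfL_i)\). By \Cref{Rem:ConfigurationSpaceDegenerate} applied to the connected graph \(\sfL_i\), the factor \(\UD_{\bfn(\sfL_i)}(\sfL_i)\) is then not a point. In fact it contains an edge: choose an unoccupied vertex adjacent to an occupied one, which exists by connectivity. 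Hence the product \(\UD_{\bfn}(\sfL)\) contains an edge, and so does its lift, so the lift is not a singleton.

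The only point needing care is this last step. We must use legality, and specifically the strict inequality for non-singleton components, to rule out a fully occupied component collapsing to a point. Without that normalization, a non-singleton subgraph could still give a singleton factor, and the characterization would fail. Everything else follows directly from covering-space theory and the definitions.
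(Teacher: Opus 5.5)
Your proof is correct and follows essentially the same route as the paper. Every lift is a deck translate of the distinguished lift because the deck group acts transitively on the components of the preimage, and a lift is a singleton exactly when \(\UD_{\bfn}(\sfL)\) is a point, which by legality, the product decomposition \eqref{Eq:ProductofGBG}, and \Cref{Rem:ConfigurationSpaceDegenerate} happens precisely when every component of \(\sfL\) is a singleton. You just spell out more detail (the covering-space transitivity and an explicit edge in a non-singleton factor) than the paper's brief argument does.
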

\begin{proof}
Every lift of a legal-pair subcomplex is a deck translate of its distinguished lift. Moreover, \(\UD_{\bfn}(\sfL)\) is a point if and only if every component of \(\sfL\) is a singleton, by \Cref{Rem:ConfigurationSpaceDegenerate} and \eqref{Eq:ProductofGBG}.
The assertions follow from the definitions of \(\frFLP^0\) and \(\frFLP\).
\end{proof}

We next separate the part of a legal pair which determines the varying configuration-space factor from the part which records the choice of a particular parallel fiber.

\begin{definition}[Constrained and free components]\label{Def:ConstrainedandFree}
Let \((\sfL,\bfn)\) be a legal pair with \(\|\bfn\|=n\) such that \(\sfL\) has at least one non-singleton component, and let \(\sfL_i\) denote the connected components of \(\sfL\).
A component \(\sfL_i\) of \(\sfL\) is called \emph{constrained} if one of the following holds:
\begin{enumerate}
\item \(\sfL_i\) is not a single vertex;
\item \(\sfL_i\) is a single vertex, and \(V(\sfD)\subseteq V(\sfL)\), where \(\sfD\) is the connected component containing \(\sfL_i\) in the subgraph of \(\Graf\) obtained by removing all non-singleton components of \(\sfL\).
\end{enumerate}
A component which is not constrained is called \emph{free}.

We write \(\sfL^{\con}\) and \(\sfL^{\free}\) for the unions of the constrained and of the free components of \(\sfL\), respectively, and set
\[\bfn^{\con}=\bfn\big|_{\pi_0(\sfL^{\con})},\qquad
\bfn^{\free}=\bfn\big|_{\pi_0(\sfL^{\free})}.\]
See \Cref{figure:constrained_components}.
\end{definition}

\begin{figure}[ht]
\[
(\sfL,\bfn)=\begin{tikzpicture}[baseline=-.5ex]
\draw[thick,fill,blue] (0,0) circle (2pt) -- (1,0) circle (2pt) -- node[midway,above] {\(1\)} (2,0) circle (2pt) -- (3,0) circle (2pt);
\draw[thick,lightgray] (4,0) + (108:1) -- +(36:1) -- +(-36:1) -- +(-108:1);
\draw[thick,fill,lightgray] (4,0) +(-36:1) circle (2pt);
\draw[thick,fill,red] (4,0) + (36:1) circle (2pt) node[right] {\(1\)};
\draw[thick,fill,blue] (4,0) ++ (108:1) circle (2pt) -- (3,0) (4,0) ++ (-108:1) circle (2pt) -- (3,0);
\draw[thick,lightgray] (-1,0) +(72:1) -- +(144:1) -- +(216:1) -- +(288:1);
\draw[thick,fill,blue] (-1,0) + (-72:1) circle (2pt) -- (0,0) (-1,0) +(1,0) -- +(72:1) circle (2pt);
\draw[thick,fill,blue] (-1,0) +(144:1) circle (2pt) node[left] {\(1\)} +(216:1) circle (2pt) node[left] {\(1\)};
\end{tikzpicture}
\]
\caption{Constrained (blue) and free (red) components of \((\sfL,\bfn)\).}
\label{figure:constrained_components}
\end{figure}

With this terminology, parallelism of legal-pair lifts can be described by moving only the free vertices in the complement of the constrained part.

\begin{lemma}\label{lemma:parallelism}
Let $F=g\cdot\langle\sfL,\bfn\rangle$ and $F'=g'\cdot\langle\sfL',\bfn'\rangle$ be elements of \(\frFLP\). Then \(F\) and \(F'\) are parallel if and only if the following conditions hold:
\begin{enumerate}[label=\textup{(\arabic*)}, ref=\arabic*]
\item \((\sfL^{\con},\bfn^{\con})=(\sfL'^{\con},\bfn'^{\con})\).
\item
There exists an edge path \(\eta\subset \UD_{\|\bfn^{\free}\|}(\Graf\setminus\sfL^{\con})\) from \(V(\sfL^{\free})\) to \(V(\sfL'^{\free})\) such that
\[g^{-1}g'\in \GBGdisc_{\bfn}(\sfL) \left[\gamma_{\sfL,\bfn}\cdot\bar\eta\cdot\gamma_{\sfL',\bfn'}^{-1} \right]\GBGdisc_{\bfn'}(\sfL')\]
inside \(\GBGdisc_n(\Graf)\), where
\(\bar\eta=\eta\times \mathbf x^0_{\sfL^{\con},\bfn^{\con}}\subset \UD_{\|\bfn^{\free}\|}(\Graf\setminus\sfL^{\con})\times \UD_{\bfn^{\con}}(\sfL^{\con}) \subset \UD_n(\Graf)\).
\end{enumerate}
\end{lemma}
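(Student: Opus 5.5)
We first reduce both sides to statements about distinguished lifts, using that parallelism is a deck-invariant relation. Translating by \(g^{-1}\), we may assume \(g=1\), so \(F=\langle\sfL,\bfn\rangle\) and \(F'=h\cdot\langle\sfL',\bfn'\rangle\) with \(h=g^{-1}g'\). The plan is to identify, for a fixed legal pair \((\sfL,\bfn)\), the full parallelism class of \(\langle\sfL,\bfn\rangle\) explicitly.

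\emph{Hyperplanes crossing a legal-pair lift.} By \Cref{Lem:Hyperplane_in_UD_n}, a hyperplane of \(\widetilde{\UD}_n(\Graf)\) crossing \(\langle\sfL,\bfn\rangle\) lies over some \(H_{\bfE}\) with \(\bfE\) an edge of \(\UD_{\bfn}(\sfL)\). The moving edge \(\sfe\) then lies in a non-singleton component of \(\sfL\). The free singleton components play no role in which hyperplanes cross, because free particles can be moved in \(\Graf\setminus\sfL^{\con}\) without crossing any hyperplane that meets \(F\).

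\emph{Sufficiency (\((1)\) and \((2)\) imply parallel).} Let \(\eta\) be as in \((2)\). The path \(\bar\eta\) sweeps the product \(\UD_{\bfn^{\con}}(\sfL^{\con})\times\eta\), which is a union of squares in \(\UD_n(\Graf)\). This uses that every cell of \(\eta\) avoids \(\sfL^{\con}\) and the free vertices are singletons. Lifting this product region starting at \(\widetilde{\bfx}^0_{\sfL,\bfn}\) gives a cubical product region in \(\widetilde{\UD}_n(\Graf)\). Its two ends are \(\langle\sfL,\bfn\rangle\) and the lift of \(\UD_{\bfn'}(\sfL')\) reached along \(\gamma_{\sfL,\bfn}\cdot\bar\eta\cdot\gamma_{\sfL',\bfn'}^{-1}\). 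These two ends are parallel, since every hyperplane dual to an \(\eta\)-edge crosses neither end, and conversely. The left and right double-coset factors only change the choice of lift point inside each factor, because \(\GBGdisc_{\bfn}(\sfL)\) stabilizes \(\langle\sfL,\bfn\rangle\) and similarly for \(\sfL'\). This gives \(F\parallel F'\).

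\emph{Necessity (parallel implies \((1)\) and \((2)\)).} By \cite[Lemma~2.4]{BHS17I}, \(F\) and \(F'\) bound a product region \(F\times I\). Choose a geodesic edge path across \(I\) from \(\widetilde{\bfx}^0_{\sfL,\bfn}\), and project it to \(\UD_n(\Graf)\) to get an edge path \(\beta\). Each hyperplane crossed by \(\beta\) has carrier containing a parallel copy of \(F\). So the moving edge of each step is disjoint from every edge of \(\sfL^{\con}\) traversed in \(F\), and the step keeps the particle counts on the non-singleton components unchanged. Using \Cref{Lem:ParallelTransportLegalPair} iteratively, each intermediate fiber is a legal-pair lift with the same non-singleton data. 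We then argue that the constrained singletons also stay fixed. A constrained singleton lies in a component \(\sfD\) of \(\Graf\) minus the non-singleton part with \(V(\sfD)\subseteq V(\sfL)\), so it has no room to move without changing which hyperplanes cross the fiber. This yields \((1)\), and it also shows that \(\beta\) has the form \(\eta\times\bfx^0_{\sfL^{\con},\bfn^{\con}}\) after a homotopy within the fibers. Comparing the endpoint with \(h\cdot\widetilde{\bfx}^0_{\sfL',\bfn'}\) gives \((2)\), where the two-sided stabilizer ambiguity accounts for the double coset.

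\emph{Main obstacle.} The hardest step is the necessity argument that constrained singletons cannot move and free ones move only in \(\Graf\setminus\sfL^{\con}\). I would try to handle it through the hyperplane characterization \(\bfn_{\bfE}\): changing a constrained singleton alters the component-distribution data of some hyperplane crossing \(F\), and hence changes the set of crossing hyperplanes.
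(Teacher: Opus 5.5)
Your overall architecture matches the paper's proof. Sufficiency concatenates the one-edge strips swept out by free-particle moves and absorbs \(\GBGdisc_{\bfn}(\sfL)\) and \(\GBGdisc_{\bfn'}(\sfL')\) into the double coset. Necessity cuts the product region \(F\times I\) into hyperplane strips and compares lifts along the standard paths. (In the sufficiency step, the ends are parallel because each strip is the legal-pair subcomplex obtained by adding the traversed edge, so its lift is a convex product with parallel boundary fibers, and parallelism is transitive when \(\eta\) is not geodesic. That the \(\eta\)-hyperplanes miss both ends is not enough.)

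The genuine gap is in the step you single out as the main obstacle: the downstairs data \(\bfn_{\bfE}\) cannot in general detect motion of a constrained singleton. Let \(\Graf\) be a cycle \(\sfC=(c_0,\dots,c_{k-1})\), \(k\geq5\), plus one vertex \(\sfu\) adjacent to \(c_0\) and \(c_3\), and let \(n=2\), \(\sfL=\sfC\sqcup\sfu\) with one particle on each component. Then \(\sfu\) is constrained, since it is isolated in \(\Graf\setminus\sfC\), and the only way it could move is into \(\sfC\), which would produce \(\UD_2(\sfC)\). But \(\Graf\setminus\sfe\) is connected for every edge \(\sfe\) of \(\sfC\). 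So by \Cref{Lem:Hyperplane_in_UD_n} the edges \(\{\sfe,\sfu\}\) and \(\{\sfe,\sfv\}\), with \(\sfv\) a vertex of \(\sfC\) off \(\sfe\), are dual to the same hyperplane. Hence \(\UD_{\bfn}(\sfL)\) and \(\UD_2(\sfC)\) are crossed by exactly the same hyperplanes of \(\UD_2(\Graf)\), and no component-distribution datum changes.

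What you need instead is that every transverse move is available uniformly along the fiber; your sentence about carriers almost says this.

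\begin{itemize}
\item The carrier of a hyperplane crossed along \(I\) contains a whole strip \(F_k\times[0,1]\), where \(F_k\) is the \(k\)-th fiber. So the move along its edge \(\sfe\) must be legal at \emph{every} configuration of \(p(F_k)\).
\item Each vertex of a non-singleton component is occupied at some such configuration and vacant at another. A short case check, using that \(\sfe\) is not an edge of the fiber, shows that \(\sfe\) avoids all of them.
\item Hence the moving particle sits at a permanently occupied vertex, namely a singleton \(\sfu\), and the other endpoint \(\sfw\) is permanently vacant. Thus \(\sfw\notin\sfL\), and \(\sfw\) lies in the same component \(\sfD\) of \(\Graf\) minus the non-singleton components as \(\sfu\).
\item If \(\sfu\) is constrained, then \(V(\sfD)\subseteq V(\sfL)\) and no such \(\sfw\) exists. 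This is precisely the paper's remark that a constrained complementary component is completely occupied.
\item The same count shows that each such \(\sfD\) keeps its number of particles. So free singletons stay free, which gives \textup{(1)} at the far end, and every traversed edge lies in \(\Graf\setminus\sfL^{\con}\).
\end{itemize}

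Consequently the projected path across \(I\) is literally \(\eta\times\bfx^0_{\sfL^{\con},\bfn^{\con}}\), with no homotopy needed, and your endpoint comparison then gives \textup{(2)}.
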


\begin{proof}
Suppose first that the two conditions hold. Write \(\eta=\eta_1\cdots\eta_k\) as a concatenation of edges in \(\UD_{\|\bfn^{\free}\|}(\Graf\setminus\sfL^{\con})\).
Each edge \(\eta_i\) moves one free particle across an edge of \(\Graf\setminus\sfL^{\con}\), while leaving the constrained part fixed.
By \Cref{Lem:ParallelTransportLegalPair}, each such move determines a parallelism strip between the corresponding legal-pair subcomplexes. Concatenating these strips gives a parallelism between \(\UD_{\bfn}(\sfL)\) and \(\UD_{\bfn'}(\sfL')\).
The double-coset condition in \textup{(2)} ensures that the lift of this parallelism has boundary components precisely \(F\) and \(F'\).
Therefore \(F\) and \(F'\) are parallel.

Conversely, suppose that \(F\) and \(F'\) are parallel. Then they are joined by a sequence of adjacent parallelism strips. Projecting to \(\UD_n(\Graf)\), each strip leaves every non-singleton component unchanged. Moreover, a singleton lying in a constrained complementary component cannot be transported, since that component is completely occupied by singleton particles. Thus only free singleton particles move across these strips, and the constrained parts agree.
Hence the successive slides determine a path \(\eta\subset \UD_{\|\bfn^{\free}\|}(\Graf\setminus\sfL^{\con})\) from \(V(\sfL^{\free})\) to \(V(\sfL'^{\free})\). Comparing the distinguished lifts using the standard paths gives precisely the double-coset relation in \textup{(2)}.
\end{proof}

The parallelism criterion separates the relevant data: the constrained part determines the varying configuration-space factor, the free vertices determine a particular fiber, and the double-coset condition records the position of its lift. We now define the configuration space parametrizing the free-particle motion.

\begin{definition}[Closure and orthogonal pair]\label{Def:ClosureOrthogonalRegion}
Let \((\sfL,\bfn)\) be a legal pair with \(\|\bfn\|=n\) such that \(\sfL\) has at least one non-singleton component.
Let \(\sfL^\perp\) be the union of all connected components of \(\Graf\setminus\sfL^{\con}\) that intersect \(\sfL^{\free}\). 
We call \(\sfL^\perp\) the \emph{orthogonal subgraph} of \((\sfL,\bfn)\), and set \([\sfL]=\sfL^{\con}\sqcup\sfL^\perp\).

Since \(\pi_0([\sfL])=\pi_0(\sfL^{\con})\sqcup\pi_0(\sfL^{\perp})\), we may define
\([\bfn]\colon\pi_0([\sfL])\rightarrow\NZ\) by
\[[\bfn]\big|_{\pi_0(\sfL^{\con})}=\bfn^{\con}
\qquad\text{and}\qquad
[\bfn](\sfL^{\perp}_j)=\#\bigl(V(\sfL^{\perp}_j)\cap V(\sfL^{\free})\bigr),\]
where \(\sfL^{\perp}_j\) ranges over the connected components of \(\sfL^{\perp}\), and write \(\bfn^{\perp}=[\bfn]\big|_{\pi_0(\sfL^{\perp})}\).
Then we call
\begin{itemize}
\item the legal pair \(([\sfL],[\bfn])\) the \emph{closure} of \((\sfL,\bfn)\), and
\item the legal pair \((\sfL^\perp,\bfn^\perp)\) the \emph{orthogonal pair} of \((\sfL,\bfn)\).
\end{itemize}
We call \(\UD_{[\bfn]}([\sfL])\) the \emph{closure subcomplex} and \(\UD_{\bfn^\perp}(\sfL^\perp)\) the \emph{orthogonal configuration space} associated to \((\sfL,\bfn)\).
\end{definition}

\Cref{figure:orthogonal_closure} illustrates the closure \(([\sfL],[\bfn])\) and the orthogonal pair \((\sfL^\perp,\bfn^\perp)\).

The two pairs in \Cref{Def:ClosureOrthogonalRegion} are legal. Indeed, every component of \(\sfL^\perp\) carries at least one particle by construction, while a non-singleton component cannot be completely occupied by free vertices: otherwise all of its vertices would be constrained by \Cref{Def:ConstrainedandFree}. Moreover, 
\[\|[\bfn]\|=\|\bfn^{\con}\|+\|\bfn^{\free}\|=n,
\qquad\|\bfn^\perp\|=\|\bfn^{\free}\|.\]
Consequently, the closure determines a unique legal-pair subcomplex \(\UD_{[\bfn]}([\sfL])\subseteq\UD_n(\Graf)\). By contrast, the orthogonal configuration space \(\UD_{\bfn^\perp}(\sfL^\perp)\) is regarded as a cube complex in its own right, parametrizing the motion of the free particles. An embedding of this cube complex into \(\UD_n(\Graf)\) depends on the choice of a witness and need not be unique.
If \(\sfL^{\free}=\varnothing\), then the orthogonal pair is the empty legal pair \((\sfL^\perp,\bfn^\perp)=(\varnothing,\varnothing)\), and its orthogonal configuration space is the point \(\UD_0(\varnothing)\).

\begin{figure}[ht]
\begin{align*}
(\sfL^\perp,\bfn^\perp)&=\begin{tikzpicture}[baseline=-.5ex]
\draw[thick,fill,lightgray] (0,0) circle (2pt) -- (1,0) circle (2pt) -- (2,0) circle (2pt) -- (3,0) circle (2pt);
\draw[thick,lightgray] (4,0) + (108:1) -- +(36:1) +(-36:1) -- +(-108:1);
\draw[thick,fill,red] (4,0) + (36:1) circle (2pt) -- node[midway,right] {\(1\)} +(-36:1) circle (2pt);
\draw[thick,fill,lightgray] (4,0) ++ (108:1) circle (2pt) -- (3,0) (4,0) ++ (-108:1) circle (2pt) -- (3,0);
\draw[thick,lightgray] (-1,0) +(72:1) -- +(144:1) -- +(216:1) -- +(288:1);
\draw[thick,fill,lightgray] (-1,0) + (-72:1) circle (2pt) -- (0,0) (-1,0) +(1,0) -- +(72:1) circle (2pt);
\draw[thick,fill,lightgray] (-1,0) +(144:1) circle (2pt) node[left] {\(\hphantom{1}\)} +(216:1) circle (2pt) node[left] {\(\hphantom{1}\)};
\end{tikzpicture}\\
([\sfL],[\bfn])&=\begin{tikzpicture}[baseline=-.5ex]
\draw[thick,fill] (0,0) circle (2pt) -- (1,0) circle (2pt) -- node[midway,above] {\(1\)} (2,0) circle (2pt) -- (3,0) circle (2pt);
\draw[thick,lightgray] (4,0) + (108:1) -- +(36:1) -- +(-36:1) -- +(-108:1);
\draw[thick,fill] (4,0) + (36:1) circle (2pt) -- node[midway,right] {\(1\)} +(-36:1) circle (2pt);
\draw[thick,fill] (4,0) ++ (108:1) circle (2pt) -- (3,0) (4,0) ++ (-108:1) circle (2pt) -- (3,0);
\draw[thick,lightgray] (-1,0) +(72:1) -- +(144:1) -- +(216:1) -- +(288:1);
\draw[thick,fill] (-1,0) + (-72:1) circle (2pt) -- (0,0) (-1,0) +(1,0) -- +(72:1) circle (2pt);
\draw[thick,fill] (-1,0) +(144:1) circle (2pt) node[left] {\(1\)} +(216:1) circle (2pt) node[left] {\(1\)};
\end{tikzpicture}
\end{align*}
\caption{The orthogonal pair \((\sfL^\perp,\bfn^\perp)\) (top) and the closure \(([\sfL],[\bfn])\) (bottom) of the legal pair \((\sfL,\bfn)\) from \Cref{figure:constrained_components}.}
\label{figure:orthogonal_closure}
\end{figure}

The closure subcomplex admits the product decomposition
\begin{equation}\label{Eq:ProductDecomposition}
\UD_{[\bfn]}([\sfL])=\UD_{\bfn^{\con}}(\sfL^{\con})\times\UD_{\bfn^\perp}(\sfL^\perp).    
\end{equation}
Under this decomposition, the original subcomplex \(\UD_{\bfn}(\sfL)\) is the first-factor fiber \(\UD_{\bfn^{\con}}(\sfL^{\con})\times\{V(\sfL^{\free})\}\).
Thus the closure is obtained by allowing the free particles to vary in the orthogonal configuration space. To choose coherent representatives of the resulting parallelism classes, we now select a preferred free configuration in each such space.

\begin{definition}[Standard representative of a legal pair]\label{Def:StandardRepresentativeLegalPair}
Let \((\sfL,\bfn)\) be a legal pair with \(\|\bfn\|=n\) such that \(\sfL\) has at least one non-singleton component, and let \(([\sfL],[\bfn])\) be its closure. Since \(V([\sfL])=V(\sfL^{\con})\sqcup V(\sfL^{\perp})\), the base configuration of the closure decomposes as
\[\bfx^0_{[\sfL],[\bfn]}=\bfx^0_{\sfL^{\con},\bfn^{\con}}\sqcup \bfz_{\sfL,\bfn},
\qquad\text{where}\quad
\bfz_{\sfL,\bfn}:=\bfx^0_{[\sfL],[\bfn]}\cap V(\sfL^{\perp})\in \UD_{\bfn^\perp}(\sfL^\perp).\]
Explicitly, \(\bfz_{\sfL,\bfn}\) consists, in each connected component \(\sfL^\perp_i\) of \(\sfL^\perp\), of the \(\bfn^\perp(\sfL^\perp_i)\) vertices with smallest labels.
Then the \emph{standard representative} of \((\sfL,\bfn)\) is the legal pair \((\sfL^{\mathrm{st}},\bfn^{\mathrm{st}})\) defined by
\[\sfL^{\mathrm{st}}=\sfL^{\con}\sqcup\bfz_{\sfL,\bfn},\qquad
\bfn^{\mathrm{st}}=\bfn^\con\sqcup \bfone_{\bfz_{\sfL,\bfn}},\]
where the vertices of \(\bfz_{\sfL,\bfn}\) are regarded as singleton components of \(\sfL^{\mathrm{st}}\). 

Such a legal pair \((\sfL,\bfn)\) is called \emph{standard} if it agrees with its standard representative, or equivalently if \(\bfx^0_{\sfL,\bfn}=\bfx^0_{[\sfL],[\bfn]}\).
\end{definition}

The two conditions in \Cref{Def:StandardRepresentativeLegalPair} are equivalent. Indeed, every free component is a single vertex carrying one particle, so that \(\bfx^0_{\sfL,\bfn}=\bfx^0_{\sfL^{\con},\bfn^{\con}}\sqcup V(\sfL^{\free})\), while \(\bfx^0_{[\sfL],[\bfn]}=\bfx^0_{\sfL^{\con},\bfn^{\con}}\sqcup\bfz_{\sfL,\bfn}\) as displayed above. Since \(\sfL^{\con}\) and \(\sfL^{\perp}\) are disjoint, the two configurations agree precisely when \(V(\sfL^{\free})=\bfz_{\sfL,\bfn}\), that is, precisely when \((\sfL,\bfn)=(\sfL^{\mathrm{st}},\bfn^{\mathrm{st}})\).

\begin{lemma}[Standard representatives of parallelism classes]
\label{Lem:StandardRepresentativeParallelismClass}
Let \((\sfL,\bfn)\) be a legal pair with \(\|\bfn\|=n\) such that \(\langle\sfL,\bfn\rangle\in\frFLP\), and let \((\sfL^{\mathrm{st}},\bfn^{\mathrm{st}})\) be its standard representative.
Then there exists an element \(\theta_{\sfL,\bfn}\in\GBGdisc_n(\Graf)\) such that
\[\langle\sfL,\bfn\rangle\parallel\theta_{\sfL,\bfn}\cdot\langle\sfL^{\mathrm{st}},\bfn^{\mathrm{st}}\rangle.\]
Consequently, for every \(g\in\GBGdisc_n(\Graf)\), the parallelism class of \(g\cdot\langle\sfL,\bfn\rangle\) is represented by the left coset \(g\theta_{\sfL,\bfn}\GBGdisc_{\bfn^{\mathrm{st}}}(\sfL^{\mathrm{st}})\).
\end{lemma}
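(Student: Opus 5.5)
The plan is to apply \Cref{lemma:parallelism} with \(F=\langle\sfL,\bfn\rangle\) and \(F'=\theta\cdot\langle\sfL^{\mathrm{st}},\bfn^{\mathrm{st}}\rangle\), constructing \(\theta\) from an explicit free-particle path. First I check that the standard representative has the same constrained data: \((\sfL^{\mathrm{st}})^{\con}=\sfL^{\con}\) and \((\bfn^{\mathrm{st}})^{\con}=\bfn^{\con}\). The non-singleton components of \(\sfL^{\mathrm{st}}\) are exactly those of \(\sfL\), so the graph \(\Graf\setminus(\text{non-singleton part})\) and its components \(\sfD\) are unchanged. A singleton of \(\sfL^{\con}\) has \(V(\sfD)\subseteq V(\sfL)\). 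If \(\sfD\) contains a free vertex of \(\sfL\), this would make that vertex constrained as well, so \(\sfD\) contains no free vertex. Hence \(\sfD\) is a component of \(\Graf\setminus\sfL^{\con}\) disjoint from \(\sfL^\perp\), and it contains no point of \(\bfz_{\sfL,\bfn}\). Thus the constrained singletons stay constrained in \(\sfL^{\mathrm{st}}\).

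Conversely, each vertex of \(\bfz_{\sfL,\bfn}\) lies in a component \(\sfL^\perp_j\) that is not completely occupied. Indeed, if \(\sfL^\perp_j\) were completely occupied, all of its vertices would be free singletons of \(\sfL\), which by \Cref{Def:ConstrainedandFree} would make them constrained. I also need that each relevant \(\sfD\) coincides with a component of \(\Graf\setminus\sfL^{\con}\), which is where care is required. The point is that \(\sfD\) is a component after removing only the non-singleton components, and removing constrained singletons, which fill their \(\sfD\) entirely, does not split the components containing free vertices. Consequently the \(\bfz\)-vertices are free, condition (1) of \Cref{lemma:parallelism} holds, and \(\sfL^{\mathrm{st},\free}=\bfz_{\sfL,\bfn}\).

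Next, for condition (2), choose an edge path \(\eta\) in \(\UD_{\bfn^\perp}(\sfL^\perp)\subseteq\UD_{\|\bfn^{\free}\|}(\Graf\setminus\sfL^{\con})\) from \(V(\sfL^{\free})\) to \(\bfz_{\sfL,\bfn}\). Such a path exists because \(\UD_{\bfn^\perp}(\sfL^\perp)\) is a product of connected configuration spaces, by \Cref{Rem:ConfigurationSpaceDegenerate}: the pair is legal, so each factor is either a point or has fewer particles than vertices. Both endpoint configurations realize \(\bfn^\perp\), since the free vertices in \(\sfL^\perp_j\) number exactly \(\bfn^\perp(\sfL^\perp_j)\). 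Then set
\[\theta_{\sfL,\bfn}=\bigl[\gamma_{\sfL,\bfn}\cdot\bar\eta\cdot\gamma_{\sfL^{\mathrm{st}},\bfn^{\mathrm{st}}}^{-1}\bigr]\in\GBGdisc_n(\Graf),\]
where \(\bar\eta=\eta\times\bfx^0_{\sfL^{\con},\bfn^{\con}}\). This is a loop at \(\bfx^0\), so it is a group element, and with \(g=1\) and \(g'=\theta\) the double-coset condition holds trivially using identity coset elements. \Cref{lemma:parallelism} then gives the parallelism \(\langle\sfL,\bfn\rangle\parallel\theta\cdot\langle\sfL^{\mathrm{st}},\bfn^{\mathrm{st}}\rangle\).

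Finally, for the consequence, left translation by \(g\) is a cubical automorphism that preserves parallelism, so \(g\langle\sfL,\bfn\rangle\parallel g\theta\langle\sfL^{\mathrm{st}},\bfn^{\mathrm{st}}\rangle\). The element \(h\) with \(h\langle\sfL^{\mathrm{st}},\bfn^{\mathrm{st}}\rangle=g\theta\langle\sfL^{\mathrm{st}},\bfn^{\mathrm{st}}\rangle\) ranges exactly over \(g\theta\,\GBGdisc_{\bfn^{\mathrm{st}}}(\sfL^{\mathrm{st}})\). This holds because the stabilizer of the distinguished lift is the image of \(\Phi_{\sfL^{\mathrm{st}},\bfn^{\mathrm{st}}}\), by the standard covering-space identification via the standard path. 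Hence this coset represents the parallelism class.

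I expect the main obstacle to be the verification in the first step that the constrained/free decomposition of \(\sfL^{\mathrm{st}}\) agrees with that of \(\sfL\). This requires keeping track of when a singleton's complementary component is fully occupied.
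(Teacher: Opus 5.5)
Your proposal is correct and follows the paper's proof. The paper also defines \(\theta_{\sfL,\bfn}\) as the class of \(\gamma_{\sfL,\bfn}\), followed by a free-particle path in \(\UD_{\bfn^\perp}(\sfL^\perp)\) from \(V(\sfL^{\free})\) to \(\bfz_{\sfL,\bfn}\) with the constrained particles fixed, followed by \(\gamma_{\sfL^{\mathrm{st}},\bfn^{\mathrm{st}}}^{-1}\); it then applies \Cref{lemma:parallelism}, reads off the coset from the stabilizer of the distinguished lift, and simply asserts ``by construction'' the agreement of constrained parts that you check in detail. One small slip in that check: a component \(\sfD\) containing a constrained singleton consists entirely of constrained singletons, so it lies inside \(\sfL^{\con}\) and is not a component of \(\Graf\setminus\sfL^{\con}\); what you actually need, \(V(\sfD)\subseteq V(\sfL^{\con})\subseteq V(\sfL^{\mathrm{st}})\), follows immediately from what you showed.
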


\begin{proof}
By construction, \((\sfL,\bfn)\) and \((\sfL^{\mathrm{st}},\bfn^{\mathrm{st}})\) have the same constrained part, and their free configurations lie in the same orthogonal configuration space \(\UD_{\bfn^\perp}(\sfL^\perp)\).
Since the two legal pairs have the same constrained part and \(\langle\sfL,\bfn\rangle\in\frFLP\), both distinguished lifts are non-singletons. Hence both belong to \(\frFLP\), and \Cref{lemma:parallelism} applies.
Choose an edge path in \(\UD_{\bfn^\perp}(\sfL^\perp)\) from \(V(\sfL^{\free})\) to \(\bfz_{\sfL,\bfn}\), and denote by \(\eta_{\sfL,\bfn}\) the corresponding path in the closure subcomplex \(\UD_{[\bfn]}([\sfL])\). Define
\[\theta_{\sfL,\bfn}=\left[\gamma_{\sfL,\bfn}\cdot\eta_{\sfL,\bfn}\cdot\gamma_{\sfL^{\mathrm{st}},\bfn^{\mathrm{st}}}^{-1}\right]\in\GBGdisc_n(\Graf).\]
Then \Cref{lemma:parallelism} implies that \(\langle\sfL,\bfn\rangle\parallel\theta_{\sfL,\bfn}\cdot\langle\sfL^{\mathrm{st}},\bfn^{\mathrm{st}}\rangle\). Multiplying by \(g\) gives the statement for \(g\cdot\langle\sfL,\bfn\rangle\).

Finally, the stabilizer of \(\langle\sfL^{\mathrm{st}},\bfn^{\mathrm{st}}\rangle\) in the deck group is precisely \(\GBGdisc_{\bfn^{\mathrm{st}}}(\sfL^{\mathrm{st}})\), by the definition of the subgroup using the standard path. Hence the parallelism class is represented by the stated left coset.
\end{proof}

\begin{lemma}[Closure subcomplexes and standard product regions]\label{Lem:LiftedClosureRegion}
Let \((\sfL,\bfn)\) be a standard legal pair such that \(F:=\langle\sfL,\bfn\rangle\in\frFLP\). Let \(g\in G\), and let \(U=[g\cdot F]\in\frSLP\). Then the standard product region associated to \(U\) is the convex product subcomplex
\[\mathbf P_U=g\cdot\bigl\langle[\sfL],[\bfn]\bigr\rangle
\cong(g\cdot F)\times\widetilde{\UD}_{\bfn^\perp}(\sfL^\perp),\]
where \(\widetilde{\UD}_{\bfn^\perp}(\sfL^\perp)\) denotes the universal cover of the orthogonal configuration space \(\UD_{\bfn^\perp}(\sfL^\perp)\).

Under this decomposition, the first-factor fibers
\[(g\cdot F)\times\{e\},\qquad e\in \bigl(\widetilde{\UD}_{\bfn^\perp}(\sfL^\perp)\bigr)^{(0)},\]
are precisely the factors representing \(U\). Consequently, \(\mathbf P_U\) is the convex hull of the factors representing \(U\). In particular, \(g\cdot\langle[\sfL],[\bfn]\rangle\) depends only on \(U\) and is preserved by \(\Stab_G(U)\).
\end{lemma}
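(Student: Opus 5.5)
By deck invariance it suffices to treat \(g=1\), so \(U=[F]\) with \(F=\langle\sfL,\bfn\rangle\). Since \((\sfL,\bfn)\) is standard, \(\bfx^0_{\sfL,\bfn}=\bfx^0_{[\sfL],[\bfn]}\), and hence \(\gamma_{\sfL,\bfn}=\gamma_{[\sfL],[\bfn]}\). Consequently the distinguished lifts \(F\) and \(\langle[\sfL],[\bfn]\rangle\) share the basepoint \(\widetilde{\bfx}^0_{\sfL,\bfn}\). Because \(\UD_{\bfn}(\sfL)\subseteq\UD_{[\bfn]}([\sfL])\), we get \(F\subseteq\langle[\sfL],[\bfn]\rangle\).

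The downstairs product decomposition \eqref{Eq:ProductDecomposition} is a product of connected cube complexes. Its universal cover is therefore the product of the universal covers, and the lift \(\langle[\sfL],[\bfn]\rangle\) is the universal cover of \(\UD_{[\bfn]}([\sfL])\), since lifts of locally convex subcomplexes are \(\pi_1\)-injective and hence simply connected. This gives
\[\langle[\sfL],[\bfn]\rangle\cong\widetilde{\UD}_{\bfn^{\con}}(\sfL^{\con})\times\widetilde{\UD}_{\bfn^\perp}(\sfL^\perp).\]
The first-factor fiber through \(\widetilde{\bfx}^0_{\sfL,\bfn}\) is a connected lift of \(\UD_{\bfn}(\sfL)\) containing that point, so it equals \(F\). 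Every other first-factor fiber over a vertex \(e\) projects onto \(\UD_{\bfn^{\con}}(\sfL^{\con})\times\{\bfz\}\) for some free configuration \(\bfz\). This projection is a legal-pair subcomplex with the same constrained part, so the fiber is a legal-pair lift. All first-factor fibers of a cubical product are crossed by the same hyperplanes, so each such fiber is parallel to \(F\) and represents \(U\).

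For the converse, let \(F'\parallel F\). I will apply \Cref{lemma:parallelism} with \(g=1\). It gives the same constrained part, together with a path \(\eta\) in \(\UD_{\|\bfn^{\free}\|}(\Graf\setminus\sfL^{\con})\) starting at \(V(\sfL^{\free})\). That path stays inside the components of \(\Graf\setminus\sfL^{\con}\) meeting \(\sfL^{\free}\), that is, inside \(\sfL^\perp\), with the same particle counts. Hence \(\bar\eta\) lies in \(\UD_{[\bfn]}([\sfL])\), and its lift from \(\widetilde{\bfx}^0\) stays in \(\langle[\sfL],[\bfn]\rangle\). The double-coset condition then shows that \(F'\) is a first-factor fiber. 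Here the left factor \(\GBGdisc_{\bfn}(\sfL)\) stabilizes \(F\), and the right factor stabilizes \(\langle\sfL',\bfn'\rangle\). The main obstacle is this bookkeeping: one must check that the element \([\gamma_{\sfL,\bfn}\bar\eta\gamma_{\sfL',\bfn'}^{-1}]\) translates the distinguished lift \(\langle\sfL',\bfn'\rangle\) to the fiber reached by lifting \(\bar\eta\). This needs the observation that \(\gamma_{\sfL',\bfn'}\) and the lifted path \(\gamma_{\sfL,\bfn}\bar\eta\) end over the same vertex \(\bfx^0_{\sfL',\bfn'}\).

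The fibers are thus exactly the factors representing \(U\). A product \(F\times Y\) with \(Y\) a connected CAT(0) cube complex is the convex hull of its vertex fibers \(F\times\{e\}\), since every hyperplane crossing it crosses either \(F\) or \(Y\). This shows that \(\langle[\sfL],[\bfn]\rangle\) is the convex hull of the factors representing \(U\), and that its orthogonal factor is \(\widetilde{\UD}_{\bfn^\perp}(\sfL^\perp)\). By the cubical model of \cite[Remark~13.5]{BHS17I}, this hull is \(\mathbf P_U\): the factor \(\mathbf E_U\) parametrizes the parallel copies of \(F\), and the product map sends \((f,e)\) to the point \(f\) of the fiber over \(e\). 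Since the hull depends only on the set of representatives of \(U\), it depends only on \(U\), and any \(h\in\Stab_G(U)\) permutes that set and therefore preserves the hull.
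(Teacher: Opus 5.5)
Your proposal is correct and follows the same route as the paper: reduce to \(g=1\), use standardness to get \(F\subseteq\langle[\sfL],[\bfn]\rangle\), lift the product decomposition \eqref{Eq:ProductDecomposition}, identify the first-factor fibers with the factors parallel to \(F\) via \Cref{lemma:parallelism}, and conclude with the cubical model of the product region; you also supply the covering-space bookkeeping that the paper leaves implicit. One small slip: the lift of \(\bar\eta\) should start at \(\widetilde{\bfx}^0_{\sfL,\bfn}\), not at \(\widetilde{\bfx}^0\); equivalently, lift \(\gamma_{\sfL,\bfn}\cdot\bar\eta\) starting at \(\widetilde{\bfx}^0\).
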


\begin{proof}
By \(G\)-equivariance, it suffices to consider the case \(g=1\), so that \(U=[F]\). Since \((\sfL,\bfn)\) is standard, \(\bfx^0_{\sfL,\bfn}=\bfx^0_{[\sfL],[\bfn]}\), and hence \(F\subseteq\langle[\sfL],[\bfn]\rangle\).
The product decomposition \eqref{Eq:ProductDecomposition}, together with \Cref{Lem:LocalisometrybetweenLegalPairs}, lifts to
\[\bigl\langle[\sfL],[\bfn]\bigr\rangle\cong F\times\widetilde{\UD}_{\bfn^\perp}(\sfL^\perp).\]

By \Cref{lemma:parallelism}, the first-factor fibers in this product are precisely the factors in \(\frFLP\) parallel to \(F\). Hence, by the cubical factor-system model recalled in \Cref{Subsection:HHG},
\[\mathbf P_U=\bigl\langle[\sfL],[\bfn]\bigr\rangle.\]
It is therefore the convex hull of the factors representing \(U\), and is preserved by \(\Stab_G(U)\). Translating by \(g\) gives the result.
\end{proof}

In the cubical factor-system model recalled in \Cref{Subsection:HHG}, the factor \(g\cdot F\) is a cubical model for the standard nesting factor \(\mathbf F_U\), while \(\widetilde{\UD}_{\bfn^\perp}(\sfL^\perp)\) is a cubical model for the standard orthogonality factor \(\mathbf E_U\). Thus the coarse HHS product \(\mathbf P_U\asymp\mathbf F_U\times\mathbf E_U\) is realized here by an exact cubical product decomposition.

We now formulate nesting and orthogonality using standard representatives. The path condition records the relative position of the chosen deck translates.

\begin{definition}[Compatible standard legal pairs and comparison paths]\label{Def:CompatibleLegalPairs}
Let \((\sfL,\bfn)\) and \((\sfL',\bfn')\) be standard legal pairs with \(\|\bfn\|=\|\bfn'\|=n\).
We say that the two standard legal pairs are \emph{compatible} if their closures intersect, that is,
\[\UD_{[\bfn]}([\sfL])\cap\UD_{[\bfn']}([\sfL'])\neq\varnothing.\]

Suppose that \((\sfL,\bfn)\) and \((\sfL',\bfn')\) are compatible, and choose a vertex \(\mathbf y\in\UD_{[\bfn]}([\sfL])\cap\UD_{[\bfn']}([\sfL'])\). Since each closure subcomplex is connected and contains the corresponding standard base configuration, there are edge paths
\begin{itemize}
\item \(\eta_{\sfL}\subset\UD_{[\bfn]}([\sfL])\) from \(\mathbf x^0_{\sfL,\bfn}\) to \(\mathbf y\), and
\item \(\eta_{\sfL'}\subset\UD_{[\bfn']}([\sfL'])\) from \(\mathbf x^0_{\sfL',\bfn'}\) to \(\mathbf y\).
\end{itemize}
The concatenation \(\eta=\eta_{\sfL}\cdot\eta_{\sfL'}^{-1}\) is an edge path from \(\mathbf x^0_{\sfL,\bfn}\) to \(\mathbf x^0_{\sfL',\bfn'}\), obtained by first following a path in \(\UD_{[\bfn]}([\sfL])\) to \(\mathbf y\) and then a path in \(\UD_{[\bfn']}([\sfL'])\) from \(\mathbf y\).
We call any path obtained in this way a \emph{comparison path} from \((\sfL,\bfn)\) to \((\sfL',\bfn')\).
For such a comparison path \(\eta\), set
\begin{equation}\label{Eq:Delta_eta}
\delta_\eta=\left[\gamma_{\sfL,\bfn}\cdot\eta\cdot\gamma_{\sfL',\bfn'}^{-1}\right]\in G.
\end{equation}
\end{definition}

No independence from the choice of comparison path is asserted: the lifting criterion below only requires the existence of one comparison path satisfying the stated double-coset condition.

\begin{example}[A comparison path]\label{Ex:ComparisonPath}
We continue with the legal pairs \((\sfL,\bfn)\) and \((\sfL',\bfn')\) of \Cref{Ex:LegalPairBasePoints}. Both are standard: each of their components is constrained, so \(\sfL^{\free}=\sfL'^{\free}=\varnothing\), and hence \([\sfL]=\sfL\) and \([\sfL']=\sfL'\). Therefore
\[\UD_{[\bfn]}([\sfL])=\UD_1(\ell_1)\times\UD_1(\Graf\setminus\ell_1)\qquad\text{and}\qquad \UD_{[\bfn']}([\sfL'])=\UD_1(\ell_2)\times\UD_1(\Graf\setminus\ell_2),\]
that is, \(\UD_{[\bfn]}([\sfL])\) consists of the configurations with one particle on \(\ell_1\) and one particle off \(\ell_1\), and similarly for \(\UD_{[\bfn']}([\sfL'])\).

The configuration \(\mathbf y=\{\sfv_4,\sfv_7\}\) lies in both: indeed \(\sfv_4\in\ell_1\) and \(\sfv_7\in\Graf\setminus\ell_1\), while \(\sfv_7\in\ell_2\) and \(\sfv_4\in\Graf\setminus\ell_2\). Thus \((\sfL,\bfn)\) and \((\sfL',\bfn')\) are compatible.

The comparison path \(\eta=\eta_{\sfL}\cdot\eta_{\sfL'}^{-1}\) through \(\mathbf y\) drawn in \Cref{Fig:ComparisonPath} is given as follows.
\begin{itemize}
\item \(\eta_{\sfL}\) runs from \(\bfx^0_{\sfL,\bfn}=\{\sfv_1,\sfv_4\}\) to \(\mathbf y\); it keeps the particle on \(\ell_1\) fixed at \(\sfv_4\) and moves the other particle from \(\sfv_1\) to \(\sfv_7\) along the edge path with vertex sequence \((\sfv_1,\sfv_2,\sfv_3,\sfv_7)\).
\item \(\eta_{\sfL'}\) runs from \(\bfx^0_{\sfL',\bfn'}=\{\sfv_1,\sfv_7\}\) to \(\mathbf y\); it keeps the particle on \(\ell_2\) fixed at \(\sfv_7\) and moves the other particle from \(\sfv_1\) to \(\sfv_4\) along the edge path with vertex sequence \((\sfv_1,\sfv_2,\sfv_3,\sfv_4)\).
\end{itemize}
The moving particle stays off \(\ell_1\) along \(\eta_{\sfL}\) and off \(\ell_2\) along \(\eta_{\sfL'}\), so the two paths do lie in the respective closure subcomplexes.
\end{example}

\begin{figure}[ht]
\newcommand{\CPgraph}{%
\draw[thick] (-0.866,-0.5) -- (0,0) -- (0.866,-0.5);
\draw[thick] (0,0) -- (0,1);
\draw[thick] (-1.366,-1.366) -- (-0.866,-0.5) -- (-1.866,-0.5) -- (-1.366,-1.366);
\draw[thick] (1.366,-1.366) -- (0.866,-0.5) -- (1.866,-0.5) -- (1.366,-1.366);
\draw[thick] (-0.5,1.866) -- (0,1) -- (0.5,1.866) -- (-0.5,1.866);
\foreach \p in {{(0,0)},{(-0.866,-0.5)},{(-1.366,-1.366)},{(-1.866,-0.5)},{(0.866,-0.5)},{(1.366,-1.366)},{(1.866,-0.5)},{(0,1)},{(0.5,1.866)},{(-0.5,1.866)}}
{\fill \p circle (2pt);}
\draw (0,0) node[above right=-0.2ex] {\(\sfv_3\)};
\draw (-0.866,-0.5) node[above left=-0.4ex] {\(\sfv_2\)};
\draw (-1.366,-1.366) node[below=0.7ex] {\(\sfv_1\)};
\draw (-1.866,-0.5) node[left=0.7ex] {\(\sfv_{10}\)};
\draw (0.866,-0.5) node[above right=-0.2ex] {\(\sfv_4\)};
\draw (1.366,-1.366) node[below=0.7ex] {\(\sfv_5\)};
\draw (1.866,-0.5) node[right=0.7ex] {\(\sfv_6\)};
\draw (0,1) node[left=0.8ex] {\(\sfv_7\)};
\draw (0.5,1.866) node[right=0.7ex] {\(\sfv_8\)};
\draw (-0.5,1.866) node[left=0.7ex] {\(\sfv_9\)};
}
\begin{align*}
\eta_{\sfL}&=
\begin{tikzpicture}[baseline=-.5ex]
\CPgraph
\draw (1.366,-0.86) node {\(\ell_1\)};
\draw[blue, fill=blue] (0.866,-0.5) circle (3pt);
\draw[red, line width=1.4pt, ->] (-1.366,-1.366) -- (-0.866,-0.5) -- (0,0) -- (0,0.86);
\draw[red, fill=red, thick] (-1.366,-1.366) circle (3pt);
\draw[red, fill=red, thick] (0,1) circle (3pt);
\end{tikzpicture}&
\eta_{\sfL'}&=
\begin{tikzpicture}[baseline=-.5ex]
\CPgraph
\draw (0,1.62) node {\(\ell_2\)};
\draw[blue, fill=blue] (0,1) circle (3.4pt);
\draw[red, line width=1.4pt, ->] (-1.366,-1.366) -- (-0.866,-0.5) -- (0,0) -- (0.74,-0.43);
\draw[red, fill=red, thick] (-1.366,-1.366) circle (3pt);
\draw[red, fill=red, thick] (0.866,-0.5) circle (3pt);
\end{tikzpicture}
\end{align*}
\caption{The two halves of the comparison path of \Cref{Ex:ComparisonPath}. In each picture the blue particle is the one fixed on the constrained cycle, and the red arrow is the track of the moving particle, from \(\sfv_1\) to the other vertex of \(\mathbf y=\{\sfv_4,\sfv_7\}\).}
\label{Fig:ComparisonPath}
\end{figure}

\begin{lemma}[Comparison paths and standard product regions]\label{Lem:ComparisonPathIntersection}
Let \((\sfL,\bfn)\) and \((\sfL',\bfn')\) be standard legal pairs, set \(H=\GBGdisc_{\bfn}(\sfL)\) and \(H'=\GBGdisc_{\bfn'}(\sfL')\), and let \(U,V\in\frSLP\) be represented by \(gH\) and \(g'H'\), respectively. Then \(\mathbf P_U\cap\mathbf P_V\neq\varnothing\) if and only if the two legal pairs are compatible and there exists a comparison path \(\eta\) from \((\sfL,\bfn)\) to \((\sfL',\bfn')\) such that \(g^{-1}g'\in H\delta_\eta H'\).
\end{lemma}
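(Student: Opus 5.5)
By \Cref{Lem:LiftedClosureRegion}, \(\mathbf P_U=g\cdot\langle[\sfL],[\bfn]\rangle\) and \(\mathbf P_V=g'\cdot\langle[\sfL'],[\bfn']\rangle\). These are lifts of the closure subcomplexes \(\UD_{[\bfn]}([\sfL])\) and \(\UD_{[\bfn']}([\sfL'])\). Since both pairs are standard, \(\bfx^0_{\sfL,\bfn}=\bfx^0_{[\sfL],[\bfn]}\). Consequently \(\langle[\sfL],[\bfn]\rangle\) is the component of \(p^{-1}(\UD_{[\bfn]}([\sfL]))\) containing \(\widetilde{\bfx}^0_{\sfL,\bfn}\), the endpoint of \(\widetilde\gamma_{\sfL,\bfn}\), and similarly for the primed pair. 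The lemma therefore reduces to a covering-space statement: two lifts of connected subcomplexes meet exactly when some downstairs path, running inside the first subcomplex to a common point and then back inside the second, lifts to a path joining the chosen basepoints of the lifts. I will prove the two directions separately.

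\emph{Sufficiency.} Assume the pairs are compatible and \(\eta=\eta_{\sfL}\cdot\eta_{\sfL'}^{-1}\) is a comparison path through \(\mathbf y\) with \(g^{-1}g'=h\delta_\eta h'\), where \(h\in H\) and \(h'\in H'\). We have \(H\subseteq\Stab\langle\sfL,\bfn\rangle\subseteq\Stab\langle[\sfL],[\bfn]\rangle\), and likewise for \(H'\). Hence we may replace \(g\) by \(gh\) and \(g'\) by \(g'h'^{-1}\) and assume \(g^{-1}g'=\delta_\eta\). Translating by \(g^{-1}\), it suffices to show \(\langle[\sfL],[\bfn]\rangle\cap\delta_\eta\langle[\sfL'],[\bfn']\rangle\neq\varnothing\).

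Lift \(\gamma_{\sfL,\bfn}\cdot\eta_{\sfL}\) from \(\widetilde{\bfx}^0\). The \(\eta_{\sfL}\) portion lies in \(\UD_{[\bfn]}([\sfL])\) and starts at \(\widetilde{\bfx}^0_{\sfL,\bfn}\), so it stays in \(\langle[\sfL],[\bfn]\rangle\) and ends at a lift \(\widetilde{\mathbf y}\). Next lift the loop \(\gamma_{\sfL,\bfn}\eta\gamma_{\sfL',\bfn'}^{-1}\), which represents \(\delta_\eta\), and read it backwards from its endpoint \(\delta_\eta\widetilde{\bfx}^0\). The reversed \(\gamma_{\sfL',\bfn'}\) portion lands at \(\delta_\eta\widetilde{\bfx}^0_{\sfL',\bfn'}\), and the reversed \(\eta_{\sfL'}\) portion then reaches \(\widetilde{\mathbf y}\) while staying inside \(\delta_\eta\langle[\sfL'],[\bfn']\rangle\). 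Thus \(\widetilde{\mathbf y}\) lies in both lifts.

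\emph{Necessity.} Suppose \(\mathbf P_U\cap\mathbf P_V\neq\varnothing\). Both are convex subcomplexes, so their intersection is a nonempty convex subcomplex and contains a vertex \(\widetilde{\mathbf y}\). Its image \(\mathbf y\) lies in both closure subcomplexes, which proves compatibility. Choose an edge path in \(g\langle[\sfL],[\bfn]\rangle\) from \(g\widetilde{\bfx}^0_{\sfL,\bfn}\) to \(\widetilde{\mathbf y}\); this is possible because lifts are connected subcomplexes. Let \(\eta_{\sfL}\) be its projection, and define \(\eta_{\sfL'}\) in the same way using \(g'\langle[\sfL'],[\bfn']\rangle\). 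Then \(\eta=\eta_{\sfL}\eta_{\sfL'}^{-1}\) is a comparison path, and the lift of \(\gamma_{\sfL,\bfn}\eta\gamma_{\sfL',\bfn'}^{-1}\) from \(g\widetilde{\bfx}^0\) ends at \(g'\widetilde{\bfx}^0\). Therefore \(\delta_\eta=g^{-1}g'\), and in particular \(g^{-1}g'\in H\delta_\eta H'\).

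The main obstacle is bookkeeping rather than an idea. The deck element represented by a loop through \(\widetilde{\bfx}^0\) must be matched with the translate via the convention that \(\Phi_{\sfL,\bfn}\) conjugates by \(\gamma_{\sfL,\bfn}\). One must also confirm that \(H\) stabilizes \(\langle[\sfL],[\bfn]\rangle\), and not merely \(\langle\sfL,\bfn\rangle\). That follows because \(\langle\sfL,\bfn\rangle\subseteq\langle[\sfL],[\bfn]\rangle\) in the standard case, and a deck transformation preserving a subset of a lift preserves the whole lift.
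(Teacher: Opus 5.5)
Your proof is correct and follows the same covering-space lifting argument as the paper. In both, \Cref{Lem:LiftedClosureRegion} identifies \(\mathbf P_U,\mathbf P_V\) with \(g\cdot\langle[\sfL],[\bfn]\rangle\) and \(g'\cdot\langle[\sfL'],[\bfn']\rangle\) and gives their \(H\)- and \(H'\)-invariance; you supply the basepoint bookkeeping the paper leaves implicit, and in the converse direction you obtain the slightly sharper equality \(g^{-1}g'=\delta_\eta\) for a suitably chosen comparison path.
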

\begin{proof} 
Suppose first that the two legal pairs are compatible, and let \(\eta\) be a comparison path. Lifting \(\eta\) gives
\[\langle[\sfL],[\bfn]\rangle\cap\delta_\eta\cdot\langle[\sfL'],[\bfn']\rangle\neq\varnothing.\]
By \Cref{Lem:LiftedClosureRegion}, \(H\) and \(H'\) preserve \(\langle[\sfL],[\bfn]\rangle\) and \(\langle[\sfL'],[\bfn']\rangle\), respectively. Hence
\[g^{-1}g'\in H\delta_\eta H'\]
implies \(\mathbf P_U\cap\mathbf P_V\neq\varnothing\).

Conversely, if \(\mathbf P_U\cap\mathbf P_V\neq\varnothing\), projecting a vertex of the intersection gives a vertex in
\[\UD_{[\bfn]}([\sfL])\cap\UD_{[\bfn']}([\sfL']).\]
Thus the two legal pairs are compatible. Choosing paths to this vertex inside the two closure subcomplexes gives a comparison path \(\eta\), and the covering-space lifting criterion yields \(g^{-1}g'\in H\delta_\eta H'\).
\end{proof}

\begin{lemma}[Nesting and orthogonality]\label{Lem:NestingOrthogonality}
Let \(U,V\in\frSLP\) be domains represented respectively by \(g\GBGdisc_{\bfn}(\sfL)\) and \(g'\GBGdisc_{\bfn'}(\sfL')\), where \((\sfL,\bfn)\) and \((\sfL',\bfn')\) are standard legal pairs.

Then \(U\sqsubseteq V\) if and only if the following conditions hold:
\begin{enumerate}[label=\textup{(\arabic*)}, ref=\arabic*]
\item\label{Item:Compatible} The standard legal pairs \((\sfL,\bfn)\) and \((\sfL',\bfn')\) are compatible.
\item\label{Item:Relation} \((\sfL^{\con},\bfn^{\con})\preceq(\sfL'^{\con},\bfn'^{\con})\).
\item\label{Item:ExistPath} There exists a comparison path \(\eta\) from \((\sfL,\bfn)\) to \((\sfL',\bfn')\) such that
\[g^{-1}g'\in\GBGdisc_{\bfn}(\sfL)\,\delta_\eta\,\GBGdisc_{\bfn'}(\sfL').\]
\end{enumerate}

Moreover, \(U\bot V\) if and only if conditions~\textup{(1)} and~\textup{(3)} hold, and condition~\textup{(2)} is replaced by: 
\begin{enumerate}[label=\textup{(\arabic*${}'$)}, ref=\arabic*${}'$, start=2]
\item\label{Item:OrthogonalRelation} \((\sfL'^{\con},\bfn'^{\con})\preceq(\sfL^\perp,\bfn^\perp)\) and \((\sfL^{\con},\bfn^{\con})\preceq(\sfL'^\perp,\bfn'^\perp)\).
\end{enumerate}
\end{lemma}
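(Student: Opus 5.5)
In the cubical factor-system model recalled in \Cref{Subsection:HHG}, \(U\sqsubseteq V\) means that some factor representing \(U\) is contained in some factor representing \(V\). Likewise, \(U\bot V\) means that representatives occur as distinct factors of a common cubical product region, so that a factor representing \(V\) sits inside a fiber of the orthogonality factor \(\mathbf E_U\). In both cases the standard product regions \(\mathbf P_U\) and \(\mathbf P_V\) must intersect. The plan is therefore to first reduce to the intersecting situation using \Cref{Lem:ComparisonPathIntersection}. That lemma says \(\mathbf P_U\cap\mathbf P_V\neq\varnothing\) exactly when conditions \textup{(1)} and \textup{(3)} hold. So in both statements, \textup{(1)} and \textup{(3)} are necessary. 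After translating \(V\) by \(\delta_\eta\) (and absorbing the double-coset elements, which preserve the product regions by \Cref{Lem:LiftedClosureRegion}), we may assume \(g=1\) and \(g'=\delta_\eta\). The lift of \(\eta\) then gives a common vertex \(\widetilde{\mathbf y}\) of \(\langle[\sfL],[\bfn]\rangle\) and \(\delta_\eta\langle[\sfL'],[\bfn']\rangle\).

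Next I would work at \(\widetilde{\mathbf y}\) using \Cref{Lem:LiftedClosureRegion}. The factors representing \(U\) are the fibers \(F\times\{e\}\) in \(\mathbf P_U\cong F\times\widetilde{\UD}_{\bfn^\perp}(\sfL^\perp)\); downstairs, each is the legal-pair subcomplex obtained by freezing the free particles at some vertex configuration of \(\sfL^\perp\). Take the fiber of \(U\) and the fiber of \(V\) through \(\widetilde{\mathbf y}\). Downstairs these are \(\UD_{\bfn^{\con}}(\sfL^{\con})\times\{\mathbf y\cap\sfL^\perp\}\) and the analogous complex for \(\sfL'\). For nesting, one fiber of \(U\) lies in one fiber of \(V\) if and only if, in the universal cover, the corresponding lifts through a common vertex are nested. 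By \Cref{Lem:IntersectionOfLifts}, this is equivalent to the downstairs inclusion \(\UD_{\bfn^{\con}}(\sfL^{\con})\times\{\mathbf y_{\perp}\}\subseteq\UD_{\bfn'^{\con}}(\sfL'^{\con})\times\{\mathbf y'_{\perp}\}\) for suitable \(\mathbf y\), possibly after parallel transport of the \(U\)-fiber inside \(\mathbf P_U\). I would then check combinatorially that such an inclusion exists if and only if \((\sfL^{\con},\bfn^{\con})\preceq(\sfL'^{\con},\bfn'^{\con})\). For the forward direction, each constrained component of \(\sfL\) lies in a component of \(\sfL'^{\con}\), and the leftover frozen particles give the witness \(\bfn^{\mathrm{ext}}\). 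For the converse, a realizing configuration \(\mathbf y\) from \Cref{Lem:LocalisometrybetweenLegalPairs} gives the embedding. The remaining point is that the particles outside \(\sfL'^{\con}\) can be placed compatibly with the free data of both pairs, and compatibility supplies this. This gives the nesting criterion.

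For orthogonality, \(U\bot V\) means that, inside a common product region, a \(V\)-factor lies in an \(\mathbf E_U\)-fiber \(\{f\}\times\widetilde{\UD}_{\bfn^\perp}(\sfL^\perp)\), and symmetrically. Downstairs, the \(\mathbf E_U\)-fibers are the subcomplexes freezing the constrained particles of \(\sfL\) and letting the free ones roam in \(\sfL^\perp\). By the same inclusion-of-legal-pairs argument, a \(V\)-fiber sits in such a fiber if and only if \((\sfL'^{\con},\bfn'^{\con})\preceq(\sfL^\perp,\bfn^\perp)\). The symmetric requirement gives the second relation in \textup{(2\('\))}. Conversely, given \textup{(1)}, \textup{(2\('\))}, \textup{(3)}, the two relations say that \(\sfL^{\con}\) and \(\sfL'^{\con}\) are disjoint and that each sits in the other's orthogonal subgraph with consistent particle counts. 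The subcomplex \(\UD_{\bfn^{\con}}(\sfL^{\con})\times\UD_{\bfn'^{\con}}(\sfL'^{\con})\times\{\text{rest}\}\) is then a legal-pair product subcomplex. Its lift through \(\widetilde{\mathbf y}\) is a cubical product containing representatives of \(U\) and \(V\) as distinct factors, which is exactly the orthogonality witness of \cite[\S8.3]{BHS17I}.

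The main obstacle I expect is the bookkeeping of lifts. One must show that the double-coset condition for one comparison path suffices to align the specific lifted fibers, not just their images downstairs. This is where the freedom to parallel-transport within \(\mathbf P_U\) (by \Cref{lemma:parallelism}) must be combined with the choice of the intersection vertex \(\mathbf y\). I would handle it by always choosing \(\mathbf y\) in the intersection of closures so that the free particles of each pair sit at positions realizing the witness, and then using connectivity of the orthogonal configuration spaces to move \(\mathbf y\) there without leaving either closure.
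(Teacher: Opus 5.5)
Your overall plan matches the paper's proof. You use \Cref{Lem:ComparisonPathIntersection} to identify conditions \textup{(1)} and \textup{(3)} with \(\mathbf P_U\cap\mathbf P_V\neq\varnothing\). You then compare the first-factor fibers of \(\mathbf P_U\) and \(\mathbf P_V\) through a common vertex \(\widetilde{\mathbf y}\), reading off \(\preceq\)-witnesses from the frozen particles: in the projected inclusion for nesting, and in the projected common product region for orthogonality. Your orthogonality converse, lifting \(\UD_{\bfn^{\con}}(\sfL^{\con})\times\UD_{\bfn'^{\con}}(\sfL'^{\con})\times\{\text{rest}\}\) through \(\widetilde{\mathbf y}\), is exactly the paper's argument.

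The one step that would fail as stated is the repositioning in your last paragraph, which you propose for the nesting converse. By \Cref{Lem:IntersectionOfLifts}, \(\mathbf P_U\cap\mathbf P_V\) lifts a single component of \(\UD_{[\bfn]}([\sfL])\cap\UD_{[\bfn']}([\sfL'])\). Connectivity of \(\UD_{\bfn^\perp}(\sfL^\perp)\) does not keep you inside the other closure. Throughout both closures the number of particles in each component of \(\sfL^\perp\) and in each component of \(\sfL'^{\con}\) is fixed. So an extra particle cannot pass between two components of \(\sfL'^{\con}\setminus\sfL^{\con}\) lying in different components of \(\sfL^\perp\). Since witnesses are not unique (\Cref{Ex:NonuniquenessOfWitnesses}), a witness prescribed by \textup{(2)} may not be realizable there at all.

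No moving is needed, however:
\begin{itemize}
\item Condition \textup{(2)} gives \(\sfL^{\con}\subseteq\sfL'^{\con}\).
\item Take any vertex \(\widetilde{\mathbf y}\in\mathbf P_U\cap\mathbf P_V\) with projection \(\mathbf y\). Since \(\mathbf y\) realizes \(\bfn^{\con}\) on \(\sfL^{\con}\) and \(\bfn'^{\con}\) on \(\sfL'^{\con}\), its particles in \(\sfL'^{\con}\setminus\sfL^{\con}\) automatically realize a witness.
\item Since \(\sfL^{\con}\cap\sfL'^\perp=\varnothing\), one gets \(\UD_{\bfn^{\con}}(\sfL^{\con})\times\{\mathbf y\cap\sfL^\perp\}\subseteq\UD_{\bfn'^{\con}}(\sfL'^{\con})\times\{\mathbf y\cap\sfL'^\perp\}\).
\item Lifting through \(\widetilde{\mathbf y}\) gives the inclusion of the \(U\)-fiber in the \(V\)-fiber directly.
\end{itemize}
This is how the paper argues, and it removes the lift-bookkeeping obstacle you anticipated.
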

\begin{proof}
By \Cref{Lem:ComparisonPathIntersection}, conditions~\textup{(1)} and~\textup{(3)} together are equivalent to \(\mathbf P_U\cap\mathbf P_V\neq\varnothing\). For \(\widetilde{\mathbf y}\in\mathbf P_U\cap\mathbf P_V\), let \(F_U(\widetilde{\mathbf y})\) and \(F_V(\widetilde{\mathbf y})\) denote the first-factor fibers of \(\mathbf P_U\) and \(\mathbf P_V\), respectively, through \(\widetilde{\mathbf y}\).

Suppose that conditions~\textup{(1)},~\textup{(2)}, and~\textup{(3)} hold, and choose \(\widetilde{\mathbf y}\in\mathbf P_U\cap\mathbf P_V\). The subconfiguration of the projection of \(\widetilde{\mathbf y}\) supported on \(\sfL'^{\con}\setminus\sfL^{\con}\) realizes a witness for \((\sfL^{\con},\bfn^{\con})\preceq (\sfL'^{\con},\bfn'^{\con})\). Consequently, \(F_U(\widetilde{\mathbf y})\subseteq F_V(\widetilde{\mathbf y})\), and hence \(U\sqsubseteq V\).

Conversely, suppose that \(U\sqsubseteq V\). Then there are factors \(F\in U\) and \(F'\in V\) such that \(F\subseteq F'\). In particular, \(F\subseteq\mathbf P_U\cap\mathbf P_V\), so the first paragraph gives conditions~\textup{(1)} and~\textup{(3)}. 
By \Cref{lemma:parallelism}, the projections of \(F\) and \(F'\) have constrained parts \((\sfL^{\con},\bfn^{\con})\) and \((\sfL'^{\con},\bfn'^{\con})\), respectively.
After projecting the inclusion \(F\subseteq F'\), the particles fixed on \(\sfL'^{\con}\setminus\sfL^{\con}\) provide a witness for \((\sfL^{\con},\bfn^{\con})\preceq (\sfL'^{\con},\bfn'^{\con})\).
Thus condition~\textup{(2)} holds, proving the nesting criterion.

Now suppose that conditions~\textup{(1)},~\textup{(2\('\))}, and~\textup{(3)} hold, and choose \(\widetilde{\mathbf y}\in\mathbf P_U\cap\mathbf P_V\).
At the projection of \(\widetilde{\mathbf y}\), the relevant complementary subconfigurations realize witnesses for the two relations in \textup{(2\('\))}.
Consequently, the motions in \(F_U(\widetilde{\mathbf y})\) and \(F_V(\widetilde{\mathbf y})\) are supported on disjoint constrained subgraphs and can be performed independently. They therefore determine a cubical product region in which \(F_U(\widetilde{\mathbf y})\) and \(F_V(\widetilde{\mathbf y})\) occur as distinct factors. Hence \(U\bot V\).

Conversely, suppose that \(U\bot V\). Choose factors \(F\in U\) and \(F'\in V\) which occur as distinct factors of a common cubical product region \(Q\).
Since \(Q\) is swept out by parallel copies of both factors, we have \(Q\subseteq\mathbf P_U\cap\mathbf P_V\).
The first paragraph therefore gives conditions~\textup{(1)} and~\textup{(3)}. After projecting the product decomposition of \(Q\), the particles fixed in the remaining coordinates provide witnesses for \((\sfL'^{\con},\bfn'^{\con})\preceq(\sfL^\perp,\bfn^\perp)\) and \((\sfL^{\con},\bfn^{\con})\preceq(\sfL'^\perp,\bfn'^\perp)\). Thus \textup{(2\('\))} holds.
\end{proof}

\begin{theorem}[Domains and relations in the legal-pair hierarchy]\label{Thm:GBGHHG}
Let \(\Graf\) be a connected finite graph and let \(2\leq n<\#V(\Graf)\). Then the legal-pair hierarchy \(\bigl(\GBGdisc_n(\Graf),\frSLP\bigr)\) has the following explicit description. 
\begin{itemize}
\item Its domains are represented by left cosets \(g\GBGdisc_{\bfn}(\sfL)\), where \(g\in\GBGdisc_n(\Graf)\) and \((\sfL,\bfn)\) ranges over the standard legal pairs of norm \(n\) such that \(\sfL\) has at least one non-singleton component.
\item The coset \(g\GBGdisc_{\bfn}(\sfL)\) represents the parallelism class of \(g\cdot\langle\sfL,\bfn\rangle\), and the associated hyperbolic space is the factored contact graph of any factor in this parallelism class.
\item Nesting and orthogonality are described by \Cref{Lem:NestingOrthogonality}. 
\end{itemize}
\end{theorem}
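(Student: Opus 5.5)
The plan is to assemble the theorem from results already established, checking each bullet in turn. By \Cref{Def:LegalPairHierarchy}, the legal-pair hierarchy is the HHG structure produced by the factor-system construction of \cite{BHS17I,HS20}. We apply that construction to the reduced factor system \(\frFLP\); this is a factor system because \Cref{Prop:LCsubcomplexes_of_UD_n} shows that \(\frFLP^0\) is one, and discarding singletons preserves the axioms. The construction takes the domains to be the parallelism classes \(\frFLP/{\parallel}\) and the hyperbolic spaces to be the factored contact graphs \(\Chat F\). Since parallel factors are crossed by the same hyperplanes, these graphs are independent of the representative \(F\). The deck group \(G\) acts properly and cocompactly, and it preserves \(\frFLP\). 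So the HHG claim is immediate, and the work lies in matching domains with cosets.

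For the first two bullets, take a domain \(U\in\frSLP\). By \Cref{Cor:FactorSystemElements}, \(U\) contains a factor of the form \(g\cdot\langle\sfL,\bfn\rangle\), where \(\sfL\) has a non-singleton component. \Cref{Lem:StandardRepresentativeParallelismClass} then replaces \((\sfL,\bfn)\) by its standard representative \((\sfL^{\mathrm{st}},\bfn^{\mathrm{st}})\). The constrained part is unchanged by this replacement, so \(\sfL^{\mathrm{st}}\) still has a non-singleton component. The factor \(g\cdot\langle\sfL,\bfn\rangle\) becomes the parallel factor \(g\theta_{\sfL,\bfn}\cdot\langle\sfL^{\mathrm{st}},\bfn^{\mathrm{st}}\rangle\).

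Next I would check that, for a standard pair, the factor \(g\cdot\langle\sfL,\bfn\rangle\) depends only on the coset \(g\GBGdisc_{\bfn}(\sfL)\). This holds because the stabilizer of \(\langle\sfL,\bfn\rangle\) in the deck group is exactly \(\Phi_{\sfL,\bfn}(\GBGdisc_{\bfn}(\sfL))\): by definition of the distinguished lift, this subgroup preserves it and acts transitively on lifts of the basepoint inside it. Hence the coset \(g\GBGdisc_{\bfn}(\sfL)\) determines a well-defined parallelism class, namely that of \(g\cdot\langle\sfL,\bfn\rangle\). Every domain arises this way, which gives the first bullet and the representation claim in the second. The hyperbolic-space clause of the second bullet is just the definition of the factor-system HHS.

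The third bullet is a direct citation of \Cref{Lem:NestingOrthogonality}. Its hypotheses ask that the domains be represented by cosets attached to standard legal pairs, which is exactly the representation just produced. I would add one remark: in the factor-system HHS, nesting and orthogonality are defined through containment of parallel representatives and through common product regions, as recalled in \Cref{Subsection:HHG}. These are the relations analysed in that lemma.

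The one point needing care is that the coset description is a description of a surjection and not a bijection. Distinct standard cosets may still represent the same domain, and the double-coset condition of \Cref{lemma:parallelism} governs when that happens. The statement says only that domains are ``represented by'' such cosets, so I would make sure the proof claims surjectivity and well-definedness only, with the equivalence relation deferred to \Cref{lemma:parallelism}.
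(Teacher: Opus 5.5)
Your proposal is correct and follows essentially the same route as the paper: it uses \Cref{Cor:FactorSystemElements} and \Cref{Lem:StandardRepresentativeParallelismClass} to reach standard representatives, identifies the stabilizer of the distinguished lift with \(\GBGdisc_{\bfn}(\sfL)\) to parametrize translates by left cosets, takes the hyperbolic spaces from the factor-system construction, and cites \Cref{Lem:NestingOrthogonality} directly. Your extra remarks add two accurate details to the paper's brief argument: the constrained part, and hence a non-singleton component, survives standardization, and the coset description is only a surjection whose fibers are governed by \Cref{lemma:parallelism}.
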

\begin{proof}
The domain set is \(\frSLP=\frFLP/{\parallel}\), where \(\frFLP\) is the reduced legal-pair factor system consisting of the non-singleton factors.
By \Cref{Cor:FactorSystemElements}, every factor is a deck translate of \(\langle\sfL,\bfn\rangle\) for some legal pair of norm \(n\). By \Cref{Lem:StandardRepresentativeParallelismClass}, every parallelism class has a representative associated to a standard legal pair. The translates of each standard factor are parametrized by the stated left cosets, and each such translate determines its parallelism class. The assertion about the associated hyperbolic spaces is part of the factor-system construction, and the final assertion is \Cref{Lem:NestingOrthogonality}.
\end{proof}

\subsection{Membership in \texorpdfstring{\(\Xi\)}{Xi}}

We now verify that the legal-pair hierarchy \((G,\frSLP)\) belongs to the class \(\Xi\) recalled in \Cref{Def:Xiclass}. We first identify the subgroups \(G_U\) and verify the \(\mathbf F_U\) stabilizers property. We then study common product regions for orthogonal domains. Their stabilizers give the commutative property directly and provide the product decomposition needed for the orthogonal decomposition property.

\begin{lemma}[The subgroups \(G_U\)]\label{Lem:LegalPairGU}
Let \(U\in\frSLP\), and let \(F=g\cdot\langle\sfL,\bfn\rangle\in\frFLP\) represent \(U\). Then
\[G_U=\Stab_G(F)=g\GBGdisc_{\bfn}(\sfL)g^{-1}\cong\GBGdisc_{\bfn^{\con}}(\sfL^{\con})
\cong\prod_{\sfL_i\in\pi_0(\sfL^{\con})}\GBGdisc_{\bfn(\sfL_i)}(\sfL_i).\]
\end{lemma}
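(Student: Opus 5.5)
By deck equivalence we reduce to \(g=1\). Conjugation carries \(G_{g U}\) to \(g G_U g^{-1}\) and \(\Stab_G(gF)\) to \(g\Stab_G(F)g^{-1}\). Moreover, \(\GBGdisc_{\bfn}(\sfL)\) depends only on the parallelism class up to conjugation, by \Cref{Lem:StandardRepresentativeParallelismClass}. So we may also assume that \((\sfL,\bfn)\) is standard, and we work with \(U=[F]\) and \(F=\langle\sfL,\bfn\rangle\). The plan is to establish three equalities in turn: \(G_U=\Stab_G(F)\), then \(\Stab_G(F)=\GBGdisc_{\bfn}(\sfL)\), then the product identification.

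For the second equality, \(\Stab_G(F)=\GBGdisc_{\bfn}(\sfL)\), we use the fact that \(F\) is a connected component of \(p^{-1}(\UD_{\bfn}(\sfL))\). Covering-space theory identifies its deck stabilizer with the image of \(\pi_1(\UD_{\bfn}(\sfL),\bfx^0_{\sfL,\bfn})\) under conjugation by \(\gamma_{\sfL,\bfn}\), that is, with \(\Phi_{\sfL,\bfn}(\GBGdisc_{\bfn}(\sfL))\). This is already used at the end of the proof of \Cref{Lem:StandardRepresentativeParallelismClass}. For the product identification, note that the free components are singletons carrying exactly one particle, so their factors \(\UD_1(\{\sfv\})\) are points. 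The product formula \eqref{Eq:ProductofGBG} then gives \(\GBGdisc_{\bfn}(\sfL)\cong\GBGdisc_{\bfn^{\con}}(\sfL^{\con})\cong\prod\GBGdisc_{\bfn(\sfL_i)}(\sfL_i)\) over constrained components. The singleton constrained components contribute trivial factors, which is harmless.

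The main step is \(G_U=\Stab_G(F)\). By \Cref{Lem:LiftedClosureRegion}, \(\mathbf P_U=\langle[\sfL],[\bfn]\rangle\cong F\times\widetilde{\UD}_{\bfn^\perp}(\sfL^\perp)\), and the \(\mathbf F_U\)-fibers are exactly the factors parallel to \(F\). Hence \(G_U\), being the subgroup that fixes every fiber, lies in \(\Stab_G(F)\).

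For the converse, take \(h\in\Stab_G(F)=\GBGdisc_{\bfn}(\sfL)\). Represent \(h\) by a loop \(\alpha\) in \(\UD_{\bfn}(\sfL)\) based at \(\bfx^0_{\sfL,\bfn}\). Under \eqref{Eq:ProductDecomposition}, \(\alpha\) moves only the constrained particles, while the free particles stay at \(\bfz_{\sfL,\bfn}\). Given any other fiber \(F\times\{e\}\), choose a path \(\beta\) in \(\UD_{\bfn^\perp}(\sfL^\perp)\) from \(\bfz_{\sfL,\bfn}\) to \(p(e)\). Since the closure is a product, \(\alpha\) commutes with \(\beta\) up to homotopy in \(\UD_{[\bfn]}([\sfL])\); explicitly, \(\alpha\times\beta\) is a square. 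Lifting, the deck transformation \(h\) restricted to \(\mathbf P_U\) is the product of \(h|_F\) with the identity on \(\widetilde{\UD}_{\bfn^\perp}(\sfL^\perp)\). The point is that the lift of \(\alpha\) translated along the lift of \(\beta\) is the loop of \(\alpha\) based at the other fiber, and so \(h\) acts trivially on the second factor. Therefore \(h\) preserves every fiber, giving \(h\in G_U\).

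I expect the main obstacle to be making this last lifting argument rigorous. One must check that the action of \(\Stab_G(U)\) on \(\mathbf E_U\) in the abstract HHS sense agrees with the action on the cubical second factor \(\widetilde{\UD}_{\bfn^\perp}(\sfL^\perp)\). I would use the cubical model from \Cref{Subsection:HHG} together with \Cref{Lem:LiftedClosureRegion} for this. One must also check that \(\pi_1\) of the closure splits as the product, so that elements of \(\GBGdisc_{\bfn}(\sfL)\) lie in the first factor \(\pi_1(\UD_{\bfn^{\con}}(\sfL^{\con}))\times\{1\}\) and hence act trivially on the universal cover of the second factor.
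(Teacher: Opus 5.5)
Your proposal is correct. Two of your three steps are the same as the paper's: \(\Stab_G(F)=g\GBGdisc_{\bfn}(\sfL)g^{-1}\) via covering-space theory for the distinguished lift, and the product identification via the free components being singletons with trivial braid group.

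The genuine difference is the key step \(G_U=\Stab_G(F)\). The paper cites \cite[Lemma~3.6]{OPRAAG}: all factors parallel to \(F\) have the same stabilizer, so stabilizing one \(\mathbf F_U\)-fiber is the same as stabilizing all of them. You instead prove the needed case directly from \Cref{Lem:LiftedClosureRegion}. There, \(\mathbf P_U\) is the universal cover of the closure subcomplex, which downstairs is an honest product \(\UD_{\bfn^{\con}}(\sfL^{\con})\times\UD_{\bfn^\perp}(\sfL^\perp)\). Hence a loop moving only the constrained particles commutes with every free-particle path, and its deck transformation acts trivially on the orthogonal factor. This is more self-contained and shows the mechanism: \(\Stab_G(F)\) cannot move the parallel copies of \(F\) because the closure is a genuine product, not a twisted one. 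The paper's route is a one-line appeal to a general fact.

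One small repair is needed. Your preliminary reduction to a standard pair is not justified by \Cref{Lem:StandardRepresentativeParallelismClass}. That lemma only gives \(\langle\sfL,\bfn\rangle\parallel\theta_{\sfL,\bfn}\cdot\langle\sfL^{\mathrm{st}},\bfn^{\mathrm{st}}\rangle\), not equality of the two stabilizers, and that equality is essentially what you are proving. The reduction is unnecessary, though. Applying \Cref{Lem:LiftedClosureRegion} to the standard representative shows that every factor representing \(U\), including your \(F\), is a first-factor fiber of \(\mathbf P_U\). Your splitting argument then runs verbatim with the basepoint in that fiber.
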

\begin{proof}
In the cubical model fixed in \Cref{Subsection:HHG}, the \(\mathbf F_U\)-fibers are precisely the factors parallel to \(F\). By \cite[Lemma~3.6]{OPRAAG}, all these factors have the same stabilizer. Hence an element of \(\Stab_G(U)\) stabilizes every \(\mathbf F_U\)-fiber if and only if it stabilizes \(F\). Therefore \(G_U=\Stab_G(F)\).

By the convention on distinguished lifts and subgroup representatives, together with covering-space theory, \(\Stab_G\bigl(\langle\sfL,\bfn\rangle\bigr)=\GBGdisc_{\bfn}(\sfL)\).
Consequently, \(\Stab_G(F)=g\GBGdisc_{\bfn}(\sfL)g^{-1}\).

Finally, the decomposition in \eqref{Eq:ProductofGBG} gives \(\GBGdisc_{\bfn}(\sfL)\cong \prod_{\sfL_i\in\pi_0(\sfL)}\GBGdisc_{\bfn(\sfL_i)}(\sfL_i)\).
Every free component is a singleton, so its corresponding braid group is trivial. Removing these trivial factors gives
\[\GBGdisc_{\bfn}(\sfL)\cong\GBGdisc_{\bfn^{\con}}(\sfL^{\con})
\cong\prod_{\sfL_i\in\pi_0(\sfL^{\con})}\GBGdisc_{\bfn(\sfL_i)}(\sfL_i).\qedhere\]
\end{proof}

To verify the \(\mathbf F_U\) stabilizers property, it remains to show that \(G_U\) acts cofinitely on the induced nesting index set.

\begin{proposition}\label{Prop:LegalPairFUStabilizers}
The legal-pair hierarchy satisfies the \(\mathbf F_U\) stabilizers property.
\end{proposition}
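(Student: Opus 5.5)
By \Cref{Lem:LegalPairGU}, for \(U\) represented by \(F=g\cdot\langle\sfL,\bfn\rangle\) we have \(G_U=\Stab_G(F)=g\GBGdisc_{\bfn}(\sfL)g^{-1}\). The \(\mathbf F_U\)-fiber \(F\) is the universal cover of the compact legal-pair subcomplex \(\UD_{\bfn}(\sfL)\), and \(G_U\) acts on it by deck transformations. So \(G_U\) acts properly and cocompactly on \(F\), and this action preserves the HHS structure \((\mathbf F_U,\frS_U)\) induced from the legal-pair hierarchy. By the HHG definition, the only remaining point is that \(G_U\) acts on the induced index set \(\frS_U=\{V\in\frSLP\mid V\sqsubseteq U\}\) with finitely many orbits, compatibly with the projections. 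The proof therefore reduces to this cofiniteness, as the sentence before the statement indicates. By translating, we may take \(g=1\).

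The first step is to realize every \(V\sqsubseteq U\) inside \(F\). In the cubical model recalled in \Cref{Subsection:HHG}, nesting is witnessed by parallel representatives \(F_V\subseteq F_U\). Parallel transport within \(\mathbf P_U\), via \Cref{Lem:LiftedClosureRegion}, moves \(F_U\) to \(F\) and carries \(F_V\) along, so each \(V\) has a representative factor \(F_V\subseteq F\). Such an \(F_V\) is a convex legal-pair lift contained in \(F\). Its image under the covering \(F\to\UD_{\bfn}(\sfL)\) is a connected component of the preimage, in \(F\), of a legal-pair subcomplex \(\UD_{\bfn'}(\sfL')\subseteq\UD_{\bfn}(\sfL)\). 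Since \(\Graf\) is finite, only finitely many such downstairs subcomplexes exist.

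The second step is finiteness of orbits. Two lifts inside \(F\) of the same subcomplex \(\UD_{\bfn'}(\sfL')\subseteq\UD_{\bfn}(\sfL)\) differ by an element of \(\pi_1(\UD_{\bfn}(\sfL))\), which is exactly the deck group \(G_U\) of \(F\). Hence the factors contained in \(F\) fall into finitely many \(G_U\)-orbits, and so do their parallelism classes. The equivariance of the projections \(\pi_V\) and \(\rho\)-maps under \(G_U\) is inherited from the ambient \(G\)-equivariance. The hyperbolic spaces \(\mathcal CV\) for \(V\sqsubseteq U\) are factored contact graphs of factors inside \(F\), so \(G_U\) acts on them by isometries.

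The step I expect to be the main obstacle is checking that the \(G_U\)-action on \(\frS_U\) is well defined at the level of parallelism classes relative to \(F\). A domain \(V\) may have several representatives inside \(F\), and an element of \(G_U\) must permute the classes coherently. I would address this with the convexity of \(F\) and \(\Stab\)-invariance: for \(h\in G_U\), \(h\) maps \(F\) to itself and preserves parallelism, so \(hV\sqsubseteq U\). The orbit count above is then an upper bound for the number of orbits of classes.
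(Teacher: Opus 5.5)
Your proof is correct and follows the paper's route. You identify \(G_U=\Stab_G(F)\) via \Cref{Lem:LegalPairGU}, use the proper cocompact deck action on \(F\), and reduce to cofiniteness on \(\frS_U\). The paper gets that cofiniteness by citing the local-finiteness-plus-cocompactness argument of \cite[Proposition~3.9]{OPRAAG}. You instead prove it directly: each \(V\sqsubseteq U\) has a representative in \(F\), which is a legal-pair lift by \Cref{Lem:ParallelTransportLegalPair} or as the gate projection \(\frg_F(F_V)\). That representative lies over one of the finitely many legal-pair subcomplexes of \(\UD_{\bfn}(\sfL)\), and the deck group \(G_U\) acts transitively on the lifts in \(F\) of each such subcomplex.
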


\begin{proof}
Let \(U=[F]\in\frSLP\). By \Cref{Lem:LegalPairGU}, \(G_U=\Stab_G(F)\). The factor system induced on \(F\) is \(G_U\)-invariant and locally finite. The action \(G_U\curvearrowright F\) is proper, and its quotient is the compact legal-pair subcomplex covered by \(F\). Hence the action is cocompact.

As in the compact-special argument in the proof of \cite[Proposition~3.9]{OPRAAG}, local finiteness of the induced factor system together with cocompactness on \(F\) implies that \(G_U\) acts cofinitely on \(\frS_U\). Therefore the natural restricted action \(G_U\curvearrowright(\mathbf F_U,\frS_U)\) defines an HHG structure, as required.
\end{proof}

We next record the common product region associated to a finite collection of pairwise orthogonal domains. This gives the commutative property immediately and will also be the main input for the orthogonal decomposition property.

\begin{lemma}[Common product regions for orthogonal domains]\label{Lem:OrthogonalLegalPairProductRegions}
Let \(U_1,\dots,U_k\in\frSLP\), where \(k\geq1\), be pairwise orthogonal. Then the intersection of their standard product regions
\[Q:=\bigcap_{i=1}^k\mathbf P_{U_i}\]
is a nonempty legal-pair lift admitting a cubical product decomposition
\[Q\cong F_1\times\cdots\times F_k\times F_0,\]
where \(F_i\) represents \(U_i\) for every \(i\).

Let \(G_0\) be the product of the remaining component braid subgroups corresponding to the residual factor \(F_0\), with \(G_0=\{1\}\) when \(F_0\) is a point. Then multiplication induces an isomorphism
\[G_{U_1}\times\cdots\times G_{U_k}\times G_0\longrightarrow \Stab_G(Q).\]
In particular, the subgroups \(G_{U_1},\dots,G_{U_k},G_0\) commute pairwise. Moreover, if \(h\in G_0\), then \(V\bot U_i\) for every \(V\in\B(h)\) and every \(i\in\{1,\dots,k\}\).
\end{lemma}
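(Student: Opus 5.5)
We argue by induction on \(k\), realizing each intersection of product regions concretely as a single legal-pair lift. For \(k=1\), take a standard representative \((\sfL,\bfn)\) of \(U_1\) with \(U_1=[g\cdot\langle\sfL,\bfn\rangle]\). By \Cref{Lem:LiftedClosureRegion}, \(\mathbf P_{U_1}=g\cdot\langle[\sfL],[\bfn]\rangle\cong F_1\times\widetilde{\UD}_{\bfn^\perp}(\sfL^\perp)\). Here \(F_0\) is the second factor and \(G_0=\GBGdisc_{\bfn^\perp}(\sfL^\perp)\), conjugated by \(g\).

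For the inductive step, assume \(Q'=\bigcap_{i<k}\mathbf P_{U_i}\) is a legal-pair lift over a legal pair of the form
\[
\Bigl(\textstyle\bigsqcup_{i<k}\sfL_i^{\con}\sqcup\sfM,\ \bigsqcup_i\bfn_i^{\con}\sqcup\bfm\Bigr),
\]
with \(F_0'\) the lift of \(\UD_{\bfm}(\sfM)\). Since \(U_k\bot U_i\) for every \(i<k\), condition (2\('\)) of \Cref{Lem:NestingOrthogonality} says that \(\sfL_k^{\con}\) is disjoint from each \(\sfL_i^{\con}\). It also places the constrained pair of \(U_k\) in \((\sfL_i^\perp,\bfn_i^\perp)\), the part where particles are free to move. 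We then have to show that \(\sfL_k^{\con}\) sits inside \(\sfM\), with \((\sfL_k^{\con},\bfn_k^{\con})\preceq(\sfM,\bfm)\). The key input is that pairwise orthogonality gives pairwise intersecting product regions (\Cref{Lem:ComparisonPathIntersection}). A Helly-type property for convex subcomplexes of a \(\CAT(0)\) cube complex then upgrades this to \(Q=Q'\cap\mathbf P_{U_k}\neq\varnothing\). By \Cref{Lem:Intersection_of_Legal_pairs,Lem:IntersectionOfLifts}, \(Q\) is a legal-pair lift. Reading off its legal pair from \Cref{Lem:Intersection_of_Legal_pairs} gives the components \(\sfL_i^{\con}\) for \(i\le k\) with their full particle counts, together with a residual pair obtained by intersecting \(\sfM\) with \(\sfL_k^\perp\). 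The product decomposition \(Q\cong F_1\times\cdots\times F_k\times F_0\) follows from \eqref{Eq:ProductofGBG} lifted to the universal cover. Finally, each fiber \(F_i\) is parallel to the representative of \(U_i\) by \Cref{lemma:parallelism}, because its constrained part is unchanged.

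For the stabilizer statement, \(\Stab_G(Q)\) is the conjugate of \(\pi_1\) of the downstairs legal-pair subcomplex, and by \Cref{Cor:UndistortedLegalPairSubgroups} this is the product of the component braid groups. The constrained components of the \(i\)-th fiber contribute \(\Stab_G(F_i)=G_{U_i}\) by \Cref{Lem:LegalPairGU}, and the rest contribute \(G_0\). Elements of the product group act coordinatewise on the cubical product, so the factors commute pairwise and multiplication is an isomorphism.

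For the final claim, let \(h\in G_0\) and \(V\in\B(h)\). By \Cref{Prop:AxialEllipticDichotomy}, some power of \(h\) fixes \(V\) and acts loxodromically on \(\mathcal C V\). Since \(h\) preserves \(Q\) and acts trivially on each \(F_i\)-coordinate, the domain \(V\) must be represented inside a fiber of the \(F_0\) factor. Such a fiber is supported on a subgraph disjoint from every \(\sfL_i^{\con}\), and its particles fixed off \(F_0\) supply the witnesses required in (2\('\)). So \(V\bot U_i\) by \Cref{Lem:NestingOrthogonality}.

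The main obstacle is the nonemptiness of \(Q\) together with the identification of \(F_0\). Helly's property gives nonemptiness cleanly. The delicate part is verifying that, in the residual pair, the constrained/free bookkeeping of \Cref{Def:ConstrainedandFree} does not merge a residual particle with some \(\sfL_i^{\con}\). I would handle this by using the witnesses from (2\('\)) at a common vertex of \(Q\).
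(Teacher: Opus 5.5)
Your argument for the nonemptiness of \(Q\), its product decomposition and \(\Stab_G(Q)\) follows the paper. The steps are: pairwise intersection of the \(\mathbf P_{U_i}\) via \Cref{Lem:NestingOrthogonality,Lem:ComparisonPathIntersection}, Helly for nonemptiness, \Cref{Lem:Intersection_of_Legal_pairs,Lem:IntersectionOfLifts} to see that \(Q\) is a legal-pair lift, the relations in (2\('\)) to see that each \(\sfL_i^{\con}\) survives with its full particle count, and the componentwise product for \(\Stab_G(Q)\).

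The induction on \(k\) does no work, since you apply Helly to the whole family anyway. The paper simply fixes one vertex \(\widetilde{\mathbf y}\in Q\) and reads off all the witnesses at its projection. One small repair: condition (1) of \Cref{lemma:parallelism} alone does not give parallelism. Instead, observe that your \(F_i\) is the first-factor fiber of \(\mathbf P_{U_i}\) through \(\widetilde{\mathbf y}\), so it represents \(U_i\) by \Cref{Lem:LiftedClosureRegion}.

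The gap is in the last assertion. The step ``since \(h\) preserves \(Q\) and acts trivially on each \(F_i\)-coordinate, the domain \(V\) must be represented inside a fiber of the \(F_0\) factor'' is the entire content of that claim, and nothing you cite justifies it. Knowing that the \(h\)-orbit lies in an \(F_0\)-fiber does not, by itself, tell you where a domain with unbounded orbit projection lives. A priori, \(V\) could be represented by a factor strictly larger than, or transverse to, that fiber.

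The missing input is the cone-off property of factored contact graphs. Set \(A_0=\{\widetilde{\mathbf y}_1\}\times\cdots\times\{\widetilde{\mathbf y}_k\}\times F_0\). This is a legal-pair lift containing \(\langle h\rangle\widetilde{\mathbf y}\). For \(R\in V\), the image of \(A_0\) in \(\Chat R\) is unbounded, so \(A_0\) is not a point and lies in \(\frFLP\). Then \cite[Lemma~8.19]{BHS17I} gives a subcomplex \(R_0\subseteq A_0\) parallel to \(R\); otherwise the relevant gate projection would be a proper factor, coned off in \(\Chat R\). You also need \(R_0\in\frFLP\). This comes from \Cref{Lem:ParallelTransportLegalPair}, not from the factor-system axiom, which only covers parallel copies of combinatorial hyperplanes. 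An equivalent HHS-level route: projections of an \(\mathbf F_W\)-fiber to domains not nested in \(W=[A_0]\) are bounded, so \(V\sqsubseteq W\), and \(W\bot U_i\) is inherited by \(V\).

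Once \(R_0\subseteq A_0\) is in hand, the paper concludes directly: \(F_i\times R_0\subseteq Q\) is a cubical product region in which \(F_i\) and \(R_0\) are distinct factors, so \(V\bot U_i\). Your route through (2\('\)) can also be completed. You would additionally need compatibility and condition (3), both of which follow from \(R_0,F_i\subseteq Q\). You would also need to check that even the constrained singletons of \(\sfL_i^{\con}\) become free in the legal pair of \(R_0\).
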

\begin{proof}
By \Cref{Lem:NestingOrthogonality,Lem:ComparisonPathIntersection}, pairwise orthogonality implies \(\mathbf P_{U_i}\cap\mathbf P_{U_j}\neq\varnothing\) for \(i\neq j\).
The subcomplexes \(\mathbf P_{U_i}\) are convex. Hence the Helly property for convex subcomplexes of \(\CAT(0)\) cube complexes \cite{Ger98,Rol99} gives \(Q:=\bigcap_{i=1}^k\mathbf P_{U_i}\neq\varnothing\). 
Each \(\mathbf P_{U_i}\) is a legal-pair lift of a closure subcomplex. Repeated applications of \Cref{Lem:Intersection_of_Legal_pairs,Lem:IntersectionOfLifts} therefore show that \(Q\) is a legal-pair lift.

For each \(i\), choose a standard legal pair \((\sfL_i,\bfn_i)\) and a left coset representing \(U_i\), as in \Cref{Thm:GBGHHG}. Choose a vertex \(\widetilde{\mathbf y}\in Q\), and let \(F_i\) be the first-factor fiber of \(\mathbf P_{U_i}\) through \(\widetilde{\mathbf y}\). By \Cref{Lem:LiftedClosureRegion}, the factor \(F_i\) represents \(U_i\).

For \(i\neq j\), \Cref{Lem:NestingOrthogonality}\eqref{Item:OrthogonalRelation}, together with the witness supplied by the projection of \(\widetilde{\mathbf y}\), gives
\[(\sfL_i^{\con},\bfn_i^{\con})\preceq
(\sfL_j^\perp,\bfn_j^\perp).\]
By \Cref{Lem:LocalisometrybetweenLegalPairs}, lifting the corresponding cubical embedding through \(\widetilde{\mathbf y}\) gives \(F_i\subseteq\mathbf P_{U_j}\).
Hence \(F_i\subseteq Q\) for every \(i\).

Let \(A=p(Q)\). By \Cref{Lem:Intersection_of_Legal_pairs,Lem:IntersectionOfLifts}, it is a legal-pair subcomplex; write \(A=\UD_{\bfn'}(\sfL')\).
For each \(i\), the relation above shows that \(\sfL_i^{\con}\) is a union of connected components of \(\sfL'\). Indeed, intersection with the other closure subgraphs leaves every component of \(\sfL_i^{\con}\) unchanged. The particle-counting equalities in \Cref{Lem:Intersection_of_Legal_pairs} moreover give
\[\bfn'\big|_{\pi_0(\sfL_i^{\con})}=\bfn_i^{\con}.\]
Consequently, the componentwise product decomposition of \(A\) has the form
\[A\cong\UD_{\bfn_1^{\con}}(\sfL_1^{\con})\times\cdots\times\UD_{\bfn_k^{\con}}(\sfL_k^{\con})\times A_{\mathrm{res}},\]
where \(A_{\mathrm{res}}\) is the product of the remaining component configuration spaces and may be a point. 
Lifting this decomposition through \(\widetilde{\mathbf y}\) gives
\[Q\cong F_1\times\cdots\times F_k\times F_0,\]
because, for each \(i\), the subcomplex through \(\widetilde{\mathbf y}\) obtained by varying precisely the coordinates supported on \(\sfL_i^{\con}\) is \(F_i\).

Since \(Q\) contains the non-singleton factor \(F_1\), it belongs to \(\frFLP\). By \Cref{Lem:LegalPairGU}, its stabilizer is the product of the component braid subgroups associated to the legal-pair decomposition above. Since \(F_i\) represents \(U_i\), the subgroup acting on the \(F_i\)-coordinate is precisely \(G_{U_i}\). Therefore multiplication induces an isomorphism
\[G_{U_1}\times\cdots\times G_{U_k}\times G_0\longrightarrow\Stab_G(Q).\]
In particular, these subgroups commute pairwise.

It remains to prove the assertion about active domains. Write \(\widetilde{\mathbf y}=(\widetilde{\mathbf y}_1,\dots,\widetilde{\mathbf y}_k,\widetilde{\mathbf y}_0)\) with respect to the displayed product decomposition, and let
\[A_0=\{\widetilde{\mathbf y}_1\}\times\cdots\times\{\widetilde{\mathbf y}_k\}\times F_0.\]
This is again a legal-pair lift. Every \(h\in G_0\) preserves
\(A_0\), and the \(h\)-orbit of \(\widetilde{\mathbf y}\) is
contained in \(A_0\).

Let \(V\in\B(h)\), and choose a factor \(R\in\frFLP\) representing \(V\). Since \(V\) is active for \(h\), the projection of the \(h\)-orbit of \(\widetilde{\mathbf y}\), and hence that of \(A_0\), to \(\Chat R\) has infinite diameter. In particular, \(A_0\) is not a point and belongs to \(\frFLP\). By \cite[Lemma~8.19]{BHS17I}, the factor \(R\) is parallel to a subcomplex \(R_0\subseteq A_0\). Since legal-pair lifts are preserved under parallel transport, \(R_0\in\frFLP\) and \(R_0\) represents \(V\). Fixing all the remaining coordinates of \(Q\), the factors \(F_i\) and \(R_0\) occur as the two factors of a cubical product subcomplex \(F_i\times R_0\subseteq Q\). Hence \(V\bot U_i\) for every \(i\).
\end{proof}

In particular, \Cref{Lem:OrthogonalLegalPairProductRegions} gives \([G_U,G_V]=1\) whenever \(U\bot V\), and hence the legal-pair hierarchy satisfies the commutative property. 

It remains to verify the orthogonal decomposition property.

\begin{proposition}\label{Prop:LegalPairOrthogonalDecomposition}
The legal-pair hierarchy satisfies the orthogonal decomposition property.
\end{proposition}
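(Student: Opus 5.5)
Let \(g\in G\) have infinite order, and write \(\B(g)=\{U_1,\dots,U_k\}\). By \Cref{Prop:AxialEllipticDichotomy}, this is a nonempty, finite, pairwise orthogonal set, and \(g\) permutes it. After replacing \(g\) by \(g^{M}\) with \(M=M(\frSLP)\), we may assume that \(g\) fixes every \(U_i\). This is harmless since \(\B(g^M)=\B(g)\) and the exponent can be absorbed into \(N\). We then apply \Cref{Lem:OrthogonalLegalPairProductRegions} to \(U_1,\dots,U_k\). This gives the common region \(Q=\bigcap_i\mathbf P_{U_i}\cong F_1\times\cdots\times F_k\times F_0\) and the isomorphism \(G_{U_1}\times\cdots\times G_{U_k}\times G_0\to\Stab_G(Q)\).

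The first step is to show that a power of \(g\) lies in \(\Stab_G(Q)\). Since \(g\) fixes each \(U_i\), it preserves each \(\mathbf P_{U_i}=g'\cdot\langle[\sfL_i],[\bfn_i]\rangle\) by \Cref{Lem:LiftedClosureRegion}, and hence preserves \(Q\). So \(g\in\Stab_G(Q)\), and we can write \(g=h_1\cdots h_k\,h_0\) with \(h_i\in G_{U_i}\), \(h_0\in G_0\), all pairwise commuting. This is the step I expect to be the main obstacle. It requires care that \(g\) really stabilizes each product region rather than merely coarsely preserving it. Here \Cref{Lem:LiftedClosureRegion} is used: it identifies \(\mathbf P_U\) as an honest subcomplex that depends only on \(U\) and is preserved by \(\Stab_G(U)\).

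Next we show that \(h_0\) is elliptic. If it were not, then \(\B(h_0)\neq\varnothing\), and by the last assertion of \Cref{Lem:OrthogonalLegalPairProductRegions} every \(V\in\B(h_0)\) is orthogonal to all \(U_i\). Since the factors commute and act on separate coordinates of \(Q\), the orbit of \(g\) projected to \(\Chat R\) for \(R\) representing \(V\) (realized as \(R_0\subseteq\) the \(F_0\)-coordinate) agrees, up to the bounded contribution of the other coordinates, with that of \(h_0\). This would make \(V\in\B(g)\setminus\{U_1,\dots,U_k\}\), a contradiction. Similarly, each \(h_i\in G_{U_i}\) acts only on the \(F_i\)-coordinate, so \(\B(h_i)\subseteq\B(g)\). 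By \cite[Lemma~3.3]{OPRAAG}, \(h_i\) acts trivially on \(\mathcal CU_j\) for \(j\neq i\). Hence the projection of \(\langle g\rangle\)-orbits to \(\mathcal CU_i\) is coarsely that of \(\langle h_i\rangle\)-orbits, and \(U_i\in\B(h_i)\). So each \(h_i\) is axial and strongly fully supported on \(U_i\).

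Finally, \(h_0\) is elliptic and therefore of finite order \(r\) by \Cref{Prop:AxialEllipticDichotomy}. Commutativity gives \(g^{r}=h_1^{r}\cdots h_k^{r}\). Each \(h_i^{r}\) is still in \(G_{U_i}\) and has \(U_i\in\B(h_i^r)\). Taking \(N=Mr\) yields the required decomposition. (In fact \(G\) is torsion-free, being the fundamental group of a nonpositively curved cube complex, so \(h_0=1\) directly.)
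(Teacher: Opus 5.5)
Your proposal is correct and follows the paper's proof. You pass to a power fixing each \(U_i\), use \Cref{Lem:LiftedClosureRegion} to place it in \(\Stab_G(Q)\), split it via \Cref{Lem:OrthogonalLegalPairProductRegions}, eliminate \(h_0\) using the final assertion of that lemma, and compare actions on \(\mathcal C U_i\) via \cite[Lemma~3.3]{OPRAAG}; the only differences are cosmetic. You rule out \(\B(h_0)\neq\varnothing\) by computing gate projections coordinatewise in \(Q\) rather than via a loxodromic power \(h_0^L\), and you absorb \(h_0\) by raising to its order rather than invoking torsion-freeness; your aside \(\B(h_i)\subseteq\B(g)\) is never used and can be dropped.
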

\begin{proof}
Let \(a\in G\) have infinite order, and write \(\B(a)=\{U_1,\dots,U_k\}\). By \Cref{Prop:AxialEllipticDichotomy}, the domains \(U_1,\dots,U_k\) are pairwise orthogonal, and there exists \(M>0\) such that \(b:=a^M\) fixes every \(U_i\). By the power invariance of bigsets, \(\B(b)=\B(a)=\{U_1,\dots,U_k\}\). 

Since \(b\) fixes every \(U_i\), \Cref{Lem:LiftedClosureRegion} shows that \(b\) preserves each standard product region \(\mathbf P_{U_i}\). Hence \(b\) stabilizes \(Q=\bigcap_{i=1}^k\mathbf P_{U_i}\). By \Cref{Lem:OrthogonalLegalPairProductRegions}, there is a product decomposition \(Q\cong F_1\times\cdots\times F_k\times F_0\) such that
\[\Stab_G(Q)\cong G_{U_1}\times\cdots\times G_{U_k}\times G_0.\]
Thus we may write
\[b=h_1\cdots h_kh_0,\qquad h_i\in G_{U_i},\quad h_0\in G_0,\]
where \(h_1,\dots,h_k,h_0\) commute pairwise.

We first show that \(\B(h_0)=\varnothing\). Suppose otherwise, and let \(V\in\B(h_0)\). By the final assertion of \Cref{Lem:OrthogonalLegalPairProductRegions}, \(V\bot U_i\) for every \(i\).
By \Cref{Prop:AxialEllipticDichotomy}, there exists \(L>0\) such that \(h_0^L\) fixes \(V\) and acts loxodromically on \(\mathcal C V\). On the other hand, for every \(i\), we have \(h_i\in G_{U_i}\) and \(V\bot U_i\). Hence \cite[Lemma~3.3]{OPRAAG} implies that \(h_i\) fixes \(V\) and acts trivially on \(\mathcal C V\). Since the elements commute, we have
\[b^L=h_1^L\cdots h_k^Lh_0^L,\]
and therefore \(b^L\) fixes \(V\) and acts loxodromically on
\(\mathcal C V\). It follows that
\[V\in\B(b^L)=\B(b)=\{U_1,\dots,U_k\}.\]
This is impossible, since \(V\bot U_i\) for every \(i\). Thus \(\B(h_0)=\varnothing\).

By \Cref{Prop:AxialEllipticDichotomy}, \(h_0\) has finite order. On the other hand, \(G\) is torsion-free: every finite-order isometry of the complete \(\CAT(0)\) cube complex \(\widetilde{\UD}_n(\Graf)\) fixes a point \cite[Corollary~II.2.8]{BH}, whereas the deck action of \(G\) is free.
Hence \(h_0=1\), and therefore \(b=h_1\cdots h_k\).

It remains to show that each \(h_i\) is strongly fully supported on \(U_i\). Fix \(i\). For every \(j\neq i\), we have \(U_j\bot U_i\) and \(h_j\in G_{U_j}\), so \cite[Lemma~3.3]{OPRAAG} implies that \(h_j\) fixes \(U_i\) and acts trivially on \(\mathcal C U_i\). Consequently, \(h_i\) and \(b\) induce the same action on \(\mathcal C U_i\). Since \(b=a^M\) acts loxodromically on \(\mathcal C U_i\), the same is true of \(h_i\). Thus \(U_i\in\B(h_i)\).
In particular, \(h_i\) has infinite order and hence is axial by \Cref{Prop:AxialEllipticDichotomy}. Together with \(h_i\in G_{U_i}\), this shows that \(h_i\) is strongly fully supported on \(U_i\).
Therefore \(a^M=h_1\cdots h_k\) is the required orthogonal decomposition.
\end{proof}

We summarize the structural properties of the legal-pair hierarchy.

\begin{theorem}[Structural properties of the legal-pair hierarchy]\label{Thm:GBGLegalPairXi}
Let \(\Graf\) be a connected finite graph and let \(2\leq n<\#V(\Graf)\). Then \((\GBGdisc_n(\Graf),\frSLP)\in\Xi\).
\end{theorem}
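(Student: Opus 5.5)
The theorem is essentially a summary of the three properties defining \(\Xi\) in \Cref{Def:Xiclass}, each of which has been established above. I will first recall that \((G,\frSLP)\) is an HHG, and then verify the three properties in turn.

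For the first point, \Cref{Prop:LCsubcomplexes_of_UD_n} shows that \(\frFLP^0\) is a \(G\)-invariant factor system. The reduction in \Cref{Def:LegalPairHierarchy} gives the reduced system \(\frFLP\), and the deck action of \(G\) on \(\widetilde{\UD}_n(\Graf)\) is proper and cocompact, since \(\UD_n(\Graf)\) is compact. By \cite{BHS17I,HS20}, this yields an HHG structure with domain set \(\frSLP\).

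The three defining properties then follow as below.

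\begin{enumerate}
\item The \(\mathbf F_U\) stabilizers property is \Cref{Prop:LegalPairFUStabilizers}. That argument identifies \(G_U=\Stab_G(F)\) via \Cref{Lem:LegalPairGU} and uses cocompactness of \(G_U\curvearrowright F\).
\item The commutative property follows from \Cref{Lem:OrthogonalLegalPairProductRegions} with \(k=2\). Let \(g\) and \(h\) be axial and strongly fully supported on orthogonal domains \(U\) and \(V\). By definition \(g\in G_U\) and \(h\in G_V\), and that lemma shows that \(G_U\) and \(G_V\) commute elementwise inside \(\Stab_G(\mathbf P_U\cap\mathbf P_V)\). Hence \([g,h]=1\).
\item The orthogonal decomposition property is \Cref{Prop:LegalPairOrthogonalDecomposition}. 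It takes \(N=M\), with \(M\) from \Cref{Prop:AxialEllipticDichotomy}, and uses torsion-freeness of \(G\) to eliminate the residual factor \(h_0\).
\end{enumerate}

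The only step needing care is the commutative property. There one must check that strong full support places \(g\) in \(G_U\), the metric orthogonal stabilizer, rather than merely in \(\Stab_G(U)\); the lemma's product decomposition is phrased in terms of \(G_U\). This is immediate from \Cref{Def:OurGUandFU}, so I expect no genuine obstacle. The substantive work lies in \Cref{Lem:OrthogonalLegalPairProductRegions}, which is taken as given.
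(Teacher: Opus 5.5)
Your proposal is correct and follows the paper's proof: the paper likewise gets the \(\mathbf F_U\) stabilizers property from \Cref{Prop:LegalPairFUStabilizers}, the commutative property from the pairwise commutation of the subgroups \(G_{U_i}\) in \Cref{Lem:OrthogonalLegalPairProductRegions}, and the orthogonal decomposition property from \Cref{Prop:LegalPairOrthogonalDecomposition}. It then concludes directly from \Cref{Def:Xiclass}.
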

\begin{proof}
The \(\mathbf F_U\) stabilizers property follows from \Cref{Prop:LegalPairFUStabilizers}, the commutative property from \Cref{Lem:OrthogonalLegalPairProductRegions}, and the orthogonal decomposition property from \Cref{Prop:LegalPairOrthogonalDecomposition}.
Thus the conclusion follows from \Cref{Def:Xiclass}.
\end{proof}

\section{General applications to graph braid groups}\label{Section:ApplicationtoGBG}
We now apply the legal-pair hierarchy to general graph braid groups. We classify its nesting-minimal unbounded domains, compute its rank, and obtain undistorted RAAG embeddings from generalized halo graphs.

\subsection{Nesting-minimal unbounded domains}
The coset description of domains in \Cref{Thm:GBGHHG} allows us to classify the nesting-minimal unbounded domains explicitly.

\begin{theorem}[Nesting-minimal unbounded domains]\label{Thm:MinimalUnboundedDomainsGBG}
Let \(\Graf\) be a connected finite graph and let \(2\leq n<\#V(\Graf)\). A domain \(U\in\frSLP\) of the legal-pair hierarchy on \(\GBGdisc_n(\Graf)\) is nesting-minimal unbounded if and only if it is represented by a left coset \(g\GBGdisc_{\bfn}(\sfL)\), where \(g\in\GBGdisc_n(\Graf)\) and \((\sfL,\bfn)\) is a standard legal pair of norm \(n\) whose constrained part has a unique non-singleton component \(\sfL_0\), where \(\sfL_0\) is one of the following:
\begin{enumerate}
\item an embedded cycle carrying a positive number of particles;
\item a tripod, that is, a subgraph isomorphic to \(\sfK_{1,3}\), carrying
two particles.
\end{enumerate}
\end{theorem}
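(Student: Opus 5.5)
Two ingredients are needed: a criterion for when a domain is unbounded, and a description of which domains nest into a given one. For the first, in a factor-system HHS a domain \(U=[F]\) is unbounded precisely when some element of \(G_U=\Stab_G(F)\) is axial with \(U\) active, equivalently when \(\Chat F\) has infinite diameter. Using \Cref{Lem:LegalPairGU}, \(G_U\cong\prod_{\sfL_i\in\pi_0(\sfL^{\con})}\GBGdisc_{\bfn(\sfL_i)}(\sfL_i)\), and \(F\) is a product of universal covers of the component configuration spaces. If two or more constrained components have infinite braid groups, \(F\) splits as a product of two unbounded convex factors, both of which are legal-pair lifts and hence coned off in \(\Chat F\), so \(\Chat F\) is bounded. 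A domain with a single infinite constrained component, the rest being finite, still contains smaller nested factors. The first reduction is therefore: a nesting-minimal unbounded domain must have exactly one non-singleton constrained component \(\sfL_0\) whose braid group \(\GBGdisc_{\bfn(\sfL_0)}(\sfL_0)\) is infinite. Any other non-singleton constrained component would have trivial braid group by \Cref{Lem:InfiniteGBG} and could be replaced by a nested domain. To make this precise, I would use \Cref{Lem:NestingOrthogonality} to see that freezing a component with contractible configuration space at one vertex configuration produces a parallel or nested domain.

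For minimality, suppose \(\sfL_0\) carries \(k\) particles with \(\GBGdisc_k(\sfL_0)\) infinite. By \Cref{Lem:InfiniteGBG}, \(\sfL_0\) contains either a cycle \(\ell\) with \(\#V(\sfL_0)\ge k+1\), or an essential vertex with \(\#V(\sfL_0)\ge k+2\). Mimicking the proof of that lemma, I would build a smaller legal pair. In the cycle case it places \(r=\max\{1,k-(\#V(\sfL_0)-\#V(\ell))\}\) particles on \(\ell\) and fixes the rest as singletons in \(\sfL_0\setminus\ell\). In the tree case it places two particles on a tripod \(\tau\) and fixes \(k-2\) singletons outside \(\tau\). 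The witness is given by the fixed singletons, so this pair is \(\preceq\) the original. Hence its lift nests into \(U\) by \Cref{Lem:NestingOrthogonality}, and it is unbounded because \(\pi_1\cong\mathbb Z\) and the lift is a quasi-line. Minimality therefore forces \(\sfL_0\) itself to be a cycle with positive particle count, or a tripod with two particles.

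For the converse, take \(\sfL_0\) to be a cycle or a tripod as in the statement. Then \(\UD_{\bfn(\sfL_0)}(\sfL_0)\) is a circle: for the cycle this uses \(\bfn(\sfL_0)<\#V(\sfL_0)\), which is legality, and for the tripod this is direct. So the lift \(F\) is a quasi-line on which \(G_U\cong\mathbb Z\) acts cocompactly. Any proper nested factor is a convex subcomplex of a line obtained by freezing particles, hence a bounded segment. So every domain properly nested in \(U\) is bounded, and \(U\) is unbounded because its generator is axial and its bigset must contain a domain nested in \(U\).

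The main obstacle is the unboundedness criterion for \(\Chat F\). I expect to settle it by taking an axial generator of \(G_U\) and applying \Cref{Prop:AxialEllipticDichotomy} together with \Cref{Prop:LegalPairOrthogonalDecomposition}: its bigset is nonempty and consists of domains nested in \(U\), which forces \(U\) itself in the minimal case. A second delicate point is verifying that the smaller pair in the reduction step is nested rather than merely parallel; this requires checking the constrained and free bookkeeping of \Cref{Def:ConstrainedandFree}.
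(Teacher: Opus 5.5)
Your plan is the paper's. In the forward direction you use the same replacement pairs: the cycle \(\ell\subseteq\sfL_0\) carrying \(\max\{1,k-\#(V(\sfL_0)\setminus V(\ell))\}\) particles plus singletons, or a tripod carrying two particles plus \(k-2\) singletons. In the converse you use a generator of \(G_U\cong\mathbb Z\) whose active domain is forced to be \(U\).

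The step that the forward contradiction rests on is not justified as written. You say the replacement domain \(V=[F']\) is unbounded ``because \(\pi_1\cong\mathbb Z\) and the lift is a quasi-line''. Quasi-line geometry of \(F'\) does not control \(\Chat F'\). A square-tiled strip \(\mathbb R\times[0,1]\) is a quasi-line, yet its contact graph has diameter at most \(2\), because its long hyperplane crosses every other one. Unbounded proper subfactors are coned off in any case. The missing idea, which the paper uses, is that you do not need \(V\) itself to be unbounded. \(G_V=\Stab_G(F')\) contains an infinite-order element, and \Cref{Prop:LegalPairFUStabilizers} makes \(G_V\) an HHG on \((\mathbf F_V,\frS_V)\). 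Applying \Cref{Prop:AxialEllipticDichotomy} inside \(G_V\) then gives an unbounded \(W\sqsubseteq V\sqsubsetneq U\), contradicting minimality. The same \(\mathbf F_U\) stabilizers property is what justifies your converse claim that the bigset of a generator of \(G_U\) consists of domains nested in \(U\). \Cref{Prop:LegalPairOrthogonalDecomposition}, which you propose to use there, does not give this. Alternatively, once your converse is proved, the standard representative of the replacement pair satisfies its hypotheses, so \(V\) is in fact unbounded. The strictness \(V\neq U\) that you leave open follows from \Cref{lemma:parallelism}: parallel factors have equal constrained parts, and your replacement removes \(\sfL_0\) from the constrained part.

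Two further points.

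In the converse, ``a convex subcomplex of a line, hence a bounded segment'' is not the right reason. \(F\) need not be a line, since \(\UD_r(\ell)\) can contain squares for \(r\geq2\), and a convex subcomplex of a line may be a ray. The correct argument runs as follows. A factor properly contained in a parallel copy of \(F\) projects to a proper legal-pair subcomplex of \(\UD_m(\sfL_0)\times\{\mathbf y\}\); if the projections were equal, the two lifts would be meeting components of the same preimage. The support of that subcomplex in \(\sfL_0\) is then a proper subgraph of a cycle or of a tripod. By \Cref{Lem:InfiniteGBG} it has trivial braid group, so its lifts are compact. The paper instead uses \Cref{Lem:NestingOrthogonality} to show that any unbounded \(V\sqsubseteq U\) has the same non-singleton constrained component and hence the same lift. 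Both versions rest on this fact about proper sub-pairs.

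Your product argument for uniqueness is a genuine alternative to the paper's, and slightly stronger. Two non-singleton constrained components, infinite or not, split \(F\) as a product of two non-singleton factors. The contact graph of such a product has uniformly bounded diameter, so \(\mathcal CU\) is bounded for every such domain, not only for minimal ones. This also makes your separate treatment of finite non-singleton components unnecessary. The paper instead freezes one component into singletons and reuses the \(W\sqsubseteq V\sqsubsetneq U\) mechanism.
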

\begin{proof}
Set \(G=\GBGdisc_n(\Graf)\). We shall repeatedly use the following observation. Let \(F\in\frFLP\) be a legal-pair lift of \(\UD_{\bfn}(\sfL)\). Replacing some component pairs \((\sfL_i,\bfn(\sfL_i))\) by legal pairs of the same respective norms whose configuration spaces are subcomplexes of \(\UD_{\bfn(\sfL_i)}(\sfL_i)\) produces a legal pair \((\sfL',\bfn')\) of norm \(n\) such that \(\UD_{\bfn'}(\sfL')\subset\UD_{\bfn}(\sfL)\). Lifting this inclusion to \(F\) gives a legal-pair lift \(F'\subset F\).
If \(F'\) is non-singleton, then \(V=[F']\) is a domain satisfying \(V\sqsubseteq[F]\). If the replacement changes the constrained part, then \(F'\not\parallel F\) by \Cref{lemma:parallelism}, and hence \(V\neq[F]\). The pair \((\sfL',\bfn')\) may subsequently be replaced by its standard representative using \Cref{Lem:StandardRepresentativeParallelismClass}.

Suppose, in addition, that \(G_V=\Stab_G(F')\) contains an infinite-order element. By \Cref{Prop:LegalPairFUStabilizers}, \(G_V\) is an HHG with underlying HHS \((\mathbf F_V,\frS_V)\). Applying \Cref{Prop:AxialEllipticDichotomy} to this induced HHG gives an unbounded domain \(W\sqsubseteq V\). Consequently, if \(U=[F]\) is nesting-minimal unbounded, no replacement can both change the constrained part and leave an infinite-order element in \(\Stab_G(F')\), since it would give \(W\sqsubseteq V\sqsubsetneq U\).

\smallskip

\noindent\emph{Forward implication.} Let \(U\) be a nesting-minimal unbounded domain. By \Cref{Thm:GBGHHG}, \(U\) is represented by a coset \(g\GBGdisc_{\bfn}(\sfL)\), where \((\sfL,\bfn)\) is a standard legal pair with \(\|\bfn\|=n\). 
Let \(F=g\cdot\langle\sfL,\bfn\rangle\) be the corresponding legal-pair lift. Since \(U\) is unbounded, so is \(F\). By \Cref{Lem:LegalPairGU},
\[\Stab_G(F)=g\GBGdisc_{\bfn}(\sfL)g^{-1}.\]
By \Cref{Prop:LegalPairFUStabilizers}, this group acts cocompactly on \(F\), and is therefore infinite. Hence some constrained component \(\sfL_i\) supports an infinite graph braid group.

We claim first that every constrained component \(\sfL_i\) supporting an infinite graph braid group is one of the two stated types. Write \(m=\bfn(\sfL_i)\). Suppose first that \(\sfL_i\) contains an embedded cycle \(\ell\). If \(\sfL_i\neq\ell\), let \(D_\ell=\#\bigl(V(\sfL_i)\setminus V(\ell)\bigr)\).
Since \((\sfL,\bfn)\) is legal and \(\sfL_i\) is not a singleton, we have \(m<\#V(\sfL_i)=\#V(\ell)+D_\ell\).
Hence we may choose an integer \(a\) satisfying
\[\max\{1,m-D_\ell\}\leq a\leq \min\{m,\#V(\ell)-1\}.\]
Replace the component \(\sfL_i\) by the embedded cycle \(\ell\) carrying \(a\) particles, together with \(m-a\) singleton components chosen from \(V(\sfL_i)\setminus V(\ell)\). 
Keeping all other components unchanged, the stabilizer of the new lift contains an infinite-order element, since \(\GBGdisc_a(\ell)\cong\mathbb Z\). The observation above thus contradicts the nesting-minimality of \(U\). Therefore, if \(\sfL_i\) contains a cycle, then \(\sfL_i\) itself must be an embedded cycle.

Now suppose that \(\sfL_i\) contains no cycle. Since \(\GBGdisc_m(\sfL_i)\) is infinite, \Cref{Lem:InfiniteGBG} implies that \(\sfL_i\) contains an essential vertex and \(m\leq \#V(\sfL_i)-2\). Choose a tripod \(\tau\subseteq\sfL_i\). If \(\sfL_i\neq\tau\), then
\[m-2\leq \#V(\sfL_i)-4=\#\bigl(V(\sfL_i)\setminus V(\tau)\bigr).\]
Replace \(\sfL_i\) by the tripod \(\tau\) carrying two particles, together with \(m-2\) singleton components chosen from \(V(\sfL_i)\setminus V(\tau)\). Since \(\GBGdisc_2(\tau)\cong\mathbb Z\), the stabilizer of the new lift contains an infinite-order element. The observation above again contradicts the nesting-minimality of \(U\).
Hence \(\sfL_i\) is a tripod. Since \(\GBGdisc_m(\tau)\) is infinite, \Cref{Lem:InfiniteGBG} gives \(2\le m\le \#V(\tau)-2=2\). Hence \(m=2\) and \(\sfL_i\) is a tripod carrying two particles.

Minimality now implies that there is a unique constrained component supporting an infinite graph braid group. Indeed, if there were two, one of them could be replaced by singleton components while retaining the other. Moreover, every other constrained component is a singleton: if one were non-singleton, it could likewise be replaced by singleton components while retaining the unique component supporting an infinite graph braid group. This completes the forward implication.

\smallskip

\noindent\emph{Reverse implication.} Suppose that \((\sfL,\bfn)\) is a standard legal pair satisfying the stated conditions, and let \(U\) be a domain represented by a coset \(g\GBGdisc_{\bfn}(\sfL)\). Let \(\sfL_0\) be the unique non-singleton component of \(\sfL^{\con}\), and set \(m=\bfn(\sfL_0)\).

We first observe that there exists an unbounded domain nested into \(U\). Let
\(F=g\cdot\langle\sfL,\bfn\rangle\).
By \Cref{Lem:LegalPairGU},
\[G_U=\Stab_G(F)\cong\GBGdisc_m(\sfL_0)\cong\mathbb Z.\]
Moreover, by \Cref{Prop:LegalPairFUStabilizers}, \(G_U\) is an HHG with underlying HHS \((\mathbf F_U,\frS_U)\). Choose a nontrivial element \(h\in G_U\). Since \(G_U\cong\mathbb Z\), the element \(h\) has infinite order. Applying \Cref{Prop:AxialEllipticDichotomy} to this HHG gives an unbounded domain \(V\in\frS_U\). In particular, \(V\sqsubseteq U\).

We now show that every unbounded domain nested into \(U\) is equal to \(U\). Let \(V\sqsubseteq U\) be unbounded, and choose legal-pair lifts \(F_V\in V\) and \(F_U\in U\) with \(F_V\subset F_U\). By \Cref{Thm:GBGHHG}, choose a coset \(g'\GBGdisc_{\bfn'}(\sfL')\) representing \(V\), where \((\sfL',\bfn')\) is a standard legal pair of norm \(n\). Since \(V\) is unbounded, some constrained component \(\sfL'_0\) supports an infinite graph braid group. Then \Cref{Lem:NestingOrthogonality} gives \(((\sfL')^{\con},(\bfn')^{\con})\preceq(\sfL^{\con},\bfn^{\con})\).
No singleton component of \(\sfL^{\con}\) can contain the non-singleton component \(\sfL'_0\). Therefore \((\sfL'_0,\bfn'(\sfL'_0))\preceq
(\sfL_0,\bfn(\sfL_0))\).

If \(\sfL_0\) is an embedded cycle, no proper subgraph of \(\sfL_0\) supports an infinite graph braid group, and the relation \(\preceq\) preserves the particle count when the entire cycle is retained. If \(\sfL_0\) is a tripod carrying two particles, no proper constrained subpair supports an infinite graph braid group by \Cref{Lem:InfiniteGBG}. Thus, in either case, \((\sfL'_0,\bfn'(\sfL'_0))=(\sfL_0,\bfn(\sfL_0))\).
Since \(\sfL'_0=\sfL_0\), every other component of \((\sfL')^{\con}\) is contained in a singleton component of \(\sfL^{\con}\), and is therefore a singleton.

Let \(A_V=p(F_V)\) and \(A_U=p(F_U)\). By \Cref{lemma:parallelism}, these legal-pair subcomplexes have the same constrained parts as \((\sfL',\bfn')\) and \((\sfL,\bfn)\), respectively. Hence there are vertex configurations \(\mathbf y_V\) and \(\mathbf y_U\) such that 
\[A_V=\UD_m(\sfL_0)\times\{\mathbf y_V\},\qquad A_U=\UD_m(\sfL_0)\times\{\mathbf y_U\}.\]
Since \(F_V\subset F_U\), we have \(A_V\subset A_U\), and therefore \(\mathbf y_V=\mathbf y_U\). Thus \(A_V=A_U\).
Now \(F_V\) and \(F_U\) are connected components of the full preimage of the same connected subcomplex, and they intersect. Hence \(F_V=F_U\), so \(V=U\).

Thus every unbounded domain nested into \(U\) is equal to \(U\). Since such an unbounded domain exists by the preceding paragraph, \(U\) itself is unbounded. Therefore \(U\) is nesting-minimal unbounded.
\end{proof}

The two types appearing in \Cref{Thm:MinimalUnboundedDomainsGBG} are illustrated in \Cref{Fig:MinimalUnboundedTypes}.

\begin{figure}[ht]
\subcaptionbox{Cycle type\label{Fig:MinimalCycleType}}[0.45\textwidth]{
\(
\begin{tikzpicture}[baseline=-.5ex]
\draw[thick,lightgray] (1,0) -- (2,0) -- (3,0);
\draw[thick,fill] (0,0) node {\(1\le a\le 4\)} (0:1) circle (2pt) -- (72:1) circle (2pt) -- (144:1) circle (2pt) -- (216:1) circle (2pt) -- (288:1) circle (2pt) -- (0:1);
\draw[thick,fill] (2,0) circle (2pt) node[above=0.8ex] {\(1\)};
\draw[thick,fill] (3,0) circle (2pt) node[above=0.8ex] {\(1\)};
\end{tikzpicture}
\)
}
\subcaptionbox{Tripod type\label{Fig:MinimalTripodType}}[0.45\textwidth]{
\(
\begin{tikzpicture}[baseline=-.5ex]
\draw[thick,lightgray] (0,1) -- (0,2);
\draw[thick,lightgray] (210:1) -- (210:2);
\draw[thick,lightgray] (-30:1) -- (-30:2);
\draw[thick,fill,lightgray] (-30:2) circle (2pt);
\draw[thick,fill] (0,0) circle (2pt) node[above right] {\(2\)};
\draw[thick,fill] (0,0) -- (0,1) circle (2pt);
\draw[thick,fill] (0,0) -- (210:1) circle (2pt);
\draw[thick,fill] (0,0) -- (-30:1) circle (2pt);
\draw[thick,fill] (0,2) circle (2pt) node[right] {\(1\)};
\draw[thick,fill] (210:2) circle (2pt) node[left] {\(1\)};
\end{tikzpicture}
\)
}
\caption{The two types of \Cref{Thm:MinimalUnboundedDomainsGBG}.}
\label{Fig:MinimalUnboundedTypes}
\end{figure}

\begin{corollary}\label{Cor:CoreEqualsExpandedCore}
Let \(U\) be a nesting-minimal unbounded domain of the legal-pair hierarchy. If \(F\in\frFLP\) satisfies \([F]=U\), then the projection \(F\rightarrow\mathcal C U\) is a quasi-isometry. In particular, \(\mathcal C U\) is a quasi-line.

Consequently, the expanded core graph is canonically isomorphic to the core graph: 
\[\EOLP\cong\mathcal G^{\frSLP}.\]
\end{corollary}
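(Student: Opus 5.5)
By \Cref{Thm:MinimalUnboundedDomainsGBG}, we may take \(F=g\cdot\langle\sfL,\bfn\rangle\) with \((\sfL,\bfn)\) standard and with a unique non-singleton constrained component \(\sfL_0\). This component is either an embedded cycle carrying \(m\geq1\) particles or a tripod carrying two particles. All other components are singletons. Thus \(p(F)\cong\UD_m(\sfL_0)\), and \(F\) is the universal cover of this complex. By \Cref{Lem:LegalPairGU}, \(G_U=\Stab_G(F)\cong\GBGdisc_m(\sfL_0)\cong\mathbb Z\), and this group acts properly and cocompactly on \(F\). Hence \(F\) is quasi-isometric to \(\mathbb R\). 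It remains to show that the projection \(\pi_U\colon F\to\Chat F\simeq\mathcal C U\) is a quasi-isometry.

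\textbf{Coarse Lipschitz.} The map \(\pi_U\) is coarsely Lipschitz, as in any factor-system HHS. Adjacent vertices of \(F\) are separated by one hyperplane, and the carriers of hyperplanes crossing a common cube meet, so they are adjacent in the contact graph.

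\textbf{Lower bound.} I would use \(G_U\)-equivariance. The generator \(h\) of \(G_U\) is axial with \(U\in\B(h)\); this is established in the reverse implication of \Cref{Thm:MinimalUnboundedDomainsGBG}, where the induced HHG \(G_U\cong\mathbb Z\) produces an unbounded domain which must equal \(U\). By \Cref{Prop:AxialEllipticDichotomy}, \(h^M\) then acts loxodromically on \(\mathcal C U\). The orbit \(\langle h\rangle\widetilde{\mathbf x}\) is coarsely dense in \(F\), and its image in \(\mathcal C U\) is a quasi-geodesic. Since \(\pi_U\) is \(G_U\)-equivariant, this gives
\[d_{\mathcal CU}\bigl(\pi_U(h^a\widetilde{\mathbf x}),\pi_U(h^b\widetilde{\mathbf x})\bigr)\geq \lambda^{-1}|a-b|-c.\]
Combined with coarse density, \(\pi_U|_F\) is a quasi-isometric embedding.

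\textbf{Coarse surjectivity.} Every vertex of \(\Chat F\) is a hyperplane crossing \(F\) or a cone vertex adjacent to such a hyperplane. Each hyperplane crossing \(F\) meets \(F\) in a carrier, whose vertices project within distance \(1\). Therefore the image of \(F\) is \(2\)-dense. It follows that \(\mathcal C U\) is a quasi-line.

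\textbf{Expanded core graph.} By \Cref{Def:ExpandedCoreGraph}, each \(V_U\) is then a single vertex. The identification \(U\mapsto V_U\) is therefore a graph isomorphism \(\EOLP\cong\mathcal G^{\frSLP}\), since adjacency is defined by orthogonality in both graphs.

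\textbf{Main obstacle.} The hardest point is the lower bound, which requires knowing that \(U\) itself, rather than merely some domain, is active for \(h\). I would argue as follows. \(\B(h)\) is nonempty by \Cref{Prop:AxialEllipticDichotomy}. Any \(W\in\B(h)\) has \(\pi_W(F)\) unbounded, and hence is nested in \([F]=U\) by the standard factor-system fact \cite[Lemma~8.19]{BHS17I}, which gives a factor parallel into \(F\). Nesting-minimality then forces \(W=U\).
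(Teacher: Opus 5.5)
Your proposal is correct, but it reaches the lower bound by a different route from the paper.

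\textbf{The paper's route.} The paper never uses the cyclic group \(G_U\) for the lower bound. It argues that \(U\) is nesting-minimal unbounded and that \(G_U\) acts cofinitely on \(\frS_U\) (the \(\mathbf F_U\) stabilizers property, \Cref{Prop:LegalPairFUStabilizers}). Hence the spaces \(\mathcal C V\) for \(V\sqsubsetneq U\) have uniformly bounded diameter. The distance formula for the induced structure on \(F\), with threshold above that bound, then reduces to \(d_F(x,y)\asymp d_{\mathcal C U}(\pi_U(x),\pi_U(y))\). The fact that \(F\) is a quasi-line is used only afterwards, to conclude that \(\mathcal C U\) is a quasi-line.

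\textbf{Your route.} You first show that \(U\in\B(h)\) for a generator \(h\) of \(G_U\). Every active domain of an element stabilizing \(F\) has unbounded projection of \(F\), so it is nested in \([F]=U\), and minimality then forces equality. You then obtain the lower bound from \(G_U\)-equivariance of \(\pi_U\) and the loxodromic action of \(h^M\) on \(\mathcal C U\). Coarse Lipschitzness and coarse surjectivity you check directly in \(\Chat F\).

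\textbf{What each buys.} The paper's argument is more general. It shows \(\mathbf F_U\to\mathcal C U\) is a quasi-isometry for any domain whose properly nested domains are uniformly bounded, independent of the structure of \(G_U\). Yours avoids the distance formula and the cofiniteness input, but depends on \(G_U\) being infinite cyclic and acting cocompactly on \(F\).

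\textbf{Order of deduction.} Your order reverses the paper's logical dependence. The paper derives \(U\in\B(h)\), hence strong full support in \Cref{Cor:MinimalDomainStabilizers}, from this corollary. You establish it first and deduce the quasi-isometry. There is no circularity, since your argument for \(U\in\B(h)\) uses only \Cref{Prop:AxialEllipticDichotomy}, the factor-system nesting lemma you quote, and minimality.
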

\begin{proof}
Let \(U\) be a nesting-minimal unbounded domain and let \(F\in\frFLP\) satisfy \([F]=U\). By \Cref{lemma:parallelism,Thm:MinimalUnboundedDomainsGBG}, \(F\) is quasi-isometric to a line. Since \(U\) is nesting-minimal unbounded and \(G_U\) acts cofinitely on the domains nested into \(U\) by \Cref{Prop:LegalPairFUStabilizers}, the spaces \(\mathcal C V\), for \(V\sqsubsetneq U\), have uniformly bounded diameter. The distance formula for the induced HHS structure on \(F\) therefore shows that \(F\to\mathcal C U\) is a quasi-isometry. Therefore, \(\mathcal C U\) is a quasi-line.

Consequently, every set \(V_U\) in \Cref{Def:ExpandedCoreGraph} consists of a single vertex.
\end{proof}

We record two consequences of the classification. The first detects non-isolated vertices from the orthogonal pair of a standard legal pair. The second shows that nesting-minimal unbounded domains and their adjacency are detected by their cyclic subgroups \(G_U\). Together, these reduce subdivision stability to transporting the cyclic subgroup supported on the unique cycle or tripod component.

\begin{corollary}[Non-isolated vertices]\label{Cor:NonisolatedVerticesGBG}
Let \(\Graf\) be a connected finite graph, let \(2\leq n<\#V(\Graf)\), and let \(U\) be a nesting-minimal unbounded domain of the legal-pair hierarchy on \(\GBGdisc_n(\Graf)\), represented by \(g\GBGdisc_{\bfn}(\sfL)\), where \((\sfL,\bfn)\) is standard.
Then the vertex corresponding to \(U\) is non-isolated if and only if \(\GBGdisc_{\bfn^\perp}(\sfL^\perp)\) is infinite, or equivalently, if \(\GBGdisc_{k_{\sfD}}(\sfD)\) is infinite for some \(\sfD\in\pi_0(\sfL^\perp)\), where \(k_{\sfD}=\bfn^\perp(\sfD)\).

If \(\Graf\) is sufficiently subdivided for \(n\), this is further equivalent to the condition that some \(\sfD\in\pi_0(\sfL^\perp)\) contains either
\begin{enumerate}
\item an embedded cycle; or
\item a vertex essential in \(\sfD\), with \(k_{\sfD}\geq2\).
\end{enumerate}
\end{corollary}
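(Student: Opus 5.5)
We prove the two implications separately, then deduce the sufficiently subdivided reformulation from \Cref{Lem:InfiniteGBG}. Throughout, write \(U=[F]\) with \(F=g\cdot\langle\sfL,\bfn\rangle\). By \Cref{Lem:LiftedClosureRegion},
\[\mathbf P_U=g\cdot\langle[\sfL],[\bfn]\rangle\cong F\times\widetilde{\UD}_{\bfn^\perp}(\sfL^\perp).\]
By \Cref{Thm:MinimalUnboundedDomainsGBG}, the constrained part of \((\sfL,\bfn)\) is a single cycle or tripod \(\sfL_0\) together with singletons.

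\emph{Step 1: if \(\GBGdisc_{\bfn^\perp}(\sfL^\perp)\) is infinite, then \(U\) is non-isolated.} Since this group is a product over \(\sfD\in\pi_0(\sfL^\perp)\), some factor \(\GBGdisc_{k_\sfD}(\sfD)\) is infinite. Apply the reduction used in the forward direction of \Cref{Thm:MinimalUnboundedDomainsGBG} inside \(\sfD\). If \(\sfD\) contains a cycle \(\ell\), replace \(\sfD\) by \(\ell\) carrying \(a\) particles plus singletons. Otherwise, by \Cref{Lem:InfiniteGBG}, \(\sfD\) contains a tripod with \(k_\sfD\ge2\); replace \(\sfD\) by that tripod carrying two particles plus singletons. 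Keep \(\sfL_0\) and the constrained singletons unchanged. This yields a legal pair of norm \(n\) whose standard representative defines a nesting-minimal unbounded domain \(V\), again by \Cref{Thm:MinimalUnboundedDomainsGBG}. The two pairs are compatible at a common vertex of the closures, and the relations \((\sfL_0\sqcup\cdots)\preceq(\sfL'^\perp,\bfn'^\perp)\) and \((\sfL'^{\con},\bfn'^{\con})\preceq(\sfL^\perp,\bfn^\perp)\) hold. We choose the translate of \(V\) so that condition (3) of \Cref{Lem:NestingOrthogonality} holds; concretely, take the fiber through a common vertex of \(\mathbf P_U\). Then \Cref{Lem:NestingOrthogonality} gives \(V\bot U\). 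The witness bookkeeping here is routine, since the new cycle or tripod lies in \(\sfL^\perp\), which is disjoint from \(\sfL^{\con}\).

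\emph{Step 2: if \(U\bot V\) for some nesting-minimal unbounded \(V\), then \(\GBGdisc_{\bfn^\perp}(\sfL^\perp)\) is infinite.} I expect this to be the main step. By \Cref{Lem:NestingOrthogonality}(2\('\)), \((\sfL'^{\con},\bfn'^{\con})\preceq(\sfL^\perp,\bfn^\perp)\). Hence the unique non-singleton constrained component \(\sfL'_0\) of \(V\) lies in some \(\sfD\in\pi_0(\sfL^\perp)\), with \(\bfn'(\sfL'_0)\le k_\sfD\). By \Cref{Lem:LocalisometrybetweenLegalPairs}, this gives a local isometry of \(\UD_{\bfn'(\sfL'_0)}(\sfL'_0)\) times a point into \(\UD_{k_\sfD}(\sfD)\), which is \(\pi_1\)-injective. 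Its source group is \(\mathbb Z\) (\Cref{Lem:LegalPairGU} and the proof of \Cref{Thm:MinimalUnboundedDomainsGBG}), so \(\GBGdisc_{k_\sfD}(\sfD)\) is infinite. Alternatively, \Cref{Lem:OrthogonalLegalPairProductRegions} places \(F_V\) inside the \(\mathbf E_U\)-factor, which gives the same conclusion. The only subtlety is checking that \(\sfL'_0\) cannot straddle components or meet \(\sfL^{\con}\); this follows from the definition of \(\preceq\), since \(\sfL'_0\) is connected.

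\emph{Step 3: the sufficiently subdivided case.} The equivalence between infinitude of the product and infinitude of some factor is immediate. For a fixed \(\sfD\), \Cref{Lem:InfiniteGBG} says \(\GBGdisc_{k_\sfD}(\sfD)\) is infinite exactly when either \(\sfD\) has a cycle and \(\#V(\sfD)\ge k_\sfD+1\), or \(\sfD\) has a vertex essential in \(\sfD\) and \(\#V(\sfD)\ge k_\sfD+2\ge4\). The first vertex-count inequality holds automatically because \(\sfD\) is not completely occupied, by legality of the orthogonal pair. For the second, we must show that \(\sfD\) has at least \(k_\sfD+2\) vertices whenever it has an essential vertex. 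Here we use sufficient subdivision: \(\sfD\) is a component of \(\Graf\setminus\sfL^{\con}\), and every arm of the essential vertex in \(\sfD\) extends along a bivalent path of \(\Graf\) of length at least \(n-1\), or runs into \(\sfL^{\con}\). Counting vertices along these arms should give at least \(k_\sfD+2\). If some arm ends quickly at \(\sfL^{\con}\), the particle count \(k_\sfD\le n-\|\bfn^{\con}\|\) compensates. I expect this counting to be the delicate part, and I would first handle the case \(k_\sfD\le n-1\).
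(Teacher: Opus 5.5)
Your Step~2 is the paper's argument, and your overall structure matches the paper: the converse plants a cycle or tripod support in a component of \(\sfL^\perp\), and the subdivided case goes through \Cref{Lem:InfiniteGBG}. However, Step~1 constructs the wrong domain, and Step~3 leaves its key count unproved.

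\textbf{Step~1.} As written, the construction gives the wrong domain. Suppose you keep \(\sfL_0\), with its \(m=\bfn(\sfL_0)\) particles, as a component of the new pair \((\sfL',\bfn')\). Then \(\sfL_0\) and \(\ell\) (or the tripod \(\tau\)) are both non-singleton constrained components. By \Cref{Thm:MinimalUnboundedDomainsGBG} the resulting domain is therefore not nesting-minimal. Worse, its lift through your common vertex contains the factor of \(U\) through that vertex, so \(U\) is nested into it rather than orthogonal to it.

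Correspondingly, the relation \((\sfL^{\con},\bfn^{\con})\preceq((\sfL')^{\perp},(\bfn')^{\perp})\) that you invoke fails, because \(\sfL_0\) lies in \((\sfL')^{\con}\), not in \((\sfL')^{\perp}\). Leaving the other components of \(\sfL^\perp\) intact causes the same problem.

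The paper instead fixes all remaining particles. Place the \(m\) particles of \(\sfL_0\) at \(m\) vertices of \(\sfL_0\), and freeze the particles of the other components of \(\sfL^\perp\), so that \(\ell\) or \(\tau\) is the unique non-singleton component. You then have to check that the singletons on \(\sfL_0\), the old constrained singletons, and the frozen particles outside \(\sfD\) are all free in \((\sfL',\bfn')\). This holds because \(\ell\subseteq\sfD\) is disjoint from \(\sfL_0\) and from every other component of \(\Graf\setminus\sfL_0\). Hence all of these particles lie in one component of \(\Graf\setminus\ell\), which still contains an unoccupied vertex of \(\sfL_0\) since \(m<\#V(\sfL_0)\). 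This is precisely what puts \(\sfL^{\con}\) inside \((\sfL')^{\perp}\). After that, your compatibility and lifting bookkeeping goes through.

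\textbf{Step~3.} The argument stops exactly where it is needed. Your use of legality of the orthogonal pair for the cycle condition is fine, and simpler than the paper's count. But ``first handle \(k_\sfD\le n-1\)'' is no restriction, since \(m\geq1\) always gives \(k_\sfD\leq n-m\leq n-1\). What is missing is the paper's bound \(\#V(\sfD)\geq n-m+2\geq k_\sfD+2\).

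To obtain it, first note that \(\sfD\) is a component of \(\Graf\setminus\sfL_0\), because constrained singletons fill whole components of \(\Graf\setminus\sfL_0\). Then follow the at least three arms of the essential vertex \(\sfv\) of \(\sfD\) through \(\Graf\)-bivalent vertices.
\begin{itemize}
\item Two arms closing up into a cycle through \(\sfv\) together contribute at least \(n\) vertices.
\item Otherwise, when \(\sfL_0\) is a cycle, each arm contributes at least \(\max(1,n-2)\) vertices of \(\sfD\). An arm stopped by \(\sfL_0\) ends next to a vertex of degree at least three, so the bivalent-path bound applies.
\item When \(\sfL_0\) is a tripod (\(m=2\)), each arm contributes at least \(\max(1,n-3)\). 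An arm may stop next to a leaf, which is bivalent for \(n\geq3\), but the bivalent path then continues to the center.
\end{itemize}
Summing over three arms gives \(\#V(\sfD)\geq n+1\) in the cycle case and \(\#V(\sfD)\geq n\) in the tripod case, which is the required \(\#V(\sfD)\geq n-m+2\). A single arm does not suffice, which is why the count has to use all three arms.
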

\begin{proof}
Suppose first that \(U\) is non-isolated, and choose a nesting-minimal unbounded domain \(W\bot U\). 
Represent \(W\) by a standard legal pair \((\sfL',\bfn')\). By \Cref{Lem:NestingOrthogonality}\eqref{Item:OrthogonalRelation},
\[((\sfL')^{\con},(\bfn')^{\con})\preceq(\sfL^\perp,\bfn^\perp).\]
The corresponding local isometry from \Cref{Lem:LocalisometrybetweenLegalPairs} induces an embedding
\[\GBGdisc_{(\bfn')^{\con}}((\sfL')^{\con})\longrightarrow\GBGdisc_{\bfn^\perp}(\sfL^\perp).\]
The group on the left is infinite cyclic by \Cref{Thm:MinimalUnboundedDomainsGBG}.

Conversely, suppose that \(\GBGdisc_{\bfn^\perp}(\sfL^\perp)\) is infinite.
Then the product decomposition in \eqref{Eq:ProductofGBG} and \Cref{Lem:InfiniteGBG} give, in some component of \(\sfL^\perp\), a legal subpair whose unique non-singleton component is either an embedded cycle carrying a positive number of particles or a tripod carrying two particles.
Completing this support inside the closure, fixing the remaining particles, and passing to the standard representative gives, by \Cref{Thm:MinimalUnboundedDomainsGBG,Lem:NestingOrthogonality}, a nesting-minimal unbounded domain orthogonal to \(U\). The componentwise formulation follows from \eqref{Eq:ProductofGBG}. 

Now suppose that \(\Graf\) is sufficiently subdivided. Let \(\sfL_0\) be the unique non-singleton constrained component, carrying \(m\) particles, and fix \(\sfD\in\pi_0(\sfL^\perp)\). Then \(1\leq k_{\sfD}\leq n-m\). Moreover,
\begin{itemize}
\item 
if \(\sfD\) contains an embedded cycle, then \(\#V(\sfD)\geq n+1>k_{\sfD}\), and
\item 
if it contains a vertex essential in \(\sfD\), the bivalent-path count from \Cref{Def:SufficientlySubdivided} gives
\[\#V(\sfD)\geq n-m+2\geq k_{\sfD}+2.\]
\end{itemize}
Thus the vertex-count conditions in \Cref{Lem:InfiniteGBG} are automatic, giving the stated criterion.
\end{proof}

\begin{corollary}[Stabilizers of nesting-minimal unbounded domains]\label{Cor:MinimalDomainStabilizers}
Let \(\Graf\) be a connected finite graph, let \(2\leq n<\#V(\Graf)\), and let \(U\) be a nesting-minimal unbounded domain of the legal-pair hierarchy on \(\GBGdisc_n(\Graf)\). Then \(G_U\cong\mathbb Z\), and every nontrivial element of \(G_U\) is strongly fully supported on \(U\). Consequently:
\begin{enumerate}
\item 
if \(U\) and \(V\) are nesting-minimal unbounded domains and \(G_U=G_V\), then \(U=V\);
\item 
if \(U\) and \(V\) are distinct nesting-minimal unbounded domains, then
\[U\bot V \qquad\Longleftrightarrow\qquad [G_U,G_V]=1.\]
\end{enumerate}
\end{corollary}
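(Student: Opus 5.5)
The plan is to read off \(G_U\) from \Cref{Lem:LegalPairGU} and \Cref{Thm:MinimalUnboundedDomainsGBG}, and then to recover \(U\) from \(G_U\) through bigsets.

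\emph{Step 1: \(G_U\cong\mathbb Z\).} By \Cref{Thm:MinimalUnboundedDomainsGBG}, \(U\) is represented by \(g\GBGdisc_{\bfn}(\sfL)\), where \((\sfL,\bfn)\) is standard and its constrained part has a unique non-singleton component \(\sfL_0\). This component is either a cycle carrying \(m\geq1\) particles or a tripod carrying two. \Cref{Lem:LegalPairGU} then gives
\[G_U=g\GBGdisc_{\bfn}(\sfL)g^{-1}\cong\GBGdisc_m(\sfL_0),\]
since singleton components contribute trivial factors. This group is infinite cyclic, because \(\UD_m(\sfL_0)\) is a circle; this was already used in the proof of \Cref{Thm:MinimalUnboundedDomainsGBG}.

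\emph{Step 2: nontrivial elements are strongly fully supported on \(U\).} Take \(1\neq h\in G_U\). It has infinite order, so it is axial by \Cref{Prop:AxialEllipticDichotomy}. It remains to show \(U\in\B(h)\).

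The orbit \(\langle h\rangle\widetilde{\mathbf y}\) of a vertex \(\widetilde{\mathbf y}\in F=g\cdot\langle\sfL,\bfn\rangle\) lies in \(F\). It is coarsely all of \(F\), because \(G_U\cong\mathbb Z\) acts cocompactly on \(F\) (\Cref{Prop:LegalPairFUStabilizers}) and \(\langle h\rangle\) has finite index in \(G_U\). By \Cref{Cor:CoreEqualsExpandedCore}, the projection \(F\to\mathcal C U\) is a quasi-isometry. Hence \(\pi_U(\langle h\rangle\widetilde{\mathbf y})\) is unbounded, so \(U\in\B(h)\). Together with \(h\in G_U\), this is strong full support. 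By the remark after \Cref{Def:OurGUandFU}, it also gives \(\B(h)=\{U\}\).

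\emph{Step 3: items (1) and (2).}

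For (1): if \(G_U=G_V\), choose a generator \(h\). Step 2 gives \(\{U\}=\B(h)=\{V\}\), so \(U=V\).

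For (2), forward direction: \(U\bot V\) implies \([G_U,G_V]=1\) by \Cref{Lem:OrthogonalLegalPairProductRegions}.

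For (2), converse: let \(h\) and \(k\) generate \(G_U\) and \(G_V\), and suppose they commute.
\begin{itemize}
\item The element \(k\) commutes with \(h\), so it permutes \(\B(h)=\{U\}\) and hence fixes \(U\). Likewise \(h\) fixes \(V\).
\item In \(\mathbb Z^2=\langle h,k\rangle\), the element \(h^Nk^N\) has \(\B(h^Nk^N)\subseteq\B(h)\cup\B(k)=\{U,V\}\). I would justify this inclusion by the orthogonal decomposition property applied inside the abelian subgroup. Since the group is torsion-free, \(h^Nk^N\neq1\), so this bigset is nonempty.
\item The main case distinction is whether \(U\) and \(V\) are orthogonal, nested or transverse. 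Distinct nesting-minimal domains cannot be nested, so suppose \(U\pitchfork V\). Then \(k\), which fixes \(U\), would have to preserve the relative projection \(\rho^V_U\). Hence \(\langle k\rangle\widetilde{\mathbf y}\) projects boundedly to \(\mathcal C U\). I would try to push this through the bounded geodesic image and consistency axioms to show that a commuting pair of axials with transverse active domains is impossible. This is a standard fact that commuting elements have pairwise non-transverse bigsets; compare \cite{DHS17} and \cite[Lemma~1.21]{AB23}.
\item Thus \(U\bot V\).
\end{itemize}

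This last step, ruling out transversality for commuting strongly fully supported elements, is the main obstacle. I would cite the HHG literature for it rather than argue cubically. A cubical fallback is available: the combined orbit of \(\langle h,k\rangle\cong\mathbb Z^2\) gives a flat, and its convex hull, via \Cref{Lem:LiftedClosureRegion}, places \(F_U\) and \(F_V\) as factors of a common product region.
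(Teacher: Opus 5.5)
Your Steps 1 and 2, item (1), and the forward half of item (2) match the paper's proof. For the converse of (2) you take a different route. The paper does not split into cases. It notes that \(\B(h)=\{U\}\neq\{V\}=\B(k)\) makes \(\{h,k\}\) geometrically irredundant. So if \(U\not\bot V\), then \Cref{Thm:GBGLegalPairXi} and \Cref{Prop:RAAGEmbeddingTheorem} force large powers of \(h\) and \(k\) to generate \(\bbF_2\), contradicting commutation. That one step disposes of nesting and transversality together, but it relies on the full \(\Xi\)-machinery.

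Your case split is more elementary and needs nothing from \(\Xi\): nesting is excluded by minimality, and transversality is excluded directly. However, the transversality step, which you flag as the obstacle, is not actually carried out, and the sketch you give points the wrong way. Knowing that \(k\) coarsely fixes the bounded set \(\rho^V_U\subseteq\mathcal C U\) only says \(\langle k\rangle\) has bounded orbits in \(\mathcal C U\). That is already known from \(\B(k)=\{V\}\), so no contradiction arises.

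The contradiction must come from \(h\) instead:
\begin{enumerate}
\item Since \(hk=kh\), we have \(h\,\B(k)=\B(hkh^{-1})=\B(k)=\{V\}\), so \(h\) fixes \(V\).
\item Also \(h\) fixes \(U\), because \(h\in G_U\leq\Stab_G(U)\).
\item Equivariance of relative projections then puts every \(h^j\rho^V_U\) uniformly close to the bounded set \(\rho^V_U\).
\item Hence \(\langle h\rangle\) has bounded orbits in \(\mathcal C U\), contradicting \(U\in\B(h)\).
\end{enumerate}
Symmetrically, you can use \(k\) acting on \(\mathcal C V\). Only equivariance is needed here, not bounded geodesic image or consistency. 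With this short argument your route is complete.

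A few smaller points:
\begin{itemize}
\item The bullet about \(\B(h^Nk^N)\) plays no role, and its proposed justification via the orthogonal decomposition property is not evident.
\item The cubical flat fallback is too vague to count as an argument.
\item \(\UD_m(\sfL_0)\) is not literally a circle for a cycle carrying \(2\leq m\leq\#V(\sfL_0)-2\) particles. It only has infinite cyclic fundamental group, which is all you need.
\end{itemize}
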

\begin{proof}
Set \(G=\GBGdisc_n(\Graf)\), and let \(F\) be a legal-pair lift representing \(U\). By \Cref{Lem:LegalPairGU,Prop:LegalPairFUStabilizers,Thm:MinimalUnboundedDomainsGBG}, \(G_U=\Stab_G(F)\cong\mathbb Z\), and \(G_U\) acts cocompactly on the quasi-line \(F\). By \Cref{Cor:CoreEqualsExpandedCore}, the projection \(F\rightarrow\mathcal C U\) is a quasi-isometry.
Thus every nontrivial \(h\in G_U\) has an unbounded orbit in \(F\). Since \(F\rightarrow\mathcal C U\) is a quasi-isometry, it follows that \(U\in\B(h)\).
Moreover, \(h\) is axial by \Cref{Prop:AxialEllipticDichotomy}. Hence \(h\) is strongly fully supported on \(U\), and the singleton bigset property following \Cref{Def:OurGUandFU} gives \(\B(h)=\{U\}\).

Suppose that \(G_U=G_V\), and choose a nontrivial element \(h\) of this common infinite cyclic subgroup. Applying the preceding conclusion to both \(U\) and \(V\) gives \(\{U\}=\B(h)=\{V\}\), and therefore \(U=V\). This proves \textup{(1)}.

Now let \(U\neq V\). If \(U\bot V\), then \Cref{Lem:OrthogonalLegalPairProductRegions} gives \([G_U,G_V]=1\).
Conversely, suppose that \([G_U,G_V]=1\), and choose nontrivial elements \(h\in G_U\) and \(k\in G_V\). Since \(\B(h)=\{U\}\) and \(\B(k)=\{V\}\), the pair \(\{h,k\}\) is geometrically irredundant. If \(U\not\bot V\), then \Cref{Thm:GBGLegalPairXi,Prop:RAAGEmbeddingTheorem} implies that sufficiently large powers of \(h\) and \(k\) generate a nonabelian free group. This contradicts \([G_U,G_V]=1\). Hence \(U\bot V\), proving \textup{(2)}.
\end{proof}

\begin{proposition}[Subdivision stability]\label{Prop:SubdivisionStabilityExpandedCore}
Fix \(n\geq2\), and let \(\Graf\) be a connected finite graph that is sufficiently subdivided for \(n\), and let \(\Graf'\) be a further subdivision of \(\Graf\). Let \(\EO\) and \(\EO'\) be the expanded core graphs of the legal-pair hierarchies on \(\GBGdisc_n(\Graf)\) and \(\GBGdisc_n(\Graf')\), respectively.

The full subgraphs of \(\EO\) and \(\EO'\) spanned by their non-isolated vertices are isomorphic. Consequently, the nontrivial connected components are independent, up to graph isomorphism, of the chosen sufficiently subdivided model.
\end{proposition}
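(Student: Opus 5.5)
We plan to compare the two hierarchies through the group isomorphism induced by subdivision, and to identify non-isolated vertices by their cyclic subgroups. Since both \(\Graf\) and \(\Graf'\) are sufficiently subdivided for \(n\), \Cref{Thm:DiscreteConfigurationRetraction} gives isomorphisms \(\GBGdisc_n(\Graf)\cong\GBG_n(\Graf)=\GBG_n(\Graf')\cong\GBGdisc_n(\Graf')\). We fix the resulting isomorphism \(\Psi\colon G\to G'\), well defined up to inner automorphisms after a change of basepoint. By \Cref{Cor:MinimalDomainStabilizers}, a nesting-minimal unbounded domain \(U\) is determined by the infinite cyclic subgroup \(G_U\), and adjacency of distinct vertices is equivalent to commutation of these subgroups. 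Commutation is preserved by \(\Psi\). It therefore suffices to show that \(U\mapsto V\) with \(\Psi(G_U)\) commensurable with \(G'_V\) (we aim for equality) gives a well-defined bijection between the non-isolated vertices of \(\EO\) and of \(\EO'\). Such a bijection is automatically an isomorphism of the full subgraphs spanned by non-isolated vertices, because \([G_U,G_W]=1\) if and only if \([\Psi(G_U),\Psi(G_W)]=1\).

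To construct the map, we take a non-isolated \(U\) represented by a standard pair \((\sfL,\bfn)\) whose unique non-singleton constrained component \(\sfL_0\) is a cycle or a tripod, as in \Cref{Thm:MinimalUnboundedDomainsGBG}. By \Cref{Cor:NonisolatedVerticesGBG}, some component of \(\sfL^\perp\) carries an infinite braid group. We subdivide \(\sfL_0\) to \(\sfL_0'\subseteq\Graf'\); a subdivided tripod contains a tripod, and a cycle remains a cycle. The singleton particles are then placed in \(\Graf'\) at the corresponding original vertices. If needed, some of them are slid into new subdivision vertices of \(\sfL_0'\) so that its occupancy stays legal and the configuration is realized with the same constrained data. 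The topological picture is unchanged: the loop braid moving particles once around the cycle, or the standard tripod braid, is the same element of \(\GBG_n\). Hence the generator of \(G_U\) is carried by \(\Psi\), up to the conjugation coming from the standard paths, to an element of \(G'\) that generates \(G'_{V}\) for a legal-pair domain \(V\) of the same type. This \(V\) is again nesting-minimal unbounded by \Cref{Thm:MinimalUnboundedDomainsGBG}. It is non-isolated by \Cref{Cor:NonisolatedVerticesGBG}, because the components of \(\sfL'^\perp\) are subdivisions of those of \(\sfL^\perp\), and cycles and essential vertices persist under subdivision.

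For the reverse direction, we start from a non-isolated \(V\) in \(\EO'\) supported on a cycle or tripod in \(\Graf'\). We would push the support back to \(\Graf\) by a homotopy of configurations in \(\UC_n\). A cycle of \(\Graf'\) is the subdivision of a cycle of \(\Graf\). A tripod of \(\Graf'\) lies along three arms at an essential vertex, so it can be slid, through the bivalent arms, to a tripod of \(\Graf\) with the same center. The free and singleton particles can be moved to original vertices, using sufficient subdivision of \(\Graf\) to guarantee room. This yields an element in \(\Psi(G_U)\) for a unique \(U\). The two assignments are inverse to each other by part (1) of \Cref{Cor:MinimalDomainStabilizers}, since each is characterized by the cyclic subgroup.

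We expect the main obstacle to be the non-isolated hypothesis in the reverse direction, and more precisely the interaction of particle counts with legality. In \(\Graf'\) a cycle can carry more particles than its length in \(\Graf\) permits, and a tripod can have singleton particles trapped on its arms. Such a domain need not come from \(\Graf\); \Cref{Ex:SubdivisionCreatesIsolatedDomain} is presumably of this kind. Our plan is to show that non-isolation forces enough room: an orthogonal infinite factor exists, and \Cref{Def:SufficientlySubdivided} for \(\Graf\) then bounds how many particles can crowd a cycle or an arm. This would let the particles be redistributed within the same \(\G Psi\)-image of the cyclic subgroup. Concretely, we would verify, by the counting already used in the proof of \Cref{Cor:NonisolatedVerticesGBG}, that the excess particles can be moved off \(\sfL_0'\) into the orthogonal component without changing the conjugacy class of the generator.
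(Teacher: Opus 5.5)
Your overall architecture is the same as the paper's proof. You use the isomorphism \(G\to G'\) through the common continuous configuration space. You identify non-isolated vertices by their infinite cyclic subgroups \(G_U\) via \Cref{Cor:MinimalDomainStabilizers}, transfer the cycle or tripod support, detect adjacency by commutation, and use a common further subdivision for the last assertion.

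The forward direction is fine once you drop the optional step of sliding singleton particles onto new vertices of the subdivided support. That step would change the particle count on the support. It is also never needed: subdivision only adds vertices, so you can retain all particle counts and pass to the standard representative.

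The genuine gap is in the reverse direction. There you misidentify the obstruction, and the remedy you propose would not work.

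\emph{The support is never the problem.} Every cycle of \(\Graf\) has at least \(n+1\) vertices by \Cref{Def:SufficientlySubdivided}, so it can legally carry any \(m\leq n\) particles, and a tripod always carries exactly two. Moving ``excess'' particles off the support \(\sfL_0'\) would also change the conjugacy class of the generator. With the other particles fixed, the generator of \(\GBGdisc_a(\ell)\cong\mathbb Z\) induces an \(a\)-cycle particle permutation, so different values of \(a\) give non-conjugate subgroups of \(G'\).

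\emph{The real obstruction lies in the complementary components.} As in \Cref{Ex:SubdivisionCreatesIsolatedDomain}, a component of \(\Graf'\setminus\sfL_0'\) that is filled by constrained singletons may correspond to a component of \(\Graf\setminus\sfL_0\) with too few vertices. Your sentence ``using sufficient subdivision of \(\Graf\) to guarantee room'' is not enough, because that example is sufficiently subdivided and still fails. What supplies the room is a count in which non-isolation enters essentially:
\begin{enumerate}
\item Choose a component \(\sfD'\) of the orthogonal subgraph that witnesses non-isolation, carrying \(k\geq1\) particles. The estimates in the proof of \Cref{Cor:NonisolatedVerticesGBG} show that the corresponding component \(\sfD\) of \(\Graf\) can carry \(k\) particles.
\item Every other occupied complementary component then carries at most \(n-m-k\leq n-2\) particles.
\item Following a bivalent path from the support to the next non-bivalent vertex, or back to the support, shows that each corresponding nonempty component of \(\Graf\setminus\sfL_0\) has at least \(n-2\) vertices.
\end{enumerate}
So the entire distribution is realizable in \(\Graf\), with the same support and the same particle count on it. After normalization and passage to the standard representative, this gives a non-isolated domain whose subgroup is conjugate to the image of the original one. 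The conjugating element is absorbed into the coset representative, exactly as in the forward direction.
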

\begin{proof}
Set \(G=\GBGdisc_n(\Graf)\) and \(G'=\GBGdisc_n(\Graf')\). By \Cref{Cor:CoreEqualsExpandedCore}, the vertices of \(\EO\) and \(\EO'\) are identified with the nesting-minimal unbounded domains of the respective legal-pair hierarchies.

The natural homeomorphism between the underlying topological graphs identifies their continuous configuration spaces. Hence \Cref{Thm:DiscreteConfigurationRetraction}, together with choices of change-of-basepoint paths, gives an isomorphism \(\phi\colon G\rightarrow G'\).

Let \(U\) be a non-isolated vertex of \(\EO\), represented by a coset \(gH\), where \(H=\GBGdisc_{\bfn}(\sfL)\) and \((\sfL,\bfn)\) is standard. By \Cref{Thm:MinimalUnboundedDomainsGBG}, the constrained part of this legal pair has a unique non-singleton component \(\sfL_0\), which is either an embedded cycle carrying a positive number of particles or a tripod carrying two particles.

Transfer this support to \(\Graf'\). In the cycle case, take its subdivided image. In the tripod case, take the tripod consisting of the same center and the first edge in each of the same three incident directions. Retain the particle number on this support and the particle numbers in the corresponding components of its complement, and then pass to the standard representative. Denote the resulting standard legal pair by \((\sfL',\bfn')\), and set \(H'=\GBGdisc_{\bfn'}(\sfL')\).

By \Cref{Thm:MinimalUnboundedDomainsGBG}, this legal pair represents a nesting-minimal unbounded domain for \(\Graf'\). Subdivision preserves embedded cycles, essential vertices in the corresponding complementary components, and the relevant particle numbers. Therefore \Cref{Cor:NonisolatedVerticesGBG} shows that this domain is non-isolated.

The same construction is reversible for non-isolated domains. Indeed, let \((\sfL'',\bfn'')\) be a standard legal pair representing a non-isolated domain for \(\Graf'\), let \(\sfL''_0\) be its unique non-singleton constrained component, and set \(m=\bfn''(\sfL''_0)\). Choose a component \(\sfD'\in\pi_0((\sfL'')^\perp)\) witnessing non-isolation as in \Cref{Cor:NonisolatedVerticesGBG}, and set \(k=(\bfn'')^\perp(\sfD')\).

The estimates in the proof of that corollary show that the corresponding component \(\sfD\) in \(\Graf\) can carry \(k\) particles. Every other occupied complementary component carries at most
\(n-m-k\leq n-2\) particles. Following a bivalent path from the transferred support to the next non-bivalent vertex, or back to the support, shows that each corresponding nonempty component in \(\Graf\) contains at least \(n-2\) vertices. Thus the entire particle distribution can be realized in \(\Graf\).

It remains to incorporate the coset data. The singleton constrained components contribute only trivial factors to \(H\) and \(H'\). If \(\sfL_0\) is a cycle, then \(H\) and \(H'\) are represented by the same cyclic particle motion in the common continuous configuration space. 
If \(\sfL_0\) is a tripod, then their generators are the elementary exchanges about the same essential vertex through the same three incident directions, and these exchange loops are homotopic in the common continuous configuration space. Hence, after the chosen changes of basepoint, there exists \(a\in G'\) such that \(\phi(H)=aH'a^{-1}\).

The coset \(\phi(g)aH'\) therefore represents a non-isolated nesting-minimal unbounded domain \(U'\) satisfying \(G_{U'}=\phi(G_U)\). By \Cref{Cor:MinimalDomainStabilizers}, \(U'\) is uniquely determined by this subgroup. We may consequently define \(G_{\Psi(U)}=\phi(G_U)\). The reverse construction, applied to \(\phi^{-1}\), shows that \(\Psi\) is a bijection between the non-isolated vertices of \(\EO\) and those of \(\EO'\).

For distinct non-isolated vertices \(U,V\), we have
\[\begin{aligned}
U\bot V &\Longleftrightarrow [G_U,G_V]=1\Longleftrightarrow[\phi(G_U),\phi(G_V)]=1\Longleftrightarrow[G_{\Psi(U)},G_{\Psi(V)}]=1\\
&\Longleftrightarrow\Psi(U)\bot\Psi(V),
\end{aligned}\]
where the first and last equivalences follow from \Cref{Cor:MinimalDomainStabilizers}. Thus \(\Psi\) is an isomorphism between the full subgraphs spanned by the non-isolated vertices.

Finally, any two sufficiently subdivided models of the same topological graph admit a common further subdivision. Applying the preceding result to the two maps into such a common subdivision proves the final assertion.
\end{proof}

\begin{example}[An isolated domain created by further subdivision]
\label{Ex:SubdivisionCreatesIsolatedDomain}
Let \(n=3\), and let \(\Graf=\sfK_{2,3}\) have bipartition \(\{\sfu,\sfv\}\sqcup\{\sfw_1,\sfw_2,\sfw_3\}\). Let \(\ell\) be the \(4\)-cycle through \(\sfu,\sfw_1,\sfv,\sfw_2\). The graph \(\Graf\) is sufficiently subdivided for \(3\), but \(\Graf\setminus\ell=\{\sfw_3\}\). Hence there is no legal pair of norm \(3\) whose unique non-singleton constrained component is \(\ell\) carrying one particle.

Subdivide the edge \([\sfu,\sfw_3]\) once, with new vertex \(\sfz\), and let \(\Graf'\) be the resulting graph. Then 
\[\sfL'=\ell\sqcup\sfz\sqcup\sfw_3,\qquad\bfn'(\ell)=\bfn'(\sfz)=\bfn'(\sfw_3)=1\]
is a standard legal pair. Indeed, the singleton components \(\sfz,\sfw_3\) exhaust the corresponding component of \(\Graf'\setminus\ell\), and are therefore constrained. Thus \((\sfL')^\perp=\varnothing\).

By \Cref{Thm:MinimalUnboundedDomainsGBG}, this pair represents a nesting-minimal unbounded domain, and \Cref{Cor:NonisolatedVerticesGBG} shows that the corresponding vertex is isolated. Denote this isolated domain by \(U'\). The natural identification of the continuous configuration spaces, together with \Cref{Thm:DiscreteConfigurationRetraction} and choices of change-of-basepoint paths, gives an isomorphism
\[\phi\colon\GBGdisc_3(\Graf)\longrightarrow\GBGdisc_3(\Graf').\]
The subgroup \(G_{U'}\) consists entirely of pure braids: its generator moves one particle around \(\ell\) while fixing the other two.

Every embedded cycle of \(\Graf=\sfK_{2,3}\) has four vertices and only one vertex outside it. Thus \Cref{Thm:MinimalUnboundedDomainsGBG} shows that every nesting-minimal unbounded domain \(U\) for \(\Graf\) has a cycle support carrying two or three particles, or a tripod support carrying two particles. In each case, \(G_U\) contains a braid with nontrivial particle permutation. Since \(\phi\) preserves the pure braid subgroups, \(G_{U'}\neq\phi(G_U)\) for every such \(U\). Hence the correspondence on non-isolated vertices need not extend to a bijection \(\Psi\) of the full vertex sets satisfying \(G_{\Psi(U)}=\phi(G_U)\).
\end{example}

\subsection{Rank}
Throughout this subsection, when working with \(\GBG_n(\Graf)\), we choose a subdivision \(\Graf^{\mathrm{sd}}\) that is sufficiently subdivided for \(n\), and suppress the superscript when no confusion can arise. By \Cref{Thm:DiscreteConfigurationRetraction} and the natural homeomorphism between a graph and its subdivisions,
\[\GBGdisc_n(\Graf^{\mathrm{sd}})\cong\GBG_n(\Graf^{\mathrm{sd}})\cong\GBG_n(\Graf).\]
We write \(\EOLP\) for the expanded core graph of the chosen legal-pair hierarchy. By \Cref{Prop:SubdivisionStabilityExpandedCore}, its nontrivial connected components are independent of this choice.

\begin{theorem}[Rank formula]\label{Thm:RankGBG}
Let \(\Graf\) be a connected finite graph, and let \(n\geq2\). Let \(r\) be the maximum of \(p+q\) such that \(p+2q\leq n\) and \(p\) embedded cycles and \(q\) essential vertices in \(\Graf\) can be chosen pairwise disjoint.

Let \(\Graf^{\mathrm{sd}}\) be any subdivision of \(\Graf\) that is sufficiently subdivided for \(n\). Then the legal-pair hierarchy on \(\GBGdisc_n(\Graf^{\mathrm{sd}})\) has rank
\[\operatorname{rank}\bigl(\GBGdisc_n(\Graf^{\mathrm{sd}}),\frSLP\bigr)=r.\]
Moreover, \(r\) is the maximal rank of a free abelian subgroup of \(\GBG_n(\Graf)\).
\end{theorem}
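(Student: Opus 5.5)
The plan is to prove two inequalities for the rank $R$ of the legal-pair hierarchy on $\GBGdisc_n(\Graf^{\mathrm{sd}})$, namely $R\le r$ and $R\ge r$. After that, the free-abelian statement follows from the general rank bound for HHGs together with \Cref{Ex:LegalPairProductSubgroups}\eqref{Item:FreeAbelianLegalPair}. Before starting, note that subdivision changes neither the embedded cycles (up to their subdivided images) nor the essential vertices, nor disjointness among them. Hence $r$ may be computed in $\Graf^{\mathrm{sd}}$.

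\emph{Lower bound.} Choose $p$ cycles $\ell_i$ and $q$ essential vertices, pairwise disjoint, with $p+2q\le n$ and $p+q=r$. \Cref{Ex:LegalPairProductSubgroups}\eqref{Item:FreeAbelianLegalPair} gives a legal pair $(\sfL_{\mathrm{ab}},\bfn_{\mathrm{ab}})$ whose non-singleton components are the $\ell_i$ and tripods $\tau_j$.
\begin{enumerate}
\item For each support $\sfS\in\{\ell_i,\tau_j\}$, form the legal pair that keeps $\sfS$ with its particle number and replaces every other component by singletons.
\item By \Cref{Thm:MinimalUnboundedDomainsGBG}, after standardizing, each such pair gives a nesting-minimal unbounded domain $U_\sfS$.
\item All these lifts pass through a common vertex of the lift of the product subcomplex. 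They are therefore distinct factors of the cubical product region $\prod\UD_{a_i}(\ell_i)\times\prod\UD_2(\tau_j)$.
\item Hence the $U_\sfS$ are pairwise orthogonal, either by \Cref{Lem:NestingOrthogonality} or directly from the product-region description of orthogonality, and they are distinct.
\end{enumerate}
This gives $R\ge r$.

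\emph{Upper bound.} Take $U_1,\dots,U_k$ pairwise orthogonal and unbounded. Each $U_i$ nests an unbounded nesting-minimal domain $W_i$; this follows by applying \Cref{Prop:AxialEllipticDichotomy} inside $G_{U_i}$, as in the proof of \Cref{Thm:MinimalUnboundedDomainsGBG}. Orthogonality is inherited by nested domains, so the $W_i$ are pairwise orthogonal and in particular distinct. By \Cref{Lem:OrthogonalLegalPairProductRegions}, the common product region $Q\cong F_1\times\cdots\times F_k\times F_0$ projects to a legal-pair subcomplex $\UD_{\bfn'}(\sfL')$. In this subcomplex each $\sfL_i^{\con}$ is a union of components and $\bfn'|_{\pi_0(\sfL_i^{\con})}=\bfn_i^{\con}$. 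By \Cref{Thm:MinimalUnboundedDomainsGBG}, each $\sfL_i^{\con}$ has a unique non-singleton component $\sfS_i$. This component is either a cycle carrying $\ge1$ particle or a tripod carrying $2$ particles. The $\sfS_i$ are distinct components of the single subgraph $\sfL'$, so they are pairwise disjoint, and the particle count gives $\#\{\text{cycles}\}+2\#\{\text{tripods}\}\le\|\bfn'\|=n$. The tripod centers are essential vertices that are disjoint from the cycles and from each other. Therefore $k\le r$.

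The step I expect to be most delicate is making sure the $\sfS_i$ really occur as \emph{distinct} components of one common legal pair rather than merely overlapping. Here I would rely on the fact from \Cref{Lem:OrthogonalLegalPairProductRegions} that each $\sfL_i^{\con}$ survives unchanged as a union of components of $\sfL'$. Two equal components would force $G_{W_i}=G_{W_j}$, which contradicts \Cref{Cor:MinimalDomainStabilizers}(1).

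\emph{Free abelian rank.} The undistorted $\mathbb Z^r$ of \Cref{Ex:LegalPairProductSubgroups}\eqref{Item:FreeAbelianLegalPair}, transported along $\GBGdisc_n(\Graf^{\mathrm{sd}})\cong\GBG_n(\Graf)$, gives the lower bound. For the upper bound I would invoke the standard HHG fact that a free abelian subgroup of rank $m$ forces $m$ pairwise orthogonal unbounded domains (the rank bound of \cite{BHS21}). An alternative argument uses the orthogonal decomposition property from \Cref{Thm:GBGLegalPairXi}: the bigsets of generators of a $\mathbb Z^m$ give at least $m$ pairwise orthogonal domains. This yields $m\le R=r$.
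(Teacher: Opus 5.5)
Your proposal is correct and follows the paper's proof essentially step for step. The lower bound uses the same single-support fibers of the lifted product from \Cref{Ex:LegalPairProductSubgroups}\eqref{Item:FreeAbelianLegalPair}; the upper bound uses the same reduction to pairwise orthogonal nesting-minimal domains with disjoint cycle/tripod supports and the particle count from \Cref{Lem:OrthogonalLegalPairProductRegions}; and the free abelian bound is the general HHG rank bound, which the paper cites as \cite[Proposition~2.17]{HRSS25}. Two small adjustments: the existence of a nesting-minimal unbounded $W_i\sqsubseteq U_i$ comes from finite complexity (the axial--elliptic dichotomy alone only yields some unbounded nested domain), and disjointness of the supports follows directly from \Cref{Lem:NestingOrthogonality}\eqref{Item:OrthogonalRelation}, so your detour through \Cref{Cor:MinimalDomainStabilizers} is unnecessary.
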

\begin{proof}
Fix a subdivision \(\Graf^{\mathrm{sd}}\) as in the statement and denote it again by \(\Graf\). Subdivision preserves embedded cycles, essential vertices, and their disjointness relations, so the number \(r\) is unchanged. If \(\Graf\) is a path, then \(r=0\) and \(\GBG_n(\Graf)\) is trivial. We may therefore assume that \(r\geq1\).

Let \(U_1,\dots,U_k\) be pairwise orthogonal unbounded domains. By finite complexity and inheritance of orthogonality under nesting, we may replace each \(U_i\) by a nesting-minimal unbounded domain nested into it. For each \(i\), choose a coset \(g_i\GBGdisc_{\bfn_i}(\sfL_i)\) representing \(U_i\), where \((\sfL_i,\bfn_i)\) is standard. By \Cref{Thm:MinimalUnboundedDomainsGBG}, its constrained part has a unique non-singleton component \(\sfL_{i,0}\), which is either an embedded cycle carrying at least one particle or a tripod carrying two particles.

Suppose that these components consist of \(p\) cycles and \(q\) tripods, so that \(k=p+q\). By \Cref{Lem:NestingOrthogonality}\eqref{Item:OrthogonalRelation}, these supports are pairwise disjoint. Let \(a_1,\dots,a_p\) be the particle numbers on the cycle components. Their common product region from \Cref{Lem:OrthogonalLegalPairProductRegions} gives
\[a_1+\cdots+a_p+2q\leq n.\]
In particular, \(p+2q\leq n\). Taking the centers of the tripod supports gives an admissible collection of \(p\) embedded cycles and \(q\) essential vertices. Therefore \(k=p+q\leq r\).

Conversely, choose pairwise disjoint embedded cycles \(\ell_1,\dots,\ell_p\) and essential vertices \(\sfv_1,\dots,\sfv_q\) realizing \(r=p+q\). Let \((\sfL_{\mathrm{ab}},\bfn_{\mathrm{ab}})\) be the legal pair constructed in \Cref{Ex:LegalPairProductSubgroups}\eqref{Item:FreeAbelianLegalPair}, and let \(F\) be a legal-pair lift of the legal-pair subcomplex associated to \((\sfL_{\mathrm{ab}},\bfn_{\mathrm{ab}})\). The product decomposition in \Cref{Ex:LegalPairProductSubgroups}\eqref{Item:FreeAbelianLegalPair} lifts to a product decomposition of \(F\).

For each of its \(r\) cycle or tripod factors, let \(F_i\subseteq F\) be the fiber obtained by varying that factor and fixing a vertex in every other factor. Each \(F_i\) is a legal-pair lift. By \Cref{Lem:StandardRepresentativeParallelismClass,Thm:MinimalUnboundedDomainsGBG}, the domain \(V_i=[F_i]\) is nesting-minimal unbounded. Since these fibers arise from distinct factors of the same lifted product, \Cref{Lem:NestingOrthogonality} gives \(V_i\bot V_j\) whenever \(i\neq j\). Thus the hierarchy has rank at least \(r\), and hence
\[\operatorname{rank}\bigl(\GBGdisc_n(\Graf),\frSLP\bigr)=r.\]

By \Cref{Ex:LegalPairProductSubgroups}\eqref{Item:FreeAbelianLegalPair}, the same legal-pair subcomplex also gives an undistorted subgroup \(\mathbb Z^r<\GBG_n(\Graf)\). By the general rank bound for free abelian subgroups of HHGs \cite[Proposition~2.17]{HRSS25}, every free abelian subgroup of \(\GBG_n(\Graf)\) has rank at most the hierarchical rank. Since this rank is \(r\), no free abelian subgroup has rank greater than \(r\). Therefore \(r\) is the maximal rank of a free abelian subgroup of \(\GBG_n(\Graf)\).
\end{proof}

The rank formula also gives a complete rank-two classification.

\begin{theorem}[Rank-two classification for graph braid groups]\label{Thm:RankTwoGBG}
Let \(\Graf\) be a connected finite graph, and let \(n\geq 2\). Then the legal-pair hierarchy associated to any sufficiently subdivided model of \(\Graf\) has rank at most \(2\) if and only if one of the following holds:
\begin{enumerate}[label=\textup{(\arabic*)}, ref=\arabic*]
\item \(n=2\);
\item \(n=3\), and \(\Graf\) does not contain three pairwise disjoint embedded cycles;
\item \(n=4\), and whenever \(\ell_1,\ell_2\subseteq\Graf\) are disjoint embedded cycles, the union \(\ell_1\cup\ell_2\) contains every essential vertex of \(\Graf\);
\item \(n=5\), and every embedded cycle in \(\Graf\), if one exists, contains all but at most one essential vertex of \(\Graf\);
\item \(n\geq 6\), and \(\Graf\) contains at most two essential vertices.
\end{enumerate}
Moreover, in each of these cases, for every finite graph \(\Lambda\), we have
\[\bbA(\Lambda)<\GBG_n(\Graf)\qquad\Longleftrightarrow\qquad\Lambda<\EOLP,\]
where \(\EOLP\) is the expanded core graph of the legal-pair hierarchy on any sufficiently subdivided model of \(\Graf\).
\end{theorem}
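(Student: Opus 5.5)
By \Cref{Thm:RankGBG}, the rank of the legal-pair hierarchy on any sufficiently subdivided model is the number \(r\). So the first assertion reduces to a purely combinatorial statement: \(r\le 2\) holds exactly in cases (1)--(5). I would prove this by splitting on \(n\). In each case I would list the admissible pairs \((p,q)\) with \(p+2q\le n\) and \(p+q=3\). These are \((3,0)\) needing \(n\ge3\), \((2,1)\) needing \(n\ge4\), \((1,2)\) needing \(n\ge5\), and \((0,3)\) needing \(n\ge6\). The condition \(r\le2\) then says that no admissible triple of pairwise disjoint cycles and essential vertices exists. Collections with \(p+q>3\) need not be treated separately, since deleting members preserves both disjointness and the inequality \(p+2q\le n\).

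The case analysis runs as follows. For \(n=2\) no pair \((p,q)\) with \(p+q=3\) is admissible. For \(n=3\) only three pairwise disjoint cycles are relevant, which is exactly condition (2). For \(n=4\) we must exclude both \((3,0)\) and \((2,1)\). Excluding \((2,1)\) says that every essential vertex lies on \(\ell_1\cup\ell_2\) for every pair of disjoint cycles. This already excludes \((3,0)\): given three disjoint cycles, any cycle \(\ell_3\) disjoint from \(\ell_1\cup\ell_2\) must contain a vertex of degree \(\ge3\) in \(\Graf\), since \(\Graf\) is connected and not a cycle. That vertex is essential and lies off \(\ell_1\cup\ell_2\).

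For \(n=5\), excluding \((1,2)\) says every cycle misses at most one essential vertex. One would try to check that this also excludes \((2,1)\) and \((3,0)\). Suppose \(\ell_1,\ell_2\) are disjoint. Each of them contains an essential vertex, because a connected path between them leaves each at a vertex of degree \(\ge3\). Then \(\ell_1\) misses the essential vertex on \(\ell_2\), so every further essential vertex lies on \(\ell_1\), and symmetrically on \(\ell_2\). Hence no essential vertex lies off \(\ell_1\cup\ell_2\), which excludes \((2,1)\). Three disjoint cycles would give \(\ell_1\) missing essential vertices on both \(\ell_2\) and \(\ell_3\), which excludes \((3,0)\).

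For \(n\ge6\), excluding \((0,3)\) says there are at most two essential vertices. I would then show that this forces the other triples to be absent as well. Take a cycle \(\ell\) and two further disjoint items among cycles and essential vertices. Each disjoint cycle contributes an essential vertex not on \(\ell\), while \(\ell\) itself carries one whenever \(\Graf\ne\ell\). Counting these vertices, I would verify in each subcase that at least three distinct essential vertices appear.

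This step is the main obstacle: every such implication must produce enough distinct essential vertices from attaching points. The key facts are that \(\Graf\) is connected and that \(\Graf\) is not a single cycle whenever two disjoint items exist. Special care is needed for configurations like \((2,1)\), where the attaching vertices might coincide with the chosen essential vertex. If the direct count fails for some subcase, I would fall back on stating the exact conjunction of exclusions and check that it simplifies to the displayed condition.

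The second assertion is then immediate. \Cref{Thm:GBGLegalPairXi} gives \((\GBGdisc_n(\Graf),\frSLP)\in\Xi\), and we have \(\GBGdisc_n(\Graf)\cong\GBG_n(\Graf)\) on a sufficiently subdivided model. So \Cref{Thm:RankTwoCriterionHHG} applies directly whenever the rank is at most \(2\).

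Independence from the chosen model follows because both sides of the equivalence are model independent. The left side \(\bbA(\Lambda)<\GBG_n(\Graf)\) does not refer to a model at all. Alternatively, one can note that \Cref{Prop:SubdivisionStabilityExpandedCore} fixes the non-isolated part of \(\EOLP\). Isolated vertices only affect \(\Lambda\) by adding isolated vertices, and such additions are realizable in either model because there is at least one vertex in each model.
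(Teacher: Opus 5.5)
Your proposal follows the paper's proof. You reduce via \Cref{Thm:RankGBG} to excluding admissible triples with \(p+q=3\) and handle \(n=2,3,4\) exactly as the paper does. Your \(n=5\) case is a slightly more explicit version of the paper's argument. You then obtain the embedding criterion from \Cref{Thm:GBGLegalPairXi} and \Cref{Thm:RankTwoCriterionHHG}.

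The one step you leave unfinished is the converse for \(n\geq 6\), and the obstacle you anticipate there does not arise. The argument is the paper's:
\begin{enumerate}
\item In an admissible triple, each cycle member is disjoint from the other two members. It is therefore a proper subgraph of the connected graph \(\Graf\).
\item Hence each cycle member contains an essential vertex lying \emph{on it}, namely where a shortest path to an off-cycle vertex leaves the cycle.
\item Take one such vertex on each cycle member, together with the essential-vertex members themselves. This gives one essential vertex in each of the three members.
\item Pairwise disjointness of the members makes these three vertices distinct.
\end{enumerate}
In particular, for type \((2,1)\) the vertex chosen on \(\ell_i\) cannot coincide with the chosen essential vertex, because that vertex is disjoint from \(\ell_i\). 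So you need no case-by-case count and no fallback. You already stated the needed fact (\(\ell\) carries an essential vertex whenever \(\Graf\neq\ell\)); it just had to be applied to every cycle member at once.

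You should drop your ``alternative'' model-independence remark. To extend an induced copy of a graph in \(\EOLP\) by an isolated vertex of \(\Lambda\), you need a vertex outside the copy that is non-adjacent to every vertex of the finite image. The mere existence of some vertex does not provide this. Moreover, by \Cref{Ex:SubdivisionCreatesIsolatedDomain}, the isolated parts of two models can genuinely differ, so \Cref{Prop:SubdivisionStabilityExpandedCore} alone does not settle the question. Your primary argument is enough, and it is the paper's: every sufficiently subdivided model has rank at most \(2\), so apply the criterion in each model, and note that the left-hand side \(\bbA(\Lambda)<\GBG_n(\Graf)\) is model-free.
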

\begin{proof}
By \Cref{Thm:RankGBG}, the rank is the maximum of \(p+q\), where \(p\) embedded cycles and \(q\) essential vertices in \(\Graf\) can be chosen pairwise disjoint, subject to \(p+2q\leq n\).
Thus the rank is at most \(2\) if and only if there is no such collection with
\[p+q\geq 3\qquad\text{and}\qquad p+2q\leq n.\]
It is enough to consider the case \(p+q=3\), since any larger collection contains a subcollection with exactly three members, and the inequality \(p+2q\leq n\) remains true after passing to a subcollection.

If \(n=2\), then \(p+2q\leq 2\) forces \(p+q\leq 2\). Hence the rank is always at most \(2\).

If \(n=3\), the only possibility with \(p+q=3\) and \(p+2q\leq 3\) is \((p,q)=(3,0)\). Thus the rank is at most \(2\) precisely when \(\Graf\) does not contain three pairwise disjoint embedded cycles.

If \(n=4\), the possible pairs with \(p+q=3\) and \(p+2q\leq4\) are
\[(p,q)=(3,0)\qquad\text{and}\qquad(p,q)=(2,1).\]
The second case occurs precisely when two disjoint embedded cycles have an essential vertex outside their union. This condition also detects the first case: if three pairwise disjoint embedded cycles exist, then the third cycle contains an essential vertex outside the other two. Hence the rank is at most \(2\) precisely under the condition in \textup{(3)}.

If \(n=5\), the possible pairs are
\[(p,q)=(3,0),\qquad(p,q)=(2,1),\qquad(p,q)=(1,2).\]
These possibilities occur precisely when some embedded cycle misses at least two essential vertices. Indeed, a collection of type \((1,2)\) is exactly such a cycle together with two essential vertices outside it. Collections of types \((2,1)\) and \((3,0)\) also contain a cycle missing at least two essential vertices, obtained from the remaining cycle components and the selected essential vertex. Hence the rank is at most \(2\) precisely under the condition in \textup{(4)}.

Finally, suppose that \(n\geq6\). Three distinct essential vertices give an admissible collection of type \((0,3)\). Conversely, any admissible collection of three embedded cycles and essential vertices forces \(\Graf\) to have at least three essential vertices: every cycle in such a collection is proper and hence contains an essential vertex, and pairwise disjointness makes the resulting essential vertices distinct. This proves \textup{(5)}.

In each of these cases, the chosen legal-pair hierarchy has rank at most \(2\). Therefore \Cref{Thm:RankTwoCriterionHHG} gives
\[\bbA(\Lambda)<\GBG_n(\Graf)\qquad\Longleftrightarrow\qquad\Lambda<\EOLP.\]
Since the left-hand condition does not depend on the chosen subdivision, this equivalence holds for every sufficiently subdivided model.
\end{proof}

\subsection{Generalized halo graphs and undistorted RAAG embeddings}\label{Subsection:GeneralizedHaloGraphs}

We introduce the following weakening of Sabalka's halo graphs, adapted to the legal-pair hierarchy.

\begin{definition}[Generalized halo graph]\label{Def:GeneralizedHaloGraph}
Let \(n\geq1\), and let \(\Lambda\) be a finite simplicial graph equipped with a proper coloring \(\chi\colon V(\Lambda)\rightarrow\{1,\dots,n\}\).
A \emph{generalized \(\Lambda\)-halo graph} with respect to \(\chi\) is a connected finite graph \(\Graf\), together with distinct vertices \(\sfw_1,\dots,\sfw_n\in V(\Graf)\) and pairwise distinct embedded cycles \(\ell_v\subseteq\Graf\) for \(v\in V(\Lambda)\), satisfying the following conditions:
\begin{enumerate}
\item\label{Item:GeneralizedHaloColorVertex} For every \(v\in V(\Lambda)\), the cycle \(\ell_v\) contains \(\sfw_{\chi(v)}\) and contains no color vertex \(\sfw_j\) with \(j\neq\chi(v)\).
\item If \(v\) and \(w\) are adjacent in \(\Lambda\), then \(\ell_v\cap\ell_w=\varnothing\).
\item\label{Item:GeneralizedHaloNonedges} If \(v\) and \(w\) are distinct and non-adjacent in \(\Lambda\), then \(\ell_v\cap\ell_w\neq\varnothing\).
\end{enumerate}
The configuration
\(\mathbf x^0_{\mathrm{Artin}}=\{\sfw_1,\dots,\sfw_n\}\) is called the \emph{Artin base configuration}.
\end{definition}

We do not require \(\chi\) to be surjective. By condition~\textup{(1)}, every unused color vertex lies on none of the supporting cycles, while, for each \(v\in V(\Lambda)\), the particle at \(\sfw_{\chi(v)}\) may travel once around \(\ell_v\) with all the other particles fixed at the remaining color vertices. After choosing an orientation of \(\ell_v\), let \(\gamma_v\in\GBG_n(\Graf)\) denote the resulting \emph{Artin loop braid}.

\begin{example}[A generalized halo graph for \(C_4\)]
\label{Ex:GeneralizedHaloC4}
Let \(\Lambda=C_4\) have vertices \(a,b,c,d\) in cyclic order, with \(\chi(a)=\chi(c)=1\) and \(\chi(b)=\chi(d)=2\). Let \(\Graf\), its color vertices, and its four supporting cycles be as in \Cref{Fig:GeneralizedHaloGraph}.

Each of \(\ell_a,\ell_c\) is disjoint from each of \(\ell_b,\ell_d\), giving the four edges of \(C_4\). On the other hand, \(\ell_a\cap\ell_c\) is an edge and \(\ell_b\cap\ell_d=\{\sfw_2\}\). Thus the non-adjacent pairs have nonempty intersections, and the color conditions are immediate from \Cref{Fig:GeneralizedHaloGraph}. Hence \(\Graf\) is a generalized \(C_4\)-halo graph.
\end{example}

\begin{figure}[ht]
\begin{align*}
\Lambda&=
\vcenter{\hbox{
\begin{tikzpicture}[baseline=-.5ex]
\draw[thick] (0,1) -- (1,1) -- (1,0) -- (0,0) -- cycle;
\draw[fill,thick] (0,1) circle (2pt) node[above left] {\(a\)};
\draw[fill,thick] (1,1) circle (2pt) node[above right] {\(b\)};
\draw[fill,thick] (1,0) circle (2pt) node[below right] {\(c\)};
\draw[fill,thick] (0,0) circle (2pt) node[below left] {\(d\)};
\end{tikzpicture}}}
&
\Graf&=
\begin{tikzpicture}[baseline=-.5ex]
\draw[fill, thick] (0,0) -- (2,0) circle (2pt);
\draw[fill, thick] (0,0) -- (120:1) circle (2pt) -- (-1,0) circle (2pt) -- (0,0);
\draw[fill, thick] (0,0) -- (-120:1) circle (2pt) -- (-1,0);
\draw[fill, thick] (2,0) -- ++(15:1) circle (2pt) -- ++(135:1) circle (2pt) -- (2,0);
\draw[fill, thick] (2,0) -- ++(-15:1) circle (2pt) -- ++(-135:1) circle (2pt) -- (2,0);
\draw[fill,thick, blue] (0,0) circle (3pt) node[below right, black] {\(\sfw_1\)};
\draw[fill,thick, blue] (2,0) circle (3pt) node[below left, black] {\(\sfw_2\)};
\draw (-0.5,0.25) node {\(\ell_a\)};
\draw (-0.5,-0.25) node {\(\ell_c\)};
\draw (2.35,0.35) node {\(\ell_b\)};
\draw (2.35,-0.35) node {\(\ell_d\)};
\end{tikzpicture}
\end{align*}
\caption{A generalized \(\Lambda\)-halo graph for \(\Lambda=C_4\) and \(n=2\), with respect to
the proper coloring \(\chi(a)=\chi(c)=1\) and \(\chi(b)=\chi(d)=2\).}
\label{Fig:GeneralizedHaloGraph}
\end{figure}

\begin{proposition}[Undistorted RAAGs from generalized halo graphs]\label{Prop:UndistortedRAAGsFromGeneralizedHaloGraphs}
Let \(n\geq2\), let \(\Lambda\) be a finite simplicial graph equipped with a proper coloring \(\chi\colon V(\Lambda)\rightarrow\{1,\dots,n\}\), and let \(\Graf\) be a generalized \(\Lambda\)-halo graph with respect to \(\chi\). For each \(v\in V(\Lambda)\), let \(\gamma_v\in\GBG_n(\Graf)\) be the corresponding Artin loop braid.

There exists \(D>0\) such that, for every integer \(d\geq D\), the assignment
\(v\mapsto\gamma_v^d\) extends to an injective homomorphism \(\bbA(\Lambda)\rightarrow\GBG_n(\Graf)\) whose image is undistorted.
\end{proposition}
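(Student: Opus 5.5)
The plan is to apply \Cref{Prop:RAAGEmbeddingTheorem} to the legal-pair hierarchy, which lies in \(\Xi\) by \Cref{Thm:GBGLegalPairXi}. First I would pass to a subdivision \(\Graf^{\mathrm{sd}}\) of \(\Graf\) that is sufficiently subdivided for \(n\). Subdivision preserves the color vertices, the supporting cycles and all intersection conditions of \Cref{Def:GeneralizedHaloGraph}. The Artin loop braids \(\gamma_v\) correspond, under \Cref{Thm:DiscreteConfigurationRetraction} and a fixed change of basepoint to the discrete configuration \(\mathbf x^0_{\mathrm{Artin}}\), to edge loops in \(\UD_n(\Graf^{\mathrm{sd}})\). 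So it suffices to prove the statement for \(\GBGdisc_n(\Graf^{\mathrm{sd}})\), with the \(\gamma_v\) read as elements of \(\pi_1(\UD_n,\mathbf x^0_{\mathrm{Artin}})\). This group is identified with \(G\) by a single path, and conjugation preserves undistortedness.

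For each \(v\), I would take the legal pair \((\sfL_v,\bfn_v)\) consisting of the cycle \(\ell_v\) with one particle, together with the singletons \(\sfw_j\) for \(j\neq\chi(v)\). Condition~\eqref{Item:GeneralizedHaloColorVertex} makes this legal, since \(\ell_v\) has at least three vertices. Its subcomplex is \(\UD_1(\ell_v)\times\{\text{point}\}\), which is a circle through \(\mathbf x^0_{\mathrm{Artin}}\), and its fundamental group is generated by \(\gamma_v\). Let \(F_v\) be the lift through the chosen lift of \(\mathbf x^0_{\mathrm{Artin}}\), and let \(U_v=[F_v]\). Passing to the standard representative via \Cref{Lem:StandardRepresentativeParallelismClass} and using \Cref{Thm:MinimalUnboundedDomainsGBG}, \(U_v\) is nesting-minimal unbounded. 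Its constrained part has a unique non-singleton component \(\ell_v\) carrying one particle. By \Cref{Lem:LegalPairGU}, \(\gamma_v\) generates \(G_{U_v}=\Stab_G(F_v)\). By \Cref{Cor:MinimalDomainStabilizers}, \(\gamma_v\) is strongly fully supported on \(U_v\) and \(\B(\gamma_v)=\{U_v\}\).

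Next I would check geometric irredundancy. The cycles \(\ell_v\) are pairwise distinct, so for \(v\neq w\) the constrained parts differ. This holds because the unique non-singleton constrained component is \(\ell_v\), respectively \(\ell_w\). By \Cref{lemma:parallelism}, \(U_v\neq U_w\), so \(\B(\gamma_v)\neq\B(\gamma_w)\).

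The remaining step, and the main obstacle, is to show \(U_v\bot U_w\iff\{v,w\}\in E(\Lambda)\). For an edge \(\{v,w\}\), the colors differ and \(\ell_v\cap\ell_w=\varnothing\), so the legal pair \(\ell_v\sqcup\ell_w\) with one particle each, plus the remaining color singletons, is legal. Condition~\eqref{Item:GeneralizedHaloColorVertex} gives \(\sfw_{\chi(w)}\in\ell_w\) and keeps the other color vertices off both cycles. Its lift through the base lift is a product containing \(F_v\) and \(F_w\) as distinct factors, hence \(U_v\bot U_w\). Equivalently, \([\gamma_v,\gamma_w]=1\) together with \Cref{Cor:MinimalDomainStabilizers}(2). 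For a non-edge, \(\ell_v\cap\ell_w\neq\varnothing\) by condition~\eqref{Item:GeneralizedHaloNonedges}. Orthogonality would require, by \Cref{Lem:NestingOrthogonality}\eqref{Item:OrthogonalRelation}, that \(\ell_w\preceq\ell_v^\perp\), so \(\ell_w\) would lie in \(\Graf\setminus\ell_v\). This is impossible, so the two domains are not orthogonal. The delicate point is confirming that the constrained parts of the standard representatives are still exactly \(\ell_v\) and \(\ell_w\). This should follow because singletons are not relevant to the non-singleton component. With this relation established, \Cref{Prop:RAAGEmbeddingTheorem} yields \(D\) such that \(\langle\gamma_v^d\rangle\cong\bbA(\Lambda)\) is undistorted for all \(d\geq D\), with \(v\mapsto\gamma_v^d\) as the isomorphism.
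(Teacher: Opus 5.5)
Your proposal is correct and follows the paper's proof essentially step for step. It uses the same sufficiently subdivided model and change of basepoint, the same legal pairs \((\sfL_v,\bfn_v)\) and domains \(U_v\), and the same appeals to \Cref{Thm:MinimalUnboundedDomainsGBG}, \Cref{Lem:LegalPairGU}, \Cref{Cor:MinimalDomainStabilizers} and \Cref{lemma:parallelism}, before concluding via \Cref{Thm:GBGLegalPairXi} and \Cref{Prop:RAAGEmbeddingTheorem}. The only minor difference is in the edge case: you certify \(U_v\bot U_w\) directly, either through the product lift of \(\ell_v\sqcup\ell_w\) or through the commuting loops \(\gamma_v,\gamma_w\), whereas the paper uses properness of the coloring to place each cycle in the other's orthogonal subgraph and then applies \Cref{Lem:NestingOrthogonality}. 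The ``delicate point'' you flag is immediate, because passing to the standard representative does not change the constrained part.
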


\begin{proof}
Choose a sufficiently subdivided model of \(\Graf\) retaining the color vertices and supporting cycles, and continue to denote it by \(\Graf\). By \Cref{Thm:DiscreteConfigurationRetraction}, it suffices to work in \(G:=\GBGdisc_n(\Graf)\). By \Cref{Rem:AuxiliaryChoicesLegalPairs}, after a change of basepoint we may take \(\bfx^0_{\mathrm{Artin}}\) as the global base configuration and regard the Artin loop braids \(\gamma_v\) as elements of \(G\).

For each \(v\in V(\Lambda)\), let \((\sfL_v,\bfn_v)\) be the legal pair whose unique non-singleton component is \(\ell_v\), carrying one particle, and whose remaining components are the singleton color vertices \(\{\sfw_j\}\), \(j\neq\chi(v)\). Let \(F_v\) be the lift of its legal-pair subcomplex through the chosen lift of \(\bfx^0_{\mathrm{Artin}}\), and set \(U_v=[F_v]\).
By \Cref{Lem:StandardRepresentativeParallelismClass,Thm:MinimalUnboundedDomainsGBG}, each \(U_v\) is nesting-minimal unbounded. Since the supporting cycles are pairwise distinct, \Cref{lemma:parallelism} implies that the domains \(U_v\) are pairwise distinct.

Since \(\UD_{\bfn_v}(\sfL_v)\cong\UD_1(\ell_v)\cong\ell_v\), the Artin loop represents a generator of its fundamental group.
By \Cref{Cor:UndistortedLegalPairSubgroups}, its image \(\gamma_v\in G\) is therefore nontrivial.
Since the loop representing \(\gamma_v\) lies in the corresponding legal-pair subcomplex, its lift starting at the chosen basepoint is contained in \(F_v\). Hence
\[\gamma_v\in\Stab_G(F_v)=G_{U_v}\]
by \Cref{Lem:LegalPairGU}. Therefore \Cref{Cor:MinimalDomainStabilizers} shows that \(\gamma_v\) is strongly fully supported on \(U_v\), with \(\B(\gamma_v)=\{U_v\}\).

For distinct \(v,w\in V(\Lambda)\), the generalized halo conditions, together with \Cref{Lem:NestingOrthogonality}, give
\[U_v\bot U_w
\qquad\Longleftrightarrow\qquad
\ell_v\cap\ell_w=\varnothing
\qquad\Longleftrightarrow\qquad
\{v,w\}\in E(\Lambda).\]
Indeed, when the cycles are disjoint, properness of the coloring places each cycle in the orthogonal subgraph of the other. Thus the \(U_v\) span an induced copy of \(\Lambda\) in \(\EOLP\), and the collection \(\{\gamma_v\mid v\in V(\Lambda)\}\) is geometrically irredundant.

Finally, \((G,\frSLP)\in\Xi\) by \Cref{Thm:GBGLegalPairXi}. The conclusion now follows from \Cref{Prop:RAAGEmbeddingTheorem} and \Cref{Thm:DiscreteConfigurationRetraction}.
\end{proof}

\begin{remark}[Comparison with Sabalka's halo graphs]\label{Rem:ComparisonWithSabalkaHalo}
Sabalka's halo graphs satisfy stronger intersection conditions. For each color, there is a distinguished color vertex lying on every supporting cycle of that color and on no supporting cycle of another color. Supporting cycles associated to adjacent vertices are disjoint, whereas those associated to distinct non-adjacent vertices meet in exactly one vertex. Consequently, every halo graph in Sabalka's sense is a generalized halo graph.

Our definition retains only the intersection--disjointness pattern detected by the legal-pair hierarchy and permits arbitrary nonempty intersections between the supporting cycles of non-adjacent vertices. For example, the cycles \(\ell_a\) and \(\ell_c\) in \Cref{Ex:GeneralizedHaloC4} meet in an edge, so that example is not a halo graph in Sabalka's sense.

Sabalka proved that the squares of the Artin loop braids generate a subgroup isomorphic to \(\bbA(\Lambda)\) \cite[Proposition~3.1]{Sab07Embedding}. By contrast, \Cref{Prop:UndistortedRAAGsFromGeneralizedHaloGraphs} shows that sufficiently large common powers generate an undistorted copy of \(\bbA(\Lambda)\), but does not determine whether the subgroup generated by the squares is undistorted.
\end{remark}

\begin{corollary}[Universal undistorted RAAG embeddings]\label{Cor:EveryRAAGUndistortedGBG}
Let \(\Lambda\) be a finite simplicial graph. For every integer \(n\geq\chi(\Lambda)\), there exists a connected finite graph \(\Graf\) such that \(\bbA(\Lambda)\) is isomorphic to an undistorted subgroup of \(\GBG_n(\Graf)\).
\end{corollary}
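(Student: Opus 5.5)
The plan is to split into the case \(n\geq2\), handled by Sabalka's halo graphs together with \Cref{Prop:UndistortedRAAGsFromGeneralizedHaloGraphs}, and the degenerate case \(n=1\).

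\emph{The case \(n=1\).} If \(n=1\geq\chi(\Lambda)\), then \(\Lambda\) has no edges (or is empty), so \(\bbA(\Lambda)\) is a free group \(\bbF_r\) with \(r=\#V(\Lambda)\) (trivial if \(r=0\)). Taking \(\Graf\) to be a rose-like simplicial graph with \(\pi_1(\Graf)\cong\bbF_r\) (e.g.\ a wedge of \(r\) triangles) gives \(\GBG_1(\Graf)=\pi_1(\Graf)\cong\bbF_r\) by \Cref{Rem:GBGDegenerate}, which is trivially undistorted in itself. For \(r=0\) any graph works.

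\emph{The case \(n\geq2\).} First treat \(n=\max\{2,\chi(\Lambda)\}\) or, more uniformly, any \(n\geq\max\{2,\chi(\Lambda)\}\). Fix a proper coloring \(\chi\colon V(\Lambda)\to\{1,\dots,\chi(\Lambda)\}\subseteq\{1,\dots,n\}\); non-surjectivity is explicitly allowed in \Cref{Def:GeneralizedHaloGraph}. Take Sabalka's \(\Lambda\)-halo graph \(\Graf_0\) for this coloring \cite{Sab07Embedding}; by \Cref{Rem:ComparisonWithSabalkaHalo} it is a generalized \(\Lambda\)-halo graph with color vertices \(\sfw_1,\dots,\sfw_{\chi(\Lambda)}\). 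To accommodate the extra \(n-\chi(\Lambda)\) particles, adjoin pendant vertices \(\sfw_{\chi(\Lambda)+1},\dots,\sfw_n\) (each attached by a new edge to some vertex of \(\Graf_0\)), obtaining \(\Graf\). The conditions of \Cref{Def:GeneralizedHaloGraph} persist: the new vertices lie on no cycle, so condition (1) holds for the unused colors, and the intersection pattern of the \(\ell_v\) is unchanged. Then \Cref{Prop:UndistortedRAAGsFromGeneralizedHaloGraphs} gives \(D\) such that \(\langle\gamma_v^d\rangle\cong\bbA(\Lambda)\) is undistorted in \(\GBG_n(\Graf)\) for \(d\ge D\). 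If \(\Lambda\) is empty the statement is trivial, and if \(\chi(\Lambda)=1\) with \(n\ge2\) the same construction applies with one color.

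The main obstacle is checking that Sabalka's halo graph indeed satisfies the generalized halo axioms exactly as stated (distinct color vertices, each \(\ell_v\) avoiding foreign color vertices, and adjacency/non-adjacency matching disjointness/intersection); I would verify this directly from Sabalka's construction rather than only cite the remark. Simpliciality of \(\Graf\) can be arranged by subdividing, which preserves all these conditions.
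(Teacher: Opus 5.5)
Your proposal is correct and follows the paper's proof essentially step for step. For \(n=1\) you use a bouquet of circles together with \(\GBG_1(\Graf)\cong\pi_1(\Graf)\). For \(n\geq2\) you take Sabalka's halo graph for a proper \(\chi(\Lambda)\)-coloring and attach \(n-\chi(\Lambda)\) pendant color vertices, which yields a generalized halo graph with non-surjective coloring, and then apply \Cref{Prop:UndistortedRAAGsFromGeneralizedHaloGraphs}. Like you, the paper relies on \Cref{Rem:ComparisonWithSabalkaHalo} for the fact that Sabalka's halo graphs satisfy the generalized halo axioms, rather than verifying this afresh.
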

\begin{proof}
Suppose first that \(n=1\). Then \(\Lambda\) is discrete. Writing \(N=\#V(\Lambda)\), let \(\mathsf R_N\) be a subdivided bouquet of \(N\) circles. Then \(\bbA(\Lambda)\cong\bbF_N\cong\pi_1(\mathsf R_N)\cong\GBG_1(\mathsf R_N)\).

Now suppose that \(n\geq2\), and set \(c=\chi(\Lambda)\). Choose a proper coloring \(\chi\colon V(\Lambda)\rightarrow\{1,\dots,c\}\). Sabalka's construction gives a \(\Lambda\)-halo graph \(\Graf_0\) with color vertices \(\sfw_1,\dots,\sfw_c\) and supporting cycles satisfying the halo intersection conditions \cite[Section~2 and Theorem~1.1]{Sab07Embedding}.
If \(n>c\), attach \(n-c\) new pendant vertices \(\sfw_{c+1},\dots,\sfw_n\) to \(\Graf_0\), and denote the resulting connected graph by \(\Graf\). If \(n=c\), set \(\Graf=\Graf_0\). Regard the original coloring as a map into \(\{1,\dots,n\}\).

The new color vertices lie on none of the supporting cycles, and the supporting cycles retain their original intersection pattern. Therefore \(\Graf\), with color vertices \(\sfw_1,\dots,\sfw_n\), is a generalized \(\Lambda\)-halo graph in the sense of \Cref{Def:GeneralizedHaloGraph}. Applying \Cref{Prop:UndistortedRAAGsFromGeneralizedHaloGraphs} gives an undistorted subgroup of \(\GBG_n(\Graf)\) isomorphic to \(\bbA(\Lambda)\).
\end{proof}

\section{Graph \texorpdfstring{\(2\)}{2}-braid groups}\label{Section:Graph2BraidGroups}

Throughout this section, let \(\Graf\) be a connected finite simplicial graph. If \(\Graf\) is a path, then \(\GBG_2(\Graf)\) is trivial, and the conclusions below are immediate. We therefore assume that \(\Graf\) is not a path, set \(G=\GBGdisc_2(\Graf)\), and equip \(G\) with its legal-pair hierarchy \((G,\frSLP)\). We write \(\EO\) for the corresponding expanded core graph. Since \(\Graf\) is not a path, it is sufficiently subdivided for two particles, and hence
\[G=\GBGdisc_2(\Graf)\cong\GBG_2(\Graf).\]
Moreover, \Cref{Thm:RankTwoGBG} shows that this hierarchy has rank at most \(2\), so \Cref{Thm:RankTwoCriterionHHG} identifies the RAAG-embedding problem for \(\GBG_2(\Graf)\) with the induced-subgraph problem for \(\EO\).

\subsection{Cycle coordinates}

We first determine which nesting-minimal unbounded domains correspond to non-isolated vertices of \(\EO\).

\begin{lemma}[Non-isolated vertices]\label{Lem:NonisolatedCycleDomains}
Let \(v\in V(\EO)\), and let \(U\in\frSLP\) be its underlying domain. Then \(v\) is non-isolated if and only if \(U\) is represented by a coset \(g\GBGdisc_{\bfn}(\sfL)\), where \((\sfL,\bfn)\) is a standard legal pair 
whose constrained part is an embedded cycle \(\sfC\) carrying one particle and 
whose free part is a single vertex lying in a component \(\sfC^\perp\in\pi_0(\Graf\setminus\sfC)\) that contains an embedded cycle.
\end{lemma}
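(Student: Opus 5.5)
The plan is to combine the classification in \Cref{Thm:MinimalUnboundedDomainsGBG} with the non-isolation criterion in \Cref{Cor:NonisolatedVerticesGBG}, specialized to \(n=2\). First we enumerate the possible standard legal pairs of norm \(2\) representing a vertex of \(\EO\). By \Cref{Thm:MinimalUnboundedDomainsGBG}, the constrained part has a unique non-singleton component \(\sfL_0\), which is either a tripod carrying two particles or an embedded cycle \(\sfC\) carrying \(a\in\{1,2\}\) particles. If \(\sfL_0\) carries both particles (the tripod case, or \(a=2\)), then \(\sfL^{\free}=\varnothing\), so \(\sfL^\perp=\varnothing\). Hence \(\GBGdisc_{\bfn^\perp}(\sfL^\perp)\) is trivial, and \Cref{Cor:NonisolatedVerticesGBG} shows that the vertex is isolated.

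The remaining case is \(\sfL_0=\sfC\) with \(a=1\). The second particle then sits at a singleton \(\{\sfw\}\) in some component \(\sfD\) of \(\Graf\setminus\sfC\). This singleton is constrained exactly when \(V(\sfD)=\{\sfw\}\), by \Cref{Def:ConstrainedandFree} (here \(\sfD\) is the component of \(\Graf\setminus\sfL_0\)).
\begin{itemize}
\item If \(\sfD\) is a single vertex, then \(\sfL^\perp=\varnothing\) and the vertex is isolated.
\item Otherwise \(\sfw\) is free. Then \(\sfL^\perp=\sfD\), with \(k_{\sfD}=1\), so the free part is a single vertex in \(\sfC^\perp:=\sfD\).
\end{itemize}

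In the second subcase, \Cref{Cor:NonisolatedVerticesGBG} says the vertex is non-isolated if and only if \(\GBGdisc_1(\sfD)\cong\pi_1(\sfD)\) is infinite. Since \(\sfD\) is a finite graph, this holds exactly when \(\sfD\) contains an embedded cycle. To invoke the corollary's sufficiently subdivided clause (condition (2) needs \(k_{\sfD}\ge2\), which fails here), we use the standing observation of this section that a non-path \(\Graf\) is sufficiently subdivided for \(2\). Alternatively, we apply the first, subdivision-free equivalence of the corollary together with \Cref{Rem:GBGDegenerate}.

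Conversely, suppose \(U\) has the stated form. Then \(\bfn^\perp\) assigns one particle to \(\sfC^\perp\), the group \(\GBGdisc_1(\sfC^\perp)=\pi_1(\sfC^\perp)\) is infinite, and the same corollary gives non-isolation.

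The main thing to check carefully is the bookkeeping of constrained versus free components for \(n=2\). In particular, the standard representative places the free vertex at the smallest label in \(\sfC^\perp\), but this does not affect membership in \(\sfC^\perp\). We also need that a singleton complementary component is the only way the second particle becomes constrained.
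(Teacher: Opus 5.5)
Your proposal is correct and follows essentially the same route as the paper. Both proofs enumerate the possible standard legal pairs of norm \(2\) using \Cref{Thm:MinimalUnboundedDomainsGBG}, then apply \Cref{Cor:NonisolatedVerticesGBG}; for a single free particle the criterion becomes whether \(\pi_1(\sfC^\perp)\) is infinite, i.e.\ whether \(\sfC^\perp\) contains an embedded cycle. Your explicit handling of the subcase where the second singleton is constrained (a one-vertex complementary component) just spells out what the paper compresses into ``forces an embedded cycle carrying one particle together with one free particle.''
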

\begin{proof}
By \Cref{Thm:MinimalUnboundedDomainsGBG}, the unique non-singleton constrained component of \((\sfL,\bfn)\) is either an embedded cycle or a tripod. By \Cref{Cor:NonisolatedVerticesGBG}, \(v\) is non-isolated exactly when \(\GBGdisc_{\bfn^\perp}(\sfL^\perp)\) is infinite.
Since \(n=2\), this excludes a tripod and a cycle carrying two particles, and forces an embedded cycle \(\sfC\) carrying one particle together with one free particle in a component \(\sfC^\perp\) of \(\Graf\setminus\sfC\). Thus \((\sfL^\perp,\bfn^\perp)=(\sfC^\perp,\bfone)\).
Since \(\Graf\) is sufficiently subdivided for two particles, the final criterion in \Cref{Cor:NonisolatedVerticesGBG} says precisely that \(\sfC^\perp\) contains an embedded cycle.
\end{proof}

In view of \Cref{Lem:NonisolatedCycleDomains}, we introduce coordinates for the non-isolated vertices of \(\EO\).

\begin{definition}[Cycle data and coordinates]\label{Def:CycleCoordinates}
A \emph{cycle datum} in \(\Graf\) is a pair \((\sfC,\sfC^\perp)\), where \(\sfC\subseteq\Graf\) is an embedded cycle and \(\sfC^\perp\) is a component of \(\Graf\setminus\sfC\) containing an embedded cycle.

Let \(\sfv_{\sfC}\) be the vertex of \(\sfC\) with smallest label, let \(\sfv_{\sfC^\perp}\) be the vertex of \(\sfC^\perp\) with smallest label, and set
\[\bfx_{\sfC,\sfC^\perp}=\{\sfv_{\sfC},\sfv_{\sfC^\perp}\}.\]
Let \((\sfL_{\sfC,\sfC^\perp},\bfn_{\sfC,\sfC^\perp})\) be the standard legal pair defined by
\[\sfL_{\sfC,\sfC^\perp}=\sfC\sqcup\sfv_{\sfC^\perp},\qquad
\bfn_{\sfC,\sfC^\perp}(\sfC)=1,\qquad
\bfn_{\sfC,\sfC^\perp}(\sfv_{\sfC^\perp})=1.\]
We write
\[H_{\sfC,\sfC^\perp}=\GBGdisc_{\bfn_{\sfC,\sfC^\perp}}(\sfL_{\sfC,\sfC^\perp})<G\]
for the corresponding standard discrete graph braid subgroup, and write
\[\gamma_{\sfC,\sfC^\perp}=\gamma_{\sfL_{\sfC,\sfC^\perp},\bfn_{\sfC,\sfC^\perp}}\]
for the corresponding standard path.

For \(g\in G\), the triple \((g,\sfC,\sfC^\perp)\) is called a \emph{cycle coordinate}. It represents the non-isolated vertex of \(\EO\) associated to the coset \(gH_{\sfC,\sfC^\perp}\).
\end{definition}

By \Cref{Lem:NonisolatedCycleDomains}, every non-isolated vertex of \(\EO\) admits a cycle coordinate, although such a coordinate need not be unique.

For later use, we record explicitly when two cycle coordinates determine adjacent vertices. The first condition below is the downstairs orthogonality condition, while the second records the relative positions of the chosen lifts.

\begin{lemma}[Adjacency in cycle coordinates]\label{Lem:CycleCoordinateOrthogonality}
Let \(v_1=(g_1,\sfC_1,\sfC_1^\perp)\) and \(v_2=(g_2,\sfC_2,\sfC_2^\perp)\) be vertices of \(\EO\) represented by cycle coordinates. Then \(v_1\) and \(v_2\) are adjacent if and only if the following conditions hold:
\begin{enumerate}[label=\textup{(\arabic*)}, ref=\arabic*]
\item\label{Item:CycleCoordinateDownstairs}
\(\sfC_1\cap\sfC_2=\varnothing\), \(\sfC_1\subseteq \sfC_2^\perp\) and \(\sfC_2\subseteq \sfC_1^\perp\).
\item\label{Item:CycleCoordinatePath}
There exists a comparison path \(\eta\) from \((\sfL_{\sfC_1,\sfC_1^\perp},\bfn_{\sfC_1,\sfC_1^\perp})\) to \((\sfL_{\sfC_2,\sfC_2^\perp},\bfn_{\sfC_2,\sfC_2^\perp})\) such that
\[g_1^{-1}g_2\in H_{\sfC_1,\sfC_1^\perp}\,\delta_\eta\,H_{\sfC_2,\sfC_2^\perp},\]
where \(\delta_\eta\) is defined in \eqref{Eq:Delta_eta}.
\end{enumerate}

Moreover, suppose that two cycle data \((\sfC_1,\sfC_1^\perp)\) and \((\sfC_2,\sfC_2^\perp)\) satisfy \textup{(1)}. Then, for every \(g_1\in G\), there exists \(g_2\in G\) such that the vertices represented by \((g_1,\sfC_1,\sfC_1^\perp)\) and \((g_2,\sfC_2,\sfC_2^\perp)\) are adjacent.
\end{lemma}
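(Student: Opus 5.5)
Adjacency in \(\EO\) means orthogonality of the underlying domains, so the plan is to specialize \Cref{Lem:NestingOrthogonality} to the standard legal pairs \((\sfL_i,\bfn_i):=(\sfL_{\sfC_i,\sfC_i^\perp},\bfn_{\sfC_i,\sfC_i^\perp})\). First I record their constrained and orthogonal parts. Since \(n=2\) and \(\sfC_i^\perp\) contains a cycle, the singleton \(\sfv_{\sfC_i^\perp}\) is free, because its complementary component \(\sfC_i^\perp\) is not exhausted. Hence \((\sfL_i^{\con},\bfn_i^{\con})=(\sfC_i,1)\) and \((\sfL_i^\perp,\bfn_i^\perp)=(\sfC_i^\perp,1)\). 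Condition \textup{(3)} of \Cref{Lem:NestingOrthogonality} is literally \textup{(2)} here. Condition \textup{(1)} (compatibility) is implied by the existence of a comparison path, so it can be absorbed into \textup{(2)}.

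It remains to show that condition \textup{(2\('\))} of \Cref{Lem:NestingOrthogonality} is equivalent to \textup{(1)} in the present statement. Condition \textup{(2\('\))} reads \((\sfC_2,1)\preceq(\sfC_1^\perp,1)\) and \((\sfC_1,1)\preceq(\sfC_2^\perp,1)\). By \Cref{Def:LegalPairRelation}, since the norms agree, \((\sfC_2,1)\preceq(\sfC_1^\perp,1)\) holds exactly when \(\sfC_2\subseteq\sfC_1^\perp\): the witness is forced to be zero, and the count on the component of \(\sfC_1^\perp\) containing \(\sfC_2\) is \(1\). The analogous statement holds with the indices swapped. Both inclusions force \(\sfC_1\cap\sfC_2=\varnothing\), because \(\sfC_1^\perp\subseteq\Graf\setminus\sfC_1\). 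Thus \textup{(2\('\))} is equivalent to \textup{(1)}, and the first assertion follows.

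For the final assertion, assume \textup{(1)} and fix \(g_1\). I will produce a comparison path directly. Take the vertex \(\mathbf y=\{\sfv_{\sfC_1},\sfv_{\sfC_2}\}\). It lies in the closure \(\UD_1(\sfC_1)\times\UD_1(\sfC_1^\perp)\) because \(\sfC_2\subseteq\sfC_1^\perp\), and symmetrically in the other closure. So the pairs are compatible, and \Cref{Def:CompatibleLegalPairs} supplies a comparison path \(\eta\). Setting \(g_2=g_1\delta_\eta\) gives \(g_1^{-1}g_2=\delta_\eta\in H_1\delta_\eta H_2\). Then \textup{(2)} holds, and the vertices are adjacent by the first part.

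The only delicate point is confirming that \(\sfv_{\sfC_i^\perp}\) is free, i.e.\ that \(\sfC_i^\perp\) is not exhausted by singleton constrained components. This is immediate because \(\sfC_i^\perp\) contains a cycle, so it has at least three vertices but carries only one particle. With that in hand, the orthogonal pair is indeed \((\sfC_i^\perp,1)\) as used above.
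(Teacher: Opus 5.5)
Your proposal is correct and follows essentially the same route as the paper. You specialize \Cref{Lem:NestingOrthogonality} using that the constrained part is \((\sfC_i,1)\) and the orthogonal pair is \((\sfC_i^\perp,1)\), so that condition~(2\('\)) reduces to the two inclusions (which force disjointness), and for the final assertion you take \(g_2=g_1\delta_\eta\).

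The only cosmetic difference is in how compatibility is handled. The paper derives it from condition~(1) by exhibiting a common configuration \(\{p_1,p_2\}\) with \(p_i\in V(\sfC_i)\). You instead absorb compatibility into the existence of a comparison path and use the explicit configuration only for the final assertion. You also spell out the freeness of \(\sfv_{\sfC_i^\perp}\) and the witness computation for \(\preceq\), which the paper leaves implicit.
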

\begin{proof}
The vertices are adjacent precisely when their underlying domains are orthogonal. For \(i=1,2\), the corresponding standard legal pair has \(\sfC_i\) as its only constrained component, carrying one particle, and has orthogonal pair \((\sfC_i^\perp,\bfone)\). Hence \Cref{Lem:NestingOrthogonality}\eqref{Item:OrthogonalRelation} is equivalent to \(\sfC_1\subseteq\sfC_2^\perp\) and \(\sfC_2\subseteq\sfC_1^\perp\).
These containments imply \(\sfC_1\cap\sfC_2=\varnothing\), so this is precisely \textup{(1)}.

Under this condition, the two closure subcomplexes intersect: for \(p_i\in V(\sfC_i)\), the configuration \(\{p_1,p_2\}\) belongs to both. Thus the two standard legal pairs are compatible and comparison paths between them exist. The remaining condition in \Cref{Lem:NestingOrthogonality} is exactly \textup{(2)}, proving the adjacency criterion.

For the final assertion, choose a comparison path \(\eta\) and set \(g_2=g_1\delta_\eta\). Then \textup{(2)} holds, so the resulting vertices are adjacent.
\end{proof}

See \Cref{Ex:ColorSwap} below for an explicit pair of adjacent cycle coordinates.

\subsection{The bipartite obstruction}
We next define a two-coloring of the non-isolated vertices. 
Choose the ordering \(\bar\bfx^0=(\sfv_1,\sfv_2)\) of the base configuration \(\bfx^0\).
Lifting a loop in \(\UD_2(\Graf)\) to \(\D_2(\Graf)\) defines the particle-permutation homomorphism 
\[\sgn\colon G\longrightarrow\mathfrak S_2\cong\{1,-1\}.\]
For a vertex configuration \(\bfx=\{\sfv,\sfw\}\in \UD_2(\Graf)\), let \(\bar\gamma_{\bfx}\) be the lift of the standard path \(\gamma_{\bfx}\) to \(\D_2(\Graf)\) starting at \(\bar\bfx^0\).
Define \(\sgn_{\bfx}\colon\bfx\rightarrow\{1,-1\}\) by declaring, for \(\sfz\in\bfx\),
\[\sgn_{\bfx}(\sfz)=
\begin{cases}
1 & \text{if the particle starting at \(\sfv_1\) ends at \(\sfz\),}\\
-1 & \text{if the particle starting at \(\sfv_2\) ends at \(\sfz\).}
\end{cases}
\]

Let \(v=(g,\sfC,\sfC^\perp)\) be a non-isolated vertex of \(\EO\). We define
\[\varepsilon(v)=\sgn_{\bfx_{\sfC,\sfC^\perp}}(\sfv_\sfC)\sgn(g)\in\{1,-1\}.\]
This is independent of the choice of the representative \(g\) of the coset \(gH_{\sfC,\sfC^\perp}\), since every element of \(H_{\sfC,\sfC^\perp}\) is pure, i.e., \(H_{\sfC,\sfC^\perp}<\PGBGdisc_2(\Graf)\).
We also verify that \(\varepsilon(v)\) is independent of the choice of cycle coordinate. Suppose that \((g_1,\sfC_1,\sfC_1^\perp)\) and \((g_2,\sfC_2,\sfC_2^\perp)\) represent the same vertex.
By \Cref{lemma:parallelism}, 
\[\sfC_1=\sfC_2=:\sfC\qquad\text{and}\qquad\sfC_1^\perp=\sfC_2^\perp=:\sfC^\perp.\]
Moreover, there is a loop \(\eta\) in \(\sfC^\perp\), based at \(\sfv_{\sfC^\perp}\), such that
\[g_1^{-1}g_2\in H_{\sfC,\sfC^\perp}\left[\gamma_{\sfC,\sfC^\perp}\cdot\bar\eta\cdot\gamma_{\sfC,\sfC^\perp}^{-1}\right]H_{\sfC,\sfC^\perp},\]
where, as in \Cref{lemma:parallelism}, \(\bar\eta(t)=\{\sfv_{\sfC},\eta(t)\}\). Thus \(\bar\eta\) keeps the constrained particle fixed and moves only the free particle. Both endpoint subgroups consist of pure braids, and the middle term is also pure. Hence \(\sgn(g_1)=\sgn(g_2)\), so the two coordinates determine the same value of \(\varepsilon(v)\).

\begin{proposition}[Bipartiteness of the expanded core graph]
\label{Prop:GBG2CoreGraphBipartite}
The expanded core graph \(\EO\) is bipartite.
\end{proposition}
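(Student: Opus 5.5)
Isolated vertices carry no edges, so they can be placed in either class. It therefore suffices to show that the coloring \(\varepsilon\) defined above on non-isolated vertices is proper. In other words, if two non-isolated vertices \(v_1=(g_1,\sfC_1,\sfC_1^\perp)\) and \(v_2=(g_2,\sfC_2,\sfC_2^\perp)\) are adjacent, then \(\varepsilon(v_1)=-\varepsilon(v_2)\). The argument rests on the adjacency criterion of \Cref{Lem:CycleCoordinateOrthogonality}.

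By that lemma, adjacency gives a comparison path \(\eta=\eta_1\cdot\eta_2^{-1}\) and elements \(h_i\in H_{\sfC_i,\sfC_i^\perp}\) with \(g_2=g_1h_1\delta_\eta h_2\). Both \(H_{\sfC_i,\sfC_i^\perp}\) are pure, so \(\sgn(g_2)=\sgn(g_1)\sgn(\delta_\eta)\). It remains to show
\[\sgn(\delta_\eta)=-\,\sgn_{\bfx_1}(\sfv_{\sfC_1})\,\sgn_{\bfx_2}(\sfv_{\sfC_2}),\]
where \(\bfx_i=\bfx_{\sfC_i,\sfC_i^\perp}\).

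To compute \(\sgn(\delta_\eta)\), lift the loop \(\gamma_{\bfx_1}\cdot\eta_1\cdot\eta_2^{-1}\cdot\gamma_{\bfx_2}^{-1}\) to \(\D_2(\Graf)\), starting at \(\bar\bfx^0\), and track the particle labelled by \(\sfv_1\).
\begin{itemize}
\item Along \(\gamma_{\bfx_1}\), this particle ends at a point with label determined by \(\sgn_{\bfx_1}\). In particular, the particle with label \(\sgn_{\bfx_1}(\sfv_{\sfC_1})\) sits on \(\sfC_1\).
\item Along \(\eta_1\), which lies in the closure \(\UD_1(\sfC_1)\times\UD_1(\sfC_1^\perp)\), the particle on \(\sfC_1\) stays on \(\sfC_1\). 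At the common vertex \(\mathbf y\), it is therefore the particle on \(\sfC_1\).
\item The orthogonality condition (1) gives \(\sfC_1\subseteq\sfC_2^\perp\) and \(\sfC_2\subseteq\sfC_1^\perp\). Since \(\mathbf y\) lies in both closures, its two points are one on \(\sfC_1\) and one on \(\sfC_2\), because the point off \(\sfC_1\) must be the one on \(\sfC_2\). So at \(\mathbf y\) the other particle is on \(\sfC_2\).
\item Reversing \(\eta_2\) inside the closure of \(\sfC_2\), that other particle stays on \(\sfC_2\) and ends at \(\sfv_{\sfC_2}\).
\item Comparing at \(\bfx_2\) with the labelling given by \(\bar\gamma_{\bfx_2}\), the loop is pure exactly when \(\sgn_{\bfx_2}(\sfv_{\sfC_2})=-\sgn_{\bfx_1}(\sfv_{\sfC_1})\). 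This gives the displayed identity.
\end{itemize}

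Combining, \(\varepsilon(v_2)=\sgn_{\bfx_2}(\sfv_{\sfC_2})\sgn(g_1)\sgn(\delta_\eta)=-\varepsilon(v_1)\).

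The main obstacle is the bookkeeping in this particle-tracking step. One must check that the closure subcomplexes really confine the relevant particle to its cycle, using the product decomposition \eqref{Eq:ProductDecomposition}. One must also check that the point of \(\mathbf y\) off \(\sfC_1\) is forced onto \(\sfC_2\), which needs both containments in (1). Finally, the sign convention in \(\sgn_{\bfx}\) must be matched consistently against the lifts of the standard paths.
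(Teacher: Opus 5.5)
Your proposal is correct and follows essentially the same route as the paper. The paper also invokes \Cref{Lem:CycleCoordinateOrthogonality}, uses purity of \(H_{\sfC_i,\sfC_i^\perp}\) to reduce to \(\sgn(\delta_\eta)=-\sgn_{\bfx_1}(\sfv_{\sfC_1})\sgn_{\bfx_2}(\sfv_{\sfC_2})\), and justifies this by noting that the comparison path swaps the constrained and free particles; your particle tracking spells out that step. Only a minor point: at \(\mathbf y\), placing the second particle on \(\sfC_2\) needs just \(\sfC_1\cap\sfC_2=\varnothing\), not both containments.
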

\begin{proof}
It suffices to color the non-isolated vertices, since isolated vertices may be assigned either color.
Let \(v_1=(g_1,\sfC_1,\sfC^{\perp}_1)\) and \(v_2=(g_2,\sfC_2,\sfC^{\perp}_2)\) be adjacent non-isolated vertices. By \Cref{Lem:CycleCoordinateOrthogonality}, there exists a comparison path \(\eta\) such that
\[g_1^{-1}g_2\in H_{\sfC_1,\sfC^{\perp}_1}\,\delta_\eta\,H_{\sfC_2,\sfC^{\perp}_2}.\]
Since the two subgroups appearing on the right consist of pure braids, \(\sgn(g_1^{-1}g_2)=\sgn(\delta_\eta)\).

The inclusions \(\sfC_1\subseteq \sfC^{\perp}_2\) and \(\sfC_2\subseteq \sfC^{\perp}_1\) imply that the comparison path interchanges the roles of the constrained and free particles. More precisely, the particle lying on \(\sfC_1\) at the initial configuration becomes the free particle at the terminal configuration, while the initial free particle becomes the particle lying on \(\sfC_2\). Therefore
\[\sgn(\delta_\eta)=-\sgn_{\bfx_{\sfC_1,\sfC^{\perp}_1}}(\sfv_{\sfC_1})\sgn_{\bfx_{\sfC_2,\sfC^{\perp}_2}}(\sfv_{\sfC_2}).\]
Combining the two equalities gives \(\varepsilon(v_1)=-\varepsilon(v_2)\). Thus every edge between non-isolated vertices joins vertices with opposite \(\varepsilon\)-values. Assigning isolated vertices arbitrarily to either side gives a bipartition of \(\EO\).
\end{proof}

\begin{example}[The sign change in an explicit adjacency]\label{Ex:ColorSwap}
Let \(\Graf\) be the graph of \Cref{Fig:MaximalTreeAndLabels}, with the maximal tree \(\sfT\), the leaf \(*=\sfv_1\) and the labels fixed there, and let \(n=2\).
Let \(\sfC_1\) be the triangle on \(\sfv_4,\sfv_5,\sfv_6\) and let \(\sfC_2\) be the triangle on \(\sfv_7,\sfv_8,\sfv_9\), as in \Cref{Fig:ColorSwap}.
Both \(\Graf\setminus\sfC_1\) and \(\Graf\setminus\sfC_2\) are connected, so the selected components \(\sfC_1^\perp\) and \(\sfC_2^\perp\) are determined
by the cycles alone, and each \(\sfC_i^\perp\) contains an embedded cycle. Since \(\sfC_1\cap\sfC_2=\varnothing\), \(\sfC_1\subseteq \sfC^{\perp}_2\) and \(\sfC_2\subseteq \sfC^{\perp}_1\), \Cref{Lem:CycleCoordinateOrthogonality}\eqref{Item:CycleCoordinateDownstairs} holds.

Taking in each case the vertices of smallest label gives
\[\bfx^0=\{\sfv_1,\sfv_2\},\qquad
\bfx_{\sfC_1,\sfC^{\perp}_1}=\{\sfv_1,\sfv_4\},\qquad
\bfx_{\sfC_2,\sfC^{\perp}_2}=\{\sfv_1,\sfv_7\}.\]
Since \(n=2\), the configuration \(\bfx^0\) is the unique critical \(0\)-cell by \Cref{Rem:UniqueCritical0Cell}, so every standard path is the reverse of a vertex-reduction path. Explicitly, both \(\gamma_{\sfC_1,\sfC^{\perp}_1}\) and \(\gamma_{\sfC_2,\sfC^{\perp}_2}\) leave one particle fixed at \(\sfv_1\) and move the other along the edge paths with vertex sequences \((\sfv_2,\sfv_3,\sfv_4)\) and \((\sfv_2,\sfv_3,\sfv_7)\), respectively.
The closure subcomplexes associated to these two cycle data, as well as the comparison path through \(\mathbf y=\{\sfv_4,\sfv_7\}\), are precisely those described in \Cref{Ex:ComparisonPath}.

\Cref{Fig:ColorSwap} follows the two particles along \(\delta_\eta\).
The particle which starts at \(\sfv_1\) ends at \(\sfv_2\), and the particle which starts at \(\sfv_2\) ends at \(\sfv_1\). Thus this loop is not pure, and \(\sgn(\delta_\eta)=-1\), in accordance with the computation in the proof of \Cref{Prop:GBG2CoreGraphBipartite}: here, 
\[\sgn_{\bfx_{\sfC_1,\sfC^{\perp}_1}}(\sfv_4)=\sgn_{\bfx_{\sfC_2,\sfC^{\perp}_2}}(\sfv_7)=-1.\]
Setting \(g_2=\delta_\eta\), the vertices \(v_1=(1,\sfC_1,\sfC^{\perp}_1)\) and \(v_2=(g_2,\sfC_2,\sfC^{\perp}_2)\) are adjacent, and
\(\varepsilon(v_1)=-1\) while \(\varepsilon(v_2)=+1\).
\end{example}

\begin{figure}[ht]
\newcommand{\GBGedges}{%
\draw[thick] (-0.866,-0.5) -- (0,0) -- (0.866,-0.5);
\draw[thick] (0,0) -- (0,1);
\draw[thick] (-1.366,-1.366) -- (-0.866,-0.5) -- (-1.866,-0.5) -- (-1.366,-1.366);
}
\newcommand{\GBGdots}[1]{%
\foreach \p in {{(0,0)},{(-0.866,-0.5)},{(-1.366,-1.366)},{(-1.866,-0.5)},{(0.866,-0.5)},{(1.366,-1.366)},{(1.866,-0.5)},{(0,1)},{(0.5,1.866)},{(-0.5,1.866)}}
{\fill \p circle (#1);}
}
\newcommand{\GBGpanel}[2]{%
\begin{tikzpicture}[baseline=-.5ex, scale=1]
\GBGedges
\draw[thick] (1.366,-1.366) -- (0.866,-0.5) -- (1.866,-0.5) -- (1.366,-1.366);
\draw[thick] (-0.5,1.866) -- (0,1) -- (0.5,1.866) -- (-0.5,1.866);
\GBGdots{2pt}
\fill[red] #1 circle (3pt);
\fill[blue] #2 circle (3pt);
\end{tikzpicture}}
\newcommand{\pA}{(-1.366,-1.366)}
\newcommand{\pB}{(-0.866,-0.5)}
\newcommand{\pD}{(0.866,-0.5)}
\newcommand{\pG}{(0,1)}
\[
\begin{tikzcd}
\GBGpanel{\pA}{\pB}\ar[d,"\gamma_{\sfC_1,\sfC_1^\perp}"']\ar[rr,equal,yshift=1cm,"\text{in }\UD_2(\Graf)"] & & \GBGpanel{\pB}{\pA}\\
\GBGpanel{\pA}{\pD}\ar[r,"\eta_1"] & \GBGpanel{\pG}{\pD} \ar[r,"\eta_2^{-1}"] & \GBGpanel{\pG}{\pA} \ar[u,"\gamma_{\sfC_2,\sfC_2^\perp}^{-1}"]
\end{tikzcd}
\]
\caption{The sign-changing loop of \Cref{Ex:ColorSwap}. The two top panels represent the same unordered configuration, but the red and blue particles have been interchanged.}
\label{Fig:ColorSwap}
\end{figure}

\begin{theorem}[Graph \(2\)-braid obstruction]\label{Thm:GBG2BipartiteObstruction}
Let \(\Lambda\) be a finite graph. If \(\bbA(\Lambda)<\GBG_2(\Graf)\), then \(\Lambda\) is bipartite.
\end{theorem}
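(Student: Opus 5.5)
The plan is to combine the rank-two RAAG-embedding criterion with the bipartite coloring of the expanded core graph. If \(\Graf\) is a path, then \(\GBG_2(\Graf)\) is trivial. Hence \(\bbA(\Lambda)<\GBG_2(\Graf)\) forces \(\Lambda\) to be empty, which is vacuously bipartite. We may therefore assume, as in the standing hypotheses of this section, that \(\Graf\) is not a path. Then \(\Graf\) is sufficiently subdivided for two particles, so \(G=\GBGdisc_2(\Graf)\cong\GBG_2(\Graf)\), and we work with the legal-pair hierarchy \((G,\frSLP)\).

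First, by \Cref{Thm:GBGLegalPairXi}, the hierarchy \((G,\frSLP)\) belongs to \(\Xi\). By \Cref{Thm:RankTwoGBG}, case \((1)\), it has rank at most \(2\). Hence \Cref{Thm:RankTwoCriterionHHG} applies and gives that \(\bbA(\Lambda)<\GBG_2(\Graf)\) holds if and only if \(\Lambda<\EO\). Consequently, the hypothesis yields an isomorphism of \(\Lambda\) onto an induced subgraph of \(\EO\).

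Second, \Cref{Prop:GBG2CoreGraphBipartite} shows that \(\EO\) is bipartite, via the \(\varepsilon\)-coloring of non-isolated vertices with isolated vertices placed arbitrarily. Every subgraph of a bipartite graph, and in particular every induced subgraph, is bipartite: restrict the two-coloring along the embedding. Therefore \(\Lambda\) is bipartite.

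No new input is needed; the main content is already in \Cref{Prop:GBG2CoreGraphBipartite} and in the rank-two criterion. The only point to check is that the cited results apply in exactly this setting. The \(\Xi\)-membership and the rank bound must be for the same hierarchy on a sufficiently subdivided model. The identification \(\GBGdisc_2\cong\GBG_2\) must be compatible with the subgroup relation \(<\), which holds because it is an abstract group isomorphism.
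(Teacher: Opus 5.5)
Your proposal is correct and follows the paper's proof exactly. The paper uses \Cref{Thm:RankTwoGBG}, which packages \Cref{Thm:GBGLegalPairXi}, the rank bound and \Cref{Thm:RankTwoCriterionHHG}, to obtain \(\Lambda<\EO\), and then concludes from the bipartiteness of \(\EO\) in \Cref{Prop:GBG2CoreGraphBipartite}; your separate handling of the path case simply makes explicit the standing assumption of that section.
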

\begin{proof}
By \Cref{Thm:RankTwoGBG}, if \(\bbA(\Lambda)<\GBG_2(\Graf)\), then \(\Lambda<\EO\). The conclusion follows from \Cref{Prop:GBG2CoreGraphBipartite}.
\end{proof}

Thus, for example, if \(C_{2k+1}\) is an odd cycle, then \(\bbA(C_{2k+1})\) does not embed into any graph \(2\)-braid group.
The obstruction is sharp when the ambient graph is allowed to vary: Sabalka's halo graph construction realizes every bipartite RAAG inside some graph \(2\)-braid group, and \Cref{Prop:UndistortedRAAGsFromGeneralizedHaloGraphs} shows that sufficiently large powers of the same Artin loop braids generate an undistorted copy.

\subsection{\texorpdfstring{The \(P_4\)-criterion}{The P4-criterion}}
Recall that \(P_4\) denotes the path graph of length \(3\), and label its vertices consecutively by \(a,b,c,d\), in this order. We now fix the ambient graph and characterize when \(\EO\) contains an induced copy of \(P_4\).

\begin{definition}[\(P_4\)-cycle configuration]\label{Def:P4CycleConfiguration}
A \emph{\(P_4\)-cycle configuration} in \(\Graf\) consists of four distinct embedded cycles
\(\sfC_a,\sfC_b,\sfC_c,\sfC_d\subseteq\Graf\) satisfying
\[\sfC_a\cap\sfC_b=\sfC_b\cap\sfC_c=\sfC_c\cap\sfC_d=\varnothing,\qquad
\sfC_a\cap\sfC_c\neq\varnothing,\quad \sfC_a\cap\sfC_d\neq\varnothing,\quad
\sfC_b\cap\sfC_d\neq\varnothing.\]
\end{definition}

Equivalently, with respect to the coloring \(\chi(a)=\chi(c)=1\) and \(\chi(b)=\chi(d)=2\), the
four cycles satisfy the conditions of \Cref{Def:GeneralizedHaloGraph} in the ambient graph
\(\Graf\), with any \(\sfw_1\in\sfC_a\cap\sfC_c\) and \(\sfw_2\in\sfC_b\cap\sfC_d\) as color
vertices; the disjointness conditions force \(\sfw_1\notin\sfC_b\cup\sfC_d\) and
\(\sfw_2\notin\sfC_a\cup\sfC_c\).

\begin{example}[A \(P_4\)-cycle configuration in \(\sfK_7\)]\label{Ex:P4inK7}
\Cref{figure:P_4-cycle_configurations} exhibits a \(P_4\)-cycle configuration in \(\sfK_7\).
Seven vertices are needed: no graph on fewer than seven vertices admits such a configuration.
Indeed, since \(\Graf\) is simplicial, every embedded cycle has at least three vertices, so \(\sfC_b\cap\sfC_c=\varnothing\) already forces \(\#V(\Graf)\geq6\). If \(\#V(\Graf)=6\), then \(\sfC_b\) and \(\sfC_c\) are triangles and \(V(\Graf)=V(\sfC_b)\sqcup V(\sfC_c)\); since \(\sfC_a\cap\sfC_b=\varnothing\), this gives \(V(\sfC_a)\subseteq V(\sfC_c)\), hence \(V(\sfC_a)=V(\sfC_c)\) and therefore \(\sfC_a=\sfC_c\), contrary to the distinctness of the four cycles.
\end{example}

\begin{figure}[ht]
\begin{align*}
&
\begin{tikzpicture}[baseline=-.5ex, scale=1.5]
\foreach \i in {0,...,6} {
\draw[gray,thick,fill] ({\i*360/7}:1) circle (2pt);
\draw[gray,thick,fill] ({\i*360/7}:1.3) node {\(\sfv_\i\)};
\draw[gray,thick] ({\i*360/7}:1) -- ({(\i+1)*360/7}:1);
\draw[gray,thick] ({\i*360/7}:1) -- ({(\i+2)*360/7}:1);
\draw[gray,thick] ({\i*360/7}:1) -- ({(\i+3)*360/7}:1);
}
\draw[line width=5, rounded corners, green, opacity=0.5] ({0*360/7}:1) -- ({1*360/7}:1) -- ({-1*360/7}:1) -- cycle;
\draw[line width=5, rounded corners, orange, opacity=0.5] ({3*360/7}:1) -- ({4*360/7}:1) -- ({5*360/7}:1) -- cycle;
\draw[line width=3, rounded corners, blue, opacity=0.5] ({2*360/7}:1) -- ({3*360/7}:1) -- ({5*360/7}:1) -- cycle;
\draw[line width=3, rounded corners, red, opacity=0.5] ({2*360/7}:1) -- ({-1*360/7}:1) -- ({0*360/7}:1) -- ({1*360/7}:1) -- cycle;
\end{tikzpicture}
&
\begingroup
\setlength{\arraycolsep}{2pt}
\begin{array}{rcl}
V(\sfC_a)&=&\{\sfv_0,\sfv_1,\sfv_2,\sfv_6\}\\
V(\sfC_b)&=&\{\sfv_3,\sfv_4,\sfv_5\}\\
V(\sfC_c)&=&\{\sfv_0,\sfv_1,\sfv_6\}\\
V(\sfC_d)&=&\{\sfv_2,\sfv_3,\sfv_5\}
\end{array}
\endgroup
\end{align*}
\caption{A \(P_4\)-cycle configuration in \(\sfK_7\).}
\label{figure:P_4-cycle_configurations}
\end{figure}

\begin{proposition}[Two graphical sources of induced copies of \(P_4\)]\label{Prop:P4SourcesGBG2}
\(P_4<\EO\) if \(\Graf\) contains one of the following:
\begin{enumerate}
\item a \(P_4\)-cycle configuration;
\item three pairwise disjoint embedded cycles.
\end{enumerate}
\end{proposition}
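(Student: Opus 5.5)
The plan is to treat the two graphical sources separately. Case~(1) reduces to the generalized halo machinery of \Cref{Subsection:GeneralizedHaloGraphs}. Case~(2) builds a path of length three in \(\EO\) directly from cycle coordinates, using \Cref{Lem:CycleCoordinateOrthogonality}. The bipartite coloring \(\varepsilon\) of \Cref{Prop:GBG2CoreGraphBipartite} then shows the path is induced. Throughout, \(\Graf\) is not a path, so it is sufficiently subdivided for \(n=2\) and no further subdivision is required.

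For (1), let \(\sfC_a,\sfC_b,\sfC_c,\sfC_d\) be a \(P_4\)-cycle configuration. As observed after \Cref{Def:P4CycleConfiguration}, choose \(\sfw_1\in\sfC_a\cap\sfC_c\) and \(\sfw_2\in\sfC_b\cap\sfC_d\). With the coloring \(\chi(a)=\chi(c)=1\), \(\chi(b)=\chi(d)=2\), this makes \(\Graf\) a generalized \(P_4\)-halo graph. I would then repeat the first half of the proof of \Cref{Prop:UndistortedRAAGsFromGeneralizedHaloGraphs}, and deliberately not its conclusion about RAAGs.

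\begin{itemize}
\item For each \(v\in\{a,b,c,d\}\), take the legal pair with \(\ell_v\) carrying one particle and the other color vertex as a singleton. This gives a nesting-minimal unbounded domain \(U_v\) by \Cref{Thm:MinimalUnboundedDomainsGBG}.
\item The four domains are pairwise distinct by \Cref{lemma:parallelism}, since the supporting cycles are distinct.
\item For adjacent \(v,w\) in \(P_4\), the cycle \(\ell_w\) is connected, disjoint from \(\ell_v\), and contains the free singleton \(\sfw_{\chi(w)}\). Hence \(\ell_w\) lies in the orthogonal subgraph of the pair for \(v\), and symmetrically. All four domains are represented by lifts through the common Artin base configuration, so condition~\textup{(3)} of \Cref{Lem:NestingOrthogonality} holds with a trivial comparison path. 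Thus \(U_v\bot U_w\).
\item For non-adjacent \(v,w\), the supports intersect, so condition~\textup{(2\('\))} fails and \(U_v\not\bot U_w\).
\end{itemize}

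Hence the \(U_v\) span an induced \(P_4\) in \(\EO\).

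For (2), let \(\sfC_1,\sfC_2,\sfC_3\) be pairwise disjoint cycles, and write \(\sfD_{i,j}\) for the component of \(\Graf\setminus\sfC_i\) containing \(\sfC_j\). Each \(\sfD_{i,j}\) contains an embedded cycle, so \((\sfC_i,\sfD_{i,j})\) is a cycle datum. The data \((\sfC_2,\sfD_{2,1})\) and \((\sfC_1,\sfD_{1,2})\) satisfy condition~\textup{(1)} of \Cref{Lem:CycleCoordinateOrthogonality}, as do \((\sfC_1,\sfD_{1,3})\) and \((\sfC_3,\sfD_{3,1})\), and also \((\sfC_3,\sfD_{3,2})\) and \((\sfC_2,\sfD_{2,3})\). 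I would then apply the final assertion of that lemma three times in succession:

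\begin{itemize}
\item start with \(v_1=(1,\sfC_2,\sfD_{2,1})\) and obtain an adjacent \(v_2=(g_2,\sfC_1,\sfD_{1,2})\);
\item note that \(v_2\) has the same underlying domain as \((g_2,\sfC_1,\sfD_{1,3})\), since this is a change of free-particle fiber in the sense of \Cref{lemma:parallelism}; from it obtain an adjacent \(v_3=(g_3,\sfC_3,\sfD_{3,1})\);
\item similarly re-coordinatize \(v_3\) as \((g_3,\sfC_3,\sfD_{3,2})\) and obtain an adjacent \(v_4=(g_4,\sfC_2,\sfD_{2,3})\).
\end{itemize}

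The main obstacle is exactly this re-coordinatization step. It is only valid when \(\sfD_{1,2}=\sfD_{1,3}\) and \(\sfD_{3,1}=\sfD_{3,2}\), that is, when the two other cycles lie in the same component of the complement. That is not guaranteed in general. I would try first to relabel the three cycles so that the middle vertices \(\sfC_1\) and \(\sfC_3\) each have the other two cycles in a single complementary component. Because \(\Graf\) is connected and the cycles are disjoint, at most one cycle can separate the other two, so such a relabeling should exist. If it does not, the fallback is to compare fibers through \Cref{lemma:parallelism} directly.

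It remains to check that \(v_1v_2v_3v_4\) is induced. By the proof of \Cref{Prop:GBG2CoreGraphBipartite}, \(\varepsilon\) alternates along edges. Hence \(\varepsilon(v_1)=\varepsilon(v_3)\) and \(\varepsilon(v_2)=\varepsilon(v_4)\), so these pairs are non-adjacent. They are also distinct, because their cycles differ, which forces distinct domains by \Cref{lemma:parallelism}. For the pair \(v_1,v_4\), both have cycle \(\sfC_2\), so condition~\textup{(1)} of \Cref{Lem:CycleCoordinateOrthogonality} fails and they are not adjacent. They are distinct because \(\varepsilon(v_1)=-\varepsilon(v_4)\). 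This is the role of the sign-changing element \(\sigma\) in \Cref{Fig:IntroGammaToCore}.
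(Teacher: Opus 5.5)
Your proposal is correct. Case~(2) follows the paper's argument. The paper also puts the two cycles that do not separate the other two in the middle of the path; it finds them as leaves of a minimal tree in the graph obtained by contracting the three cycles. It puts the remaining cycle at both ends, produces the vertices with the final assertion of \Cref{Lem:CycleCoordinateOrthogonality}, and uses \(\varepsilon\) for non-adjacency and distinctness.

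Your hedged claim that at most one of the three cycles separates the other two is true, so the fallback is unnecessary. Suppose \(\sfC_i\) and \(\sfC_j\) both separated, and take a shortest path from the third cycle to \(\sfC_i\cup\sfC_j\). If it ends on \(\sfC_i\), it lies in \(\Graf\setminus\sfC_j\), so \(\sfC_j\) does not separate; if it ends on \(\sfC_j\), it lies in \(\Graf\setminus\sfC_i\), so \(\sfC_i\) does not separate.

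In case~(1) the paper takes a longer route. It applies \Cref{Prop:UndistortedRAAGsFromGeneralizedHaloGraphs} to get \(\bbA(P_4)<\GBG_2(\Graf)\), then uses the rank-two criterion in \Cref{Thm:RankTwoGBG} to return to \(P_4<\EO\). You stop at the intermediate step of that proposition's proof, where the domains \(U_v\) are already shown to span an induced copy of the defining graph. This is equally valid and avoids the external criterion of \cite{OPRAAG}.

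One small imprecision: the pairs \((\sfL_v,\bfn_v)\) need not be standard, so condition~(3) of \Cref{Lem:NestingOrthogonality} is not literally witnessed by a trivial comparison path. What you actually use is that \(F_v\) and \(F_w\) share a vertex, so \(\mathbf P_{U_v}\cap\mathbf P_{U_w}\neq\varnothing\). By \Cref{Lem:ComparisonPathIntersection}, this is equivalent to conditions~(1) and~(3).
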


\begin{proof}
Suppose first that \(\Graf\) contains a \(P_4\)-cycle configuration. By the observation following \Cref{Def:P4CycleConfiguration}, \(\Graf\) is a generalized \(P_4\)-halo graph with respect to the coloring \(\chi(a)=\chi(c)=1\) and \(\chi(b)=\chi(d)=2\), so it satisfies the hypotheses of \Cref{Prop:UndistortedRAAGsFromGeneralizedHaloGraphs}. Hence \(\bbA(P_4)<\GBG_2(\Graf)\). It follows from \Cref{Thm:RankTwoGBG} that \(P_4<\EO\).

Now suppose that \(\Graf\) contains three pairwise disjoint embedded cycles \(\sfC_1,\sfC_2,\sfC_3\).
After relabeling them, there exist components \(\sfC^{\perp}_1\in\pi_0(\Graf\setminus\sfC_1)\) and \(\sfC^{\perp}_3\in\pi_0(\Graf\setminus\sfC_3)\) such that \(\sfC_2,\sfC_3\subseteq \sfC^{\perp}_1\) and \(\sfC_1,\sfC_2\subseteq \sfC^{\perp}_3\).
Indeed, contract the three cycles to vertices, choose a minimal tree joining the resulting vertices, and take \(\sfC_1\) and \(\sfC_3\) to correspond to two leaves of this tree.

Let \(\sfC^{\perp}_{2,1}\) and \(\sfC^{\perp}_{2,3}\) be the components of \(\Graf\setminus\sfC_2\) containing \(\sfC_1\) and \(\sfC_3\), respectively. They are allowed to coincide.
Start with any vertex \(v_1=(g_1,\sfC_1,\sfC^{\perp}_1)\). By the final assertion of \Cref{Lem:CycleCoordinateOrthogonality}, choose vertices \(v_2=(g_2,\sfC_2,\sfC^{\perp}_{2,1})\) and \(v_3=(g_3,\sfC_3,\sfC^{\perp}_3)\) which are both adjacent to \(v_1\). Applying the same lemma once more, choose \(v_4=(g_4,\sfC_2,\sfC^{\perp}_{2,3} )\) adjacent to \(v_3\).

Since \(v_2\) and \(v_3\) are both adjacent to \(v_1\), they have the same color in the bipartition from \Cref{Prop:GBG2CoreGraphBipartite}, and hence are not adjacent. Similarly, \(v_1\) and \(v_4\) are both adjacent to \(v_3\), so they have the same color and are not adjacent.
On the other hand, \Cref{Lem:CycleCoordinateOrthogonality}\eqref{Item:CycleCoordinateDownstairs} shows that \(v_2\) and \(v_4\) are not adjacent, since their constrained cycles are both equal to \(\sfC_2\). Moreover, \(v_2\) and \(v_4\) are distinct: \(v_2\) has the same color as \(v_3\), whereas \(v_4\) is adjacent to \(v_3\) and hence has the opposite color. Consequently, \(v_2,v_1,v_3,v_4\) span an induced \(P_4\).
\end{proof}

We now prove the remaining implication.

\begin{lemma}[Cycle extraction from an induced \(P_4\)]\label{Lem:P4CycleExtraction}
Suppose that \(\Graf\) contains no three pairwise disjoint embedded cycles. If \(P_4<\EO\), then \(\Graf\) contains a \(P_4\)-cycle configuration.
\end{lemma}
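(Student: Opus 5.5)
Suppose \(v_a,v_b,v_c,v_d\) span an induced \(P_4\) in \(\EO\), with edges \(v_av_b,v_bv_c,v_cv_d\). All four vertices are non-isolated, so by \Cref{Lem:NonisolatedCycleDomains} each has a cycle coordinate \(v_x=(g_x,\sfC_x,\sfC_x^\perp)\). The plan is to show that the four cycles \(\sfC_a,\sfC_b,\sfC_c,\sfC_d\) themselves form a \(P_4\)-cycle configuration.

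\textbf{Edges.} By \Cref{Lem:CycleCoordinateOrthogonality}\eqref{Item:CycleCoordinateDownstairs}, the three edges immediately give
\[\sfC_a\cap\sfC_b=\sfC_b\cap\sfC_c=\sfC_c\cap\sfC_d=\varnothing.\]
They also give the containments \(\sfC_a\subseteq\sfC_b^\perp\), \(\sfC_b\subseteq\sfC_a^\perp\), and similarly for the other two edges.

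\textbf{Non-edges.} It remains to show that \(\sfC_a\cap\sfC_c\), \(\sfC_b\cap\sfC_d\) and \(\sfC_a\cap\sfC_d\) are nonempty, and that the four cycles are distinct.

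The pairs \(\{a,c\}\) and \(\{b,d\}\) are at distance two and therefore have the same color under \Cref{Prop:GBG2CoreGraphBipartite}. Bipartiteness explains why they are non-adjacent, but it says nothing about the underlying cycles. So I would argue by the hypothesis instead. Suppose \(\sfC_a\cap\sfC_c=\varnothing\). Then \(\sfC_a,\sfC_b,\sfC_c\) are pairwise disjoint embedded cycles, contradicting the assumption that \(\Graf\) has no three pairwise disjoint cycles. The same argument applied to \(\sfC_b,\sfC_c,\sfC_d\) gives \(\sfC_b\cap\sfC_d\neq\varnothing\).

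The pair \(\{a,d\}\) is the delicate case. Suppose \(\sfC_a\cap\sfC_d=\varnothing\). I expect to show that the containments then force \(\sfC_a\subseteq\sfC_d^\perp\) and \(\sfC_d\subseteq\sfC_a^\perp\), so that \(v_a\) and \(v_d\) could be adjacent, and to derive a contradiction from this.

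\begin{itemize}
\item Since \(\sfC_c\cap\sfC_a\neq\varnothing\) and \(\sfC_c\) is connected and disjoint from \(\sfC_d\), the cycle \(\sfC_c\) lies in the component of \(\Graf\setminus\sfC_d\) containing \(\sfC_a\). Since \(\sfC_c\subseteq\sfC_d^\perp\), this gives \(\sfC_a\subseteq\sfC_d^\perp\).
\item The symmetric argument, using \(\sfC_b\), gives \(\sfC_d\subseteq\sfC_a^\perp\).
\end{itemize}

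Condition \eqref{Item:CycleCoordinateDownstairs} then holds for \(v_a,v_d\), so non-adjacency must come from the coset condition \eqref{Item:CycleCoordinatePath} failing. On the other hand, \(v_a\) and \(v_d\) lie at odd distance in the path, so they have opposite colors.

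\textbf{Main obstacle.} The step I expect to be hardest is turning this into a contradiction, that is, showing that the lift data along the path forces \(g_a^{-1}g_d\in H\delta_\eta H\) for some comparison path \(\eta\). My first attempt would be to concatenate the comparison paths \(a\to b\to c\to d\) and deform the result, inside the union of the closure subcomplexes, into a single comparison path from \(a\) to \(d\). This uses the connectivity of \(\sfC_d^\perp\supseteq\sfC_a\cup\sfC_c\) and of \(\sfC_a^\perp\supseteq\sfC_b\cup\sfC_d\), which lets the free particle slide while the constrained particle stays on its cycle.

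If this deformation fails, the fallback is to replace \(\sfC_d\) with \(\sfC_b\). We have \(\sfC_b\cap\sfC_d\neq\varnothing\), and \(\sfC_b\cap\sfC_c=\varnothing\), so I would re-run the construction with the vertex \(v_d'=(g',\sfC_b,\dots)\) adjacent to \(v_c\) built in the proof of \Cref{Prop:P4SourcesGBG2}. The quadruple \(\sfC_a,\sfC_b,\sfC_c,\sfC_b\) is then not distinct, so this case must be excluded separately.

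\textbf{Distinctness.} Edge pairs are distinct because they are disjoint. We cannot have \(\sfC_a=\sfC_c\): otherwise \(v_a\) and \(v_c\) have the same cycle and, by \Cref{lemma:parallelism}, the same complementary component (namely the one containing \(\sfC_b\)). Their difference is then a non-pure double-coset element, which I would rule out via the \(\varepsilon\)-coloring. The cases \(\sfC_b=\sfC_d\) and \(\sfC_a=\sfC_d\) are handled similarly.
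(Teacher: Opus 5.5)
Your plan cannot work as stated. You are trying to prove that the cycles $\sfC_a,\sfC_b,\sfC_c,\sfC_d$ read off from the cycle coordinates of the induced $P_4$ themselves form a $P_4$-cycle configuration, and that is false under the hypothesis. So the step you flag as the main obstacle cannot be completed.

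Here is a counterexample. Let $\Graf$ be a chain of four embedded cycles $\sfC_b,\sfC_d,\kappa,\sfC_c$ in which consecutive cycles meet in one vertex ($\sfC_b\cap\sfC_d=\{p\}$, $\sfC_d\cap\kappa=\{t\}$, $\kappa\cap\sfC_c=\{s\}$) and non-consecutive ones are disjoint. These are the only cycles, so no three are pairwise disjoint.
\begin{itemize}
\item Put $x=\{p,q\}$ with $q\in\sfC_c\setminus\{s\}$, and fix a lift $\widetilde x$.
\item Let $U_b,U_c,U_d$ be the domains of the lifts through $\widetilde x$ in which one particle runs around $\sfC_b$, $\sfC_c$, $\sfC_d$ while the other stays at $q$, $p$, $q$ respectively. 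Write $b,c,d$ for the generators.
\item Let $k\in G$ move the particle at $q$ once around $\kappa$ (via $s$) while the other particle stays at $p$.
\end{itemize}
Then $k$ lies in the free-particle factor of $\Stab(\mathbf P_{U_b})$, so $kU_c\bot kU_b=U_b$. Also $kU_c\neq U_c$, because $kck^{-1}\notin\langle c\rangle$ in the free group $\langle c,k\rangle$.

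On the other hand, $\Stab(\mathbf P_{U_d})=\langle c,d\rangle\leq\Stab(\mathbf P_{U_c})$. The two closure subcomplexes meet in the connected torus $\sfC_c\times\sfC_d\ni x$. Hence $k\mathbf P_{U_c}\cap\mathbf P_{U_d}\neq\varnothing$ would force $k\in\Stab(\mathbf P_{U_c})$. Let $\mathsf{R}$ be the component of $\Graf\setminus p$ containing $q$. Then $k\widetilde x$ would have to lie on the axis of $c$ in the tree lifting $\{p\}\times\mathsf{R}$, which is false.

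Therefore $kU_c,U_b,U_c,U_d$ span an induced $P_4$ in $\EO$ with cycles $\sfC_c,\sfC_b,\sfC_c,\sfC_d$. The endpoint cycle coincides with $\sfC_c$ and is disjoint from $\sfC_d$. Condition~(1) of \Cref{Lem:CycleCoordinateOrthogonality} holds for the two ends, so their non-adjacency comes entirely from the failure of the coset condition~(2). The coset relation you hope to obtain by concatenating and deforming comparison paths is simply not available.

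Your distinctness argument fails for the same reason. $v_a$ and $v_c$ are both adjacent to $v_b$, so $\varepsilon(v_a)=\varepsilon(v_c)$. When $\sfC_a=\sfC_c$ the element $g_a^{-1}g_c$ is pure, so the coloring rules nothing out.

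The missing idea is that the endpoint cycles have to be \emph{replaced}. The paper argues as follows.
\begin{itemize}
\item Fix $\widetilde{\mathbf y}\in\mathbf P_{U_b}\cap\mathbf P_{U_c}$ lying over $\{p_b,p_c\}$.
\item Use $U_a\bot U_b$, $U_d\bot U_c$ and parallel transport to obtain axial elements $h_a\in\iota_b(\pi_1(\sfC_b^\perp,p_c))$ and $h_d\in\iota_c(\pi_1(\sfC_c^\perp,p_b))$, strongly fully supported on $U_a$ and $U_d$.
\item Since $U_a\neq U_d$ and $U_a\not\bot U_d$, \Cref{Prop:RAAGEmbeddingTheorem} gives $[h_a,h_d]\neq1$.
\item Let $\mathsf{A}_b$ and $\mathsf{A}_c$ be the unions of the embedded cycles in $\sfC_b^\perp$ and $\sfC_c^\perp$. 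By the no-three-disjoint-cycles hypothesis these are connected and carry all of $\pi_1$.
\item If $\mathsf{A}_b$ and $\mathsf{A}_c$ were disjoint, representatives of $h_a$ and $h_d$ would lie in the two factors of $\mathsf{A}_b\times\mathsf{A}_c\subseteq\D_2(\Graf)$ and would commute.
\end{itemize}
Hence some cycles $\alpha\subseteq\sfC_b^\perp$ and $\delta\subseteq\sfC_c^\perp$ share a vertex, and $\alpha,\sfC_b,\sfC_c,\delta$ is the required configuration. In the example above this gives $\alpha=\kappa$ and $\delta=\sfC_d$.
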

\begin{proof}
Let \(v_a,v_b,v_c,v_d\) be the vertices of an induced copy of \(P_4\) in \(\EO\), corresponding respectively to the consecutively ordered vertices \(a,b,c,d\) of \(P_4\).
Since none of these vertices is isolated, choose cycle coordinates \(v_x=(g_x,\sfC_x,\sfC^{\perp}_x)\) for \(x\in\{a,b,c,d\}\). By \Cref{Lem:CycleCoordinateOrthogonality}\eqref{Item:CycleCoordinateDownstairs}, consecutive cycles are disjoint, and \(\sfC_a,\sfC_c\subseteq \sfC_b^{\perp}\), \(\sfC_b,\sfC_d\subseteq \sfC_c^{\perp}\).

Let \(U_x\in\frSLP\) be the domain underlying \(v_x\). Since \(\EO\cong\mathcal G^{\frSLP}\) by \Cref{Cor:CoreEqualsExpandedCore}, the four distinct vertices \(v_a,v_b,v_c,v_d\) have pairwise distinct underlying domains \(U_a,U_b,U_c,U_d\).
Since \(U_b\bot U_c\), \Cref{Lem:OrthogonalLegalPairProductRegions} gives \(\mathbf P_{U_b}\cap\mathbf P_{U_c}\neq\varnothing\).
Choose a vertex \(\widetilde{\mathbf y}\in\mathbf P_{U_b}\cap\mathbf P_{U_c}\) and write its projection to \(\UD_2(\Graf)\) as \(\mathbf y=\{p_b,p_c\}\) with \(p_b\in\sfC_b\) and \(p_c\in\sfC_c\).
Choose an edge path \(\widetilde\beta\) from \(\widetilde{\bfx}^0\) to \(\widetilde{\mathbf y}\), and let \(\beta\) be its projection to \(\UD_2(\Graf)\).
We use the resulting change-of-basepoint isomorphism
\[\pi_1\bigl(\UD_2(\Graf),\mathbf y\bigr)\longrightarrow G=\pi_1\bigl(\UD_2(\Graf),\bfx^0\bigr),\qquad[\omega]\longmapsto[\beta\cdot\omega\cdot\beta^{-1}].\]
Under this identification, the maps
\[j_b\colon\sfC_b^\perp\longrightarrow\UD_2(\Graf),\quad q\longmapsto\{p_b,q\},
\qquad
j_c\colon\sfC_c^\perp\longrightarrow\UD_2(\Graf),\quad q\longmapsto\{q,p_c\},\]
are cubical embeddings which are local isometries by \Cref{Lem:LocalisometrybetweenLegalPairs}. They therefore induce injective homomorphisms
\[\iota_b\colon\pi_1(\sfC_b^{\perp},p_c)\longrightarrow G,\qquad \iota_c\colon\pi_1(\sfC_c^{\perp},p_b)\longrightarrow G.\]

Let \(E_b(\widetilde{\mathbf y})\) denote the second-factor fiber of \(\mathbf P_{U_b}\) through \(\widetilde{\mathbf y}\). By \Cref{Lem:LiftedClosureRegion}, it is the lift through \(\widetilde{\mathbf y}\) of the image of \(j_b\). Define \(E_c(\widetilde{\mathbf y})\) similarly. 
Since \(U_a\bot U_b\), \Cref{Lem:OrthogonalLegalPairProductRegions} gives a factor representing \(U_a\) in some second-factor fiber of \(\mathbf P_{U_b}\). Parallel transport across the first factor, together with \Cref{Lem:LiftedClosureRegion,Lem:ParallelTransportLegalPair}, therefore gives a factor \(F_a\subseteq E_b(\widetilde{\mathbf y})\) representing \(U_a\). Choose a path in \(E_b(\widetilde{\mathbf y})\) from \(\widetilde{\mathbf y}\) to a vertex of \(F_a\). Conjugating a nontrivial cycle translation stabilizing \(F_a\) along the projection of this path gives an axial element \(h_a\in\iota_b(\pi_1(\sfC_b^\perp,p_c))\) strongly fully supported on \(U_a\).
Similarly, \(U_d\bot U_c\) gives an axial element \(h_d\in\iota_c(\pi_1(\sfC_c^{\perp},p_b))\) strongly fully supported on \(U_d\).
Since \(U_a\neq U_d\) and \(U_a\not\bot U_d\), the pair \(\{h_a,h_d\}\) is geometrically irredundant. Hence \Cref{Prop:RAAGEmbeddingTheorem} gives \(\langle h_a^N,h_d^N\rangle\cong\bbF_2\)
for all sufficiently large \(N\). In particular, \([h_a,h_d]\neq1\).

Let \(\mathsf{A}_b\subseteq \sfC_b^{\perp}\) be the union of all embedded cycles contained in \(\sfC_b^{\perp}\), and define \(\mathsf{A}_c\subseteq \sfC_c^{\perp}\) similarly.

\begin{claim}\label{Claim:CoreIntersection}
Both \(\mathsf{A}_b\) and \(\mathsf{A}_c\) are connected, and \(\mathsf{A}_b\cap \mathsf{A}_c\neq\varnothing\).
\end{claim}
\begin{proof}[Proof of \Cref{Claim:CoreIntersection}]
Every embedded cycle in \(\sfC_b^{\perp}\) meets \(\sfC_c\). Indeed, a cycle in \(\sfC_b^{\perp}\) disjoint from \(\sfC_c\) would, together with \(\sfC_b\) and \(\sfC_c\), give three pairwise disjoint embedded cycles. Since \(\sfC_c\subseteq \sfC_b^{\perp}\), it follows that \(\mathsf{A}_b\) is connected. Moreover, \(\sfC_b^{\perp}\) is obtained from \(\mathsf{A}_b\) by attaching a forest. Hence the inclusion induces an isomorphism \(\pi_1(\mathsf{A}_b,p_c)\rightarrow\pi_1(\sfC_b^{\perp},p_c)\).
Similarly, \(\mathsf{A}_c\) is connected, \(\sfC_c^{\perp}\) is obtained from \(\mathsf{A}_c\) by attaching a forest, and the inclusion induces an isomorphism \(\pi_1(\mathsf{A}_c,p_b)\rightarrow\pi_1(\sfC_c^{\perp},p_b)\).
Here the basepoints lie in the corresponding subgraphs since \(p_c\in\sfC_c\subseteq \mathsf{A}_b\) and \(p_b\in\sfC_b\subseteq \mathsf{A}_c\).
We may therefore choose based loops
\[\rho_a\colon([0,1],\{0,1\})\longrightarrow(\mathsf{A}_b,p_c),\qquad\rho_d\colon([0,1],\{0,1\})\longrightarrow(\mathsf{A}_c,p_b)\]
such that \(\iota_b([\rho_a])=h_a\) and \(\iota_c([\rho_d])=h_d\).

Suppose, to the contrary, that \(\mathsf{A}_b\cap \mathsf{A}_c=\varnothing\). Then there is a product subcomplex \(\mathsf{A}_b\times \mathsf{A}_c\subseteq\D_2(\Graf)\) with ordered base configuration \((p_c,p_b)\). The loops \(t\mapsto(\rho_a(t),p_b)\) and \(t\mapsto(p_c,\rho_d(t))\) lie in the two different factors of \(\mathsf{A}_b\times \mathsf{A}_c\), and hence commute in its fundamental group. After projecting to \(\UD_2(\Graf)\) and applying the common change-of-basepoint isomorphism, they represent \(h_a\) and \(h_d\), respectively. It follows that \([h_a,h_d]=1\), contradicting the noncommutation established above.
Therefore \(\mathsf{A}_b\cap \mathsf{A}_c\neq\varnothing\).
\end{proof}

By \Cref{Claim:CoreIntersection}, choose a vertex \(z\in \mathsf{A}_b\cap \mathsf{A}_c\). Since \(\mathsf{A}_b\) is the union of the embedded cycles contained in \(\sfC_b^{\perp}\), there is an embedded cycle \(\alpha\subseteq \sfC_b^{\perp}\) containing \(z\). Similarly, there is an embedded cycle \(\delta\subseteq \sfC_c^{\perp}\) containing \(z\). Consequently, \(z\in\alpha\cap\delta\), and in particular \(\alpha\cap\delta\neq\varnothing\).
These cycles need not coincide with the original endpoint cycles \(\sfC_a\) and \(\sfC_d\).
Since \(\alpha\subseteq \sfC_b^{\perp}\) and \(\delta\subseteq \sfC_c^{\perp}\), we have 
\[\alpha\cap\sfC_b=\sfC_b\cap\sfC_c=\sfC_c\cap\delta=\varnothing.\]
The absence of three pairwise disjoint embedded cycles now implies that 
\begin{equation}\label{Eq:Intersection}
\alpha\cap\sfC_c\neq\varnothing\qquad\text{and}\qquad\sfC_b\cap\delta\neq\varnothing.  
\end{equation}
Indeed, otherwise either \(\alpha,\sfC_b,\sfC_c\) or \(\sfC_b,\sfC_c,\delta\) would be a collection of three pairwise disjoint embedded cycles.

The four cycles \(\alpha,\sfC_b,\sfC_c,\delta\) are distinct.
Consecutive cycles are distinct by the displayed disjointness relations. Moreover, if \(\alpha=\sfC_c\), then \(\alpha\cap\delta\neq\varnothing\) would contradict \(\sfC_c\cap\delta=\varnothing\). Similarly, if \(\delta=\sfC_b\), then \(\alpha\cap\delta\neq\varnothing\) would contradict \(\alpha\cap\sfC_b=\varnothing\).
Finally, if \(\alpha=\delta\), then \(\alpha,\sfC_b,\sfC_c\) would be three pairwise disjoint embedded cycles, contrary to the hypothesis.

Thus the ordered cycles \(\alpha,\sfC_b,\sfC_c,\delta\), corresponding respectively to
\(a,b,c,d\), form a \(P_4\)-cycle configuration.
\end{proof}

\begin{theorem}[Graphical \(P_4\)-criterion]\label{Thm:GraphicalP4Criterion}
For a connected finite simplicial graph \(\Graf\), the following are equivalent:
\begin{enumerate}[label=\textup{(\arabic*)}, ref=\arabic*]
\item \(\bbA(P_4)<\GBG_2(\Graf)\);
\item \(P_4<\EO\);
\item \(\Graf\) contains either
\begin{enumerate}
\item three pairwise disjoint embedded cycles, or
\item a \(P_4\)-cycle configuration.
\end{enumerate}
\end{enumerate}
\end{theorem}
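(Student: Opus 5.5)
The equivalence \textup{(1)}\(\Leftrightarrow\)\textup{(2)} is the two-particle case of \Cref{Thm:RankTwoGBG}. If \(\Graf\) is a path, all three conditions fail: \(\GBG_2(\Graf)\) is trivial, \(\EO\) is empty, and \(\Graf\) has no cycles. Otherwise \(\Graf\) is sufficiently subdivided for two particles, and the legal-pair hierarchy on \(G=\GBGdisc_2(\Graf)\cong\GBG_2(\Graf)\) has rank at most two. Since it lies in \(\Xi\) by \Cref{Thm:GBGLegalPairXi}, \Cref{Thm:RankTwoCriterionHHG} gives \(\bbA(P_4)<\GBG_2(\Graf)\) if and only if \(P_4<\EO\). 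So it remains to prove \textup{(2)}\(\Leftrightarrow\)\textup{(3)}, and this reduces to the two results already established.

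For \textup{(3)}\(\Rightarrow\)\textup{(2)}, we cite \Cref{Prop:P4SourcesGBG2} directly. In case \textup{(a)}, three pairwise disjoint cycles yield an induced \(P_4\) of cycle coordinates; the endpoints are separated by the bipartition of \Cref{Prop:GBG2CoreGraphBipartite}. In case \textup{(b)}, a \(P_4\)-cycle configuration makes \(\Graf\) a generalized \(P_4\)-halo graph, so \Cref{Prop:UndistortedRAAGsFromGeneralizedHaloGraphs} supplies \(\bbA(P_4)<\GBG_2(\Graf)\), and the rank-two equivalence returns \(P_4<\EO\).

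For \textup{(2)}\(\Rightarrow\)\textup{(3)}, we split into two cases. If \(\Graf\) contains three pairwise disjoint embedded cycles, then \textup{(a)} holds and there is nothing to prove. Otherwise the hypothesis of \Cref{Lem:P4CycleExtraction} is satisfied, so \(P_4<\EO\) yields a \(P_4\)-cycle configuration, which is \textup{(b)}. All of the substantive work has already been done in \Cref{Lem:P4CycleExtraction}: extracting \(h_a,h_d\) in the orthogonal fibers and the intersection argument of \Cref{Claim:CoreIntersection}. Consequently, the only point requiring care here is the degenerate path case together with checking that a non-path graph is sufficiently subdivided for \(n=2\), as noted at the start of \Cref{Section:Graph2BraidGroups}; the theorem should then follow by assembling these implications.
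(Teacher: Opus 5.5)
Your proposal is correct and follows the paper's proof exactly: you get (1)$\Leftrightarrow$(2) from the rank-two equivalence in \Cref{Thm:RankTwoGBG}, (3)$\Rightarrow$(2) from \Cref{Prop:P4SourcesGBG2}, and (2)$\Rightarrow$(3) by splitting on whether three pairwise disjoint cycles exist and otherwise invoking \Cref{Lem:P4CycleExtraction}. Your extra remarks on the path case and on sufficient subdivision for $n=2$ correspond to the standing assumptions stated at the start of \Cref{Section:Graph2BraidGroups}.
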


\begin{proof}
The equivalence of \textup{(1)} and~\textup{(2)} follows from \Cref{Thm:RankTwoGBG}.
The implication \(\textup{(3)}\Rightarrow\textup{(2)}\) is \Cref{Prop:P4SourcesGBG2}.
Conversely, suppose that \(P_4<\EO\). If \(\Graf\) contains three pairwise disjoint embedded cycles, then \textup{(3)(a)} holds. Otherwise, \Cref{Lem:P4CycleExtraction} gives a \(P_4\)-cycle configuration, so \textup{(3)(b)} holds.
\end{proof}

\begin{corollary}[Algorithmic \(P_4\)-embeddability]\label{Cor:AlgorithmicP4Embeddability}
There is an algorithm which, given a connected finite simplicial graph \(\Graf\), decides whether \(\bbA(P_4)<\GBG_2(\Graf)\).
\end{corollary}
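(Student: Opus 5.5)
The plan is to reduce the algorithmic question to checking the finite graphical condition~\textup{(3)} of \Cref{Thm:GraphicalP4Criterion}, which is equivalent to \(\bbA(P_4)<\GBG_2(\Graf)\). There is one preliminary point. The theorem is stated for connected finite simplicial graphs, and throughout \Cref{Section:Graph2BraidGroups} we assumed \(\Graf\) is not a path. So the algorithm first checks whether \(\Graf\) is a path, which is decidable by inspecting degrees and connectivity. If it is, then \(\GBG_2(\Graf)\) is trivial and the answer is negative. Condition~\textup{(3)} also fails in that case, since a path has no cycles, so the criterion still agrees with the answer.

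Otherwise the algorithm enumerates the set \(\mathcal Z\) of embedded cycles of \(\Graf\). This set is finite, and each cycle is determined by its vertex set together with a cyclic ordering. One can list \(\mathcal Z\) by running over all sequences of at least three distinct vertices and keeping those that form closed edge paths, modulo rotation and reversal. For each cycle the algorithm records its vertex set. Two embedded cycles intersect precisely when their vertex sets meet, because an edge lying in both cycles has its endpoints in both. All intersection tests are therefore finite set computations.

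The algorithm then performs two finite searches:
\begin{itemize}
\item for condition~\textup{(3)(a)}, it looks over all triples in \(\mathcal Z\) for one whose vertex sets are pairwise disjoint;
\item for condition~\textup{(3)(b)}, it looks over all ordered quadruples \((\sfC_a,\sfC_b,\sfC_c,\sfC_d)\) of distinct cycles for one satisfying the three disjointness and three intersection conditions of \Cref{Def:P4CycleConfiguration}.
\end{itemize}
It outputs a positive answer exactly when one of these searches succeeds. Correctness follows immediately from the equivalence \((1)\Leftrightarrow(3)\) in \Cref{Thm:GraphicalP4Criterion}.

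No step here is a real obstacle, since all the mathematical content is in \Cref{Thm:GraphicalP4Criterion}. The only thing to be careful about is that the criterion is stated purely in terms of \(\Graf\), with no dependence on the expanded core graph \(\EO\). This is what allows us to avoid computing \(\EO\), which is infinite in general, and it means no subdivision or choice of model enters the algorithm.
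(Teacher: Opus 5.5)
Your proposal is correct and follows the paper's proof: both enumerate the finitely many embedded cycles of \(\Graf\), check for three pairwise disjoint cycles or a \(P_4\)-cycle configuration, and conclude via \Cref{Thm:GraphicalP4Criterion}. Your extra details, namely the separate path case and testing intersections through vertex sets, are sound refinements of the same argument.
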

\begin{proof}
A finite graph has only finitely many embedded cycles, and these can be enumerated. One may therefore check, by finite enumeration, whether \(\Graf\) contains three pairwise disjoint embedded cycles or four distinct embedded cycles forming a \(P_4\)-cycle configuration. The conclusion follows from \Cref{Thm:GraphicalP4Criterion}.
\end{proof}

\begin{corollary}[Path and forest RAAGs]\label{Cor:PathForestRAAGsGBG2}
For \(m\geq4\), let \(P_m\) denote the path graph on \(m\) vertices. Then the following are equivalent:
\begin{enumerate}
\item \(\bbA(P_m)<\GBG_2(\Graf)\);
\item \(\Graf\) contains either a \(P_4\)-cycle configuration or three pairwise disjoint embedded cycles.
\end{enumerate}
Moreover, these conditions are equivalent to \(\GBG_2(\Graf)\) containing \(\bbA(T)\) for every finite forest \(T\).
\end{corollary}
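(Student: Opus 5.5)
The plan is to show the equivalences (1)$\Leftrightarrow$(2) and (2)$\Rightarrow$(forest clause)$\Rightarrow$(1). Throughout, write $\EO$ for the expanded core graph of the legal-pair hierarchy on $\GBGdisc_2(\Graf)$. If $\Graf$ is a path, $\GBG_2(\Graf)$ is trivial, none of the conditions holds, and there is nothing to prove; so assume otherwise. By \Cref{Thm:RankTwoGBG} (rank at most two when $n=2$), $\bbA(\Lambda)<\GBG_2(\Graf)$ is equivalent to $\Lambda<\EO$ for every finite graph $\Lambda$. Hence every statement reduces to an induced-subgraph statement in $\EO$.

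\emph{(1)$\Rightarrow$(2).} Since $m\ge4$, the path $P_4$ is an induced subgraph of $P_m$. Thus $\bbA(P_m)<\GBG_2(\Graf)$ gives $P_m<\EO$, hence $P_4<\EO$, and \Cref{Thm:GraphicalP4Criterion} yields the graphical condition (2).

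\emph{(2)$\Rightarrow$ every forest.} By \Cref{Thm:GraphicalP4Criterion}, condition (2) gives $P_4<\EO$, that is, an induced path $v_a v_b v_c v_d$. It then suffices to show that $\EO$ contains every finite forest $T$ as an induced subgraph. I would do this by showing that $\EO$ contains an induced infinite-valence tree, or at least arbitrarily large induced regular trees; this suffices because every finite forest is an induced subgraph of a sufficiently large finite tree, and hence of a large regular tree.

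For the tree construction, I would use the group action and the bipartition. Let $h_a\in G_{U_a}$ and $h_d\in G_{U_d}$ be strongly fully supported elements, as in the proof of \Cref{Lem:P4CycleExtraction}. Then $g\in G_{U_b}\cong\mathbb Z$ commutes with $G_{U_a}$ and $G_{U_c}$, so it fixes $U_a$ and $U_c$. I would exploit instead that $\langle h_a^N,h_d^N\rangle\cong\bbF_2$ and consider translates $k\cdot\{v_a,v_b,v_c,v_d\}$ for $k$ in the subgroup generated by the Artin-type elements attached to $U_b$ and $U_c$. A cleaner route is Kim--Koberda style: the induced path gives $\bbA(P_4)<G$ undistorted by \Cref{Prop:RAAGEmbeddingTheorem}. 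The extension graph of $P_4$ contains every finite forest, and $\bbA(T)<\bbA(P_4)$ for every finite forest $T$ (Kim--Koberda). Composing gives $\bbA(T)<\GBG_2(\Graf)$. This avoids constructing trees in $\EO$ directly, and it is the route I would commit to.

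\emph{Forest clause$\Rightarrow$(1).} This is trivial, since $P_m$ is a tree. Combined with (1)$\Rightarrow$(2)$\Rightarrow$forests, this closes the cycle of implications.

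The main obstacle is justifying $\bbA(T)<\bbA(P_4)$ for all finite forests $T$. I would cite Kim--Koberda's result that $\bbA(T)$ embeds in $\bbA(P_4)$ for every finite forest $T$, which follows from their extension-graph criterion: every finite tree is an induced subgraph of the extension graph of $P_4$. If that citation is unavailable, the fallback is to build explicit induced trees in $\EO$ by iterating the final assertion of \Cref{Lem:CycleCoordinateOrthogonality}. Distinctness and non-adjacency of the new vertices would be controlled via \Cref{Prop:GBG2CoreGraphBipartite} and \Cref{Cor:MinimalDomainStabilizers}.
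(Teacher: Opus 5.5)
Your committed route is correct and is essentially the paper's. Both arguments reduce to \Cref{Thm:GraphicalP4Criterion} and invoke Kim--Koberda's theorem that $\bbA(T)<\bbA(P_4)$ for every finite forest $T$. The only difference is cosmetic: you obtain (1)$\Rightarrow$(2) by passing $P_4<P_m<\EO$ through the rank-two criterion, whereas the paper uses $\bbA(P_4)<\bbA(P_m)$ directly. The exploratory remarks about building induced trees inside $\EO$, and the fallback plan, are unnecessary and can be dropped.
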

\begin{proof}
Since \(P_4\) is an induced subgraph of \(P_m\), one has \(\bbA(P_4)<\bbA(P_m)\). Hence
\[\bbA(P_m)<\GBG_2(\Graf)\quad\Longrightarrow\quad\bbA(P_4)<\GBG_2(\Graf).\]
Conversely, \(P_m\) is a finite forest, so \cite[Theorem~1.8]{KK13} gives \(\bbA(P_m)<\bbA(P_4)\). Thus
\[\bbA(P_m)<\GBG_2(\Graf)\qquad\Longleftrightarrow\qquad\bbA(P_4)<\GBG_2(\Graf).\]
The first assertion now follows from \Cref{Thm:GraphicalP4Criterion}.

More generally, every RAAG defined by a finite forest embeds into \(\bbA(P_4)\) by \cite[Theorem~1.8]{KK13}. Hence \(\bbA(P_4)<\GBG_2(\Graf)\) implies that \(\GBG_2(\Graf)\) contains \(\bbA(T)\) for every finite forest \(T\). The converse follows by taking \(T=P_4\).
\end{proof}

\printbibliography

\end{document}